\newtheorem{theorem}{Theorem}[section]
\newtheorem{corollary}[theorem]{Corollary}
\newtheorem{lemma}[theorem]{Lemma}
\newtheorem{proposition}[theorem]{Proposition}
\newtheorem{definition-proposition}[theorem]{Definition-Proposition}
\newtheorem{conjecture}[theorem]{Conjecture}
\theoremstyle{definition}
\newtheorem{definition}[theorem]{Definition}
\newtheorem{remark}[theorem]{Remark}
\numberwithin{equation}{section}
\newcommand{\CC}{\mathcal{C}}
\newcommand{\OO}{{\mathcal O}}
\renewcommand{\AA}{\mathbb{A}}
\renewcommand{\L}{\mathbb{L}}
\newcommand{\X}{\mathbb{X}}
\newcommand{\Y}{\mathbb{Y}}
\renewcommand{\P}{\mathbb{P}}
\newcommand{\Z}{\mathbb{Z}}
\renewcommand{\c}{\vec c}
\newcommand{\vdelta}{\vec{\delta}}
\newcommand{\y}{\vec y}
\newcommand{\x}{\vec x}
\newcommand{\vell}{\vec {\ell}}
\newcommand{\z}{\vec z}
\newcommand{\s}{\vec s}
\newcommand{\w}{\vec{\omega}}
\newcommand{\cut}{\ar@{-}@[|(5)]}
\newcommand{\Hom}{\operatorname{Hom}\nolimits}
\newcommand{\Mod}{\operatorname{Mod}\nolimits}
\newcommand{\End}{\operatorname{End}\nolimits}
\newcommand{\Ext}{\operatorname{Ext}\nolimits}
\renewcommand{\Im}{\operatorname{Im}\nolimits}
\newcommand{\coker}{\operatorname{Cok}\nolimits}
\newcommand{\bo}{\operatorname{b}\nolimits}
\newcommand{\rk}{\operatorname{rank}\nolimits}
\newcommand{\RHom}{\mathbf{R}\strut\kern-.2em\operatorname{Hom}\nolimits}
\newcommand{\Hhom}[3]{\mathcal{H}om_{#1}(#2,#3)}
\DeclareMathOperator{\add}{\mathsf{add}}
\DeclareMathOperator{\thick}{\mathsf{thick}}
\DeclareMathOperator{\CM}{\mathsf{CM}}
\DeclareMathOperator{\ACM}{\mathsf{ACM}}
\DeclareMathOperator{\moduleCategory}{\mathsf{mod}} \renewcommand{\mod}{\moduleCategory}
\DeclareMathOperator{\coh}{\mathsf{coh}}
\newcommand{\DDD}{\mathsf{D}}
\newcommand{\KKK}{\mathsf{K}}
\DeclareMathOperator{\lb}{\mathsf{line}}
\DeclareMathOperator{\vect}{\mathsf{vect}}
\DeclareMathOperator{\proj}{\mathsf{proj}}
\tikzset{every picture/.style={line width=0.75pt}} 
\begin{document}

	\title[BP singularities via  ACM bundles on GL projective spaces]{Brieskorn-Pham singularities via ACM bundles on Geigle-Lenzing projective spaces}

	\author[J. Chen, S. Ruan and W. Weng] {Jianmin Chen, Shiquan Ruan and Weikang Weng$^*$}

	\thanks{$^*$ the corresponding author}
	\makeatletter \@namedef{subjclassname@2020}{\textup{2020} Mathematics Subject Classification} \makeatother
	
	\subjclass[2020]{14F05, 16G10, 16G50, 18E30}
	\keywords{Brieskorn-Pham singularity, Geigle-Lenzing projective space,  arithmetically Cohen-Macaulay bundle,   weighted projective line, tilting object}
	
	\begin{abstract} 

	We study the singularity category of the Brieskorn-Pham singularity $R=k[X_1, \dots, X_4]/(\sum_{i=1}^{4} X_i^{p_i})$,  associated with the  Geigle-Lenzing projective space $\X$ of weight quadruple $(p_1,\dots, p_4)$,  by investigating the stable category $\underline{\ACM} \, \X$ of arithmetically Cohen-Macaulay bundles on $\X$.
		
	We introduce the notion of $2$-extension bundles on $\X$, which is a higher dimensional analog of extension bundles on a weighted projective line of Geigle-Lenzing, and then establish a correspondence between $2$-extension bundles and a certain important class of  Cohen-Macaulay $R$-modules studied by Herschend-Iyama-Minamoto-Oppermann. Furthermore, we construct a tilting object in $\underline{\ACM} \, \X$ consisting of $2$-extension bundles, whose endomorphism algebra is a $4$-fold tensor product of certain Nakayama algebras. We also investigate 
	the Picard group action on $2$-extension bundles and obtain an explicit formula for the orbit number, 	
	which gives a positive answer to a higher version of an open question raised by Kussin-Lenzing-Meltzer.
 	
	\end{abstract}
	
	\maketitle
	
	\section{Introduction}

	The study of singularity theory has a high contact with many mathematical subjects, including representation theory \cite{HIMO,IT,KLM}, cluster theory \cite{AIR}, algebraic geometry and mathematical physics \cite{Orlov:2004, Orlov:2009}.   
	A fundamental result due to Buchweitz \cite{Bu} states that for a  Gorenstein ring $R$,  the singularity category  of $R$ is triangle equivalent to the stable category of maximal Cohen-Macaulay $R$-modules, see also Orlov \cite{Orlov:2009} for the graded situation.

	The Brieskorn-Pham singularity of a sequence $(p_1,\dots,p_n)$ with integers $p_i\ge 2$ over an algebraically closed field $k$, is defined as  $R:=k[X_1, \dots, X_n]/(\sum_{i=1}^{n} X_i^{p_i})$. Brieskorn–Pham singularities and their (graded) singularity categories have been actively studied,  e.g.  homological mirror symmetry \cite{FU}.   In particular, for the case $n=3$, they are called  triangle singularities and their singularity categories  have been studied in various approaches, such as the stable category of vector bundles on weighted projective lines of Geigle-Lenzing \cite{KLM,KLM2,L1}, Cohen-Macaulay modules and matrix factorizations \cite{KST1,KST2}.

	Geigle-Lenzing projective spaces, introduced by Herschend, Iyama, Minamoto and Oppermann in \cite{HIMO},  are higher dimensional analogs of the weighted projective lines. The singularity category of the (graded) Brieskorn-Pham singularity (or more general, Geigle-Lenzing complete intersection)   associated with the Geigle-Lenzing projective space $\X$, was studied in terms of Cohen–Macaulay representations.	

	From a different perspective, it is also indicated in \cite{HIMO} that the
	Cohen-Macaulay modules over the  Brieskorn-Pham singularity, correspond to a particular class of vector bundles, called arithmetically Cohen-Macaulay (ACM) bundles on $\X$. With motivation coming from the work in \cite{KLM},  
	we study  the singularity category of the  Brieskorn-Pham singularity  via the structure of  the corresponding stable category of	ACM bundles on $\X$.

	Throughout this paper, we focus on  the Brieskorn-Pham singularity $$R:=k[X_1,  \dots, X_4]/(\sum_{i=1}^{4} X_i^{p_i})$$ of a quadruple $(p_1,\dots,p_4)$ with integers $p_i\ge 2$, where the base field $k$ is supposed to be algebraically closed. Let $\L$ be the abelian group on generators $\x_1,\ldots,\x_4,\c$ modulo the relations $p_1 \x_1= \dots=p_4 \x_4=:\c$. The category $\coh \X$ of coherent sheaves on the \emph{Geigle-Lenzing} (\emph{GL}) \emph{projective space} $\X$ attached to $R$, is defined by 	applying Serre's construction \cite{S1} to the $\L$-graded singularity $R$. Recall that the Picard group of $\X$ is naturally isomorphic to the abelian group $\L$. 
	
	We denote by $\pi:\mod^{\L}R\to\coh\X$ the sheafification functor. Denote by $\CM^{\L}R$ the category  of $\L$-graded (maximal) Cohen–Macaulay $R$-modules and by  $\ACM \X$ its sheafification, called the category of 
	arithmetically Cohen-Macaulay 
	bundles on $\X$, which is Frobenius,  and  the  stable category $\underline{\ACM} \, \X$ forms a triangulated category 
	by a general result of Happel \cite{Hap1}  (see Section \ref{sec: Preliminaries} for details). 
	It follows from \cite{HIMO} that the  sheafification yields a  natural equivalence $\pi: \CM^{\L}R\xrightarrow{\sim} \ACM \X$, mapping the indecomposable projective module $R(\x)$ to the line bundle $\OO(\x)$ for any $\x \in \L$, and further induces triangle equivalences   $$\underline{\ACM} \, \X \simeq \underline{\CM}^{\L} R \simeq \DDD^{\L}_{\rm sg}(R),$$ where  $\DDD^{\L}_{\rm sg}(R):=\DDD^{\bo}(\mod^{\L}R)/\KKK^{\bo}(\proj^{\L}R)$
	is the  \emph{singularity category} of $R$, and $\underline{\CM}^{\L}R$ is the stable category of ${\CM}^{\L} R$.
	
	The main purpose of the paper is to study the singularity category of $R$ via the structure of the corresponding stable category of ACM bundles on $\X$. Recall that there are two  distinguished elements of $\L$, the \emph{dualizing element} $\w:=\c-\sum_{i=1}^{4}\x_i$ and the \emph{dominant element} $\vdelta:=2\c+2\w=\sum_{i=1}^{4}(p_i-2)\x_i$.

		Let  $L$ be a line bundle and $\x=\sum_{i=1}^{4}\lambda_i\x_i$ with $0\le \x \le \vdelta$.
		There exists a `unique' nonsplit exact sequence (see Section \ref{sec:2-extension bundles and $2$-coextension bundles} for details)
		\begin{equation*}
			0 \to L(\w) \xrightarrow{}  E 
			\xrightarrow{} 
			\bigoplus_{1\le i \le 4} L(\x-(1+\lambda_i)\x_i) \xrightarrow{\gamma} L(\x) \to 0
		\end{equation*}
		in $\coh\X$, where $\gamma:=(X_i^{\lambda_i+1})_{1\le i\le 4}$. The term $E_L \langle \x \rangle:=E$ of the sequence,  is called the \emph{$2$-extension bundle}. For $\x=0$, it is called a \emph{$2$-Auslander bundle}. 
		Each $2$-extension bundle is an exceptional ACM bundle of rank four, see Propositions \ref{homological properties of $E$ and $K$ 2}, \ref{homological properties of $E$ and $K$ 3} and \ref{shift preserve extension bundle}.
	 Dually,  there exists a `unique' nonsplit exact sequence 
	 $$0 \to L(\w)  \xrightarrow{\gamma'} \bigoplus_{1\le i \le 4} L(\w+(1+\lambda_i)\x_i)  \to F \to L(\x) \to 0$$  	in $\coh\X$, where $\gamma':=(X_i^{\lambda_i+1})_{1\le i\le 4}$. The term $F_L \langle \x \rangle:=F$ of the sequence,  is called the \emph{$2$-coextension bundle}. 
	 For $\x=0$,	it is called a \emph{$2$-coAuslander bundle}. 	 		
	 There are close relationships between $2$-extension bundles and $2$-coextension bundles  shown in Propositions \ref{dual ext-def} and \ref{susp.ext.}.

	We establish a correspondence between $2$-extension bundles and a certain class of  Cohen-Macaulay modules studied in \cite{HIMO}. 
	Let $\s:=\sum_{i=1}^{4}\x_i$. For each element $\vell=\sum_{i=1}^{4}\ell_i\x_i$ in the interval $[\s,\s+\vdelta]$, let
	$$U^{\vell}:=\rho(R/(X_i^{\ell_i}\mid 1\le i\le 4)),$$ 
	where $\rho$ denotes the composition $\DDD^{\bo}(\mod^{\L}R)\to\DDD_{\rm sg}^{\L}(R)\xrightarrow{\sim} \underline{\CM}^{\L}R$. For simplicity,  we write $E \langle \x \rangle:=E_{\OO} \langle \x \rangle$ and $F \langle \x \rangle:=F_{\OO} \langle \x \rangle$.

	\begin{theorem}
		[Theorem~\ref{correspondence of 2-Extension bundles}, Corollary~\ref{bijections}] \label{Thm A}
		Let $R$ be an $\L$-graded Brieskorn-Pham singularity of a quadruple $(p_1,\dots,p_4)$, and
		$\X$ be the corresponding GL projective space. 
		Then for each $\vell \in [\s,\s+\vdelta]$, we have   
		$$\pi(U^{\vell})= E\langle \s+\vdelta-\vell \rangle(-\w).$$	
		Conversely, each $2$-extension bundle is of this form, up to degree shift.		
		Moreover, there are bijections between:
			\begin{itemize}			
				\item[(a)]   The set of the $2$-extension bundles $E \langle \x \rangle$, $0 \leq \x \leq \vdelta;$
				\item[(b)]   The set of  the $2$-coextension bundles $F \langle \x \rangle$, $0 \leq \x \leq \vdelta;$
				\item[(c)]   The set of the $\L$-graded Cohen-Macaulay modules $U^{\vell}$, $ \vell \in [\s, \s+\vdelta]$.
			\end{itemize}				
	\end{theorem}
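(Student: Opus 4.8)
The plan is to prove the displayed identity $\pi(U^{\vell})=E\langle\s+\vdelta-\vell\rangle(-\w)$ by resolving the module $M_{\vell}:=R/(X_i^{\ell_i}\mid 1\le i\le 4)$ over the hypersurface $R=S/(f)$, with $S=k[X_1,\dots,X_4]$ and $f=\sum_i X_i^{p_i}$, and then matching the resulting Cohen--Macaulay approximation with a twist of a $2$-extension bundle. The first observation is that for $\vell=\sum\ell_i\x_i\in[\s,\s+\vdelta]$ one has $1\le\ell_i\le p_i-1$, whence $X_i^{p_i}=X_i^{p_i-\ell_i}X_i^{\ell_i}\in(X_i^{\ell_i})$ and $fM_{\vell}=0$; thus $M_{\vell}$ is a genuine $\L$-graded $R$-module, of finite length and generated in degree $0$. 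Setting $a_i:=X_i^{\ell_i}$ and $b_i:=X_i^{p_i-\ell_i}$ gives the factorization $f=\sum_{i=1}^{4}a_ib_i$, and the associated Koszul (tensor-product) matrix factorization $(\phi,\psi)$ of $f$ --- built on the exterior algebra of $\bigoplus_i Se_i$ with the degree-twisted differential $\sum_i\bigl(a_i\,e_i\wedge(-)+b_i\,\iota_{e_i}\bigr)$, so that its square is $f$ --- has $8\times 8$ matrices and Cohen--Macaulay cokernel of rank $4$. Since $fM_{\vell}=0$, Eisenbud's theory of matrix factorizations applies with the homotopy for $\cdot f$ given by contraction against $(b_i)_i$: the minimal $R$-free resolution of $M_{\vell}$ is eventually $2$-periodic and governed precisely by $(\phi,\psi)$, so $\rho(M_{\vell})=U^{\vell}$ is represented, up to a degree shift to be pinned down, by the Cohen--Macaulay cokernel attached to the exponent vector $(\ell_i)_i$.

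The second step is to recognise the $2$-extension bundle inside the \emph{same} factorization. Put $\x:=\s+\vdelta-\vell=\sum\lambda_i\x_i$, so that $\lambda_i=p_i-1-\ell_i$ and $\lambda_i+1=p_i-\ell_i$; then the defining map $\gamma=(X_i^{\lambda_i+1})=(b_i)_i$ of $E\langle\x\rangle$ uses exactly the exponents $p_i-\ell_i$. After the twist by $(-\x)$, the defining four-term sequence
\[
0\to\OO(\w)\to E\langle\x\rangle\to\bigoplus_{1\le i\le 4}\OO(\x-(1+\lambda_i)\x_i)\xrightarrow{\gamma}\OO(\x)\to 0
\]
becomes the sheafification of the first steps of the $R$-free resolution of the companion module $R/(X_i^{p_i-\ell_i})$, the extra line bundle $\OO(\w)$ arising from the single ``hypersurface'' relation $\sum_i a_ib_i=f=0$ together with the top exterior power; the capping by $\OO(\w)$ (rather than a larger bundle) is forced by the rank count, since $\ker\gamma$ has rank $3$ and $E\langle\x\rangle$ has rank $4$, and the extension itself is singled out by the one-dimensionality of the relevant $\Ext^2(\OO(\x),\OO(\w))$. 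Thus $E\langle\x\rangle$ is, up to twist, the Cohen--Macaulay cokernel attached to the exponent vector $(p_i-\ell_i)_i$. Now the two exponent vectors $(\ell_i)_i$ and $(p_i-\ell_i)_i$ are exchanged by the factorization $f=\sum_i a_ib_i$, and correspond to the two cokernels $\coker\bar\phi,\coker\bar\psi$ of one and the same matrix factorization of $f$; combining this with the two identifications above yields the equality of $\pi(U^{\vell})$ and $E\langle\x\rangle$ up to a single degree shift.

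That remaining shift is the one genuinely delicate point, and it is where the correction term $-\w=\s-\c$ is produced: one must track the generator of $M_{\vell}$ from degree $0$ through the syzygies and the $\det$-twist of the Koszul factorization (whose period is the degree $\c$ of $f$), and compare with the intrinsic normalisation of $E\langle\x\rangle$ fixed by its defining sequence. I expect this $\L$-degree bookkeeping to be the main obstacle; everything surrounding it is either formal or already available. Granting the shift, we obtain $\pi(U^{\vell})=E\langle\s+\vdelta-\vell\rangle(-\w)$. The converse assertion is then immediate, because $\vell\mapsto\s+\vdelta-\vell$ is an order-reversing bijection of $[\s,\s+\vdelta]$ onto $[0,\vdelta]$, so every $\x$ with $0\le\x\le\vdelta$ occurs.

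For the three bijections, (a)$\leftrightarrow$(c) is a formal consequence of the first part: $\pi$ restricts to an equivalence $\underline{\CM}^{\L}R\xrightarrow{\sim}\underline{\ACM}\,\X$ and the twist $(-\w)$ is an autoequivalence, so $U^{\vell}\mapsto E\langle\s+\vdelta-\vell\rangle$ is a bijection once the two families are known to be pairwise non-isomorphic. I would deduce the latter either from the fact, established in \cite{HIMO}, that the modules $U^{\vell}$, $\vell\in[\s,\s+\vdelta]$, are pairwise non-isomorphic indecomposables of $\underline{\CM}^{\L}R$, or equivalently from the exceptionality of the $E\langle\x\rangle$ recorded in Propositions~\ref{homological properties of $E$ and $K$ 2}, \ref{homological properties of $E$ and $K$ 3} and \ref{shift preserve extension bundle}, together with a separation of their classes in the Grothendieck group. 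The bijection (a)$\leftrightarrow$(b), given by $E\langle\x\rangle\leftrightarrow F\langle\x\rangle$, is furnished directly by the comparison between $2$-extension and $2$-coextension bundles in Propositions~\ref{dual ext-def} and \ref{susp.ext.}, which identify $F\langle\x\rangle$ with the image of $E\langle\x\rangle$ under an explicit autoequivalence of $\underline{\ACM}\,\X$ (a combination of the suspension and a degree shift) and are therefore bijective on the sets indexed by $0\le\x\le\vdelta$. Composing the two bijections closes the triangle (a)$\leftrightarrow$(b)$\leftrightarrow$(c).
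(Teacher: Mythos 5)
Your matrix-factorization strategy is genuinely different from the paper's proof of Theorem \ref{correspondence of 2-Extension bundles} (the paper never invokes matrix factorizations: it applies the graded global section functor $\Gamma$ to the defining sequence of $E\langle \s+\vdelta-\vell \rangle(-\w)$, obtains a four-term exact sequence of $\L$-graded modules ending in $E^{4\c-\vell}(3\c-\vell)$, reads off $M[2]\simeq U^{4\c-\vell}(3\c-\vell)$ in $\DDD^{\L}_{\rm sg}(R)$, and then converts suspensions into twists), and your route could in principle be completed --- note that it even passes through the same companion module $R/(X_i^{p_i-\ell_i})=E^{4\c-\vell}$. But as written it has a genuine gap, located exactly where you flag it: you never compute the degree shift, and conclude only ``granting the shift.'' The precise twist $(-\w)$ \emph{is} the theorem. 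What your argument actually delivers is that $\pi(U^{\vell})$ and $E\langle \s+\vdelta-\vell\rangle$ agree up to some unspecified twist --- a strictly weaker statement that yields the ``converse'' clause but not the displayed identity. Deferring the $\L$-graded bookkeeping of the Koszul factorization (the degrees of the generators $e_i$, the determinant twist, the syzygy shift relating $\coker\bar\phi$ and $\coker\bar\psi$) is therefore not the omission of a routine verification; it is the omission of the result. The paper's mechanism for exactly this step is Proposition \ref{susp.ext.}: the identities $E\langle \x \rangle[2]=E\langle \x \rangle(\c)$ and $E\langle \x \rangle=E\langle \vdelta-\x \rangle(\x-\w-\c)$ are what turn the homological shift relating the two cokernels (equivalently, relating $U^{\vell}$ and $U^{4\c-\vell}$) into the explicit normalization $(-\w)$, after the substitution $\vell\mapsto 4\c-\vell$. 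Any completion of your proof must either reproduce these formulas or carry out the equivalent graded computation inside the factorization; neither is present.

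Two smaller points. First, your identification of $E\langle\x\rangle$ with the cokernel attached to the exponent vector $(p_i-\ell_i)_i$ is stated too loosely: the defining four-term sequence is not ``the first steps of an $R$-free resolution'' of the companion module, since $E\langle\x\rangle$ is not a sum of line bundles; what is true is that $E\langle\x\rangle$ sheafifies a twisted second syzygy of that module, and seeing this requires applying $\Gamma$, passing to syzygies, and invoking the uniqueness statement (Proposition \ref{uniqueness of org} / Theorem \ref{eq. des. 2-ext}) --- a step that again involves the very degree bookkeeping you postpone. (The rank count alone does not force the kernel term to be $\OO(\w)$ rather than some other line bundle; only the graded computation does.) Second, your treatment of the bijections is essentially the paper's --- Proposition \ref{dual ext-def} for $(a)\leftrightarrow(b)$, the stable equivalence $\pi$ together with the first part for $(a)\leftrightarrow(c)$ --- and is fine in itself, but it inherits the gap, since it presupposes the exact form of the correspondence just discussed.
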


		We construct a tilting object in $\underline{\ACM} \, \X$ consisting of $2$-(co)Auslander bundles. For each $0\le \x \le \vdelta$ with $\x=\sum_{i=1}^{4} \lambda_i\x_i$ in  normal form, we let $ \sigma(\x):= \sum_{i=1}^{4} \lambda_i$. Denote by $k\vec{\AA}_n$  the path algebra of the equioriented quiver of type $\AA_n$ and  by $\vec{\AA}_{n}(2):=k\vec{\AA}_n/{\rm rad^2} \, k\vec{\AA}_n$, where $n \ge 1$. 
		Denote by $[1]$  the suspension functor in the triangulated category $\underline{\ACM} \, \X$.

		\begin{theorem}[Theorem~\ref{main  theorem}] \label{Thm C}		
		Let $\X$ be a GL projective space attached to an $\L$-graded Brieskorn-Pham singularity of a quadruple $(p_1,\dots,p_4)$. Let  $E_L$ be a $2$-Auslander bundle for some line bundle $L$. 	 		
		Then  $$T=\bigoplus_{0\leq\x\leq\vdelta}E_L(\x)[-\sigma(\x)]$$
		is a tilting object in $\underline{\ACM}\, \X$ with endomorphism algebra $$\underline{\End}(T)^{\rm op}\simeq \bigotimes_{1\le i \le 4} k\vec{\AA}_{p_i-1}(2). $$
	\end{theorem}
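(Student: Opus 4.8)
The plan is to verify the three defining properties of a tilting object in the triangulated category $\underline{\ACM}\,\X$: that $T$ has no self-extensions, i.e. $\underline{\Hom}(T,T[n])=0$ for all $n\neq 0$; that $\thick(T)=\underline{\ACM}\,\X$; and that $\underline{\End}(T)^{\op}\simeq\bigotimes_{1\le i\le 4}k\vec{\AA}_{p_i-1}(2)$. Since tensoring with the line bundle $L$ is an autoequivalence of $\coh\X$ preserving $\ACM\,\X$ together with its Frobenius structure, and carries $E_{\OO}(\x)$ to $E_{L}(\x)$, I would first reduce to $L=\OO$ and write $E:=E_{\OO}$ for the $2$-Auslander bundle. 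The index set $\{\x\mid 0\le\x\le\vdelta\}$ has exactly $\prod_{i=1}^{4}(p_i-1)$ elements, which is simultaneously the number of vertices of $\bigotimes_i k\vec{\AA}_{p_i-1}(2)$ and the Milnor number, equal to $\rk K_0(\underline{\ACM}\,\X)$; keeping this count in view guides the whole argument.

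The heart of the matter is one computation: to determine $\underline{\Hom}(E(\x),E(\y)[m])$ for all $0\le\x,\y\le\vdelta$ and all $m\in\Z$. I would extract it from the defining four-term exact sequence $0\to\OO(\w)\to E\to\bigoplus_{i}\OO(-\x_i)\xrightarrow{(X_i)}\OO\to 0$: since the line bundles are precisely the projective–injective objects of $\ACM\,\X$, the three line-bundle terms vanish in $\underline{\ACM}\,\X$, so $E$ becomes, in $\DDD^{\L}_{\mathrm{sg}}(R)\simeq\underline{\ACM}\,\X$, a short complex of line bundles, and feeding the sequence into the relevant $\RHom$ computations reduces every stable Hom to finite combinations of the cohomology groups $H^{i}(\X,\OO(\z))$, which are explicitly known for GL projective spaces; equivalently, via Theorem~\ref{Thm A} one may replace each $E\langle\x\rangle$ by the module $U^{\vell}=\rho(R/(X_i^{\ell_i}))$ and run Koszul-type computations in which the four variables separate. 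I expect the outcome to be that $\underline{\Hom}(E(\x),E(\y)[m])$ is one-dimensional exactly when $\y-\x=\sum_{i\in S}\x_i$ for some $S\subseteq\{1,2,3,4\}$ and $m=-|S|$, and vanishes otherwise. This is the step I expect to be the main obstacle, both because one must control the higher stable extensions in every degree and because the sheafification $\pi$ manufactures short exact sequences in $\coh\X$ that are \emph{not} conflations in $\ACM\,\X$ (so $E$ is genuinely a rank-four, non-projective bundle), which forces the reductions to be carried out with care.

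Granting the computation, the first and third properties follow at once. For the summand at $\x$ the shift $[-\sigma(\x)]$ is engineered to convert every nonzero contribution into cohomological degree zero: because $\y-\x=\sum_{i\in S}\x_i$ forces $\sigma(\y)-\sigma(\x)=|S|$, we have $\underline{\Hom}(E(\x)[-\sigma(\x)],E(\y)[-\sigma(\y)][n])=\underline{\Hom}(E(\x),E(\y)[\,\sigma(\x)-\sigma(\y)+n\,])\neq 0$ only when $n=0$, which is rigidity. In degree zero the same formula exhibits a basis of $\underline{\End}(T)$: the identity at each $\x$ (the summands are exceptional), a generating arrow $\x\to\x+\x_i$ for each admissible $i$ (the case $S=\{i\}$), the relation that $\x\to\x+2\x_i$ vanishes (no subset yields $2\x_i$), and the relation that the two routes from $\x$ to $\x+\x_i+\x_j$ coincide (a single class for $S=\{i,j\}$). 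This is precisely the quiver-with-relations presentation of $\bigotimes_i k\vec{\AA}_{p_i-1}(2)$, and the total dimension $\prod_i(2p_i-3)$ confirms that there are no further morphisms; since each factor is isomorphic to its opposite algebra, one obtains $\underline{\End}(T)^{\op}\simeq\bigotimes_i k\vec{\AA}_{p_i-1}(2)$.

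It remains to prove generation. As $B:=\bigotimes_i k\vec{\AA}_{p_i-1}(2)$ has finite global dimension, rigidity already yields a fully faithful functor $\DDD^{\bo}(\mod B)\to\underline{\ACM}\,\X$ with image $\thick(T)$, so it suffices to show $\thick(T)=\underline{\ACM}\,\X$. My preferred, hands-on route is to exhibit a known generating set inside $\thick(T)$: the Brieskorn–Pham singularity has an isolated singularity, so the image of the residue field $k=R/(X_1,\dots,X_4)$ generates $\DDD^{\L}_{\mathrm{sg}}(R)\simeq\underline{\ACM}\,\X$, and iterated cones of the multiplication maps among the $R/(X_i^{\ell_i})$ build $k$; using the defining sequences of the $2$-(co)extension bundles together with Theorem~\ref{Thm A} to pass between the extension bundles $E\langle\x\rangle$ and the twisted Auslander bundles $E(\x)$, one places these generators in $\thick(T)$. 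Alternatively, granting that $\underline{\ACM}\,\X$ admits a tilting object, the numerical route applies: $T$ is rigid with $\prod_i(p_i-1)=\rk K_0(\underline{\ACM}\,\X)$ pairwise non-isomorphic indecomposable summands whose classes are linearly independent (the Cartan matrix of $B$ is invertible), whence $T$ is tilting and $\underline{\ACM}\,\X\simeq\DDD^{\bo}(\mod B)$. Either route completes the proof.
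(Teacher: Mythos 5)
Your skeleton (a single stable-Hom formula feeding both rigidity and the computation of $\underline{\End}(T)$, with generation handled separately) coincides with the paper's, but the formula at the heart of your argument is false as stated, and the computation behind it is not actually supplied. You predict that $\underline{\Hom}(E(\x),E(\y)[m])$ is one-dimensional exactly when $\y-\x=\sum_{i\in S}\x_i$ and $m=-|S|$, so that nonzero maps point toward \emph{larger} twists with \emph{negative} shifts. The correct statement (Proposition \ref{hom.rig}) is the transpose: $\underline{\Hom}(E(\x),E(\y)[n])=k$ exactly when $0\le\x-\y\le\s$ and $n=\sigma(\x)-\sigma(\y)\ge 0$. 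Concretely, take $p_i\ge 3$, so that $\x_i\le\vdelta$. Then $\underline{\Hom}(E(\x_i),E[1])=k$ while $\underline{\Hom}(E,E(\x_i)[-1])=0$, the opposite of what your formula gives: indeed $E[1]$ is the $2$-coAuslander bundle $F$ (Proposition \ref{dual ext-def}), and by Remark \ref{comp. 2-coAus} the first space is nonzero because $\OO=\OO(\x_i-\x_i)$ is a summand of $\mathfrak{I}(E(\x_i))$, whereas $E(\x_i)[-1]=F(\x_i-\c)$ and one checks $\x_i-\c$ is not among the twists $\w+\x_j$, $-\x_j$ occurring in $\mathfrak{I}(E)$. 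Because $\bigotimes_i k\vec{\AA}_{p_i-1}(2)$ happens to be isomorphic to its own opposite, this transposition does not change the isomorphism class of the algebra you end up with — your rigidity and quiver-with-relations arguments would go through verbatim with the corrected formula — but it does show the key computation was never performed. Worse, the method you propose for it cannot work as described: morphism spaces in $\underline{\ACM}\,\X$ are Hom spaces in the Frobenius category $\ACM\,\X$ modulo maps factoring through line bundles, together with (co)syzygy bookkeeping for the shifts, and these are \emph{not} computed by ``finite combinations of $H^i(\X,\OO(\z))$''; that procedure computes Homs in $\DDD^{\bo}(\coh\X)$, where the line bundles generate everything. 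What is actually needed, and what the paper develops, is the suspension formula of Proposition \ref{susp.ext.} (in particular $E[2]=E(\c)$), the injective hull and projective cover theorems, and the detection results (Proposition \ref{Aus.to X} and Remark \ref{comp. 2-coAus}). Your Koszul/matrix-factorization alternative via Theorem \ref{correspondence of 2-Extension bundles} is plausible in principle, but it is not carried out, and the directional error suggests it was not even set up.

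The generation step has a comparable hole. Your main route needs two inputs: (i) the graded analogue of the fact that the residue field generates the singularity category of an isolated hypersurface singularity — and note that you need all degree twists $k(\x)$, $\x\in\L$, not $k$ alone — and (ii) the statement that the corresponding objects of $\underline{\ACM}\,\X$ lie in $\thick(T)$. Since $\pi(\rho(k))=E(\c)$ (Proposition \ref{sq.cor}), input (ii) is precisely the statement that every twisted $2$-Auslander bundle $E(\y)$, $\y\in\L$, lies in $\thick(T)$; this is Lemma \ref{gen. pre-con}, whose proof requires the distinguished triangles of Proposition \ref{triangle} and Corollary \ref{triangle2} (themselves resting on the exceptionality computations of Appendix \ref{excep C} and Grothendieck-group comparisons) plus a double induction. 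Saying ``iterated cones of the multiplication maps'' names the desired conclusion without producing any of these triangles, so this is where your argument asserts rather than proves. The fallback numerical route is not valid either: even granting the existence of a tilting object, there is no general theorem that a rigid object with $\rk K_0$ pairwise non-isomorphic indecomposable summands having linearly independent classes must generate, so nothing here can be ``granted''. For comparison, the paper's generation argument avoids the residue field entirely: it orders the summands of $T$ into an exceptional sequence, applies Bondal--Kapranov to split off the perpendicular category $\CC^{\perp}$, and kills $\CC^{\perp}$ by combining Lemma \ref{gen. pre-con} with Corollary \ref{consists of all 2-Aus}, which again rests on the projective-cover detection result Proposition \ref{Aus.to X}.
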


	Denote by $\mathcal{V}$ the set of isomorphism classes of $2$-extension bundles in $\ACM \X$. The Picard group $\L$ acts on $\mathcal{V}$ by line bundle twist
	$$\L\times\mathcal{V} \to \mathcal{V}, \  \ \ (\x, E) \mapsto E(\x).$$
	We describe when $2$-extension bundles are in the same $\L$-orbit in Proposition \ref{iso. exten. bundle}.
	Basing on this observation, we get the formula for the cardinality of orbits of $\mathcal{V}/\L$ via the group $\Z_2 \times\Z_2\times\Z_2$ action on certain set of $2$-extension bundles,  giving a positive answer to a higher version of the open question stated in \cite[Remark 9.4]{KLM}.

	\begin{theorem}
		[Theorem~\ref{exact number}] \label{Thm D}
		Let  $J=\{1\le i \le 4 \mid p_i \ \text{is even}\}$. Then we have 
		$$|\mathcal{V}/\L|=\frac{1}{8}\sum_{\substack{I \subset J \\ |I|  \text{ even}}}\prod_{  i\in  I^{c}} (p_i-1),$$
		where $I^c:=\{1,\dots,4\}\setminus I$.		
		\end{theorem}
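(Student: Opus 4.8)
The plan is to reduce the count to a combinatorial orbit problem on a finite box and then apply the Cauchy--Frobenius (Burnside) formula. By Theorem~\ref{Thm A}, every $2$-extension bundle becomes, after a suitable line bundle twist, one of the $E\langle\x\rangle=E_{\OO}\langle\x\rangle$ with $\x=\sum_{i=1}^{4}\lambda_i\x_i$ in normal form and $0\le\x\le\vdelta$, i.e. $0\le\lambda_i\le p_i-2$. Writing $B:=\{\x\mid 0\le\x\le\vdelta\}$ for this box, which has $|B|=\prod_{i=1}^{4}(p_i-1)$ elements, the twist action $\x\mapsto[E\langle\x\rangle]$ gives a surjection $B\twoheadrightarrow\mathcal{V}/\L$, and it remains to identify exactly which $\x$ index the same $\L$-orbit.

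For each $i$ let $r_i$ be the involution of $B$ reflecting the $i$-th coordinate, $\lambda_i\mapsto p_i-2-\lambda_i$ (and fixing the others). The point, extracted from Proposition~\ref{iso. exten. bundle}, is that $E\langle\x\rangle$ and $E\langle\x'\rangle$ lie in one $\L$-orbit precisely when $\x'=r_S(\x)$ for some $S\subset\{1,2,3,4\}$ of even cardinality, where $r_S:=\prod_{i\in S}r_i$, the compensating twist $L\mapsto L'$ being read off from $S$ and $\x$. These even reflections form a group $G=\{r_S\mid |S|\ \text{even}\}\cong\Z_2\times\Z_2\times\Z_2$ of order $8$ acting on $B$ (the even-weight subgroup of $\langle r_1,\dots,r_4\rangle\cong\Z_2^4$), and under the surjection above we obtain a bijection $\mathcal{V}/\L\xrightarrow{\sim}B/G$, so that $|\mathcal{V}/\L|=|B/G|$.

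Now I would apply Burnside's lemma to $G\curvearrowright B$:
\[
|\mathcal{V}/\L|=|B/G|=\frac{1}{8}\sum_{\substack{S\subset\{1,2,3,4\}\\ |S|\ \text{even}}}\bigl|\mathrm{Fix}_B(r_S)\bigr|.
\]
An element $\x=(\lambda_i)\in B$ is fixed by $r_S$ iff $\lambda_i=p_i-2-\lambda_i$ for every $i\in S$, that is $\lambda_i=(p_i-2)/2$, which is an integer in $\{0,\dots,p_i-2\}$ iff $p_i$ is even, while $\lambda_i$ is unconstrained for $i\notin S$. Hence $\mathrm{Fix}_B(r_S)=\emptyset$ unless $S\subset J$; for $S\subset J$ the fixed value of each coordinate in $S$ is unique and each coordinate outside $S$ ranges freely over its $p_i-1$ values, so $|\mathrm{Fix}_B(r_S)|=\prod_{i\in S^{c}}(p_i-1)$. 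Substituting and renaming $S$ as $I$ yields exactly the claimed formula. (Burnside's formula needs no faithfulness, so the degenerate coordinates with $p_i=2$, for which $r_i$ acts trivially, cause no trouble.)

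The main obstacle is the second step: verifying through Proposition~\ref{iso. exten. bundle} that the $\L$-orbit coincidences among the $E\langle\x\rangle$, $\x\in B$, are governed by exactly the even-reflection group $\Z_2^{3}$ and nothing more. Concretely, one must exhibit for each even $S$ an explicit twist $\y_S\in\L$ realizing an isomorphism $E\langle\x\rangle\cong E\langle r_S(\x)\rangle(\y_S)$, and conversely show that no odd reflection---in particular no single $r_i$---can be so realized, the obstruction living in the part of $\L$ not reached by admissible twists; comparing the defining four-term sequence of $E_L\langle\x\rangle$ with that of $E_{L'}\langle r_S(\x)\rangle$ is what simultaneously pins down $\y_S$ and forces the even-parity constraint. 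Once this identification of $G$ is in place, the orbit count above is routine.
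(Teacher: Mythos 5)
Your proposal is correct and follows essentially the same route as the paper: both reduce to the box $\{\x \mid 0\le\x\le\vdelta\}$ via degree shift, use Proposition~\ref{iso. exten. bundle} to identify the $\L$-orbit coincidences with the action of the even-reflection group $\Z_2\times\Z_2\times\Z_2$ (the proposition's cases (a), (b), (c) being exactly the reflections in subsets of cardinality $0$, $4$, $2$), and then apply Burnside's formula with the identical fixed-point count. Your parenthetical remark that faithfulness is not needed (relevant when some $p_i=2$) is a point the paper glosses over but handles implicitly in the same way.
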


	The paper is organized as follows.  In Section \ref{sec: Preliminaries}, we recall some basic concepts and properties on   Brieskorn-Pham singularities and  Geigle-Lenzing projective spaces. 
	In Section \ref{sec:2-extension bundles and $2$-coextension bundles}, we introduce the notions of $2$-extension bundles and $2$-coextension bundles,   and their properties are studied in Section \ref{sec:Basic properties of 2-extension bundles}.   	
	By establishing the correspondence between $2$-extension bundles and the class of Cohen-Macaulay modules, we prove Theorem \ref{Thm A} in Section \ref{sec:The correspondence of 2-Extension bundles}. 
	In Section \ref{sec:Tilting theory in the stable category of ACM bundles}, 	we study tilting objects in the stable category of arithmetically Cohen-Macaulay bundles and prove Theorem \ref{Thm C}.  
	In Section \ref{sec:The action of 2-extension bundles}, we investigate the action of the Picard group 	on $2$-extension bundles and prove Theorem \ref{Thm D}.

	\section{Preliminaries}
	\label{sec: Preliminaries}
	In this section, we recall some basic notions and facts appearing in this paper. We refer to  \cite{HIMO,KLM} on Geigle-Lenzing projective spaces. We restrict our treatment to the case of that of dimension $d=2$, that is, Geigle-Lenzing  projective planes, and where the base field $k$ is supposed to be algebraically closed.   
	
	\subsection{Quadrangle singularities}	
	 Fix a quadruple $(p_1,\dots, p_4)$ with integers $p_i\ge 2$ called \emph{weights}. We consider the $k$-algebra
	$$R:=k[X_1, \dots, X_4]/(\sum_{i=1}^{4}X_i^{p_i}).$$ Let $\L$ be the abelian group on generators $\x_1,\ldots,\x_4,\c$ subject to the relations $$p_1 \x_1= \dots=p_4 \x_4=:\c.$$ The element $\c$ is called the \emph{canonical element} of $\L$. Each element $\x$ in $\L$ can be uniquely written in a \emph{normal form} as 
	\begin{align}\label{equ:nor}
		\x=\sum_{i=1}^4\lambda_i\x_i+\lambda\c
	\end{align}
	with $ 0\le \lambda_i < p_i$ and $\lambda\in\Z$. We can regard $R$ as an $\L$-graded $k$-algebra by setting $\deg X_i:=\x_i$ for any $i$, hence $R=\bigoplus_{\x\in \L } R_{\x}$, where $R_{\x}$ denotes the homogeneous component of degree $\vec{x}$.  We call $R$ an $\L$-graded \emph{quadrangle singularity}, which is a special case of the graded Brieskorn-Pham singularity in \cite{FU,HM}.
	We also call the pair $(R,\L)$ Geigle-Lenzing hypersurface of dimension three in \cite{HIMO}. 
	
	Let $\L_+$ be the submonoid of $\L$ generated by $\x_1,\ldots,\x_4,\c$. Then we equip $\L$ with the structure of a partially ordered set: $\x\le \y$ if and only if $\y-\x\in\L_+$. Each element $\x$ of $\L$ satisfies exactly one of the following two possibilities
	\begin{align}\label{two possibilities of x}
		\x\geq 0 \text{\quad or\quad}  \x\leq 2\c+\vec{\omega},
	\end{align}
	where  $\w=\c-\sum_{i=1}^4 \x_i$ is called the \emph{dualizing element} of $\L$. We have $R_{\x} \neq 0$ if and only if $\x \geq 0$ if and only if $\ell \geq 0$ in its normal form in (\ref{equ:nor}).

	Denote by $\CM^{\L} R$ the category of $\L$-graded (maximal) Cohen-Macaulay (CM) $R$-modules, which is Frobenius, and by $\underline{\CM}^{\L}R$ its stable category. By results of Buchweitz \cite{Bu} and Orlov \cite{Orlov:2009}, there exists a triangle equivalence $\DDD^{\L}_{\rm sg}(R)\simeq\underline{\CM}^{\L}R$, where 
	$\DDD^{\L}_{\rm sg}(R):=\DDD^{\bo}(\mod^{\L}R)/\KKK^{\bo}(\proj^{\L}R)$
	is the \emph{singularity category} of $R$. 	
	The following presents a basic property in Cohen-Macaulay representation theory. Here, we denote by $D$ the $k$-dual, that is  $D(-):=\Hom_{k}(-,k)$.
	\begin{theorem}[\cite{HIMO}] (Auslander-Reiten-Serre duality) There exists a functorial isomorphism for any $X,Y\in \underline{\CM}^{\L}R$:
	\begin{equation}\label{Auslander-Reiten-Serre duality CM}
		\Hom_{\underline{\CM}^{\L}R}(X,Y)\simeq D\Hom_{\underline{\CM}^{\L}R}(Y,X(\w)[2]).
	\end{equation}	
	\end{theorem}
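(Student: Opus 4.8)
The statement is the existence of a Serre functor on the triangulated category $\underline{\CM}^{\L}R\simeq\DDD^{\L}_{\rm sg}(R)$, namely $\mathbb{S}=(\w)[2]$; the asserted isomorphism is then the defining adjunction $\Hom(X,Y)\cong D\Hom(Y,\mathbb{S}X)$ of such a functor. The plan is to produce this pairing from graded local duality for the Gorenstein ring $R$. First I would record the two structural inputs. Writing $S=k[X_1,\dots,X_4]$ with $\deg X_i=\x_i$, the ring $R=S/(\sum_i X_i^{p_i})$ is an $\L$-graded hypersurface in $\AA^4$, hence Gorenstein of Krull dimension $n=3$; and since $\omega_S\cong S(-\sum_{i=1}^4\x_i)$ while $\deg(\sum_i X_i^{p_i})=\c$, adjunction for the hypersurface gives the graded canonical module $\omega_R\cong R(\c-\sum_{i=1}^4\x_i)=R(\w)$. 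This already pins down both the twist $(\w)$ and, via $n-1=2$, the shift $[2]$ appearing in the theorem.

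Second, I would check that $R$ has an isolated singularity. The Jacobian ideal is generated by the partials $p_iX_i^{p_i-1}$, which (using $p_i\ge 2$ and the hypotheses on $k$) cut out only the irrelevant ideal, so $\operatorname{Spec}R$ is regular away from its vertex, equivalently $\X$ is a smooth Deligne--Mumford stack. This is exactly the condition guaranteeing that $\underline{\CM}^{\L}R$ is $\Hom$-finite over $k$, a prerequisite both for the existence of a Serre functor and for the $k$-duals in the statement to make sense.

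The heart of the argument is graded local duality. For the $\L$-graded Gorenstein ring $R$ one has a functorial, $\L$-graded isomorphism $D H^i_{\mathfrak{m}}(M)\cong\Ext^{n-i}_R(M,\omega_R)=\Ext^{n-i}_R(M,R(\w))$, where $\mathfrak m$ is the irrelevant ideal and $H^i_{\mathfrak m}$ denotes local cohomology. I would restrict this to $M,N\in\CM^{\L}R$ and combine it with the Frobenius structure of $\CM^{\L}R$, whose projective--injective objects are the graded free modules (sheafifying to the line bundles $\OO(\x)$). Using the isolated-singularity vanishing to ensure the relevant local cohomology modules have finite length, one identifies the stable $\Hom$ spaces with the finite-dimensional pieces of these $\Ext$ groups: the suspension $[1]$ is the cosyzygy, the twist by $\omega_R=R(\w)$ supplies the $(\w)$, and passing from the degree-$n$ duality on $\DDD^{\bo}(\mod^{\L}R)$ to the quotient $\DDD^{\L}_{\rm sg}(R)=\DDD^{\bo}(\mod^{\L}R)/\KKK^{\bo}(\proj^{\L}R)$ drops exactly one cohomological degree, producing the shift $[n-1]=[2]$. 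Assembling these identifications yields the nondegenerate functorial pairing $\Hom_{\underline{\CM}^{\L}R}(X,Y)\cong D\Hom_{\underline{\CM}^{\L}R}(Y,X(\w)[2])$.

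As an independent cross-check, and an alternative route, one may transport the duality through the equivalence $\underline{\CM}^{\L}R\simeq\underline{\ACM}\,\X$ and use Serre duality on the smooth proper stack $\X$ of dimension $2$: since $\omega_\X\cong\OO(\w)$ (this is why $\w$ is the dualizing element), one has $\Ext^2_\X(\mathcal F,\mathcal G)\cong D\Hom_\X(\mathcal G,\mathcal F(\w))$, and the defining property of ACM bundles (no intermediate cohomology, i.e.\ $H^1(\mathcal F(\x))=0$ for all $\x$) is what allows the degree-$2$ stable $\Hom$ to be computed from this Serre pairing without extra correction terms. The main obstacle in either route is the same bookkeeping: first securing $\Hom$-finiteness through the isolated-singularity property, and then tracking the precise twist $(\w)$ and the degree shift $[2]$ through local (respectively Serre) duality without an off-by-one error, while verifying that the resulting pairing is natural in both variables. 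This naturality is the genuine technical content of Auslander--Reiten--Serre duality, the structural inputs above being comparatively routine.
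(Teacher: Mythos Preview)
The paper does not prove this statement at all: it is quoted verbatim from \cite{HIMO} as background, with no accompanying argument. So there is no ``paper's own proof'' to compare against.

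Your sketch is a correct outline of the standard proof and is essentially the argument given in \cite{HIMO} (and earlier in the Auslander--Reiten and Iyama--Takahashi literature). The three structural inputs you isolate---$R$ is a graded Gorenstein hypersurface of Krull dimension $3$ with $\omega_R\simeq R(\w)$; $R$ has an isolated singularity so that $\underline{\CM}^{\L}R$ is $\Hom$-finite; and graded local duality then yields AR-duality in the stable category with Serre functor $(\omega_R)[\dim R-1]=(\w)[2]$---are exactly the ingredients used in the reference. One small caveat: your Jacobian check ``$p_iX_i^{p_i-1}$ cut out only the irrelevant ideal'' silently assumes $\operatorname{char}k\nmid p_i$ (or more precisely that the $p_i$ are invertible in $k$); the cited result needs the isolated-singularity hypothesis, so if you want a self-contained proof you should either add that assumption or argue differently in positive characteristic. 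Apart from that, the proposal is sound and matches the approach of the source being cited.
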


	\subsection{Geigle-Lenzing projective spaces}	
	The category of \emph{coherent sheaves} on
	\emph{Geigle-Lenzing} (\emph{GL}) \emph{projective space} $\X$ is defined as the quotient category	
	\[ \coh \X :=\mod^{\L}R/\mod_0^{\L}R\]
	of finitely generated $\L$-graded $R$-modules $\mod^{\L}R$ by its Serre subcategory $\mod^{\L}_0R$ consisting of finite dimensional $R$-modules. 
	
	Denote by $\pi:\mod^{\L}R\to\coh\X$ the natural functor, also called \emph{sheafification}. The object $\OO:=\pi(R)$ is called the \emph{structure sheaf} of $\X$. We recall a list of fundamental properties of  $\coh\X$ and its derived category $\DDD^{\bo}(\coh\X)$.
	
	\begin{theorem}[\cite{HIMO}] 
		\label{basic properties} The category $\coh\X$ has the following properties:
		\begin{itemize}
			\item[(a)] $\coh\X$ is a Noetherian abelian category of global dimension $2$.
			\item[(b)] $\Hom_{\DDD^{\bo}(\coh\X)}(X,Y)$ is
			a finite dimensional $k$-vector space for all $X,Y\in\DDD^{\bo}(\coh\X)$. In particular, $\coh\X$ is Ext-finite.
			\item[(c)] For any $\x,\y \in \L$ and $i\in\Z$,  
			\begin{equation}\label{extension spaces between line bundles}
				\Ext_{\X}^i(\OO(\x),\OO(\y))=\left\{
				\begin{array}{ll}
					R_{\y-\x}&\mbox{if $i=0$,}\\
					D(R_{\x-\y+\w})&\mbox{if $i=2$,}\\
					0&\mbox{otherwise.}
				\end{array}\right.
			\end{equation}
			\item[(d)] (Auslander-Reiten-Serre duality) There exists a functorial isomorphism for any $X, Y\in\DDD^{\bo}(\coh\X) \mathsf{:}$
			\begin{equation}\label{Auslander-Reiten-Serre duality}
				\Hom_{\DDD^{\bo}(\coh\X)}(X,Y)\simeq D\Hom_{\DDD^{\bo}(\coh\X)}(Y,X(\w)[2]).
			\end{equation}
			In other words, $\DDD^{\bo}(\coh\X)$ has a Serre functor $(\w)[2]$.	
		\end{itemize}
	\end{theorem}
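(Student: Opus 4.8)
The plan is to transport everything to the algebraic side via the identification $\coh\X=\qgr R:=\mod^{\L}R/\mod^{\L}_0R$ and to exploit that $R$ is a graded complete intersection. Part (a) splits into a formal part and a cohomological part. The formal part is immediate: $\mod^{\L}R$ is a Noetherian abelian category (as $R$ is a Noetherian $\L$-graded $k$-algebra and $\L$ is finitely generated), $\mod^{\L}_0R$ is a Serre subcategory, so by Gabriel's localization theory the quotient $\coh\X$ is abelian with exact quotient functor $\pi$, and Noetherianity passes to Serre quotients; the statement that the global dimension equals $2$ I would deduce at the end from Serre duality. The engine for the cohomological statements is the $\L$-graded Serre--Grothendieck correspondence: writing $\mathfrak m=R_{>0}$ for the irrelevant ideal and $\Gamma_{*}(\pi M)=\bigoplus_{\z}\Hom_{\X}(\OO,\pi M(\z))$, one has an exact sequence $0\to H^0_{\mathfrak m}(M)\to M\to \Gamma_{*}(\pi M)\to H^1_{\mathfrak m}(M)\to 0$ together with isomorphisms $\Ext^i_{\X}(\OO,\pi M(\z))\cong H^{i+1}_{\mathfrak m}(M)_{\z}$ for all $i\ge 1$. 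Setting this correspondence up correctly in the $\L$-graded (rather than $\Z$-graded) setting, with the right irrelevant ideal and a verification that $R$ satisfies the necessary $\chi$-type condition, is the step I expect to be the main obstacle.

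With the correspondence available, I would feed in the commutative algebra of $R=k[X_1,\dots,X_4]/(\sum_i X_i^{p_i})$. As a graded hypersurface it is a complete intersection, hence Cohen--Macaulay of Krull dimension $3$, so $H^i_{\mathfrak m}(R)=0$ for $i\le 2$ while $H^3_{\mathfrak m}(R)\ne 0$. Applying the correspondence to $M=R$, the vanishing $H^0_{\mathfrak m}(R)=H^1_{\mathfrak m}(R)=0$ yields $\Hom_{\X}(\OO,\OO(\z))=R_{\z}$, and twisting by $\OO(-\x)$ (an autoequivalence, since $\L=\operatorname{Pic}\X$) gives the $i=0$ case of (c), namely $\Hom_{\X}(\OO(\x),\OO(\y))=R_{\y-\x}$; the vanishing $H^2_{\mathfrak m}(R)=0$ gives $\Ext^1_{\X}(\OO(\x),\OO(\y))=0$. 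Each $R_{\z}$ is finite-dimensional and the modules $H^i_{\mathfrak m}(R)$ are cofinite, so all $\Ext$-spaces between twists of $\OO$ are finite-dimensional; since the line bundles generate $\coh\X$, this propagates to all objects by reducing to line-bundle resolutions, proving (b).

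For the dualizing data I would compute $\omega_R$ explicitly. Writing $R=S/(f)$ with $S=k[X_1,\dots,X_4]$, $\omega_S=S(-\sum_i\x_i)$ and $\deg f=\c$, the hypersurface adjunction $\omega_R\cong\Ext^1_S(R,\omega_S)\cong\omega_S(\c)\otimes_S R$ gives $\omega_R\cong R(\c-\sum_i\x_i)=R(\w)$, so $R$ is Gorenstein with dualizing module $R(\w)$. Graded local duality then identifies $H^3_{\mathfrak m}(R)_{\z}\cong D(R_{\w-\z})$, and substituting into the correspondence produces the $i=2$ case of (c): after twisting, $\Ext^2_{\X}(\OO(\x),\OO(\y))\cong D(R_{\x-\y+\w})$. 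The Serre duality (d) I would first verify on the generating line bundles, where it is precisely the comparison of the $i=0$ and $i=2$ formulas just obtained --- for $\x,\y\in\L$ and $j\in\{0,2\}$ one checks $\Ext^j_{\X}(\OO(\x),\OO(\y))\cong D\,\Ext^{2-j}_{\X}(\OO(\y),\OO(\x)(\w))$ using $DD=\operatorname{id}$ --- and then extend it to all of $\DDD^{\bo}(\coh\X)$ by reducing to line bundles, exhibiting $(\w)[2]$ as a Serre functor. Finally, (d) closes the loop on the remaining claims: for any $X,Y\in\coh\X$ and $i\ge 3$, Serre duality gives $\Ext^i_{\X}(X,Y)\cong D\,\Ext^{2-i}_{\X}(Y,X(\w))=0$, since $\Ext$-groups in negative degrees between objects of the heart $\coh\X$ vanish, so $\coh\X$ has global dimension $\le 2$, completing (a) and the ``otherwise'' case of (c); the lower bound is witnessed by $\Ext^2_{\X}(\OO,\OO(\w))\cong D(R_0)=k\ne 0$, so the global dimension is exactly $2$.
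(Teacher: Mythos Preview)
The paper does not prove this theorem at all: it is stated with the citation \cite{HIMO} and used as a black box, so there is no ``paper's own proof'' to compare against. Your sketch is essentially the standard route taken in \cite{HIMO} itself---transport to $\qgr R$, use the $\L$-graded local-cohomology/Serre correspondence, read off (c) from the Cohen--Macaulayness of the hypersurface $R$ and graded local duality with $\omega_R\cong R(\w)$, then bootstrap Serre duality from line bundles to $\DDD^{\bo}(\coh\X)$ and deduce the global dimension. The outline is sound; the one place that genuinely needs care, as you already flag, is setting up the $\L$-graded Serre--Grothendieck correspondence (the correct irrelevant ideal, the $\chi$-condition, and cofiniteness of $H^i_{\mathfrak m}$), together with the fact that line bundles generate $\DDD^{\bo}(\coh\X)$ so that the duality extends functorially---both are handled in \cite{HIMO}, and your argument would in effect be reproducing that material.
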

	
	Denote by $\vect \X$ the full subcategory of $\coh \X$ formed by all vector bundles 
	 and by $\lb\X$ its full subcategory  of finite direct sums of 
	 line bundles $\OO(\x)$ with $\x\in \L$. 
	The following lemma gives an equivalent description of vector bundles.

	\begin{lemma}[\cite{HIMO}] \label{second syzygy}
			Any object in $\vect\X$ is isomorphic to $\pi(X)$ for some
			$X\in\mod^{\L}R$ such that there exists an exact sequence
			$0\to X\to P^0\to P^1$ in $\mod^{\L}R$ with $P^0,P^1\in\proj^{\L}R$.		
	\end{lemma}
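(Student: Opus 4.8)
\emph{Sketch of the intended approach.} The plan is to realize the given bundle $\mathcal{E}\in\vect\X$ as the kernel of a map between finite sums of line bundles inside $\coh\X$, and then to lift this picture along $\pi$ to $\mod^{\L}R$. What makes this work is that a vector bundle is locally free, hence reflexive, that finite sums of line bundles are precisely the sheafifications of objects of $\proj^{\L}R$, and that the Serre quotient functor $\pi$ is exact. So the whole problem reduces to producing, at the level of sheaves, a two-step co-resolution $0\to\mathcal{E}\to\mathcal{P}^0\to\mathcal{P}^1$ by finite sums of line bundles.

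\emph{Embedding a bundle into line bundles.} Since $\coh\X=\mod^{\L}R/\mod_0^{\L}R$ and every finitely generated $\L$-graded module is a quotient of a finite free module, every coherent sheaf is a quotient of a finite sum of line bundles; I would apply this to the dual $\mathcal{E}^{\vee}:=\Hhom{\X}{\mathcal{E}}{\OO}$, choosing a surjection $\mathcal{L}\twoheadrightarrow\mathcal{E}^{\vee}$ with $\mathcal{L}=\bigoplus_j\OO(-\x_j)$ and kernel $\mathcal{K}$. As the kernel of a surjection of locally free sheaves, $\mathcal{K}$ is again a vector bundle, so $0\to\mathcal{K}\to\mathcal{L}\to\mathcal{E}^{\vee}\to0$ is a sequence of vector bundles. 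Applying $\Hhom{\X}{-}{\OO}$, using the vanishing $\mathcal{E}xt^1_\X(\mathcal{K},\OO)=0$ for locally free sheaves together with reflexivity $\mathcal{E}^{\vee\vee}\cong\mathcal{E}$, I would obtain a short exact sequence $0\to\mathcal{E}\to\mathcal{P}^0\to\mathcal{C}\to0$ with $\mathcal{P}^0=\bigoplus_j\OO(\x_j)$ and $\mathcal{C}=\mathcal{K}^{\vee}$ again a vector bundle.

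\emph{Splicing and lifting to modules.} Running the same construction on $\mathcal{C}$ gives an embedding $\mathcal{C}\hookrightarrow\mathcal{P}^1$ into a finite sum of line bundles, and composing $\mathcal{P}^0\twoheadrightarrow\mathcal{C}\hookrightarrow\mathcal{P}^1$ yields an exact sequence $0\to\mathcal{E}\to\mathcal{P}^0\to\mathcal{P}^1$ in $\coh\X$ with $\mathcal{P}^0,\mathcal{P}^1$ sums of line bundles. To descend to modules, write $\mathcal{P}^0=\pi(P^0)$, $\mathcal{P}^1=\pi(P^1)$ with $P^0,P^1\in\proj^{\L}R$. By the $\Ext$-computation \eqref{extension spaces between line bundles} one has $\Hom_\X(\OO(\x),\OO(\y))=R_{\y-\x}=\Hom_{\mod^{\L}R}(R(\x),R(\y))$, so $\pi$ induces an isomorphism $\Hom_{\mod^{\L}R}(P^0,P^1)\xrightarrow{\sim}\Hom_\X(\mathcal{P}^0,\mathcal{P}^1)$. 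Hence the sheaf map $\mathcal{P}^0\to\mathcal{P}^1$ equals $\pi(f)$ for a module map $f\colon P^0\to P^1$; setting $X:=\ker f$ gives an exact sequence $0\to X\to P^0\to P^1$ in $\mod^{\L}R$ with $P^0,P^1$ projective, and applying the exact functor $\pi$ identifies $\pi(X)$ with $\ker(\mathcal{P}^0\to\mathcal{P}^1)=\mathcal{E}$, as required.

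\emph{Main obstacle.} The step demanding genuine care is the local-freeness bookkeeping of the embedding step: because $\X$ is a Geigle-Lenzing space rather than an honest smooth variety, I must verify that kernels and duals of maps of vector bundles stay locally free and that $\mathcal{E}xt^1_\X(-,\OO)$ vanishes on $\vect\X$, relying on the fact that $\coh\X$ has global dimension $2$ and is locally modeled on modules over regular rings. An alternative, more module-theoretic route would take $X:=\bigoplus_{\x\in\L}\Hom_\X(\OO,\mathcal{E}(\x))$, the module of sections, for which $\pi(X)\cong\mathcal{E}$ and which is saturated, so $\operatorname{depth}X\ge2$, whence over the Cohen--Macaulay ring $R$ it is a second syzygy; there the delicate point is instead the finite generation of $X$, which is exactly where the hypothesis that $\mathcal{E}$ be a vector bundle is used.
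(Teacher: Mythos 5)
The paper offers no proof of this lemma at all: it is quoted from \cite{HIMO}, so the only comparison available is with the argument in that reference, which is commutative-algebraic and runs essentially along the lines of what you call your ``alternative, more module-theoretic route.'' Your primary, sheaf-level route is genuinely different and is correct in outline: present $\mathcal{E}^{\vee}$ by a finite sum of line bundles, dualize to embed $\mathcal{E}$ with locally free cokernel, iterate once, then lift $\mathcal{P}^0\to\mathcal{P}^1$ to a map $f$ of graded projectives and set $X=\ker f$. The lifting step is unproblematic: $\pi$ is exact and restricts to an equivalence $\proj^{\L}R\xrightarrow{\sim}\lb\X$ (the paper records this right after the lemma), so $\pi(\ker f)=\ker\pi(f)\cong\mathcal{E}$. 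What your route buys is that it never leaves $\coh\X$; what it costs is that all the content is pushed into the foundational facts you yourself flag --- that $\vect\X$ is closed under duals and under kernels of epimorphisms, that bundles are reflexive, and that $\mathcal{E}xt^1_{\X}(-,\OO)$ vanishes on bundles. These do hold, because $R$ has an isolated singularity, so the relevant graded local rings away from $R_+$ are regular of dimension at most $2$; but they are exactly the local theory that \cite{HIMO} develops alongside this lemma, so your proof is complete only relative to that theory. One small slip: exactness of $0\to\mathcal{E}\to\mathcal{P}^0\to\mathcal{K}^{\vee}\to 0$ requires $\mathcal{E}xt^1_{\X}(\mathcal{E}^{\vee},\OO)=0$, not $\mathcal{E}xt^1_{\X}(\mathcal{K},\OO)=0$; both vanish, so this is harmless.

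The genuine gap is in the alternative route, and it sits at the decisive point. The asserted implication ``saturated, so $\operatorname{depth}X\ge 2$, whence over the Cohen--Macaulay ring $R$ it is a second syzygy'' is false. Take $S=k[x,y,z]$ and $I=(x,y)$: this module is finitely generated, saturated, of depth $2$, yet it is \emph{not} a second syzygy --- if $0\to I\to F^0\to F^1$ were exact with $F^i$ free, then localizing at the prime $(x,y)$, whose local ring is regular of dimension $2$ and hence of global dimension $2$, would force $I_{(x,y)}$ to be free, but $I_{(x,y)}$ is the maximal ideal of that local ring. Geometrically, $\pi(I)$ is the ideal sheaf of a point of $\P^2$, which is not a bundle. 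This example also contradicts your closing claim that the bundle hypothesis enters ``exactly'' through finite generation of the section module: $(x,y)$ is finitely generated and saturated. The hypothesis is needed a second time, decisively, to know that the section module is locally free on the punctured spectrum; only then do the kernel and cokernel of the natural map $X\to X^{**}$ have finite length, so that they vanish by $H^0_{\mathfrak{m}}(X)=0=H^1_{\mathfrak{m}}(X)$ together with torsion-freeness of $X^{**}$, and dualizing a free presentation of $X^{*}$ then exhibits $X\cong X^{**}$ as a second syzygy. As written, the alternative route begs the question at the one step where being a vector bundle actually does the work.
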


	The natural functor  $\pi:\mod^{\L}R\to\coh\X$ restricts to a fully faithful 
	functor $\CM^{\L}R\to\vect\X$ and further induces an equivalence $\CM^{\L}R \xrightarrow{\sim} \pi(\CM^{\L} R)$.  In the context of projective geometry (e.g. \cite{CH,CMP}), the objects in $\pi(\CM^{\L} R)$ are called \emph{arithmetically Cohen-Macaulay} (\emph{ACM}) \emph{bundles}. We denote by $\ACM \X:=\pi(\CM^{\L} R)$. The following gives a description of $\ACM \X$ in terms of vector bundles by \cite{HIMO}.
		\begin{align}\label{ACM bundle eq.}
				\ACM \X&=
			\{X\in\vect\X\mid \Ext_{\X}^1(\OO(\x),X)=0 \text{ for any} \ \x\in\L\}\\
				&=\{X\in\vect\X\mid \Ext_{\X}^1(X,\OO(\x))=0\text{ for any} \ \x\in\L\}\nonumber. 
			\end{align}			
	Notice that all line bundles serve as the indecomposable projective-injective objects in $\ACM \X$ by (\ref{ACM bundle eq.}). Moreover, $\pi$ restricts to an equivalence $\proj^{\L}R\xrightarrow{\sim}\lb\X$ with the $\L$-graded  indecomposable projective module $R(\x)$ corresponding to the line bundle $\OO(\x)$. Therefore, this turns $\ACM \X$ into a Frobenius category and thus its stable category $\underline{\ACM} \, \X$ is triangle equivalent to $\underline{\CM}^{\L}R$.

	\subsection{Tilting bundles and Grothendieck groups} 
	Let $\mathcal{T}$ be a triangulated category with a suspension functor $[1]$.
	
	\begin{definition} We say that an object $T \in \mathcal{T}$ is \emph{tilting} if
		\begin{itemize}
			\item[(a)] $T$ is \emph{rigid}, that is $\Hom_{\mathcal{T}}(T,T[i])=0$ for all $i \neq 0$.
			\item[(b)] $\thick T=\mathcal{T}$, where $\thick T$ denotes by the smallest thick subcategory of $\mathcal{T}$ containing $T$.		
		\end{itemize}
	\end{definition}

		Let $K_0(\coh \X)$ be the Grothendieck group of $\coh \X$. We denote by $[X]$ the corresponding element in $K_0(\coh \X)$. It is shown in \cite{HIMO} that $K_0(\coh \X)$ is a free abelian group  with a  basis $[\OO(\x)]$ for $0 \le \x \le 2\c$. The \emph{rank functor} rank: $K_0(\coh \X) \to \Z$ is the additive function uniquely determined by $${\rm rank}\,([\OO(\x)])=1\ \ \text{for any}\ \ \x\in \L.$$ 
		Recall that there are no nonzero morphisms from torsion sheaves (that is, rank zero objects in $\coh \X$) 
	to vector bundles by a general result of \cite[Proposition 3.28]{HIMO}.

	\begin{proposition} \label{k0}
		 Let $\X$ be a GL projective space with  weight quadruple.
		Assume that $E$ and $F$ be two exceptional objects in $\vect \X$. If $[E]=[F]$, then   $E\simeq F$.		
	\end{proposition}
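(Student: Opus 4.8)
The plan is to run everything through the Euler form. Since $\coh\X$ has global dimension $2$ and is $\Ext$-finite (Theorem~\ref{basic properties}), the assignment $\langle X,Y\rangle:=\sum_{i=0}^{2}(-1)^i\dim_k\Ext^i_{\X}(X,Y)$ is additive on short exact sequences in each variable, and hence descends to a well-defined bilinear form on $K_0(\coh\X)$. For an exceptional object one has $\langle E,E\rangle=\dim\Hom(E,E)-\dim\Ext^1(E,E)+\dim\Ext^2(E,E)=1$, and likewise for $F$. Since $[E]=[F]$, bilinearity immediately gives $\langle E,F\rangle=\langle F,E\rangle=\langle E,E\rangle=1$.

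Next I would produce a nonzero morphism between $E$ and $F$. The only term in $\langle E,F\rangle=\dim\Hom(E,F)-\dim\Ext^1(E,F)+\dim\Ext^2(E,F)=1$ that could obstruct $\Hom(E,F)\neq 0$ is the $\Ext^2$ term, which by the Serre duality of Theorem~\ref{basic properties}(d) equals $D\Hom(F,E(\w))$. Because the dualizing element $\w$ has negative degree, $E(\w)$ sits strictly below $E$, hence below $F$, in slope, so I expect a slope or (semi)stability estimate for exceptional bundles to force $\Hom(F,E(\w))=0$, that is $\Ext^2(E,F)=0$; then $\dim\Hom(E,F)=1+\dim\Ext^1(E,F)\ge 1$. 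Thus there is a nonzero $\phi\colon E\to F$, and symmetrically a nonzero $\psi\colon F\to E$.

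Finally I would upgrade $\phi$ to an isomorphism. Writing $0\to K\to E\to I\to 0$ and $0\to I\to F\to C\to 0$ for the kernel--image--cokernel factorization and comparing classes gives $[K]=[C]$, so in particular $\rk K=\rk C$. As $K$ is a subsheaf of the bundle $E$ it is torsion-free; combined with the rigidity and brick property ($\End=k$) of the exceptional objects and the absence of nonzero maps from torsion sheaves to bundles (\cite[Proposition~3.28]{HIMO}), I would force $K=C=0$, so $\phi$ is an isomorphism and $E\simeq F$. Alternatively, with both $\Hom(E,F)$ and $\Hom(F,E)$ nonzero, a composite $\psi\phi\in\End(E)=k$; if it is nonzero then $\phi$ is a split monomorphism into the indecomposable $F$ and hence an isomorphism. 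The main obstacle is the middle step: Serre duality relates $\Ext^2(E,F)$ to a $\w$-twisted Hom rather than to $\Hom(F,E)$ directly, so some positivity input---(semi)stability of exceptional bundles, or an equivalent slope estimate on $K_0(\coh\X)$---is needed to guarantee a nonzero morphism; the upgrade to an isomorphism is then comparatively routine bookkeeping with ranks and the torsion-free property.
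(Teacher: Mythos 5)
Your reduction to the Euler form correctly isolates where the difficulty lies, but the proposal does not close the gap it creates, and the patch you suggest cannot work in the stated generality. The crux is your middle step: to deduce $\Hom(E,F)\neq 0$ from $\langle E,F\rangle=1$ you must first prove $\Ext^{2}(E,F)\simeq D\Hom(F,E(\w))=0$, and for this you appeal to a slope or (semi)stability estimate for exceptional bundles. No such theory is available here: the paper introduces no slope and no stability notion for $\coh\X$, and the statement that exceptional bundles are (semi)stable is a substantial theorem already on $\P^2$, with no analogue proved or cited for GL projective spaces. Worse, the heuristic behind your patch is directionally wrong in general: normalizing degree so that $\deg\c>0$ and $\deg\x_i=\deg\c/p_i$, one gets $\deg\w=\deg\c\,\bigl(1-\sum_{i=1}^{4}1/p_i\bigr)$, which is \emph{positive} as soon as the weights are large (for instance all $p_i\ge 5$). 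In that case $E(\w)$ sits \emph{above} $E$ and $F$ in slope, and no stability estimate could force $\Hom(F,E(\w))=0$. So the key vanishing is a genuinely missing idea, not a verification one can defer.

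The endgame is also not the ``routine bookkeeping'' you claim. Even with nonzero $\phi\colon E\to F$ and $\psi\colon F\to E$, the composite $\psi\phi\in\End(E)=k$ may be zero, so the split-mono alternative is not automatic; and in the kernel--image--cokernel route, $[K]=[C]$ gives only $\rk K=\rk C$, from which $K=C=0$ does not follow: $C$ is a quotient and may be a torsion sheaf, while the vanishing criterion ``$[X]=0\Rightarrow X=0$'' is available (via the rank function and \cite[Proposition 3.28]{HIMO}) only for $X\in\vect\X$, not for arbitrary coherent sheaves; and when $\rk K=\rk C\ge 1$ nothing in your argument produces a contradiction. For comparison, the paper does not argue from scratch at all: its proof consists of checking that the one ingredient of the weighted-projective-line argument of \cite[Proposition 4.4.1]{Me} that needs re-verification on $\X$ --- namely that a vector bundle with vanishing class is zero --- holds, and then citing that argument. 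Your proposal is therefore not an alternative correct route but an incomplete one; all of the real content sits in the $\Ext^{2}$-vanishing (or a substitute for it), which remains unproved.
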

	\begin{proof} Notice that for any $X \in \vect \X$, $[X]=0$ implies that  $\rk X=0$ and thus $X=0$ by a general result of \cite[Proposition 3.28]{HIMO}.
		Hence the argument in \cite[Proposition 4.4.1]{Me} still works in the vector bundle on GL projective space.	
	\end{proof}

	\subsection{Yoneda $2$-extension groups} Let $\mathcal{A}$ be an abelian category. For two objects $X,Y\in\mathcal{A}$,  let $\Ext_{\mathcal{A}}^{2}(X,Y)$ denote the abelian group of \emph{$2$-extensions} in the sense of Yoneda, that is, equivalence classes of exact sequences	
	$$\xi: \quad  0\to Y \to B \to A \to X \to 0,$$
	where such sequences $\xi$ and $\xi'$ are \emph{equivalent} if there exists an exact sequence $\xi''$ fitting into the following commutative diagram.
	 	\[
		\begin{tikzcd} 
		\xi:\,\,\, \quad	0\rar&	Y \rar\ar[equals]{d}& B \dar \rar& A \dar \rar& X\ar[equals]{d} \rar &0 \\
		\xi'': \quad	0\rar&	Y\ar[equals]{d}\rar&B'' \rar & A'' \rar & X \ar[equals]{d} \rar&0 \\
		\xi':\, \quad	0\rar&	Y\rar&B' \rar\ar{u} & A'\ar{u} \rar & X  \rar &0 
		\end{tikzcd}
		\]
	Note that $\xi$ is split if and only if $\xi$ is equivalent to $0\to Y \xrightarrow{1_{Y}} Y\xrightarrow{0}  X \xrightarrow{1_{X}} X \to 0$. 

	\begin{lemma} \label{2-Yoneda}Let $\xi: 0\to Y \xrightarrow{f} B \xrightarrow{g} A \xrightarrow{h} X \to 0$ be an exact sequence in $\mathcal{A}$, and $B\xrightarrow{p}K \xrightarrow{i} A$ be the canonical factorization of $g$. Then the sequence $\xi$ is split if and only if there exists an object $Z$ in $\mathcal{A}$ fitting into the commutative diagram 	
		\[
		\begin{tikzcd} 
			& 0 \dar& 0\dar & 0\dar &\\
			0\rar &	Y\rar{f}\ar[equals]{d}&B\rar{p}\dar{}&K \dar{i}\rar & 0\\
			0\rar &	Y\rar{f'}&Z\rar{p'}\dar{}&A  \dar{h}\rar & 0 \\
			& & X\ar[equals]{r} \dar & X\dar& \\
			&  & 0 & 0 & 
		\end{tikzcd}
		\]	
		with exact rows and columns in $\mathcal{A}$. 	
	\end{lemma}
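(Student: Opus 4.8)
The plan is to recognize $\xi$ as the Yoneda splice of the two short exact sequences extracted from the canonical factorization of $g$, to read the asserted diagram as the assertion that the second of these lifts along the epimorphism in the first, and then to invoke the long exact sequence for Yoneda $\Ext$ to convert this lifting condition into the vanishing of $[\xi]$.

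First I would record the data already present in $\xi$. The canonical factorization $g = i p$ splits $\xi$ into the short exact sequences
\[
e_1 \colon\ 0 \to Y \xrightarrow{f} B \xrightarrow{p} K \to 0, \qquad e_2 \colon\ 0 \to K \xrightarrow{i} A \xrightarrow{h} X \to 0,
\]
which are automatically exact, and $\xi$ is exactly their splice along $K$. These are precisely the top row and the right column of the diagram to be produced, so the entire content of the statement is the existence of the middle column and bottom row, i.e.\ of the object $Z$.

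Second I would show that producing $Z$ is the same as lifting $e_2$ through $p$. Given such a $Z$, the middle column is a short exact sequence $\tilde e \colon 0 \to B \xrightarrow{\beta} Z \xrightarrow{\zeta} X \to 0$ representing a class in $\Ext^{1}_{\mathcal A}(X,B)$, and commutativity gives $f' = \beta f$ and $i p = p' \beta$. The latter square, together with the identity on the common kernel $Y$, is a morphism of short exact sequences from $e_1$ to the bottom row; since the induced map on kernels is $1_Y$, hence an isomorphism, this square is bicartesian. Thus $A$ is the pushout of $p$ and $\beta$, which says exactly that $p_{*}\tilde e \cong e_2$. Conversely, given any $\tilde e \in \Ext^{1}_{\mathcal A}(X,B)$ with $p_{*}\tilde e \cong e_2$, forming this pushout manufactures $Z$ together with the whole commutative diagram. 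Hence the existence of $Z$ is equivalent to $e_2 \in \operatorname{im}\big(p_{*}\colon \Ext^{1}_{\mathcal A}(X,B) \to \Ext^{1}_{\mathcal A}(X,K)\big)$.

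Finally I would match this with splitness. Applying $\Ext^{\bullet}_{\mathcal A}(X,-)$ to $e_1$ yields a connecting homomorphism $\delta \colon \Ext^{1}_{\mathcal A}(X,K) \to \Ext^{2}_{\mathcal A}(X,Y)$, and the Yoneda description of the composition product gives $[\xi] = \delta(e_2)$. Exactness of this long exact sequence at $\Ext^{1}_{\mathcal A}(X,K)$ reads $\ker \delta = \operatorname{im} p_{*}$, so $[\xi] = 0$ — equivalently $\xi \sim \xi_0$, where $\xi_0 \colon 0 \to Y \xrightarrow{1_Y} Y \xrightarrow{0} X \xrightarrow{1_X} X \to 0$ — holds if and only if $e_2 \in \operatorname{im} p_{*}$, i.e.\ if and only if $Z$ exists. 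I expect the main obstacle to be making this last equivalence honest at the level of the explicit Yoneda equivalence relation fixed in the paper rather than quoting derived-functor formalism: concretely, the forward implication asks one to unwind a roof $\xi \to \xi'' \leftarrow \xi_0$ witnessing $\xi \sim \xi_0$ into an actual sequence $\tilde e$ with $p_{*}\tilde e \cong e_2$, and to check that the pushout reproduces $e_2$ on the nose. The bicartesian-square verification and the identity $[\xi] = \delta(e_2)$ are otherwise standard bookkeeping.
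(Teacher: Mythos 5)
Your proposal is correct, and for the direction ``split $\Rightarrow$ diagram'' it is essentially the paper's own argument in mirror image: you apply $\Hom_{\mathcal{A}}(X,-)$ to $e_1$ and lift $e_2$ along $p_{*}\colon \Ext^{1}_{\mathcal{A}}(X,B)\to \Ext^{1}_{\mathcal{A}}(X,K)$ (the pushout reading of the diagram), while the paper applies $\Hom_{\mathcal{A}}(-,Y)$ to $e_2$ and lifts $e_1$ along $i^{*}\colon \Ext^{1}_{\mathcal{A}}(A,Y)\to \Ext^{1}_{\mathcal{A}}(K,Y)$ (the pullback reading). Your bicartesian-square check is exactly the dictionary between these two readings of the same diagram, and your formulation --- resting on $\delta(e_2)=[\xi]$ and exactness at $\Ext^{1}_{\mathcal{A}}(X,K)$ --- is in fact stated more carefully than the paper's, which conflates splitness of $\xi$ with vanishing of the whole group $\Ext^{2}_{\mathcal{A}}(X,Y)$.

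The genuine divergence is in the converse, and it is exactly where your flagged obstacle lives. The paper never runs the long exact sequence backwards: given $Z$, it writes down the $2$-extension $\xi''\colon 0\to Y\xrightarrow{f'} Z \xrightarrow{\left(\begin{smallmatrix} p' \\ 0 \end{smallmatrix}\right)} A\oplus X \xrightarrow{(0\ \, 1_X)} X\to 0$ and exhibits morphisms into it from both $\xi$ (components $1_Y$, $\beta$, $\left(\begin{smallmatrix} 1_A \\ h \end{smallmatrix}\right)$, $1_X$) and the trivial sequence $\xi_0$ (components $1_Y$, $f'$, $\left(\begin{smallmatrix} 0 \\ 1_X \end{smallmatrix}\right)$, $1_X$). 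Since the paper \emph{defines} equivalence by the existence of such a common target, splitness follows with no appeal to the Yoneda group structure, Baer sums, or the LES. Your route instead needs ``$e_2\in\operatorname{im} p_{*} \Rightarrow [\xi]=0 \Rightarrow \xi\sim\xi_0$ via a single wedge,'' and the last implication is the costly one: the paper's relation is a single wedge, not the zigzag-generated relation underlying the derived-functor formalism, so identifying its zero class with $[\xi]=0$ requires real (if standard) work. The cheapest repair is not to do that work at all: keep your forward argument, and for the converse replace the LES step by the paper's explicit construction of $\xi''$, which the object $Z$ you have already produced hands you for free.
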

	\begin{proof} Assume that $\xi$ is split, that is, $\Ext_{\mathcal{A}}^{2}(X,Y)=0$. 	
	By applying the functor $\Hom_{\mathcal{A}}(-,Y)$  to the exact sequence $0 \to K \xrightarrow{i} A \xrightarrow{h} X \to 0$, yields an epimorphism $\Ext_{\mathcal{A}}^{1}(A,Y)\to \Ext_{\mathcal{A}}^{1}(K,Y)$. Thus we have the commutative diagram above.
		
		On the other hand, we get the morphisms from both exact sequences $\xi$ and $\xi':0\to Y \xrightarrow{1_{Y}} Y\xrightarrow{0}  X \xrightarrow{1_{X}} X \to 0$ to the exact sequence  $$\xi'': 0 \to Y \xrightarrow{f'} Z  \xrightarrow{\left(\begin{smallmatrix} p' \\  0  \end{smallmatrix}\right)}
		A\oplus X \xrightarrow{(0~~1_{X})} X \to 0.$$ 
		Hence we have that $\xi$ is split.
	\end{proof}
	
	For convenient, we denote ${\Ext}^{i}_{\coh \X}(-,-)$ by ${\Ext}^{i}(-,-)$, even more simplified by ${}^{i}(-,-)$ for $i\ge 0$, and denote ${\Hom}_{\underline{\ACM}\, \X}(-,-)$ by  $\underline{\Hom}(-,-)$.
	
	\section{$2$-extension bundles and $2$-coextension bundles}
	\label{sec:2-extension bundles and $2$-coextension bundles}
	In this section, we introduce  the notions of $2$-extension bundles and $2$-coextension bundles on GL projective planes, which are higher dimensional analogs of extension bundles on weighted projective lines.
	Recall that  $$\vdelta :=2\c+2\w=\sum_{i=1}^{4}(p_i-2)\x_i$$ is called the $\emph{dominant element}$ of $\L$. Observe that $0 \leq \x \leq \vdelta$ holds if and only if we have $\x=\sum_{i=1}^4\lambda_i\x_i$ with $0 \le \lambda_i \le p_i-2$ for  $1\le i\le 4$.

		\subsection{$2$-extension bundles} 
	For any $\x=\sum_{i=1}^4\lambda_i\x_i$ with $0\le \x \le \vdelta$, we consider the morphism 
	\begin{equation*}
		\bigoplus_{1\le i \le 4}R(\x-(1+\lambda_i)\x_i) \xrightarrow{\gamma} R(\x)
	\end{equation*}
	in $\mod^{\L} R$, where $\gamma:=(X_i^{\lambda_i+1})_{1\le i\le 4}$. Since $\coker \gamma= (R/(X_i^{\lambda_i+1}\mid 1\le i\le 4)(\x)$ belongs to $\mod_0^{\L}R$, 
	by degree shift to its image in $\coh \X$, this yields an epimorphism 
	\begin{equation}
		\bigoplus_{1\le i \le 4}L(\x-(1+\lambda_i)\x_i) \xrightarrow{\gamma} L(\x),
	\end{equation}
	 where $L$ is a line bundle. For simplicity, we write 
	 \begin{align}
	 	 L_i\langle \x \rangle:= L(\x-(1+\lambda_i)\x_i) \ \ \text{and}\ \ L\langle \x \rangle:=\bigoplus_{1\le i\le4}L_i\langle \x \rangle.
	 \end{align}	
	Let $\eta_{2}: 0 \to K_{\x,L} \xrightarrow{i} L\langle \x \rangle \xrightarrow{\gamma} L(\x) \to 0$ be an exact sequence
	 in $\coh \X$ extending the morphism $\gamma$.  
	 The subscript of $K_{\x,L}$ is usually omitted below. By applying $\Hom(-,L(\w))$ to the  sequence $\eta_{2}$, we obtain an exact sequence
	$$0={}^{1}(L\langle \x \rangle,L(\w)) \to {}^{1}(K,L(\w)) \to {}^{2}(L(\x),L(\w))\to {}^{2}(L\langle \x \rangle,L(\w))=0.$$
	Hence $\Ext^{1}(K,L(\w))\simeq \Ext^{2}(L(\x),L(\w))\simeq D\Hom(L,L(\x))=k$. 
	 Let $\eta_{1}:0\to L(\w) \xrightarrow{\alpha} E \xrightarrow{p} K \to 0$ be a nonsplit exact sequence.  By connecting these two sequences $\eta_{1}$ and $\eta_{2}$, we finally obtain an exact sequence 
	 \begin{equation}\label{2-extension bundle exact}
	 \eta: \quad	0 \to L(\w) \xrightarrow{\alpha}  E 
	 	\xrightarrow{\beta} 
	 	L\langle \x \rangle \xrightarrow{\gamma} L(\x) \to 0
	 \end{equation}
	  in $\coh \X$. In this case,
 the term $E_L \langle \x \rangle:=E$ of the sequence, which is uniquely defined up to isomorphism, is called the \emph{$2$-extension bundle} gived by the data $(L,\x)$. If $L=\OO$, then we just write $E\langle \x \rangle$. For $\x=0$, the term $E_L:=E_L\langle 0 \rangle$ is called the \emph{$2$-Auslander bundle} associated with $L$.

	 In the setup above, we start with proving the following results, 
	 which collect some homological properties of $E$ and $K$.
	 \begin{proposition}\label{homological properties of $E$ and $K$ 1} Assume $0\le \y\le \x \le \vdelta$. Then the following assertions hold.
	 \begin{itemize} 
	 	\item[(a)] The sequence {\rm{(\ref{2-extension bundle exact})}} does not split.
	 	\item[(b)]   $\Ext^{i}(L(\y),K)=0$ for any $i \neq 1$, and $\Ext^{1}(L(\y),K)=k$.
	 	\item[(c)]  $\Ext^{i}(L(\y),E)=0$ for any $i\ge 0$.
	 \end{itemize}
	 \end{proposition}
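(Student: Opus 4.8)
The plan is to reduce all three statements to $\Ext$-computations between line bundles, for which the only inputs are the formula (\ref{extension spaces between line bundles}), Serre duality (\ref{Auslander-Reiten-Serre duality}), and the elementary observation that $R_{\z}\ne 0$ precisely when the coefficient of $\c$ in the normal form of $\z$ is non-negative. Throughout I use that $0\le\y\le\x\le\vdelta$ forces $\y=\sum_i\mu_i\x_i$ and $\x=\sum_i\lambda_i\x_i$ with $0\le\mu_i\le\lambda_i\le p_i-2$: indeed, writing $\x-\y$ in normal form shows that its $\c$-coefficient equals minus the number of indices $i$ with $\lambda_i<\mu_i$, so $\x-\y\ge 0$ is equivalent to $\mu_i\le\lambda_i$ for all $i$. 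Since $\coh\X$ has global dimension $2$ (Theorem~\ref{basic properties}(a)) and $\Ext^{1}$ between line bundles always vanishes by (\ref{extension spaces between line bundles}), the long exact sequences below collapse to short pieces.

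For (b) I would apply $\Hom(L(\y),-)$ to $\eta_{2}\colon 0\to K\to L\langle\x\rangle\to L(\x)\to 0$. The long exact sequence yields $\Hom(L(\y),K)=\ker\gamma_{*}$, $\Ext^{1}(L(\y),K)=\coker\gamma_{*}$ and $\Ext^{2}(L(\y),K)\cong\Ext^{2}(L(\y),L\langle\x\rangle)$, where $\gamma_{*}\colon\bigoplus_{i}R_{\x-\y-(1+\lambda_i)\x_i}\to R_{\x-\y}$ sends $(f_i)$ to $\sum_i X_i^{\lambda_i+1}f_i$. The degree bookkeeping then does everything: the degrees $\y-\x+\w$, $\y-\x+(1+\lambda_i)\x_i+\w$ and $\x-\y-(1+\lambda_i)\x_i$ have $\c$-coefficients $-3$, $-2$ and $-1$ respectively (using $\mu_i\le\lambda_i\le p_i-2$), so $\Ext^{2}(L(\y),L(\x))=0$, $\Ext^{2}(L(\y),L\langle\x\rangle)=0$ and the source of $\gamma_{*}$ vanishes. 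Hence $\Hom(L(\y),K)=0=\Ext^{2}(L(\y),K)$, while $\Ext^{1}(L(\y),K)=\coker\gamma_{*}=R_{\x-\y}$, which is one-dimensional because $0\le\x-\y\le\vdelta$ (its normal form is the single monomial $X^{\x-\y}$). This gives (b).

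Statement (a) I would dispatch first and separately: the four-term sequence $\eta$ is the Yoneda splice of $\eta_{1}$ and $\eta_{2}$ at $K$, so its class is the product $[\eta]=[\eta_{1}]\circ[\eta_{2}]\in\Ext^{2}(L(\x),L(\w))$. This product is exactly the image of $[\eta_{1}]$ under the connecting isomorphism $\Ext^{1}(K,L(\w))\xrightarrow{\sim}\Ext^{2}(L(\x),L(\w))$ of $\eta_{2}$ recorded just before the proposition; since $\eta_{1}$ was chosen nonsplit, $[\eta_{1}]\ne 0$ and therefore $[\eta]\ne 0$, i.e. $\eta$ does not split.

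For (c) I would apply $\Hom(L(\y),-)$ to $\eta_{1}\colon 0\to L(\w)\to E\to K\to 0$ and feed in (b) together with $\Ext^{0}(L(\y),L(\w))=\Ext^{1}(L(\y),L(\w))=0$ and $\Ext^{2}(L(\y),L(\w))\cong D R_{\y}=k$ (again from (\ref{extension spaces between line bundles}), since $0\le\y\le\vdelta$). The long exact sequence immediately kills $\Ext^{0}(L(\y),E)$ and all $\Ext^{\ge 3}$, leaving $0\to\Ext^{1}(L(\y),E)\to\Ext^{1}(L(\y),K)\xrightarrow{\delta}\Ext^{2}(L(\y),L(\w))\to\Ext^{2}(L(\y),E)\to 0$ with both middle terms $\cong k$. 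Everything thus hinges on showing that the connecting map $\delta$ — which is Yoneda multiplication by $[\eta_{1}]$ — is nonzero, and this is the main obstacle. I would resolve it by exhibiting a class on which $\delta$ does not vanish: take $u:=X^{\x-\y}\in R_{\x-\y}=\Hom(L(\y),L(\x))$ and the class $u^{*}[\eta_{2}]=[\eta_{2}]\circ u\in\Ext^{1}(L(\y),K)$. By associativity of the Yoneda product, $\delta(u^{*}[\eta_{2}])=[\eta_{1}]\circ[\eta_{2}]\circ u=u^{*}[\eta]$. Now $u^{*}\colon\Ext^{2}(L(\x),L(\w))\to\Ext^{2}(L(\y),L(\w))$ is an isomorphism, because via Serre duality (\ref{Auslander-Reiten-Serre duality}) it is the $k$-dual of the multiplication map $R_{\y}\xrightarrow{\cdot X^{\x-\y}}R_{\x}$, which is an isomorphism $k\to k$ since $X^{\y}X^{\x-\y}=X^{\x}\ne 0$ in $R$. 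As $[\eta]\ne 0$ by (a), we conclude $\delta(u^{*}[\eta_{2}])=u^{*}[\eta]\ne 0$, so $\delta\ne 0$; both its source and target being one-dimensional, $\delta$ is an isomorphism, and hence $\Ext^{i}(L(\y),E)=0$ for all $i\ge 0$.
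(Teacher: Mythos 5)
Your proposal is correct; I see no gaps. Part (b) is essentially the paper's own argument (apply $\Hom(L(\y),-)$ to $\eta_2$ and use Serre duality), with the degree bookkeeping that the paper dismisses as ``straightforward to check'' carried out explicitly. The genuine differences are in (a) and in the key step of (c). For (a), the paper argues by contradiction via Lemma~\ref{2-Yoneda}: a splitting of (\ref{2-extension bundle exact}) would produce a middle object $X$, and $\Ext^1(L\langle \x \rangle,L(\w))=0$ would then force $\eta_1$ to split. You instead use Yoneda calculus: the class of the splice is the image of $[\eta_1]$ under the connecting map $\Ext^1(K,L(\w))\to\Ext^2(L(\x),L(\w))$, which the paper has already shown to be an isomorphism, hence $[\eta]\neq 0$. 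For (c), the paper works contravariantly: it applies $\Hom(-,L(\y+\w))$ to $\eta_1$, proves that the pushout of $\eta_1$ along $y=\prod_i X_i^{e_i}$ is nonsplit by an explicit pushout diagram combined with the Serre-dual isomorphism $\Ext^2(L(\x),y)$, and only at the very end dualizes to convert $\Ext^i(E,L(\y+\w))=0$ into $\Ext^i(L(\y),E)=0$. You work covariantly with $\Hom(L(\y),-)$, identify the connecting map $\delta$ with Yoneda multiplication by $[\eta_1]$, and evaluate it on the test class $u^{*}[\eta_2]$ with $u=X^{\x-\y}$, using associativity of the Yoneda product and the Serre-dual isomorphism $u^{*}$. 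The essential inputs are identical in both treatments --- part (a) plus Serre duality applied to a multiplication map between one-dimensional graded pieces of $R$ --- but your organization avoids both the pushout diagram and the final dualization, so it is shorter. What the paper's route buys in exchange is self-containedness: it never invokes the identification of connecting homomorphisms with Yoneda products, which your argument uses twice (in (a) and (c)) and which, while completely standard, is not proved in the paper.
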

	 \begin{proof}
	 	 	 (a) Assume that the sequence {\rm{(\ref{2-extension bundle exact})}} is split.
	 		By Lemma \ref{2-Yoneda}, there exists an object $X$ in $\coh \X$ fitting into the following commutative diagram. 
	 		\[
	 		\begin{tikzcd} 
	 			& 0 \dar& 0\dar & 0\dar &\\
	 			0\rar &	L(\w)\rar{\alpha}\ar[equals]{d}&E\rar{p}\dar&K \dar{i}\rar & 0\\
	 			0\rar &	L(\w)\rar&X\rar\dar&L \langle \x \rangle \dar{\gamma}\rar & 0 \\
	 			& & L(\x)\ar[equals]{r} \dar & L(\x)\dar& \\
	 			&  & 0 & 0 & 
	 		\end{tikzcd}
	 		\]	
	 		Since $\Ext^{1}(L \langle \x \rangle,L(\w))=0$,  the middle row of the diagram is split, implying the top row of diagram is also split, a contradiction to our assumption on $\eta_1$.

	 		(b) Recall that $\coh \X $ has global dimension $2$.  We can restrict our treatment to the case for $i=0,1,2$.	 
	 		We apply $\Hom(L(\y),-)$ to  $\eta_{2}$ and get an exact sequence 
	 		\begin{align*}
	 			0&\to(L(\y),K) \to (L(\y),L\langle \x \rangle) \to (L(\y),L(\x)) \\ & \to {}^{1}(L(\y),K)   
	 			\to {}^{1}(L(\y),L\langle \x \rangle) \to {}^{1}(L(\y),L(\x)) \\ & \to{}^{2}(L(\y),K) \to{}^{2}(L(\y),L\langle \x \rangle) \to{}^{2}(L(\y),L(\x))= 0 \nonumber
	 		\end{align*}
	 		It is straightforward to check that $\Hom(L(\y),L\langle \x \rangle)=0$ and $\Ext^{2}(L(\y),L\langle \x \rangle)=0$ hold by (\ref{Auslander-Reiten-Serre duality}). Then we have $\Ext^{i}(L(\y),K)=0$ for $i=0,2$ and $\Ext^{1}(L(\y),K)\simeq \Hom(L(\y),L(\x))=k$. Thus we have the assertion.

	 		(c) 
	 		We apply $\Hom(-,L(\y+\w))$ to  $\eta_{1}$ and obtain an exact sequence 
	 		\begin{align*}
	 			0&\to(K,L(\y+\w)) \to (E,L(\y+\w)) \to (L(\w),L(\y+\w)) \\ & \xrightarrow{\partial} {}^{1}(K,L(\y+\w))
	 			\to {}^{1}(E,L(\y+\w)) \to{}^{1}(L(\w),L(\y+\w)) \\ & \to {}^{2}(K,L(\y+\w)) \to {}^{2}(E,L(\y+\w))\to{}^{2}(L(\w),L(\y+\w))= 0. \nonumber
	 		\end{align*}
	 		Combining (b) and (\ref{Auslander-Reiten-Serre duality}), we have $\Hom(K,L(\y+\w))=0$ and $\Ext^{2}(K,L(\y+\w))=0$.
	 		Now we claim that  the connecting morphism $\partial$ is nonzero, that is, the pushout of $\eta_1$ along the morphism $y:=\prod_{i=1}^4X_i^{e_i}:L(\w) \to L(\y+\w)$ is nonzero, where $\y=\sum_{i=1}^{4}e_i\x_i$ with $e_i \ge 0$. This is shown invoking Auslander-Reiten-Serre duality, yielding the following commutative diagram.
	 		\[
	 		\begin{tikzcd} 
	 			\Ext^{2}(L(\x),L(\w)) \rar{\simeq} \dar{\Ext^{2}(L(\x),y)}&  D\Hom(L(\w),L(\x+\w))\dar{D\Hom(y,L(\x+\w))}\\
	 			\Ext^{2}(L(\x),L(\y+\w))\rar{\simeq}  &D\Hom(L(\y+\w),L(\x+\w))
	 		\end{tikzcd}
	 		\]
	 		Since $0\le \y\le \x \le  \vdelta$ by our assumption, the morphism $D\Hom(y,L(\x+\w))$ is an isomorphism. Hence also $\Ext^{2}(L(\x),y)$ is an isomorphism. Since the top row of the following commutative diagram is nonsplit by  (a), this implies that the bottom row  is also nonsplit. 
	 		\[
	 		\begin{tikzcd}[column sep=1.3em, row sep=0.50em]
	 			0\ar{rr}&&L(\w)\ar[rr, "\alpha"]\ar[dd, "y"]&&E\ar[rr, "\beta"]\ar{dr}\ar{dd}&&L\langle \x \rangle\ar[rr, "\gamma"]\ar[equals]{dd}&&L(\x)\ar{rr}\ar[equals]{dd}&&0\\
	 			&&&&&K\ar{ur}\\
	 			0\ar{rr}&&L(\y+\w)\ar{rr}&&M\ar{rr}\ar{dr}&&L\langle \x \rangle\ar[rr, "\gamma"]&&L(\x)\ar{rr}&&0\\
	 			&&&&&K\ar{ur}\ar[crossing over, equals]{uu}
	 		\end{tikzcd}
	 		\]
	 		Thus the pushout of $\eta_1$ along $y$ is nonzero, as claimed. Because both the spaces $\Hom(L(\w),L(\y+\w))$ and $\Ext^{1}(K,L(\y+\w))$ are one dimensional, the nonzero morphism $\partial$ is an isomorphism. Thus
	 		we have $\Ext^{i}(E,L(\y+\w))=0$ for any $i\ge 0$.  The assertion follows immediately by using Auslander-Reiten Serre duality.
	 \end{proof}
	 
	 Recall that for an abelian or a triangulated category $\mathcal{C}$,  an object $E$ is called \emph{exceptional} in $\mathcal{C}$ if $\End_{\mathcal{C}}(E) = k$ and $\Ext_{\mathcal{C}}^{i}(E,E)=0$ hold for any $i\ge 1$. Moreover, a sequence of exceptional objects $(E_1,\dots,E_n)$ in $\mathcal{C}$ is called an  \emph{exceptional sequence} if $\Ext_{\mathcal{C}}^{p}(E_i,E_j)=0$ holds for any integers  $i>j$ and $p\in \Z$.

	  \begin{proposition}\label{homological properties of $E$ and $K$ 2}
	  	Both $E$ and $K$ are exceptional in $\coh \X$ and in $\DDD^{\bo}(\coh\X)$. 		  		  	
	  \end{proposition}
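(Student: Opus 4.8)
The plan is to verify exceptionality in the abelian category $\coh\X$; the statement for $\DDD^{\bo}(\coh\X)$ is then automatic, since $\coh\X$ has global dimension $2$ (Theorem~\ref{basic properties}(a)) and hence $\Ext^i_{\coh\X}(X,Y)\cong\Hom_{\DDD^{\bo}(\coh\X)}(X,Y[i])$ for all $i$, so the two notions of exceptional object coincide. By the same bound it suffices to show, for each of $E$ and $K$, that the endomorphism space is $k$ while $\Ext^1$ and $\Ext^2$ of the object with itself vanish (higher $\Ext$ vanish for free). The organizing principle is that every group I need is reduced, through the two defining short exact sequences $\eta_1\colon 0\to L(\w)\to E\to K\to 0$ and $\eta_2\colon 0\to K\to L\langle \x\rangle\to L(\x)\to 0$, to $\Ext$-groups between line bundles, which are evaluated by the formula (\ref{extension spaces between line bundles}); this is supplemented by the vanishing already available from Proposition~\ref{homological properties of $E$ and $K$ 1} and its Serre-dual consequences via (\ref{Auslander-Reiten-Serre duality}).

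First I would settle $K$. Applying $\Hom(-,K)$ to $\eta_2$ expresses $\Ext^*(K,K)$ through $\Ext^*(L(\x),K)$ and $\Ext^*(L\langle\x\rangle,K)$. The first is given directly by Proposition~\ref{homological properties of $E$ and $K$ 1}(b) with $\y=\x$, so that $\Ext^1(L(\x),K)=k$ and the other degrees vanish. For $\Ext^*(L\langle\x\rangle,K)=\bigoplus_i\Ext^*(L(\x-(1+\lambda_i)\x_i),K)$ I would instead feed each summand into $\eta_2$ via $\Hom(L(\x-(1+\lambda_i)\x_i),-)$ and compute the resulting line-bundle terms with (\ref{extension spaces between line bundles}): a normal-form count shows that $\Hom(L(\x-(1+\lambda_i)\x_i),L_j\langle\x\rangle)$ equals $k$ for $j=i$ and $0$ otherwise, that this surviving copy is carried isomorphically by $\gamma$ onto $\Hom(L(\x-(1+\lambda_i)\x_i),L(\x))=k$, and that all relevant $\Ext^2$ pieces vanish. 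Hence $\Ext^*(L\langle\x\rangle,K)=0$ in every degree, and the long exact sequence from $\eta_2$ collapses to $\End(K)=k$, $\Ext^1(K,K)=\Ext^2(K,K)=0$.

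Next I would handle $E$. Proposition~\ref{homological properties of $E$ and $K$ 1}(c) with $\y=0$, dualized through (\ref{Auslander-Reiten-Serre duality}), gives $\Ext^*(E,L(\w))=0$, so $\Hom(E,-)$ applied to $\eta_1$ yields $\Ext^*(E,E)\cong\Ext^*(E,K)$. I then compute the right-hand side by applying $\Hom(-,K)$ to $\eta_1$, reducing it to the now-known $\Ext^*(K,K)$ together with $\Ext^*(L(\w),K)$. The latter I obtain directly from $\eta_2$ and (\ref{extension spaces between line bundles}), the point being that every homogeneous component $R_{\bullet}$ that arises has strictly negative canonical coefficient in normal form and therefore vanishes, so $\Ext^*(L(\w),K)=0$. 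The long exact sequence then forces $\Hom(E,K)=k$ and $\Ext^1(E,K)=\Ext^2(E,K)=0$; combined with the isomorphism $\Ext^*(E,E)\cong\Ext^*(E,K)$ this shows that $E$ is exceptional, the one-dimensional $\End(E)$ being spanned by the identity.

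The real difficulty, and where essentially all the labor lies, is that the clean vanishing of Proposition~\ref{homological properties of $E$ and $K$ 1} is only available for line bundles $L(\y)$ with $0\le\y\le\x$, whereas the summands $L(\x-(1+\lambda_i)\x_i)$ of $L\langle\x\rangle$ and the twist $L(\w)$ fall outside this range and must be treated by hand via (\ref{extension spaces between line bundles}). The decisive input there is the weight constraint $0\le\lambda_i\le p_i-2$: it is precisely this bound that makes the degrees entering the $\Hom$ and $\Ext^2$ parts of (\ref{extension spaces between line bundles}), namely suitable combinations of $\x$, $\y$ and $\w$, reduce to normal forms with negative $\c$-coefficient, so that the corresponding pieces of $R$ vanish and the long exact sequences degenerate exactly as required.
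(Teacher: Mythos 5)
Your proposal is correct. For $K$ it is essentially the paper's own argument: apply $\Hom(L_j\langle \x \rangle,-)$ to $\eta_2$, observe that $\gamma^{\ast}$ is an isomorphism between one-dimensional spaces (your normal-form count is exactly the computation the paper leaves implicit), conclude $\Ext^{\ast}(L\langle \x \rangle,K)=0$ in all degrees, and collapse the long exact sequence obtained from $\Hom(-,K)$. For $E$, however, you take a genuinely shorter route. The paper stays on the side of $\Ext^{\ast}(-,E)$: it proves $\Ext^{\ast}(K,E)=0$ by invoking the nonsplitness of $\eta_1$ a second time (so that the connecting map $\End(K)\to\Ext^{1}(K,L(\w))$ between one-dimensional spaces is an isomorphism), separately computes $\Ext^{\ast}(L(\w),E)$, and then assembles these through $\Hom(-,E)$ applied to $\eta_1$. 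You instead transfer everything onto $K$: the vanishing $\Ext^{\ast}(E,L(\w))=0$ --- Proposition \ref{homological properties of $E$ and $K$ 1}(c) at $\y=0$ read backwards through (\ref{Auslander-Reiten-Serre duality}), which is precisely the intermediate claim established inside the paper's proof of that proposition --- collapses the covariant sequence of $\eta_1$ to give $\Ext^{\ast}(E,E)\simeq\Ext^{\ast}(E,K)$, while the line-bundle vanishing $\Ext^{\ast}(L(\w),K)=0$ (which the paper also verifies) collapses the contravariant sequence of $\eta_1$ to give $\Ext^{\ast}(E,K)\simeq\Ext^{\ast}(K,K)$. Exceptionality of $E$ is thus inherited from that of $K$ in two steps, with one fewer long exact sequence and no repetition of the nonsplitness/connecting-morphism argument, since that input is already packaged into Proposition \ref{homological properties of $E$ and $K$ 1}(c); what the paper's longer route buys is the auxiliary facts $\Ext^{\ast}(K,E)=0$ and $\Hom(L(\w),E)=k$ recorded along the way. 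Your opening reduction (global dimension $2$, so exceptionality in $\coh\X$ and in $\DDD^{\bo}(\coh\X)$ coincide and only $\Ext^{1}$ and $\Ext^{2}$ need checking) is the standard fact the paper uses silently, and your identification of the weight constraint $0\le\lambda_i\le p_i-2$ as the source of all the negative-$\c$-coefficient vanishings in (\ref{extension spaces between line bundles}) is accurate.
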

	 \begin{proof}
	 		   First we show that $K$ is exceptional in $\coh \X$.  
	 	For each $1\le j \le 4$, we apply $\Hom(L_j\langle \x \rangle,-)$ to  $\eta_{2}$ and obtain an exact sequence
	 	\begin{align*}
	 		0&\to(L_j\langle \x \rangle,K) \to (L_j\langle \x \rangle,L\langle \x \rangle) \xrightarrow{\gamma^{\ast} } (L_j\langle \x \rangle,L(\x))  \\ &\to {}^{1}(L_j\langle \x \rangle,K)
	 		\to {}^{1}(L_j\langle \x \rangle,L\langle \x \rangle) \to{}^{1}(L_j\langle \x \rangle,L(\x))  \nonumber\\  &\to {}^{2}(L_j\langle \x \rangle,K) \to {}^{2}(L_j\langle \x \rangle,L\langle \x \rangle)= 0. \nonumber
	 	\end{align*}	
	 	Because both  $\Hom(L_j\langle \x \rangle,L\langle \x \rangle)$ and $\Hom(L_j\langle \x \rangle,L(\x))$ are one dimensional spaces and the morphism $\gamma^{\ast}$ 
	 	is nonzero, this implies that $\gamma^{\ast}$ is an isomorphism.  
	 	Hence the space   $\Ext^{i}(L_j\langle \x \rangle,K)$ vanishes for any $i \ge 0$ and $1\le j \le 4$. Moreover, we have  the space  $\Ext^{i}(L\langle \x \rangle,K)=0$ for any $i \ge 0$.         
	 	
	 	Applying $\Hom(-,K)$ to the sequence $\eta_{2}$, we have an exact sequence
	 	\begin{align*}
	 		0 &= (L(\x),K) \to (L\langle \x \rangle,K) \to (K,K) \\ 
	 		&\to {}^{1}(L(\x),K)  \to{}^{1}(L\langle \x \rangle,K)  \to {}^{1}(K,K)  \\ 
	 		&\to{}^{2}(L(\x),K) \to{}^{2}(L\langle \x \rangle,K) \to{}^{2}(K,K) \to 0. \nonumber
	 	\end{align*}
	 	By Proposition \ref{homological properties of $E$ and $K$ 1}, we have $\End(K)=k$ and $\Ext^{i}(K,K)=0$ for any $i\neq 0$.
	 	
	 	Next we show that $E$ is exceptional in $\coh \X$.  
	 	By applying $\Hom(L(\w),-)$ to $\eta_2$, it is easy to check that $\Ext^{i}(L(\w),K)=0$ holds for any $i\ge 0$.
	 	Applying $\Hom(K,-)$ to $\eta_1$, we obtain an exact sequence
	 	\begin{align*}
	 		0 &\to (K,L(\w)) \to (K,E) \to (K,K) \\ &\xrightarrow{\partial}   {}^{1}(K,L(\w))  \to
	 		{}^{1}(K,E)  \to {}^{1}(K,K) \\ & \to{}^{2}(K,L(\w)) 
	 		\to{}^{2}(K,E) \to{}^{2}(K,K) \to 0. \nonumber
	 	\end{align*}
	 	By Proposition \ref{homological properties of $E$ and $K$ 1},  $\Ext^{1}(K,L(\w))\simeq D\Ext^{1}(L,K)=k$ and $\Ext^{i}(K,L(\w))=0$ hold for $i=0,2$. Since  $\eta_1$ does not split, the connecting morphism $\partial$ between one dimensional spaces is an isomorphism. Thus we have $\Ext^{i}(K,E)=0$ for any $i\ge 0$. Applying $\Hom(L(\w),-)$ to $\eta_1$, we obtain an exact sequence 
	 	\begin{align*}
	 		0 &\to (L(\w),L(\w)) \to (L(\w),E) \to (L(\w),K) \\ & \to {}^{1}(L(\w),L(\w))   \to
	 		{}^{1}(L(\w),E)  \to {}^{1}(L(\w),K) \\ &\to {}^{2}(L(\w),L(\w)) \to {}^{2}(L(\w),E) \to {}^{2}(L(\w),K) \to 0
	 	\end{align*}
	 	Thus we have $\Hom(L(\w),E)=k$ and $\Ext^{i}(L(\w),E)=0$ for any $i\neq 1$.
	 	
	 	Applying $\Hom(-,E)$ to the sequence $\eta_1$, we obtain an exact sequence
	 	\begin{align*}
	 		0 &\to (K,E) \to (E,E) \to (L(\w),E) \\ & \to {}^{1}(K,E)  \to
	 		{}^{1}(E,E)  \to {}^{1}(L(\w),E) \\ & \to{}^{2}(K,E) 
	 		\to{}^{2}(E,E) \to{}^{2}(L(\w),E) \to 0. \nonumber
	 	\end{align*}
	 	Putting things together, we have $\End(E)=k$ and $\Ext^{i}(E,E)=0$ for any $i\neq 0$. 
	 \end{proof}
	 
	 \begin{proposition}\label{homological properties of $E$ and $K$ 3} 
	 	The object $K$ belongs to $\vect \X$ and  $E$  belongs to $\ACM  \X$.	 	
	 \end{proposition}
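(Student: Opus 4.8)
The plan is to establish the two assertions separately: $K\in\vect\X$ follows directly from the syzygy description of vector bundles, whereas $E\in\ACM\X$ is obtained from the cohomological characterisation (\ref{ACM bundle eq.}) by splitting the required $\Ext^1$-vanishing into two complementary ranges of degrees.

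For $K$, recall that it arises by sheafifying $\gamma=(X_i^{\lambda_i+1})_{1\le i\le4}\colon\bigoplus_{1\le i\le4}R(\x-(1+\lambda_i)\x_i)\to R(\x)$ in $\mod^\L R$. Setting $\widetilde K:=\ker\gamma$ produces an exact sequence $0\to\widetilde K\to P^0\xrightarrow{\gamma}P^1$ with $P^0,P^1\in\proj^\L R$, and since $\pi$ is exact and annihilates the finite-dimensional $\coker\gamma$, one gets $K\cong\pi(\widetilde K)$ (after twisting by $L$). Lemma \ref{second syzygy} then gives $K\in\vect\X$ immediately. For $E$, the sequence $\eta_1\colon 0\to L(\w)\to E\to K\to0$ realises $E$ as an extension of the vector bundle $K$ by the line bundle $L(\w)$; as locally free sheaves are closed under extensions, this already yields $E\in\vect\X$.

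It then remains to verify $\Ext^1(L(\y),E)=0$ for all $\y\in\L$; since $L=\OO(\z)$ for some $\z\in\L$, this is exactly the ACM condition $\Ext^1(\OO(\y'),E)=0$ for all $\y'\in\L$ in (\ref{ACM bundle eq.}), via $\OO(\y')=L(\y'-\z)$. Applying $\Hom(L(\y),-)$ to $\eta_1$ and using $\Ext^1(L(\y),L(\w))=0$ from (\ref{extension spaces between line bundles}) produces an injection $\Ext^1(L(\y),E)\hookrightarrow\Ext^1(L(\y),K)$, so it suffices to understand the right-hand side. Applying $\Hom(L(\y),-)$ to $\eta_2$ and using $\Ext^1(L(\y),L\langle\x\rangle)=0$ identifies $\Ext^1(L(\y),K)$ with $\coker\bigl(\gamma_\ast\colon\bigoplus_i R_{\x-(1+\lambda_i)\x_i-\y}\to R_{\x-\y}\bigr)$, i.e.\ with the degree-$(\x-\y)$ component of $R/(X_i^{\lambda_i+1}\mid 1\le i\le4)$. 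Because $\lambda_i+1\le p_i-1$ for each $i$, the defining element $\sum_{i}X_i^{p_i}$ of $R$ lies in the ideal $(X_i^{\lambda_i+1}\mid 1\le i\le4)$, so this quotient equals the Artinian complete intersection $k[X_1,\dots,X_4]/(X_i^{\lambda_i+1}\mid 1\le i\le4)$, whose nonzero homogeneous parts occur exactly in degrees $\sum_i m_i\x_i$ with $0\le m_i\le\lambda_i$. Hence $\Ext^1(L(\y),K)\neq0$ forces $\x-\y=\sum_i m_i\x_i$ with $0\le m_i\le\lambda_i$, i.e.\ $0\le\y\le\x$. For $\y\notin[0,\x]$ the injection then forces $\Ext^1(L(\y),E)=0$, while for $0\le\y\le\x$ (so that $0\le\y\le\x\le\vdelta$, recalling $\x\le\vdelta$) Proposition \ref{homological properties of $E$ and $K$ 1}(c) already gives $\Ext^1(L(\y),E)=0$. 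Together these exhaust $\L$, so $E\in\ACM\X$.

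The delicate point, and the step I would double-check most carefully, is the degree bookkeeping in the last paragraph: one must confirm that the degrees supporting $R/(X_i^{\lambda_i+1}\mid 1\le i\le4)$ lie precisely in the interval $[0,\x]$ covered by Proposition \ref{homological properties of $E$ and $K$ 1}(c), so that the ``bounded-range'' vanishing from that proposition and the ``outside-the-range'' vanishing coming from $K$ fit together to cover every $\y\in\L$ with no gap. The remaining inputs---exactness of $\pi$, the line-bundle $\Ext$-computations in (\ref{extension spaces between line bundles}), and the extension-closedness of $\vect\X$---are routine.
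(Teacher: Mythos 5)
Your proof is correct, and its first half coincides with the paper's: $K\in\vect\X$ comes from the syzygy sequence $0\to\widetilde K\to P^0\to P^1$ together with Lemma \ref{second syzygy}, $E\in\vect\X$ follows since $\vect\X$ is extension-closed, and the ACM condition (\ref{ACM bundle eq.}) is reduced, via the injection $\Ext^1(L(\y),E)\hookrightarrow\Ext^1(L(\y),K)$ obtained from $\Ext^1(L(\y),L(\w))=0$, to controlling $\Ext^1(L(\y),K)$. Where you genuinely diverge is in how that control is achieved. The paper argues by cases on $\y$: when $\x\not\ge\y$ it uses $\Hom(L(\y),L(\x))=0$; when $\x\ge\y$ and $\y\ge 0$ it quotes Proposition \ref{homological properties of $E$ and $K$ 1}(c); and in the remaining case ($\x\ge\y$, $\y\not\ge 0$, so $\y\le 2\c+\w$ by (\ref{two possibilities of x})) it proves by a normal-form computation that $\x-\y\ge(1+\lambda_i)\x_i$ for some $i$ and then exhibits $\gamma^{\ast}$ as an epimorphism through explicit multiplication maps on homogeneous components of $R$. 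You instead compute $\Ext^1(L(\y),K)$ uniformly as $\coker\gamma_{\ast}=\bigl(R/(X_i^{\lambda_i+1}\mid 1\le i\le 4)\bigr)_{\x-\y}$, note that this quotient is the Artinian monomial ring $k[X_1,\dots,X_4]/(X_i^{\lambda_i+1}\mid 1\le i\le 4)$ because $\sum_i X_i^{p_i}$ lies in the ideal, and read off its support. The degree bookkeeping you flagged as delicate does check out: the supported degrees are exactly $\sum_i m_i\x_i$ with $0\le m_i\le\lambda_i$ (each already in normal form since $m_i\le\lambda_i\le p_i-2$, so distinct tuples give distinct degrees by uniqueness of normal forms in (\ref{equ:nor})), and $\x-\y$ being of this form is equivalent to $\y=\sum_i(\lambda_i-m_i)\x_i$, i.e.\ to $0\le\y\le\x\le\vdelta$, which is precisely the hypothesis of Proposition \ref{homological properties of $E$ and $K$ 1}(c). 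So the two ranges fit together with no gap. What your route buys is a cleaner argument: the paper's hardest case (surjectivity of $\gamma^{\ast}$ for $\y\not\ge 0$) is absorbed into a single support computation for a monomial complete intersection; what the paper's route makes explicit is the concrete surjection on graded pieces, which is essentially the same information unpacked by hand. One shared caveat: Lemma \ref{second syzygy} as stated gives only that vector bundles admit such syzygy presentations, while both you and the paper use the converse implication (which is the full characterization in \cite{HIMO}); this is an imprecision of the lemma's formulation, not of your argument.
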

	 \begin{proof}By Lemma \ref{second syzygy}, $K$ belongs to $\vect \X$. Because $\vect \X$ is an extension-closed subcategory of $\coh \X$, $E$ belongs to $\vect \X$. Now we  show that  $E$ satisfies the condition (\ref{ACM bundle eq.}) for being
	 	an ACM bundle. It suffices to show that $\Ext^{1}(L(\z),E)=0$ holds for any $\z \in \L$. 
	 	Applying $\Hom(L(\z),-)$ to $\eta_1$, we obtain an exact sequence
	 	\begin{align}\label{k2}
	 		0={}^{1}(L(\z),L(\w)) \to {}^{1}(L(\z),E) \to {}^{1}(L(\z),K)
	 		 \to {}^{2}(L(\z),L(\w)) 
	 	\end{align}
	 	Next we apply $\Hom(L(\z),-)$ to $\eta_2$, we obtain an exact sequence
	 	\begin{align}\label{k3}
	 		0\to (L(\z),K) \to  (L(\z),L \langle \x \rangle) \xrightarrow{\gamma^{\ast}} (L(\z),L(\x)) \to{}^{1}(L(\z),K) \to 0.
	 	\end{align}	 	
	 	Concerning the value of $\z$, we divide the proof into the following two cases.
	 	
	 	\emph{Case $1$}: $\x\not\ge \z$. In this case, we have $\Hom(L(\z),L(\x))=0$, which implies  ${\Ext}^{1}(L(\z),K)=0$. Thus we have ${\Ext}^{1}(L(\z),E)=0$.
	 		 
	 \emph{Case $2$}: $\x\ge \z$. In this case, if $\z \ge 0$, then by Proposition \ref{homological properties of $E$ and $K$ 1} we have ${\Ext}^{1}(L(\z),E)=0$; if  $\z \not\ge 0$, we have $\z \le 2\c+\w$ by (\ref{two possibilities of x}). Write $\x-\z$ in the normal form $\x-\z=\sum_{i=1}^{4}\ell_i\x_i+\ell\c$ with $0 \le \ell_i < p_i$ and $\ell \ge 0$.  We first claim that there exists $i$ with $1\le i \le 4$ such that $\x-\z \ge (1+\lambda_i)\x_i$.  The claim is clear for $\ell>0$. If  $\ell=0$, then $$\x-\z=\sum_{i=1}^{4}\ell_i\x_i \ge \x-(2\c+\w) = \sum_{i=1}^{4}(\lambda_i+1)\x_i-3\c$$ 
	 implies $3\c\ge \sum_{i=1}^{4}(\lambda_i+1-\ell_i)\x_i$. Then  
	 $\ell_i\ge \lambda_i+1$ for at least one of $1\le i \le 4$ and thus  the claim follows. 
	 To prove ${\Ext}^{1}(L(\z),E)=0$,  it suffices to show that $\gamma^{\ast}$ is an epimorphism. 
	If there exists $\ell_i\ge \lambda_i+1$ for some $1\le i \le 4$, then the multiplication morphism
	$$ X_i^{\lambda_i+1}: \ (\prod_{j\neq i} X_j^{\ell_j}) X_i^{\ell_i-\lambda_i-1} R_{\ell\c}  \to (\prod_{1\le j \le 4} X_j^{\ell_j}) R_{\ell\c} $$
	 is an isomorphism, yielding the morphism $\gamma^{\ast}$ is an epimorphism by (\ref{extension spaces between line bundles}). Otherwise $\ell_i \le \lambda_i$ for any $i$, we have
	 $\ell>0$ by the above claim and thus $\x-(1+\lambda_i)\x_i \ge \z$ for  $1\le i \le 4$. 
	  This yields that the morphism
	 $$\gamma^{\ast}=(X_i^{\lambda_i+1})_{1\le i\le 4}:\ \bigoplus_{1\le i \le 4} \Big((\prod_{j\neq i} X_j^{\ell_j})X_{i}^{p_i+\ell_i-\lambda_i-1} R_{(\ell-1)\c}\Big) \to (\prod_{1\le j \le 4} X_j^{\ell_j}) R_{\ell\c}$$ is an epimorphism. Hence the assertion follows.	 	
	 \end{proof}
	 
	 \begin{proposition}\label{homological properties of $E$ and $K$ 4} The following assertions hold.
	 	\begin{itemize}
	 		\item[(a)]	The morphism $p:E \to K$  is a mininal right \emph{(}$\ACM  \X$\emph{)}-approximation.
	 		\item[(b)] The morphism $i:K \to L \langle \x \rangle$   is a mininal left \emph{(}$\ACM \X$\emph{)}-approximation.	
	 	\end{itemize}
	 \end{proposition}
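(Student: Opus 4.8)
The plan is to treat (a) and (b) in parallel, using the two short exact sequences $\eta_1$ and $\eta_2$ together with the two dual descriptions of $\ACM\X$ in (\ref{ACM bundle eq.}), and to read off minimality from the exceptionality of $E$ and $K$. For the approximation property in (a), I would first recall that $E\in\ACM\X$ by Proposition \ref{homological properties of $E$ and $K$ 3}, so that $p$ is genuinely a morphism from an ACM bundle to $K$. For any $Y\in\ACM\X$, applying $\Hom(Y,-)$ to $\eta_1:0\to L(\w)\to E\xrightarrow{p} K\to 0$ yields the exact sequence $\Hom(Y,E)\xrightarrow{p_\ast}\Hom(Y,K)\to\Ext^1(Y,L(\w))$. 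Since $L(\w)$ is a line bundle and $Y$ is ACM, the second description in (\ref{ACM bundle eq.}) gives $\Ext^1(Y,L(\w))=0$, so $p_\ast$ is surjective and every morphism from an ACM bundle factors through $p$. For minimality I would use that $E$ is exceptional (Proposition \ref{homological properties of $E$ and $K$ 2}), so $\End(E)=k$: any $\phi\in\End(E)$ with $p\phi=p$ is a scalar $c\cdot\mathrm{id}_E$, and as $p$ is a nonzero epimorphism onto $K\neq 0$, the relation $cp=p$ forces $c=1$; hence $p$ is right minimal.

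Part (b) begins dually. Here $L\langle\x\rangle$ is a finite direct sum of line bundles, hence lies in $\ACM\X$. For any $Y\in\ACM\X$, applying $\Hom(-,Y)$ to $\eta_2:0\to K\xrightarrow{i} L\langle\x\rangle\to L(\x)\to 0$ gives the exact sequence $\Hom(L\langle\x\rangle,Y)\xrightarrow{i^\ast}\Hom(K,Y)\to\Ext^1(L(\x),Y)$, and the first description in (\ref{ACM bundle eq.}) yields $\Ext^1(L(\x),Y)=0$ because $L(\x)$ is a line bundle; thus $i^\ast$ is surjective and $i$ is a left $\ACM\X$-approximation.

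The only step demanding real work is the minimality in (b), which I expect to be the main obstacle precisely because $L\langle\x\rangle$ is decomposable, so the scalar argument of (a) is unavailable. Since $\coh\X$ is Hom-finite (Theorem \ref{basic properties}(b)) it is Krull--Schmidt, so I may pass to the left-minimal version of $i$: write $L\langle\x\rangle=N_0\oplus N'$ with $i=\binom{i_0}{0}$ and $i_0:K\to N_0$ left minimal. As $N_0$ is a split summand containing $\im i$, this gives $\coker i\cong\coker i_0\oplus N'$. But $\coker i=L(\x)$ is an indecomposable line bundle, so either $N'=0$, in which case $i=i_0$ is left minimal and we are done, or $\coker i_0=0$. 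In the latter case $i_0$ is epic, and it is monic as a corestriction of the monomorphism $i$, hence an isomorphism in the abelian category $\coh\X$; then $K\cong N_0$ would be a direct summand of the sum of line bundles $L\langle\x\rangle$. Since $K$ is indecomposable (it is exceptional by Proposition \ref{homological properties of $E$ and $K$ 2}), Krull--Schmidt would force $K$ to be a single line bundle, so $\rk K=1$; but $\rk K=3$, read off from $\eta_2$ via $\rk L\langle\x\rangle=4$ and $\rk L(\x)=1$. This contradiction rules out the latter case, so $N'=0$ and $i$ is left minimal.
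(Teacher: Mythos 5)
Your proof is correct, and its approximation halves coincide with the paper's: in both, one applies $\Hom(M,-)$ to $\eta_1$ and kills $\Ext^1(M,L(\w))$, respectively $\Hom(-,M)$ to $\eta_2$ and kills $\Ext^1(L(\x),M)$, via (\ref{ACM bundle eq.}). Where you genuinely diverge is minimality. For (a), the paper takes $f\in\End E$ with $pf=p$, notes that $f$ induces some $g\in\End L(\w)=k$ on kernels, and rules out $g=0$ because then $f$ would factor through $p$, forcing $p$ to be a split epimorphism and $\eta_1$ to split; hence $g$, and by the snake lemma $f$, is an isomorphism. Your scalar argument via $\End E=k$ is shorter but leans on the exceptionality of $E$ (Proposition \ref{homological properties of $E$ and $K$ 2}), which the paper's diagram chase never needs. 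For (b), the paper's ``similar argument'' is just the dual chase: $\phi\in\End L\langle\x\rangle$ with $\phi i=i$ induces $g\in\End L(\x)=k$ on cokernels, and $g=0$ would make $i$ a split monomorphism, contradicting that $\eta_2$ is nonsplit (no summand of $L\langle\x\rangle$ is isomorphic to $L(\x)$); so $\phi$ is an isomorphism. That chase is insensitive to the decomposability of $L\langle\x\rangle$, which you correctly identified as the obstacle to the scalar trick but then circumvented with heavier machinery: the Krull--Schmidt splitting $i=\binom{i_0}{0}$ into a left minimal part, indecomposability of $K$ (again exceptionality), and the rank count $\rk K=3$ against the rank-one summands of $L\langle\x\rangle$. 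Both routes are sound; the paper's buys uniformity -- (a) and (b) are literally dual arguments resting only on nonsplitness of $\eta_1$, $\eta_2$ and one-dimensionality of $\End$ of a line bundle -- while yours isolates exactly which general facts (Hom-finiteness, Krull--Schmidt, additivity of rank) suffice, at the price of quoting the stronger Propositions \ref{homological properties of $E$ and $K$ 2} and \ref{homological properties of $E$ and $K$ 3} as ingredients.
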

	 \begin{proof}
	 	We only prove (a), since (b) can be shown by a similar argument.  
	 	
	 	(a) First we show that the morphism $p:E\to K$ is right minimal.
	 	Let $f\in \End E$ such that the right square of the following diagram 
	 	\[
	 	\begin{tikzcd} 
	 		0\rar&	L(\w) \rar{\alpha}\dar{g}& E \dar{f} \rar{p}& K\dar[equals] \rar &0 \\
	 		0\rar&	L(\w)\rar{\alpha}&E \rar{p} & K \rar &0
	 	\end{tikzcd}
	 	\]
	 	commutes. Thus there exists a morphism $g:L(\w) \to L(\w)$ such that the left square commutes. Since the two exact rows of the diagram are nonsplit and $\End L(\w)=k$, $g$ is an isomorphism. Hence  $f$ is also an isomorphism. 
	 	
	 	Next we show that $p$ is a right $\ACM \X$-approximation of $K$. For any $M\in \ACM \X$, we apply $\Hom(M,-)$ to $\eta_1$ and obtain an exact sequence
	 	$$(M,E) \xrightarrow{p^{\ast}} (M,K)\to {}^{1}(M,L(\w))=0.$$
	 	Thus  $p^{\ast}:\Hom(M,E) \to \Hom(M,K)$ is surjective. This finishes the proof.
	 \end{proof}

	 	\begin{remark} Let $0\to K \to L \langle \x \rangle \xrightarrow{\gamma} L(\x) \to 0$ be an exact sequence. 	 By Proposition \ref{homological properties of $E$ and $K$ 1}, 
	 		 $\Ext^{1}(K,L(\y+\w))\simeq D\Ext^{1}(L(\y),K))=k$ holds for any $0\le \y  \le \vdelta$. Let $0\to L(\y+\w) \to M \to K \to 0$ be a nonsplit exact sequence in $\coh \X$. Then 
	 		 $$M\in \ACM \X \ \text{ if and only if } \ \y=0.$$ In fact, for $\y > 0$, one can  check that $\Ext^{1}(L,M)\simeq\Ext^{1}(L,K)=k$, a contradiction to the statement (\ref{ACM bundle eq.}) for $M$ being an ACM bundle.
	 		
	 	\end{remark}

	 	\begin{proposition} \label{uniqueness of org} Assume that there exists an exact sequence 
	\begin{equation}\label{2-uuexact}
		0 \to L(\w) \xrightarrow{}  F  
		\xrightarrow{} 
		L \langle \x \rangle \xrightarrow{\gamma} L(\x) \to 0
	\end{equation}
	 in $\ACM \X$. Then there is an isomorphism  $ F\simeq E_L\langle \x \rangle.$  
	\end{proposition}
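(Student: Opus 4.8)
The plan is to reduce the four-term exact sequence (\ref{2-uuexact}) to a short exact sequence and then to exploit the one-dimensionality of $\Ext^{1}(K,L(\w))$.

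First I would factor the middle morphism $\beta\colon F\to L\langle\x\rangle$ of (\ref{2-uuexact}) through its image. Since (\ref{2-uuexact}) is exact at $L\langle\x\rangle$ and uses the same morphism $\gamma=(X_i^{\lambda_i+1})_{1\le i\le 4}$ as $\eta_2$, we have $\Im\beta=\ker\gamma=K$, where $K=K_{\x,L}$. Exactness at $F$ and at $L(\w)$ then shows that $\beta$ induces a short exact sequence
$$\xi_F\colon\quad 0\to L(\w)\to F\to K\to 0$$
in $\coh\X$, so that $F$ represents a class in $\Ext^{1}(K,L(\w))$. Applying exactly the same factorization to (\ref{2-extension bundle exact}) recovers the defining nonsplit extension $\eta_1$ of $E=E_L\langle\x\rangle$, so it suffices to compare the two classes in $\Ext^{1}(K,L(\w))$.

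Next I would show that $\xi_F$ does not split. If it did, then $F\cong L(\w)\oplus K$, and by additivity of $\Ext$ together with (\ref{extension spaces between line bundles}) and Proposition \ref{homological properties of $E$ and $K$ 1}(b) (with $\y=0$) we would obtain
$$\Ext^{1}(L,F)\cong \Ext^{1}(L,L(\w))\oplus\Ext^{1}(L,K)\cong 0\oplus k\neq 0,$$
contradicting the hypothesis $F\in\ACM\X$ through the characterization (\ref{ACM bundle eq.}). Hence $\xi_F$ is nonsplit, so both $F$ and $E$ define nonzero classes in $\Ext^{1}(K,L(\w))$.

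Finally I would invoke that $\Ext^{1}(K,L(\w))\simeq k$ is one-dimensional (this was computed just before $\eta_1$, equivalently it follows from Proposition \ref{homological properties of $E$ and $K$ 1}(b) via Auslander--Reiten--Serre duality (\ref{Auslander-Reiten-Serre duality})). Since $\End(K)=k$ by Proposition \ref{homological properties of $E$ and $K$ 2} and $\End(L(\w))=k$ for the line bundle $L(\w)$, any two nonzero classes differ by a scalar and are carried into one another by an automorphism of $L(\w)$; pushing out along such an automorphism is an isomorphism of extensions, so their middle terms are isomorphic. Therefore $F\simeq E_L\langle\x\rangle$, as claimed. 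I expect the only delicate point to be the clean identification $\Im\beta=K$ and the resulting short exact sequence $\xi_F$; once $\xi_F$ is in hand, the conclusion is a formal consequence of the one-dimensional $\Ext$-group.
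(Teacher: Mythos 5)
Your proposal is correct, and its overall skeleton (reduce to the short exact sequence $\xi_F\colon 0\to L(\w)\to F\to K\to 0$, prove it nonsplit, then conclude from $\dim_k\Ext^{1}(K,L(\w))=1$ that the two nonzero classes have isomorphic middle terms) matches the paper's. The one step where you genuinely diverge is the nonsplitness argument. The paper first proves that the whole four-term sequence is nonsplit \emph{as a Yoneda $2$-extension}: it invokes Lemma \ref{2-Yoneda} to produce, from a hypothetical splitting, an object $X$ with exact row $0\to L(\w)\to X\to L\langle\x\rangle\to 0$ and column $0\to F\to X\to L(\x)\to 0$; the characterization (\ref{ACM bundle eq.}) forces both to split, giving a contradiction, and nonsplitness of the short piece $\xi_F$ is then deduced from nonsplitness of the $2$-extension. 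You instead argue directly at the level of $\xi_F$: a splitting would make $K$ a direct summand of $F$, while $\Ext^{1}(L,K)=k\neq 0$ (Proposition \ref{homological properties of $E$ and $K$ 1}(b) with $\y=0$) shows $K$ violates the ACM condition, contradicting $F\in\ACM\X$. Your route is the more elementary of the two --- it bypasses Lemma \ref{2-Yoneda} entirely and isolates the real obstruction, namely that $K$ itself is not an ACM bundle --- whereas the paper's route establishes along the way the slightly stronger statement that the four-term sequence is nonzero in Yoneda $\Ext^{2}(L(\x),L(\w))$, which is the formulation it reuses elsewhere (e.g.\ in Theorem \ref{eq. des. 2-ext}, where nonsplitness of $\xi$ is one of the equivalent conditions). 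Both arguments rest on the same two pillars, ACM-ness and the one-dimensionality of $\Ext^{1}(K,L(\w))$, so either is a complete proof.
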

	\begin{proof} Let $K:=\ker \gamma$. 
		First we show that the exact sequence (\ref{2-uuexact}) is nonsplit.  Assume for contradiction that it is split. By Lemma \ref{2-Yoneda}, there exists an object $X$ in $\coh \X$ fitting into the following commutative diagram 
		\[
		\begin{tikzcd} 
			& 0 \dar& 0\dar & 0\dar &\\
			0\rar &	L(\w)\rar\ar[equals]{d}&F\rar\dar&K \dar\rar & 0\\
			0\rar &	L(\w)\rar&X\rar\dar&L \langle \x \rangle \dar\rar & 0 \\
			& & L(\x)\ar[equals]{r} \dar & L(\x)\dar& \\
			&  & 0 & 0 & 
		\end{tikzcd}
		\]	
		with exact rows and columns in $\coh \X$.  By (\ref{ACM bundle eq.}), we have that both the middle row and column are split, a contradiction. Thus the exact sequence (\ref{2-uuexact}) is nonsplit.
		
		By  Proposition \ref{homological properties of $E$ and $K$ 1},  we obtain
		$\Ext^{1}(K,L(\w))\simeq D\Ext^{1}(L,K)=k.$ 
		Moreover, both of the exact sequences 
		$$0\to L(\w)\to E_L\langle \x \rangle \to K \to 0 \ \ \text{and} \ \ 0\to L(\w)\to F \to K \to 0$$
		are also nonsplit since the exact rows of the following diagram are nonsplit.
		This implies the commutativity of the following diagram. 
		\[
		\begin{tikzcd}[column sep=1.3em, row sep=0.5em]
			0\ar{rr}&&L(\w)\ar[rr,""]\ar[equals]{dd}&&F\ar[rr,""]\ar[dr]\ar[dd,""]&&L\langle \x \rangle\ar[rr, "\gamma"]\ar[equals]{dd}&&L(\x)\ar{rr}\ar[equals]{dd}&&0\\
			&&&&&K\ar{ur}\\
			0\ar{rr}&&L(\w)\ar[rr,"\alpha"]&&E_L\langle \x \rangle\ar[rr, "\ \ \ \ \ \ \beta"]\ar{dr}&&L\langle \x \rangle\ar[rr, "\gamma"]&&L(\x)\ar[rr]&&0\\
			&&&&&K\ar{ur}\ar[crossing over, equals]{uu}
		\end{tikzcd}
		\]	
		Hence we obtain the desired isomorphism.   	
	\end{proof}
	 As an immediate consequence of Propositions \ref{homological properties of $E$ and $K$ 1}, \ref{homological properties of $E$ and $K$ 3} and \ref{uniqueness of org}, we have the following result, giving the equivalent descriptions of $2$-extension bundles.
	 
	 \begin{theorem} \label{eq. des. 2-ext} Let $\xi: 0 \to L(\w) \xrightarrow{}  F  
	 	\xrightarrow{} L \langle \x \rangle \xrightarrow{\gamma} L(\x) \to 0$ be an exact sequence in $\coh \X$. Then the following conditions are equivalent. 
	 	\begin{itemize}
	 		\item[(a)]  $F \simeq E_L\langle \x \rangle $.
	 		\item[(b)]  $F \in \ACM \X$.
	 		\item[(c)]  $\xi$ is nonsplit.
	 	\end{itemize}
	 \end{theorem}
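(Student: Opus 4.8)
The plan is to first dispatch the equivalence $(a)\Leftrightarrow(b)$, which is immediate from the results already proved, and then to link in condition $(c)$. Indeed, Proposition \ref{homological properties of $E$ and $K$ 3} shows $E_L\langle \x \rangle \in \ACM \X$, giving $(a)\Rightarrow(b)$, while Proposition \ref{uniqueness of org} is precisely the statement $(b)\Rightarrow(a)$. Throughout I write $K:=\ker\gamma$; since $\gamma$ is the fixed morphism in the statement, $K$ is determined, and exactness at $L\langle \x \rangle$ factors any such $\xi$ into the short exact sequences $0\to L(\w)\to F\to K\to 0$ and $0\to K\to L\langle \x \rangle\xrightarrow{\gamma}L(\x)\to 0$. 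It therefore suffices to prove $(a)\Rightarrow(c)$ and $(c)\Rightarrow(a)$.

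For $(a)\Rightarrow(c)$ I would argue by indecomposability. If $\xi$ were split, then by Lemma \ref{2-Yoneda} it fits into the $3\times 3$ diagram whose middle row $0\to L(\w)\to X\to L\langle \x \rangle\to 0$ splits (as $\Ext^{1}(L\langle \x \rangle,L(\w))=0$); pulling back along $K\hookrightarrow L\langle \x \rangle$ forces the left-hand sequence $0\to L(\w)\to F\to K\to 0$ to split, so $F\simeq L(\w)\oplus K$. But $F\simeq E_L\langle \x \rangle$ is exceptional, hence indecomposable, by Proposition \ref{homological properties of $E$ and $K$ 2}, a contradiction. Thus $\xi$ is nonsplit.

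The substantive direction is $(c)\Rightarrow(a)$. Applying $\Hom(-,L(\w))$ to $0\to K\to L\langle \x \rangle\to L(\x)\to 0$ and using $\Ext^{1}(L\langle \x \rangle,L(\w))=0=\Ext^{2}(L\langle \x \rangle,L(\w))$ produces a connecting isomorphism $\delta\colon\Ext^{1}(K,L(\w))\xrightarrow{\sim}\Ext^{2}(L(\x),L(\w))$, exactly as in the construction preceding Proposition \ref{homological properties of $E$ and $K$ 1}. Under the Yoneda splice, $\delta$ carries the class of the left-hand sequence $0\to L(\w)\to F\to K\to 0$ to the class of the $2$-extension $\xi$; hence $\xi$ is nonsplit precisely when this short exact sequence is nonsplit. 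So $(c)$ forces $0\to L(\w)\to F\to K\to 0$ to be a nonsplit extension of $K$ by $L(\w)$. Since $\Ext^{1}(K,L(\w))\simeq k$ is one-dimensional by Proposition \ref{homological properties of $E$ and $K$ 1}, every nonsplit such extension is isomorphic to the defining extension $\eta_{1}$ of $E_L\langle \x \rangle$, whence $F\simeq E_L\langle \x \rangle$ and $(a)$ holds.

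The step I expect to require the most care is the compatibility claim that the Yoneda class of the four-term sequence $\xi$ is the image under $\delta$ of the class of its left-hand short exact sequence, i.e.\ that splitting of $\xi$ is equivalent to splitting of $0\to L(\w)\to F\to K\to 0$. This can be made rigorous either by tracing the diagram of Lemma \ref{2-Yoneda} in both directions, or by invoking the standard identification of the connecting map $\delta$ with Yoneda product against the class of $0\to K\to L\langle \x \rangle\to L(\x)\to 0$. Once this is settled, the one-dimensionality of $\Ext^{1}(K,L(\w))$ makes the uniqueness automatic, and the whole theorem reduces to the three named propositions.
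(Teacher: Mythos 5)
Your proposal is correct and takes essentially the same route as the paper, which obtains the theorem as an immediate consequence of Propositions \ref{homological properties of $E$ and $K$ 1}, \ref{homological properties of $E$ and $K$ 3} and \ref{uniqueness of org}: the implications (a)$\Leftrightarrow$(b) are cited identically, and your (c)$\Rightarrow$(a) argument (nonsplitness of $\xi$ forces nonsplitness of $0\to L(\w)\to F\to K\to 0$ via the Yoneda splice, then one-dimensionality of $\Ext^{1}(K,L(\w))$ identifies $F$ with $E_L\langle \x \rangle$) is exactly the second half of the proof of Proposition \ref{uniqueness of org}, which indeed only uses nonsplitness rather than the ACM hypothesis. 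Your use of Lemma \ref{2-Yoneda} plus indecomposability (exceptionality from Proposition \ref{homological properties of $E$ and $K$ 2}) for (a)$\Rightarrow$(c), instead of the paper's ACM-based splitting contradiction, is only a minor reorganization of the same argument.
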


	 \begin{proposition}\label{shift preserve extension bundle}
	 	\begin{itemize}
	 		\item[(a)] Let $E$ be a $2$-extension bundle. Then  $\rk E=4$. 
	 		\item[(b)] There exists an isomorphism $$(E_L\langle \x \rangle)(\z)\simeq E_{L(\z)}\langle \x \rangle$$
	 		for any $0 \leq \x \leq \vdelta $ and $\z \in \L$.
	 	\end{itemize}	
	 \end{proposition}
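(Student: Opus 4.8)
The plan is to read off both statements directly from the defining four-term sequence (\ref{2-extension bundle exact}) together with the characterization of $2$-extension bundles recorded in Theorem \ref{eq. des. 2-ext}.

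For (a), I would use that the rank functor is additive along exact sequences, hence applies termwise to (\ref{2-extension bundle exact}). Since every line bundle has rank $1$ and $L\langle \x \rangle$ is a direct sum of four line bundles, the alternating sum of ranks gives $\rk L(\w) - \rk E + \rk L\langle \x \rangle - \rk L(\x) = 0$, i.e. $1 - \rk E + 4 - 1 = 0$, so $\rk E = 4$. This step is immediate and carries no real difficulty.

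For (b), the idea is to apply the line-bundle twist $(\z)$ to the defining sequence of $E_L\langle \x \rangle$. Because twisting by $\OO(\z)$ is an exact autoequivalence of $\coh \X$, applying it to $0 \to L(\w) \to E_L\langle \x \rangle \to L\langle \x \rangle \xrightarrow{\gamma} L(\x) \to 0$ produces the exact sequence
$$0 \to (L(\z))(\w) \to (E_L\langle \x \rangle)(\z) \to (L(\z))\langle \x \rangle \xrightarrow{\gamma} (L(\z))(\x) \to 0,$$
where I use the identifications $L(\w)(\z) = (L(\z))(\w)$, $L(\x)(\z) = (L(\z))(\x)$, and, component by component, $L_i\langle \x \rangle(\z) = (L(\z))_i\langle \x \rangle$, so that $L\langle \x \rangle(\z) = (L(\z))\langle \x \rangle$ and the twisted connecting morphism is again $\gamma = (X_i^{\lambda_i+1})_{1\le i\le 4}$. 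Thus the twisted sequence has exactly the shape occurring in the definition of $E_{L(\z)}\langle \x \rangle$.

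It then remains to verify that $(E_L\langle \x \rangle)(\z)$ lies in $\ACM \X$. By Proposition \ref{homological properties of $E$ and $K$ 3} we have $E_L\langle \x \rangle \in \ACM \X$, and this class is stable under line-bundle twist: invoking the characterization (\ref{ACM bundle eq.}) and the isomorphism $\Ext^{1}(\OO(\y), X(\z)) \simeq \Ext^{1}(\OO(\y-\z), X)$ coming from the autoequivalence $(\z)$, the vanishing for $X = E_L\langle \x \rangle$ transfers to its twist. Hence $(E_L\langle \x \rangle)(\z) \in \ACM \X$, and applying the equivalence (b)$\Rightarrow$(a) of Theorem \ref{eq. des. 2-ext} with $L(\z)$ in place of $L$ yields $(E_L\langle \x \rangle)(\z) \simeq E_{L(\z)}\langle \x \rangle$. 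The argument is essentially formal, and the only point requiring care is the bookkeeping that identifies the twisted terms with the data $(L(\z), \x)$ — in particular that the connecting map is unchanged — and the verification that the ACM property survives twisting, after which the uniqueness built into Theorem \ref{eq. des. 2-ext} finishes the proof.
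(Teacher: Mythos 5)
Your proposal is correct and follows essentially the same route as the paper: part (a) via additivity of the rank function, and part (b) by twisting a defining sequence and invoking the uniqueness of the middle term (your Theorem \ref{eq. des. 2-ext}(b)$\Rightarrow$(a) is just the packaged form of Proposition \ref{uniqueness of org}, which is what the paper cites). The only cosmetic difference is the direction of the twist — the paper shifts the defining sequence of $E_{L(\z)}\langle \x \rangle$ by $(-\z)$ rather than shifting that of $E_L\langle \x \rangle$ by $(\z)$ — and your explicit check that the ACM property survives twisting is a point the paper leaves implicit.
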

	 \begin{proof}(a) This follows from the additivity of the rank function.
	 	
	 	(b)	Applying degree shift $(-\z)$ to the defining sequence  for $E_{L(\z)}\langle \x \rangle$, we obtain an exact sequence
	 	$0 \to L(\w) \to E_{L(\z)}\langle \x \rangle(-\z) \to L\langle \x \rangle \xrightarrow{\gamma} L(\x) \to 0$. Then we have $E_{L(\z)}\langle \x \rangle(-\z) \simeq E_L \langle \x \rangle$ by Proposition \ref{uniqueness of org}.	Thus the assertion follows.
	 \end{proof}
	 
	 	\subsection{$2$-coextension bundles}
	 In this section, we introduce the dual concept of $2$-extension bundles, called $2$-coextension bundles. 	   
	  For any $\x=\sum_{i=1}^4\lambda_i\x_i$ with $0\le \x \le \vdelta$, we consider the monomorphism 	
	 \begin{equation}
	 	L(\w) \xrightarrow{\gamma'}   \bigoplus_{1\le i \le 4}L(\w+(1+\lambda_i)\x_i),
	 \end{equation}
	 in $\coh \X$, where $\gamma':=(X_i^{\lambda_i+1})_{1\le i\le 4}$. For simplicity, we write 
	 \begin{align}
	 		 L_i[\x]:= L(\w+(1+\lambda_i)\x_i) \ \ \text{and}\ \ L[\x]:=\bigoplus_{1\le i\le4}L_i[\x].
	 \end{align}	  
	 Let $\mu_{2}: 0 \to L(\w) \xrightarrow{\gamma'} L[\x] \to C_{\x,L} \to 0$ be an exact sequence in $\coh \X$.  We usually omit the subscript of $C_{\x,L}$ below. Note that $\Ext^{1}(L(\x),C)\simeq \Ext^{2}(L(\x),L(\w))=k$  by (\ref{Auslander-Reiten-Serre duality}). Let $\mu_1: 0\to C \to F \to L(\x)  \to 0 $ be a nonsplit exact sequence. By connecting the two sequences $\mu_1$ and $\mu_2$, we finally obtain an exact sequence
	 \begin{equation} \label{2-coextension bundles}
	 \mu: \quad	0 \to L(\w)\xrightarrow{\gamma'} L[\x] \xrightarrow{\beta'} F \xrightarrow{\alpha'} L(\x) \to 0
	 \end{equation}
	 in $\coh \X$. In this case, the term $F_L \langle \x \rangle:=F$ of the sequence, which is uniquely defined up to isomorphism, is called the \emph{$2$-coextension bundle} gived by the data $(L,\x)$. If $L=\OO$, then we just write $F\langle \x \rangle$. For $\x=0$, the term $F_L:=F_L\langle 0 \rangle$ is called the \emph{$2$-coAuslander bundle} associated with $L$. 
	 
	 \begin{proposition}
	 	\label{uniqueness of org co} 
	 	Assume that there exists an exact sequence 
	 	\begin{equation*}
	 		0 \to L(\w) \xrightarrow{\gamma'}  L[\x]  
	 		\xrightarrow{} 
	 		E \xrightarrow{} L(\x) \to 0
	 	\end{equation*}
	 	 in $\ACM  \X$. Then  there is an isomorphism $ E\simeq F_L\langle \x \rangle$. 
	 \end{proposition}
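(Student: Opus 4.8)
The plan is to dualize the proof of Proposition~\ref{uniqueness of org}. Write the given sequence as
\[
\xi:\quad 0\to L(\w)\xrightarrow{\gamma'} L[\x]\xrightarrow{\beta} E\xrightarrow{\alpha} L(\x)\to 0,
\]
and set $C:=\coker\gamma'$. Since the left-hand monomorphism of $\xi$ is exactly the $\gamma'$ used in the defining sequence $\mu_{2}\colon 0\to L(\w)\xrightarrow{\gamma'}L[\x]\to C\to 0$ of $F_L\langle\x\rangle$, the object $C$ here coincides with the one building $F_L\langle\x\rangle$. As $\xi$ is exact at $L[\x]$, the image of $\beta$ is $C$, so $\xi$ factors through $\mu_{2}$ together with a short exact sequence
\[
\xi_{1}\colon\quad 0\to C\to E\xrightarrow{\alpha} L(\x)\to 0.
\]
First I would show that $\xi_{1}$ is nonsplit; then I would compare $\xi_{1}$ with the nonsplit defining sequence $\mu_{1}\colon 0\to C\to F_L\langle\x\rangle\to L(\x)\to 0$ inside the space $\Ext^{1}(L(\x),C)$.

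For the nonsplitness I would argue by contradiction, exactly dually to Proposition~\ref{uniqueness of org}. If $\xi_{1}$ split, then $E\cong C\oplus L(\x)$; since $\ACM\X$ is closed under direct summands and $E\in\ACM\X$ by hypothesis, this would force $C\in\ACM\X$, hence $\Ext^{1}(L(\x),C)=0$ by the characterization~(\ref{ACM bundle eq.}). But $\Ext^{1}(L(\x),C)\simeq\Ext^{2}(L(\x),L(\w))=k$ by Auslander-Reiten-Serre duality~(\ref{Auslander-Reiten-Serre duality}), a contradiction. Mirroring the proof of Proposition~\ref{uniqueness of org} one could instead apply Lemma~\ref{2-Yoneda} to $\xi$ with the canonical factorization $L[\x]\twoheadrightarrow C\hookrightarrow E$ of $\beta$, producing an object $Z$ whose middle column $0\to L[\x]\to Z\to L(\x)\to 0$ splits because $\Ext^{1}(L(\x),L[\x])=0$ and whose middle row $0\to L(\w)\to Z\to E\to 0$ splits because $E\in\ACM\X$; comparing the two resulting decompositions of $Z$ by Krull-Schmidt yields the same contradiction.

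Finally, both $\xi_{1}$ and $\mu_{1}$ are nonzero classes in $\Ext^{1}(L(\x),C)$, which is one-dimensional by the computation above. Hence they are nonzero scalar multiples of one another and therefore define isomorphic extensions: scaling by the relevant unit is an automorphism of $C$, so the five lemma produces a commutative diagram with identities on $C$ and $L(\x)$ and an isomorphism $E\xrightarrow{\sim}F_L\langle\x\rangle$ in the middle. This yields the desired conclusion.

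I expect no serious obstacle, since the argument is a formal dualization of Proposition~\ref{uniqueness of org}. The one place where the hypothesis $E\in\ACM\X$ is genuinely used is the nonsplitness of $\xi_{1}$, and the only point requiring slight care is the identification of the cokernel $C$ arising from $\xi$ with the object $C$ used to construct $F_L\langle\x\rangle$, so that the comparison really takes place in a single one-dimensional $\Ext^{1}$ group; once that identification and the one-dimensionality are in hand, the final isomorphism is automatic.
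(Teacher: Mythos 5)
Correct, and this is essentially the paper's approach: the paper's proof of this proposition is literally ``dualize Proposition \ref{uniqueness of org}'', i.e.\ factor the given sequence through $C=\coker\gamma'$, establish nonsplitness, and compare the resulting right-hand short exact sequence with the nonsplit defining sequence $\mu_1$ of $F_L\langle \x\rangle$ inside the one-dimensional space $\Ext^{1}(L(\x),C)\simeq\Ext^{2}(L(\x),L(\w))=k$, which is exactly what you do. Your only deviation is cosmetic: for the nonsplitness step you replace the paper's template (Lemma \ref{2-Yoneda} together with (\ref{ACM bundle eq.}) and Krull-Schmidt) by the more direct observation that a splitting would make $C$ a direct summand of $E\in\ACM\X$ and hence force $\Ext^{1}(L(\x),C)=0$, contradicting the computation above, and you correctly record the paper's route as an alternative.
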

	 \begin{proof}
	 	The proof is parallel to that of Proposition \ref{uniqueness of org}.
	 \end{proof}
	 
	 Recall that \emph{vector bundle duality} $(-)^{\vee}: \vect\X \to \vect\X$, $X\mapsto \Hhom{}{X}{\OO}$, defined by the sheaf Hom functor, sends line bundle $\OO(\x)$ to $\OO(-\x)$ for any $\x \in \L$.
	 
	 \begin{proposition}\label{vector bundle duality}
	 	Let $L$ be a line bundle and  $0\le \x \le \vdelta$. Then
	 	$$E_L \langle \x \rangle^{\vee}=F_{L^{\vee}}\langle \x \rangle(-\x-\w).$$
	 	In particular, for $L=\OO$, we obtain $E \langle \x \rangle^{\vee}=F\langle \x \rangle(-\x-\w).$
	 \end{proposition}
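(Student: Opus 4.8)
The plan is to apply the vector bundle duality functor $(-)^{\vee}=\Hhom{}{-}{\OO}$ to the defining four-term exact sequence
$$0 \to L(\w) \xrightarrow{\alpha} E_L\langle \x \rangle \xrightarrow{\beta} L\langle \x \rangle \xrightarrow{\gamma} L(\x) \to 0$$
of $E_L\langle\x\rangle$ and to recognise the result as a shifted defining sequence of a $2$-coextension bundle, so that the uniqueness statement in Proposition~\ref{uniqueness of org co} forces the isomorphism. First I would note that all four terms lie in $\vect\X$: the outer terms and $L\langle\x\rangle$ are (sums of) line bundles, and $E_L\langle\x\rangle\in\ACM\X\subset\vect\X$ by Proposition~\ref{homological properties of $E$ and $K$ 3}, while its image $K$ lies in $\vect\X$ as well. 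Splitting the sequence at $K$ into $0\to L(\w)\to E_L\langle\x\rangle\to K\to 0$ and $0\to K\to L\langle\x\rangle\to L(\x)\to 0$, both short exact sequences of vector bundles, and using that $(-)^{\vee}$ is exact on such sequences, I dualise and splice back to obtain an exact four-term sequence with the arrows reversed.

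The second step is to identify the terms and maps. Since $(-)^{\vee}$ sends $\OO(\y)$ to $\OO(-\y)$, I would record $L(\w)^{\vee}=L^{\vee}(-\w)$, $L(\x)^{\vee}=L^{\vee}(-\x)$, and $L_i\langle\x\rangle^{\vee}=L^{\vee}(-\x+(1+\lambda_i)\x_i)$, and observe that the dual of each multiplication map $X_i^{\lambda_i+1}$ is again multiplication by $X_i^{\lambda_i+1}$, so that $\gamma^{\vee}$ is the column of monomials $(X_i^{\lambda_i+1})_{1\le i\le 4}$. Dualising thus yields the exact sequence
$$0\to L^{\vee}(-\x)\xrightarrow{\gamma^{\vee}}\bigoplus_{1\le i\le 4}L^{\vee}(-\x+(1+\lambda_i)\x_i)\to E_L\langle\x\rangle^{\vee}\to L^{\vee}(-\w)\to 0.$$

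The third step matches this with coextension data. Setting $M:=L^{\vee}(-\x-\w)$, I would check $M(\w)=L^{\vee}(-\x)$, $M(\x)=L^{\vee}(-\w)$ and $M[\x]=\bigoplus_{1\le i\le 4}L^{\vee}(-\x+(1+\lambda_i)\x_i)$, so the dualised sequence is exactly of the form $0\to M(\w)\xrightarrow{\gamma'}M[\x]\to E_L\langle\x\rangle^{\vee}\to M(\x)\to 0$ treated in Proposition~\ref{uniqueness of org co}. To apply that proposition I must verify $E_L\langle\x\rangle^{\vee}\in\ACM\X$, which holds because duality is an exact contravariant self-equivalence of $\vect\X$ with $X^{\vee\vee}\cong X$, giving $\Ext^{1}(\OO(\z),E_L\langle\x\rangle^{\vee})\cong\Ext^{1}(E_L\langle\x\rangle,\OO(-\z))=0$ for all $\z$ by the symmetric characterisation (\ref{ACM bundle eq.}) of ACM bundles. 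Proposition~\ref{uniqueness of org co} then gives $E_L\langle\x\rangle^{\vee}\simeq F_M\langle\x\rangle$. Finally, twisting the defining sequence of $F_{L^{\vee}}\langle\x\rangle$ by $(-\x-\w)$ produces an exact sequence in $\ACM\X$ of the same shape with the same line bundle $M$, so the same uniqueness statement identifies $F_{L^{\vee}}\langle\x\rangle(-\x-\w)$ with $F_M\langle\x\rangle$ as well, whence $E_L\langle\x\rangle^{\vee}\simeq F_{L^{\vee}}\langle\x\rangle(-\x-\w)$; the case $L=\OO$ is the stated specialisation. The main obstacle I anticipate is the bookkeeping in the second step: confirming that duality carries the four-term sequence to an exact four-term sequence (handled by splitting through $K$) and, above all, that the transposed map $\gamma^{\vee}$ has entries precisely $X_i^{\lambda_i+1}$ in the correct degrees, so that it genuinely coincides with the monomorphism $\gamma'$ defining the coextension bundle rather than merely resembling it.
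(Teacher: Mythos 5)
Your proposal is correct and follows essentially the same route as the paper: dualize the defining four-term sequence of $E_L\langle \x \rangle$, recognize the resulting sequence as a defining sequence of a $2$-coextension bundle, and invoke the uniqueness statement of Proposition~\ref{uniqueness of org co}. The only cosmetic difference is that the paper applies the degree shift by $\x+\w$ before invoking uniqueness (so the base line bundle is $L^{\vee}$ directly), whereas you twist at the end via $M=L^{\vee}(-\x-\w)$; your additional verifications (exactness of duality by splitting at $K$, the form of $\gamma^{\vee}$, and ACM-ness of $E_L\langle \x \rangle^{\vee}$) are details the paper leaves implicit.
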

	 \begin{proof} Let $\eta: 0\to L(\w) \to E_L \langle \x \rangle \to L\langle \x \rangle \xrightarrow{\gamma} L(\x) \to 0$ be the defining sequence for the $2$-extension bundle $E_L \langle \x \rangle$. Applying vector bundle duality and the degree shift by $\x+\w$ to the sequence $\eta$, we obtain an exact sequence
	 	$$0\to L^{\vee}(\w)  \xrightarrow{\gamma'} L^{\vee}[\x]  \to E_{L^{\vee}} \langle \x \rangle(\x+\w) \to L^{\vee}(\x) \to 0.$$	
	 	By  Proposition \ref{uniqueness of org co}, we have 	$E_L \langle \x \rangle^{\vee}=F_{L^{\vee}}\langle \x \rangle(-\x-\w)$.
	 \end{proof}
	 
	\begin{remark}By the preceding lemma, like to the $2$-extension bundle, we note that each $2$-coextension bundle 
		is also an exceptional ACM bundle of rank four. Moreover, we can obtain dual version of Theorem \ref{eq. des. 2-ext}. 
	\end{remark}

	 \subsection{$2$-(co)Auslander bundles}
	 In this section,  we will give a description on how the $2$-Auslander bundles (resp. $2$-coAuslander bundles) are connected to the line bundles by the left (resp. right) minimal almost split morphisms in $\ACM \X$.

	 As before, let $ \eta: 0 \to L(\w) \xrightarrow{\alpha}  E_L 
	 \xrightarrow{\beta} 
	 L\langle 0 \rangle \xrightarrow{\gamma} L \to 0$ and 	$\mu: 0\to L(\w) \xrightarrow{\gamma'} L[0] \xrightarrow{\beta'} F_L \xrightarrow{\alpha'} L \to 0 $ 	 
	  be the defining sequences for the $2$-Auslander bundle $E_L$ and $2$-coAuslander bundle $F_L$, respectively. 
	  	  	  
	  Recall that   $\rho$ denotes the composition $\DDD^{\bo}(\mod^{\L}R)\to\DDD_{\rm sg}^{\L}(R)\xrightarrow{\sim} \underline{\CM}^{\L}R$ and $\pi:{\CM}^{\L}R\xrightarrow{\sim} \ACM\X$ denotes the sheafification. We identify objects in $\underline{\CM}^{\L}R$ with objects in ${\CM}^{\L}R$ without nonzero projective direct summands. Denote by $\pi(\rho(k))$ the image of $\rho(k)$ in $\coh \X$ under $\pi$, and by $[1]$  the suspension functor in $\underline{\ACM} \, \X$.		 	
	 
	 \begin{proposition}\label{sq.cor} For $L=\OO$, we have $E[2]=\pi(\rho(k))=F[1]$.
	 \end{proposition}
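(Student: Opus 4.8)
The plan is to compute $\pi(\rho(k))$ by identifying the maximal Cohen--Macaulay representative of the residue field $k=R/(X_1,\dots,X_4)$ inside $\DDD^{\L}_{\rm sg}(R)\simeq\underline{\CM}^{\L}R$, and to match it with the $\x=0$ bundles $E$ and $F$ through their defining sequences. Two facts are used throughout. First, since $R$ is an $\L$-graded hypersurface of Krull dimension $3$ with $\deg(\sum_i X_i^{p_i})=\c$, its canonical module is $\omega_R\simeq R(\w)$, which is precisely why the twist $\OO(\w)$ enters the defining sequences. Second, in $\DDD^{\L}_{\rm sg}(R)$ the syzygy functor $\Omega$ agrees with $[-1]$, so $\Omega M\simeq M[-1]$ for every module $M$; in particular $\Omega^{2}k\simeq k[-2]$.

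For the equality $\pi(\rho(k))=E[2]$ I would work at the module level. The map $\gamma=(X_1,\dots,X_4)\colon\bigoplus_i R(-\x_i)\to R$ has cokernel $k$, so its image is $\mathfrak{m}=\Omega k$ and its kernel is $\Omega^{2}k$; sheafifying recovers $K=\ker\gamma=\pi(\Omega^{2}k)$ from the $\x=0$ construction. Local duality gives $\Ext^{1}_{R}(\Omega^{2}k,R(\w))\simeq\Ext^{3}_{R}(k,\omega_R)\simeq k$, matching the one-dimensional space $\Ext^{1}(K,\OO(\w))$ used to build $E$. Choosing the nonzero class produces a short exact sequence $0\to R(\w)\to\tilde E\to\Omega^{2}k\to0$ of $\L$-graded modules; since $R(\w)$ and $\Omega^{2}k$ both have depth $\ge 2$ and the nonzero class makes the connecting map $H^{2}_{\mathfrak{m}}(\Omega^{2}k)\to H^{3}_{\mathfrak{m}}(R(\w))$ injective, a local-cohomology computation shows $\tilde E$ is maximal Cohen--Macaulay. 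Sheafifying gives a nonsplit sequence $0\to\OO(\w)\to\pi\tilde E\to K\to0$, so $\pi\tilde E\simeq E$ by Proposition \ref{uniqueness of org}. As $R(\w)$ is projective it vanishes in $\DDD^{\L}_{\rm sg}(R)$, whence the attached triangle yields $\tilde E\simeq\Omega^{2}k\simeq k[-2]$ there. Therefore $\rho(k)=k\simeq\tilde E[2]$, and applying $\pi$ gives $\pi(\rho(k))=E[2]$.

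For the equality $\pi(\rho(k))=F[1]$ I would invoke the canonical duality $\mathbb{D}:=\RHom_{R}(-,\omega_R)$, which preserves $\proj^{\L}R$ and hence descends to a duality of $\DDD^{\L}_{\rm sg}(R)$ satisfying $\mathbb{D}\circ\Omega\simeq\Omega^{-1}\circ\mathbb{D}$; on $\ACM\X$ it induces $X\mapsto X^{\vee}(\w)$. By Proposition \ref{vector bundle duality}, $\pi\mathbb{D}(\tilde E)=(\pi\tilde E)^{\vee}(\w)=E^{\vee}(\w)=F$, so $\tilde F:=\mathbb{D}(\tilde E)$ sheafifies to $F$. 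Since $k$ has finite length, $\RHom_{R}(k,\omega_R)$ is concentrated in cohomological degree $3$ with $\Ext^{3}_{R}(k,\omega_R)\simeq k$, i.e. $\mathbb{D}(k)\simeq k[-3]$ in $\DDD^{\L}_{\rm sg}(R)$. Combining this with $\tilde E\simeq\Omega^{2}k$ from the previous step gives $\tilde F=\mathbb{D}(\tilde E)\simeq\mathbb{D}(\Omega^{2}k)\simeq\Omega^{-2}\mathbb{D}(k)\simeq k[-3][2]=k[-1]$, so $\rho(k)=k\simeq\tilde F[1]$ and hence $\pi(\rho(k))=F[1]$. Together with the previous paragraph this yields $E[2]=\pi(\rho(k))=F[1]$, as claimed.

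The main obstacle is the passage between $\mod^{\L}R$ and $\coh\X$: I must guarantee that the module-level extension $\tilde E$ is genuinely maximal Cohen--Macaulay, rather than Cohen--Macaulay only up to finite length, and that it sheafifies to the honest bundle $E$ instead of to a sequence differing by a torsion module. This is exactly where the depth/local-cohomology argument, the one-dimensionality of the relevant $\Ext$-spaces, and Proposition \ref{uniqueness of org} are indispensable. A secondary delicate point is the bookkeeping of shifts and twists: one must verify that $\omega_R\simeq R(\w)$ with no residual grading shift, that $\Ext^{3}_{R}(k,\omega_R)\simeq k$ is concentrated in degree $0$, and that $\mathbb{D}$ interchanges $\Omega$ and $\Omega^{-1}$ with the correct signs, since an error of a single suspension would break the final identification. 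An alternative to the third paragraph is a fully parallel computation built from the defining sequence of the $2$-coextension bundle $F$ together with a horseshoe argument dual to the one for $E$; this bypasses the self-duality of $k$ but instead requires identifying the relevant cosyzygy with $\tilde F$ directly.
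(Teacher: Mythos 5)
Your proposal is correct, but it runs in the opposite direction from the paper's proof and handles the second equality by a different mechanism. The paper works sheaf-to-module: it applies the graded global section functor $\Gamma$ to the defining sequence of the $2$-coAuslander bundle $F$, obtaining a four-term exact sequence in $\mod^{\L}R$ whose middle term $M$ lies in $\CM^{\L}R$ automatically (it is $\Gamma$ of an ACM bundle), and then performs the same Verdier-quotient triangle chase you use (stalk complexes of projectives vanish in $\DDD^{\L}_{\rm sg}(R)$) to get $\rho(k)\simeq M[1]$, hence $\pi(\rho(k))=F[1]$; the equality $\pi(\rho(k))=E[2]$ is then declared ``similar''. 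You work module-to-sheaf: you build $0\to R(\w)\to\tilde E\to\Omega^{2}k\to 0$ by hand, must therefore prove $\tilde E\in\CM^{\L}R$ yourself (your depth/local-cohomology step, which the paper's route gets for free), and only then sheafify, identifying $\pi\tilde E\simeq E$ via Proposition \ref{uniqueness of org}. The genuine divergence is your second half: instead of repeating the computation for $F$, you transport the first equality through the duality $\mathbb{D}=\RHom_{R}(-,\omega_R)$, using $\mathbb{D}(k)\simeq k[-3]$ and $E^{\vee}(\w)=F$ from Proposition \ref{vector bundle duality}; this proves the two equalities in a linked way and re-derives $F\simeq E[1]$ as a byproduct, at the cost of importing graded local duality and the canonical module. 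Two steps are asserted more tersely than they deserve, though neither is a gap: (i) injectivity of the connecting map $H^{2}_{\mathfrak{m}}(\Omega^{2}k)\to H^{3}_{\mathfrak{m}}(R(\w))$ does follow from nonvanishing of the extension class, because under graded local duality this map is dual to the Yoneda multiplication $\Hom_{R}(\omega_R,\omega_R)\to\Ext^{1}_{R}(\Omega^{2}k,\omega_R)$ by that class and $H^{2}_{\mathfrak{m}}(\Omega^{2}k)\simeq D\Ext^{1}_{R}(\Omega^{2}k,\omega_R)$ is one-dimensional, but this should be said; (ii) the compatibility $\pi\bigl(\Hom_{R}(M,R(\w))\bigr)\simeq(\pi M)^{\vee}(\w)$ for $M\in\CM^{\L}R$, which you use to get $\pi\mathbb{D}(\tilde E)=F$, needs a one-line argument (dualize a finite free presentation of $M$ and use exactness of $\pi$ together with $\OO(\x)^{\vee}=\OO(-\x)$).
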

	 
	 \begin{proof} 
	 	 We only show the second equality, the first one can be shown similarly.
	 	 
	 	 Applying the graded global section functor 
	 	$$\Gamma:\quad\coh \X \to \Mod^{\L} R, \ \ X \mapsto \bigoplus_{\x \in \L} \Hom_{\X}(\OO(-\x),X)$$ to the sequence $\mu$,
	 	we obtain an exact sequence in $\mod^{\L}R$
	 	\begin{align*}  	\Gamma(\mu):\quad 0\to R(\w) \xrightarrow{\gamma'} \bigoplus_{1\le  i\le4}R(\w+\x_i) \to   M \to R \xrightarrow{q} k\to 0,
	 	\end{align*}
	 	where $M \in \CM^{\L} R$. Let $C:=\coker \gamma'$ and $\mathfrak{m}:=\ker q$. These yield triangles
	 	\begin{align*} 
	 		R(\w) &\to \bigoplus_{1\le  i\le4}R(\w+\x_i) \to C \to R(\w)[1],\\
	 		C &\to M \to \mathfrak{m} \to C[1],\\
	 		\mathfrak{m} &\to R \to k \to \mathfrak{m}[1]
	 	\end{align*} in $\DDD^{\bo}(\mod^{\L}R)$,  where we view every object in the triangles as stalk-complex. By standard properties of the Verdier quotient, we have 
	 	$C=0$, $M\simeq \mathfrak{m}$ and $k \simeq \mathfrak{m}[1]$ in $\DDD_{\rm sg}^{\L}(R)$, which implies $M[1] \simeq \rho(k)$  in $\underline{\CM}^{\L} R$. Note that the sequence $\mu$ is obtained back from $\Gamma(\mu)$ by applying the sheafification $\pi$. By the triangle equivalent $ \underline{\CM}^{\L} R\simeq \underline{\ACM} \, \X $,  we obtain $\pi(\rho(k))=F[1]$.	
	 \end{proof}
	 
	 As an immediate consequence,  the morphism $\alpha$ (resp. $\alpha'$) is a left (resp. right) minimal almost split morphism in $\ACM \X$ by a general result in \cite[Theorem 4.13]{HIMO} under the equivalence $\pi: \CM^{\L} R \xrightarrow{\sim} \ACM \X$.
	 
%

	\section{Basic properties of $2$-(co)extension bundles}
	\label{sec:Basic properties of 2-extension bundles}
	In this section, we establish the key features of $2$-(co)extension bundles. Most of major results of this paper are based on these properties.	
	The treatment of this section is parallel to Kussin-Lenzing-Meltzer's exposition of the properties of extension bundles  given in \cite[Section $4$]{KLM}.

	\subsection{Injective hulls and projective covers} 
	In this section, we will give the injective hulls and projective covers of $2$-extension bundles. This investigates the close relationship between $2$-extension  and $2$-coextension bundles. 	 		

	By using a similar argument as in the proof of \cite[Lemma 3.5]{I3}, we have the following observation. 
	Here, 	$\add X$ denotes the full subcategory of $\coh \X$ consisting of direct summands of finite direct sums of $X$.
 	\begin{lemma}\label{applying hom} Let $X\in \coh \X$ and $$0\to X_0 \to X_1\to X_2\to X_{3} \to 0$$
	be an exact sequence in $\coh \X$ with $X_i\in \add X$.
	\begin{itemize}			
		\item[(a)]   If $W\in \coh\X$ satisfies $\Ext^{1}(W,X)=0$, then we have an exact sequence
		$$0\to (W,X_{0})\to\cdots\to(W,X_{3})\to {}^2(W,X_{0})\to\cdots\to{}^2(W,X_{3})\to 0.$$
		\item[(b)] If $Y\in \coh\X$ satisfies $\Ext^{1}(X,Y)=0$, then we have an exact sequence
		$$0\to(X_3,Y)\to\cdots\to(X_0,Y) \to{}^2(X_3,Y)\to\cdots\to{}^2(X_0,Y)\to0.$$
	\end{itemize}
\end{lemma}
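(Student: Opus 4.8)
The plan is to split the four-term sequence into two short exact sequences and then splice the associated long exact sequences. Put $C:=\Im(X_1\to X_2)$, so that the given sequence decomposes as $0\to X_0\to X_1\to C\to 0$ and $0\to C\to X_2\to X_3\to 0$. Two observations drive the argument. First, since each $X_i\in\add X$, the group ${}^{j}(W,X_i)$ is a direct summand of ${}^{j}(W,X^{m})$ for some $m$; hence the hypothesis $\Ext^1(W,X)=0$ in (a) forces ${}^1(W,X_i)=0$ for all $i$. Second, $\coh\X$ has global dimension $2$ by Theorem~\ref{basic properties}(a), so ${}^{j}(W,-)=0$ for all $j\ge 3$.

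For (a), I would apply $\Hom(W,-)$ to each short exact sequence. In the long exact sequence attached to $0\to X_0\to X_1\to C\to 0$, the vanishing of ${}^1(W,X_0)$, ${}^1(W,X_1)$ and ${}^3(W,X_0)$ collapses it into
$$0\to (W,X_0)\to (W,X_1)\to (W,C)\to 0$$
and
$$0\to {}^1(W,C)\to {}^2(W,X_0)\to {}^2(W,X_1)\to {}^2(W,C)\to 0.$$
Likewise, $0\to C\to X_2\to X_3\to 0$ yields
$$0\to (W,C)\to (W,X_2)\to (W,X_3)\to {}^1(W,C)\to 0$$
and
$$0\to {}^2(W,C)\to {}^2(W,X_2)\to {}^2(W,X_3)\to 0.$$
Concatenating these four sequences along the shared terms $(W,C)$, ${}^1(W,C)$ and ${}^2(W,C)$ produces the asserted eight-term sequence; here the middle connecting map $(W,X_3)\to {}^2(W,X_0)$ is the composite of the surjection $(W,X_3)\twoheadrightarrow {}^1(W,C)$ with the injection ${}^1(W,C)\hookrightarrow {}^2(W,X_0)$.

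Part (b) is entirely dual: I would apply the contravariant functor $\Hom(-,Y)$ to the same two short exact sequences, using ${}^1(X_i,Y)=0$ (again because $X_i\in\add X$ and $\Ext^1(X,Y)=0$) together with ${}^{j}(-,Y)=0$ for $j\ge 3$. This produces the analogous collapsed sequences, now with indices in the reverse order $X_3,X_2,X_1,X_0$ and with intermediate connecting object ${}^1(C,Y)$, which splice to the claimed sequence. I expect the only point needing care to be the bookkeeping at the splicing junctions—checking exactness where the $\Hom$-block meets the ${}^2$-block through the terms ${}^1(W,C)$ (resp. ${}^1(C,Y)$)—but this is the standard diagram chase underlying the Yoneda splice and presents no real difficulty.
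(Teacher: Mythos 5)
Your proof is correct, and it is essentially the argument the paper has in mind: the paper gives no written proof of this lemma but instead cites the analogous \cite[Lemma 3.5]{I3}, whose proof is exactly this standard device of splitting the four-term sequence at the image $C$, killing the intermediate $\Ext^1$-terms via additivity over $\add X$ and the vanishing hypothesis, truncating with $\gl(\coh\X)=2$, and splicing through $(W,C)$, ${}^1(W,C)$, ${}^2(W,C)$ (resp. their contravariant counterparts). Your bookkeeping at the junctions is sound, so nothing further is needed.
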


\begin{lemma}\label{ext1(E,E)=0} 
	Let  $L$ be a line bundle  and $0\le \y\le \x \le \vdelta$. Then  $$\Ext^{1}(E_L \langle\y \rangle,E_L \langle \x \rangle)=0.$$
 
\end{lemma}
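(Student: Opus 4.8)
The plan is to compute $\Ext^{1}(E_L\langle\y\rangle,E_L\langle\x\rangle)$ by dévissage along the two short exact sequences into which the defining four-term sequence of the \emph{source} $E_L\langle\y\rangle$ factors, reducing everything to vanishing statements about the \emph{target} $E_L\langle\x\rangle$ that are already available. Writing $K_\y:=\ker\bigl(L\langle\y\rangle\xrightarrow{\gamma}L(\y)\bigr)$, the defining sequence of $E_L\langle\y\rangle$ splits into
$$\eta_1:\ 0\to L(\w)\to E_L\langle\y\rangle\to K_\y\to 0,\qquad \eta_2:\ 0\to K_\y\to L\langle\y\rangle\to L(\y)\to 0.$$
I would apply the contravariant functor $\Hom(-,E_L\langle\x\rangle)$ to each of these and read off $\Ext^{1}(E_L\langle\y\rangle,E_L\langle\x\rangle)$ from the resulting long exact sequences.

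First I would record the vanishing facts about the target. Since $E_L\langle\x\rangle\in\ACM\X$ by Proposition \ref{homological properties of $E$ and $K$ 3}, the characterization (\ref{ACM bundle eq.}) gives $\Ext^{1}(\OO(\z),E_L\langle\x\rangle)=0$ for every $\z\in\L$; as $L(\w)$ and every summand of $L\langle\y\rangle$ are line bundles, this yields $\Ext^{1}(L(\w),E_L\langle\x\rangle)=0$ and $\Ext^{1}(L\langle\y\rangle,E_L\langle\x\rangle)=0$. Next, because $0\le\y\le\x\le\vdelta$, Proposition \ref{homological properties of $E$ and $K$ 1}(c) applies with parameter $\y$ and gives $\Ext^{i}(L(\y),E_L\langle\x\rangle)=0$ for all $i\ge 0$; I only need the instance $\Ext^{2}(L(\y),E_L\langle\x\rangle)=0$.

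With these inputs, applying $\Hom(-,E_L\langle\x\rangle)$ to $\eta_2$ produces the exact segment $\Ext^{1}(L\langle\y\rangle,E_L\langle\x\rangle)\to\Ext^{1}(K_\y,E_L\langle\x\rangle)\to\Ext^{2}(L(\y),E_L\langle\x\rangle)$, whose two outer terms vanish, forcing $\Ext^{1}(K_\y,E_L\langle\x\rangle)=0$. Then applying $\Hom(-,E_L\langle\x\rangle)$ to $\eta_1$ produces the exact segment $\Ext^{1}(K_\y,E_L\langle\x\rangle)\to\Ext^{1}(E_L\langle\y\rangle,E_L\langle\x\rangle)\to\Ext^{1}(L(\w),E_L\langle\x\rangle)$, again sandwiching the middle term between two zeros and giving the desired conclusion $\Ext^{1}(E_L\langle\y\rangle,E_L\langle\x\rangle)=0$.

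I do not expect a genuine obstacle here: once the input vanishing is in place the argument is a two-step diagram chase through the long exact sequences. The only point requiring attention is verifying that the hypothesis of Proposition \ref{homological properties of $E$ and $K$ 1}(c) is truly met, namely that $0\le\y\le\x\le\vdelta$ places the parameter $\y$ in precisely the range for which that proposition guarantees vanishing of $\Ext^{i}(L(\y),E_L\langle\x\rangle)$ in \emph{all} degrees, in particular in degree $2$; everything else is formal.
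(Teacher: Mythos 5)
Your proposal is correct and follows essentially the same route as the paper: the paper likewise factors the defining sequence of the source $E_L\langle\y\rangle$ into $\eta_1$ and $\eta_2$, applies $\Hom(-,E_L\langle\x\rangle)$ to each, and sandwiches $\Ext^{1}(K_\y,E_L\langle\x\rangle)$ and then $\Ext^{1}(E_L\langle\y\rangle,E_L\langle\x\rangle)$ between vanishing terms. If anything, your justification is slightly more careful than the paper's, which cites Proposition \ref{homological properties of $E$ and $K$ 1} for all the outer vanishing terms, whereas the $\Ext^1$-vanishing against the line bundles $L\langle\y\rangle$ and $L(\w)$ (whose twists lie outside the interval $[0,\x]$) really rests, as you say, on $E_L\langle\x\rangle$ being ACM via Proposition \ref{homological properties of $E$ and $K$ 3} and the characterization (\ref{ACM bundle eq.}).
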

\begin{proof}We factor the defining sequence (\ref{2-extension bundle exact}) for the $2$-extension bundle $E_L\langle \y \rangle$ into the two short exact sequences	
	$$\eta_1: 0\to L(\w)\to E_L\langle \y \rangle \to K_{\y} \to 0 \ \ \text{and} \ \ \eta_2: 0 \to K_{\y} \to L\langle \y \rangle \to L(\y) \to 0.$$
	By applying $\Hom(-,E_L\langle \x \rangle)$ to  $\eta_2$, we obtain an exact sequence 
	$$ {}^{1}(L\langle \y \rangle,E_L \langle \x \rangle) \to {}^{1}(K_{\y},E_L \langle \x \rangle) \to {}^{2}(L(\y),E_L \langle \x \rangle).$$ By Proposition \ref{homological properties of $E$ and $K$ 1}, the end terms of this sequence are zero, which implies that $\Ext^{1}(K_{\y},E_L \langle \x \rangle)=0$.
	Applying $\Hom(-,E_L \langle \x \rangle)$ to  $\eta_1$, we obtain an exact sequence
	$${}^{1}(K_{\y},E_L \langle \x \rangle) \to
	{}^{1}(E_L \langle \y \rangle,E_L \langle \x \rangle) \to {}^{1}(L(\w),E_L \langle \x \rangle)=0. $$			
	Therefore we have $\Ext^{1}(E_L \langle \y \rangle,E_L \langle \x \rangle)=0$. 		 	
\end{proof}

		\begin{proposition} \label{ext-pullback} 
	Assume $0\le \x \le \x+\y \le \vdelta$ and $\y=\sum_{i=1}^{4}e_i\x_i$ with $e_i \ge 0$.
	Consider $y:=\prod_{i=1}^4X_i^{e_i}$ as a morphism $L(\x)\to
	L(\x+\y)$. Then there exists a commutative diagram with exact rows	
	\begin{equation}\label{pullback and pushout diagram}
			\begin{tikzcd}[column sep=1.8em]
			\eta_{\x}:\quad\,\,	0\rar &	L(\w)\rar{\alpha}\arrow[d,equal]&E_L\langle \x \rangle \rar{\beta}\dar{y^{\ast}}&L\langle \x \rangle  \rar{\gamma}\dar{y'}&L(\x)\arrow[d,"y"] \rar& 0\,\\
			\eta_{\x+\y}:\,\,	0\rar&	L(\w)\rar{\alpha}&E_{L}\langle \x+\y \rangle \rar{\beta}&L\langle \x+\y \rangle\rar{\gamma}&L(\x+\y) \rar& 0,
		\end{tikzcd}
	\end{equation}	
	where $y':={\rm diag}(y/X^{e_1}_1,\dots, y/X^{e_4}_4)$, and $\Hom(E_L \langle \x \rangle, E_L \langle \x+\y \rangle)=k{y^{\ast}}$. Moreover, for any $i\neq j$ and $0 \le \x \le \x+\x_i+\x_j\le \vdelta$, the diagram below is commutative.
	\[
	\begin{tikzcd} 
		E_L\langle \x \rangle \rar{x_i^\ast}\dar{x_j^\ast}&E_L\langle \x+\x_i \rangle \dar{x_j^\ast} \\
		E_L\langle \x+\x_j \rangle\rar{x_i^\ast}&E_L\langle \x+\x_i+\x_j \rangle
	\end{tikzcd}
	\]	
\end{proposition}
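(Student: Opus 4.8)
The plan is to build the vertical map $y^{\ast}$ by a lifting argument, pin down the size of the relevant Hom space by a direct degree computation, and then deduce the commuting square of the ``Moreover'' part purely from one-dimensionality. Throughout I write $\eta_1,\eta_2$ for the two short exact sequences into which the defining sequence (\ref{2-extension bundle exact}) factors, decorating the maps with the subscript $\x$ or $\x+\y$ when the source matters.

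First I would check that the right-hand square of (\ref{pullback and pushout diagram}) commutes. With $\gamma_{\x}=(X_i^{\lambda_i+1})_i$ and $y'=\operatorname{diag}(y/X_i^{e_i})_i$, the $i$-th entry of $\gamma_{\x+\y}\circ y'$ is $X_i^{\lambda_i+e_i+1}\cdot(y/X_i^{e_i})=y\cdot X_i^{\lambda_i+1}$, which is the $i$-th entry of $y\circ\gamma_{\x}$; this is an identity of monomials in $R$. Hence $y'$ carries $K_{\x}=\ker\gamma_{\x}$ into $K_{\x+\y}=\ker\gamma_{\x+\y}$, inducing $\bar y\colon K_{\x}\to K_{\x+\y}$, and since every component of $y'$ is a nonzero monomial and $R$ is a domain, both $y'$ and $\bar y$ are injective.

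The heart of the matter is the vanishing $\Hom(K_{\x},E_L\langle\x+\y\rangle)=0$. Applying $\Hom(-,E_L\langle\x+\y\rangle)$ to $\eta_2$ for $\x$, and using $\Hom(L(\x),E_L\langle\x+\y\rangle)=\Ext^1(L(\x),E_L\langle\x+\y\rangle)=0$ from Proposition~\ref{homological properties of $E$ and $K$ 1}(c) (applicable since $0\le\x\le\x+\y$), this reduces to showing $\Hom(L_i\langle\x\rangle,E_L\langle\x+\y\rangle)=0$ for each $\z_i:=\x-(1+\lambda_i)\x_i$. For this I would apply $\Hom(L(\z_i),-)$ to $\eta_1,\eta_2$ for $\x+\y$; since $\Ext^1$ between line bundles vanishes by (\ref{extension spaces between line bundles}), the space $\Hom(L(\z_i),E_L\langle\x+\y\rangle)$ is an extension of $\ker(\gamma_{\x+\y})_{\ast}$ by $R_{\w-\z_i}$. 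A normal-form computation gives $\w-\z_i=-2\c+\sum_{k\neq i}(p_k-1-\lambda_k)\x_k$, whose $\c$-coefficient is negative, so $R_{\w-\z_i}=0$; likewise, for $j\neq i$ the degree $(\x+\y)-(1+\lambda_j+e_j)\x_j-\z_i$ has negative $\c$-coefficient, so only the $j=i$ summand of the source of $(\gamma_{\x+\y})_{\ast}$ survives, and on it $(\gamma_{\x+\y})_{\ast}$ is multiplication by the nonzero monomial $X_i^{\lambda_i+e_i+1}$, which is injective as $R$ is a domain. Thus $\ker(\gamma_{\x+\y})_{\ast}=0$ and $\Hom(K_{\x},E_L\langle\x+\y\rangle)=0$.

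With this vanishing I would finish quickly. Since $E_L\langle\x\rangle\in\ACM\X$, we have $\Ext^1(E_L\langle\x\rangle,L(\w))=0$ by (\ref{ACM bundle eq.}), so $\bar y\,p_{\x}$ lifts along $p_{\x+\y}$ to a morphism $y^{\ast}\colon E_L\langle\x\rangle\to E_L\langle\x+\y\rangle$ with $\beta_{\x+\y}y^{\ast}=y'\beta_{\x}$; writing $y^{\ast}\alpha_{\x}=c\,\alpha_{\x+\y}$, injectivity of $\bar y$ together with $\Hom(K_{\x},E_L\langle\x+\y\rangle)=0$ forces $c\neq0$, and after rescaling $\alpha_{\x+\y}$ in the (isomorphic) defining sequence we may take $c=1$, yielding (\ref{pullback and pushout diagram}). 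Applying $\Hom(-,E_L\langle\x+\y\rangle)$ to $\eta_1$ for $\x$ and using both $\Hom(K_{\x},E_L\langle\x+\y\rangle)=0$ and $\Hom(L(\w),E_L\langle\x+\y\rangle)=k$ (established in the proof of Proposition~\ref{homological properties of $E$ and $K$ 2}) gives $\dim\Hom(E_L\langle\x\rangle,E_L\langle\x+\y\rangle)\le1$; as $y^{\ast}\neq0$, we conclude $\Hom(E_L\langle\x\rangle,E_L\langle\x+\y\rangle)=k\,y^{\ast}$. For the final square, both composites $x_j^{\ast}x_i^{\ast}$ and $x_i^{\ast}x_j^{\ast}$ lie in the one-dimensional space $\Hom(E_L\langle\x\rangle,E_L\langle\x+\x_i+\x_j\rangle)=k\,(X_iX_j)^{\ast}$, and each factor restricts to the identity on $L(\w)$; comparing with $(X_iX_j)^{\ast}\alpha_{\x}=\alpha_{\x+\x_i+\x_j}\neq0$ forces both scalars to be $1$, so the two composites coincide. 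The main obstacle is the vanishing $\Hom(K_{\x},E_L\langle\x+\y\rangle)=0$ — specifically the two normal-form degree computations and the injectivity of monomial multiplication, where integrality of $R$ is used; the rest is formal once this is in place.
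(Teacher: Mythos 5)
Your proof is correct, but it takes a genuinely different route from the paper's. The paper constructs $y^{\ast}$ by pulling back $\eta_{\x+\y,1}$ along the induced kernel map $y''\colon K_{\x}\to K_{\x+\y}$, and the work goes into showing that this pullback sequence is nonsplit: via the Snake Lemma, a cokernel analysis, and Auslander-Reiten-Serre duality, one checks that $y''$ induces an isomorphism $\Ext^{1}(K_{\x+\y},L(\w))\to\Ext^{1}(K_{\x},L(\w))$ between one-dimensional spaces, whence the pullback is nonsplit and is identified with $E_L\langle\x\rangle$ by uniqueness of the nonsplit extension; the one-dimensionality of the Hom-space is then a separate dimension count ($4-3=1$) obtained by applying Lemma \ref{applying hom} to $\eta_{\x}$ against $L(\x+\y)$ and $L\langle\x+\y\rangle$. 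You instead make a single vanishing, $\Hom(K_{\x},E_L\langle\x+\y\rangle)=0$, do all the work: you prove it by explicit normal-form degree computations, and it simultaneously (i) lets you lift $\bar y p_{\x}$ through $p_{\x+\y}$ (using $\Ext^{1}(E_L\langle\x\rangle,L(\w))=0$ from the ACM property), (ii) forces the restriction scalar $c$ to be nonzero, and (iii) gives $\dim\Hom(E_L\langle\x\rangle,E_L\langle\x+\y\rangle)\le\dim\Hom(L(\w),E_L\langle\x+\y\rangle)=1$. Your route is more computational but more self-contained; the paper's stays at the level of extension classes and avoids graded-piece computations, realizing $E_L\langle\x\rangle$ conceptually as a pullback. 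Notably, your treatment of the final square is more complete than the paper's: the paper asserts that one-dimensionality of $\Hom(E_L\langle\x\rangle,E_L\langle\x+\x_i+\x_j\rangle)$ ``implies commutativity'', but proportionality $x_i^{\ast}x_j^{\ast}=c\,x_j^{\ast}x_i^{\ast}$ alone does not give $c=1$; your normalization step --- comparing the two composites on $L(\w)$, where both act as $\alpha_{\x+\x_i+\x_j}$ --- is exactly the missing argument.

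One small repair: the injectivity of multiplication by the monomial $X_i^{\lambda_i+e_i+1}$ should not be justified by integrality of $R$. In special characteristics $R$ need not be a domain (for instance ${\rm char}\,k=2$ with all $p_i$ even makes $\sum_i X_i^{p_i}$ a square), but $X_i$ is always a non-zerodivisor, since it divides no irreducible factor of $\sum_j X_j^{p_j}$ (equivalently, $R$ is Cohen-Macaulay and $\dim R/(X_i)<\dim R$); this is the same fact the paper invokes when asserting that $(X_j)_{j\in I}$ is an $R$-regular sequence in the proof of Proposition \ref{susp.ext.}. Also, when you rescale $\alpha_{\x+\y}$ to achieve $c=1$, it is worth noting explicitly that the rescaled sequence is again a defining sequence for $E_L\langle\x+\y\rangle$ (it remains a nonsplit exact sequence, so Theorem \ref{eq. des. 2-ext} applies); the paper's identification of its pullback $M$ with $E_L\langle\x\rangle$ involves the same implicit freedom, so this is a presentational point rather than a gap.
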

\begin{proof}
	Let $K_{\x}$ be a kernel of the morphism $L\langle \x \rangle \to L(\x)$. 
	By commutativity of the right square of the diagram below,  there exists a morphism $y'':K_{\x} \to K_{\x+\y}$ making the  following diagram  commutative.
	\begin{equation}\label{right}
		\begin{tikzcd} 
		\eta_{\x,2}: \quad\,\,	0\rar&	K_{\x} \rar\dar{y''}& L\langle \x \rangle \dar{y'} \rar{\gamma}& 
			L(\x)\dar{y} \rar &0 \\
		\eta_{\x+\y,2}: \,\,	0\rar&	K_{\x+\y} \rar&L\langle \x+\y \rangle \rar{\gamma} & L(\x+\y) \rar &0
		\end{tikzcd}
	\end{equation}	
	 Construct a pullback of $\eta_{\x+\y,1}$ along the morphism $y''$
	\begin{equation}\label{left}
		\begin{tikzcd} 
		\eta_{\x,1}:\quad\,\,	0\rar&	L(\w) \rar\dar[equal]& M \dar \rar& 
			K_{\x}\dar{y''} \rar &0\, \\
		\eta_{\x+\y,1}:\,\,	0\rar&	L(\w) \rar&E_L\langle \x+\y \rangle \rar & K_{\x+\y} \rar &0.
		\end{tikzcd}
	\end{equation}	
			
	We claim that the exact sequence $\eta_{\x,1}$ does not split and thus  $M\simeq E_L \langle \x \rangle$. 
	Note that both $y$ and $y'$ are monomorphisms in $\coh \X$. Invoking the Snake Lemma to the diagram (\ref{right}), $y''$ is also monomorphism and we obtain an exact sequence
	$$
		0 \to \coker y'' \to \coker y'  \to \coker y \to 0.
	$$
	It is easy to check that the spaces $\Ext^{i}(L,\coker y)$ and $\Ext^{i}(L,\coker y')$ vanish for any $i\ge 0$, which implies $\Ext^{i}(L,\coker y'')=0$ for any $i\ge 0$. Applying $\Hom(-,L(\w))$ to the exact sequence $0\to K_{\x} \to K_{\x+\y} \to \coker y'' \to 0$, we obtain an exact sequence
		\begin{align*}
		  {}^{1}(\coker y'',L(\w)) \to {}^{1}(K_{\x+\y},L(\w)) \xrightarrow{y''_{\ast}} {}^{1}(K_{\x},L(\w)) \to {}^{2}(\coker y'',L(\w)).
	\end{align*}
	By Proposition \ref{homological properties of $E$ and $K$ 1} and (\ref{Auslander-Reiten-Serre duality}), both the spaces ${\Ext}^{1}(K_{\x+\y},L(\w))$ and ${\Ext}^{1}(K_{\x},L(\w))$ are 
	one dimensional.
	Using Auslander-Reiten-Serre duality, we have $${\Ext}^{2}(\coker y'',L(\w))\simeq D{\Hom}(L,\coker y'')=0.$$ 
	Thus the morphism $y''_{\ast}:\Ext^{1}(K_{\x+\y},L(\w))\to \Ext^{1}(K_{\x},L(\w))$ is an isomorphism, implying that the sequence $\eta_{\x,1}$ does not split. Connecting the two diagrams (\ref{right}) and (\ref{left}), we obtain the diagram (\ref{pullback and pushout diagram})
	 and thus  the first assertion follows.
	
	Concerning the second assertion, by Lemma \ref{applying hom},
	we apply $\Hom(-,L(\x+\y))$ to the sequence $\eta_{\x}$ and  obtain an exact sequence
	$$0\to (L(\x),L(\x+\y))\to (L\langle \x \rangle ,L(\x+\y))\to (E_L \langle \x \rangle,L(\x+\y)) \to (L(\w),L(\x+\y))=0.$$
	It is easy to verify that $\Hom(L(\x),L(\x+\y))=k$ and $\Hom(L\langle \x \rangle ,L(\x+\y))=k^4$ hold as $k$-vector spaces, which yields $\Hom(E_L \langle \x \rangle,L(\x+\y))=k^3$. By applying $\Hom(-,L\langle \x+\y \rangle)$ to  $\eta_{\x}$, it is  straightforward to verify that 
$\Hom(E_L \langle \x \rangle,L\langle \x+\y \rangle)\simeq \Hom(L\langle \x \rangle ,L\langle \x+\y \rangle)=k^4.$ 
	Invoking Lemma \ref{ext1(E,E)=0}, we can apply $\Hom(E_L \langle \x \rangle,-)$ to the sequence  $\eta_{\x+\y}$ and obtain an exact sequence 
	\begin{align*}
		0 =(E_L \langle \x \rangle,L(\w))&\to (E_L \langle \x \rangle ,E_L \langle \x+\y \rangle)\to (E_L \langle \x \rangle,L\langle \x+\y \rangle) \\ &\to (E_L \langle \x \rangle,L(\x+\y)) \to {}^{2}(E_L \langle \x \rangle,L(\w))=D(L,E_L \langle \x \rangle)=0.
	\end{align*}
	By the additivity of dimension, we have $\Hom(E_L \langle \x \rangle,E_L \langle \x+\y \rangle)=ky^{\ast}$ since $y^{\ast}$ is a nonzero morphism. By a similar argument, one can show that $\Hom(E_L \langle \x \rangle,E_L \langle \x+\x_i+\x_j \rangle)=k$ implying commutativity $x_i^{\ast}x_j^{\ast}=x_j^{\ast}x_i^{\ast}$.	
\end{proof}

	 \begin{theorem}[Injective Hulls] Let $E_L\langle \x \rangle$ be a $2$-extension bundle for $\x=\sum_{i=1}^4\lambda_i\x_i$ with $0\le \x \le \vdelta$. Then its injective hull
		 	$\mathfrak{I}(E_L\langle \x \rangle)$ is given by
		 	\begin{equation}\label{inj.hull}
			 		\mathfrak{I}(E_L\langle \x \rangle)=   \bigoplus_{1\le i \le 4} L(\w+(1+\lambda_i)\x_i) \oplus \bigoplus_{1\le i \le 4} L(\x-(1+\lambda_i)\x_i).
			 	\end{equation}
		 	Furthermore, the eight line bundle summands $\{L_i\}_{i=1}^8$ of  $\mathfrak{I}(E_L\langle \x \rangle)$ are mutually
		 	\emph{Hom-orthogonal}, that is, they satisfy
		 	\begin{equation}\label{eq:hom-orth}
			 		\Hom(L_i,L_j)=\begin{cases}k &  \ \textrm{if} \  i=j \\  
				 			0 &   \	\textrm{if}\  i\neq j.   \end{cases}
			 	\end{equation}
		 \end{theorem}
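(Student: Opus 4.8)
The object on the right-hand side of \eqref{inj.hull} is precisely $L[\x]\oplus L\langle\x\rangle$, a direct sum of eight line bundles, and is therefore an injective object of the Frobenius category $\ACM\X$. The plan is to exhibit an admissible monomorphism $\Phi\colon E_L\langle\x\rangle\to L[\x]\oplus L\langle\x\rangle$ that is left minimal; since a left minimal admissible monomorphism into an injective object is an injective hull, this proves \eqref{inj.hull}, and the Hom-orthogonality \eqref{eq:hom-orth} is then a separate bookkeeping computation. Throughout I abbreviate $E:=E_L\langle\x\rangle$ and factor its defining sequence \eqref{2-extension bundle exact} into $\eta_1\colon 0\to L(\w)\xrightarrow{\alpha}E\xrightarrow{p}K\to 0$ and $\eta_2\colon 0\to K\xrightarrow{i}L\langle\x\rangle\xrightarrow{\gamma}L(\x)\to 0$.

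First I would construct the component into $L[\x]$. Applying Serre duality \eqref{Auslander-Reiten-Serre duality} and then $\Hom(L((1+\lambda_i)\x_i),-)$ to $\eta_2$, together with the vanishing of $\Ext^1$ between line bundles and of $\Hom(L((1+\lambda_i)\x_i),L(\x))$ (a normal-form computation, since $\x-(1+\lambda_i)\x_i\not\ge 0$) from \eqref{extension spaces between line bundles}, yields $\Ext^1(K,L(\w+(1+\lambda_i)\x_i))=0$ for each $i$. Feeding this into $\Hom(-,L[\x])$ applied to $\eta_1$ shows that $\alpha^{\ast}$ is surjective, so the coextension monomorphism $\gamma'\colon L(\w)\to L[\x]$ from \eqref{2-coextension bundles} lifts to some $\theta\colon E\to L[\x]$ with $\theta\alpha=\gamma'$. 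Setting $\Phi:=(\theta,\beta)\colon E\to L[\x]\oplus L\langle\x\rangle$, a kernel chase shows $\Phi$ is a monomorphism: if $\beta(v)=0$ then $v\in\Im\alpha$, say $v=\alpha(u)$, and then $\theta(v)=\gamma'(u)=0$ forces $u=0$ since $\gamma'$ is monic.

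The main obstacle is to show $\Phi$ is \emph{admissible}, i.e. that $\coker\Phi\in\ACM\X$, which is not formal: as $\Hom(E,L(\y))\ne 0$ for suitable $\y$, the group $\Ext^1(\OO(\z),\coker\Phi)$ need not vanish for free. I would instead identify the cokernel with the $2$-coextension bundle $F:=F_L\langle\x\rangle$. Forming the pushout of $\alpha$ and $\gamma'$ and using $\theta$ to split off $L[\x]$, one factors $\Phi=m\circ j$ where $j\colon E\to L[\x]\oplus K$ has cokernel $C:=\coker\gamma'$ and $m=\mathrm{id}_{L[\x]}\oplus i$ has cokernel $\coker i=L(\x)$; this produces a short exact sequence $0\to C\to\coker\Phi\to L(\x)\to 0$ of the same shape as the defining sequence $\mu_1$ of $F$. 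Because $\Hom(L(\x),L[\x])=0$ and $\Hom(L(\x),L\langle\x\rangle)=0$ (again normal-form computations), we obtain $\Hom(L(\x),L[\x]\oplus L\langle\x\rangle)=0$, and since $E$ and $L[\x]\oplus L\langle\x\rangle$ are ACM the sequence $0\to E\to L[\x]\oplus L\langle\x\rangle\to\coker\Phi\to 0$ forces $\Hom(L(\x),\coker\Phi)=0$. Hence $L(\x)$ is not a direct summand of $\coker\Phi$, so the displayed sequence is nonsplit; as $\Ext^1(L(\x),C)\simeq\Ext^2(L(\x),L(\w))=k$ is one dimensional, its middle term must be $F$. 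Thus $\coker\Phi\cong F\in\ACM\X$ and $\Phi$ is admissible.

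It remains to prove left minimality and \eqref{eq:hom-orth}. The Hom-orthogonality is a direct calculation: writing the degree difference of any two of the eight summands in normal form via $p_i\x_i=\c$ and $0\le\lambda_i\le p_i-2$, each difference acquires a strictly negative $\c$-coefficient, so the relevant homogeneous component $R_{\bullet}$ vanishes by \eqref{extension spaces between line bundles}; this simultaneously shows the eight bundles are pairwise distinct and that $\End(L[\x]\oplus L\langle\x\rangle)$ is the diagonal algebra $k^{8}$. For minimality it then suffices to check that every component $\Phi_a\colon E\to L_a$ is nonzero, for then any endomorphism of the injective object fixing $\Phi$ is the identity. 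The components into $L[\x]$ are nonzero since $\theta_i\alpha=\gamma'_i=X_i^{\lambda_i+1}\ne 0$; the components $\beta_i$ into $L\langle\x\rangle$ are nonzero because $i\colon K\to L\langle\x\rangle$ is left minimal by Proposition~\ref{homological properties of $E$ and $K$ 4}(b) and the $L_j\langle\x\rangle$ are Hom-orthogonal, so each $i_j\ne 0$, whence $\beta_i=i_i\,p\ne 0$ as $p$ is epic. Therefore $\Phi$ is a left minimal admissible monomorphism into an injective object, i.e. the injective hull, which completes the proof.
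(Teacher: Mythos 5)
Your proof is correct, and it takes a genuinely different route from the paper's. The paper argues by induction on $\sigma(\x)=\sum_{i=1}^4\lambda_i$: the base case ($2$-Auslander bundles) is treated by an explicit factorization argument that relies on $\alpha$ being a left almost split morphism in $\ACM\X$ (a consequence of Proposition \ref{sq.cor} together with the general result of Herschend--Iyama--Minamoto--Oppermann), and the induction step transports the hull along the morphisms $x_i^{\ast}$ of Proposition \ref{ext-pullback}, using that $\coker x_i^{\ast}$ has rank zero. You instead give a uniform, induction-free construction: you produce the map $\Phi=(\theta,\beta)$ by lifting $\gamma'$ through $\alpha$ (the key vanishing $\Ext^{1}(K,L[\x])=0$ is correctly reduced, via Serre duality applied to $\eta_2$, to the normal-form fact that $\x-(1+\lambda_i)\x_i\not\ge 0$), you identify $\coker\Phi$ with $F_L\langle\x\rangle$ using the one-dimensionality of $\Ext^{1}(L(\x),C)$ and the vanishing $\Hom(L(\x),\coker\Phi)=0$, and you get minimality from Hom-orthogonality plus the nonvanishing of all eight components (where only the approximation statement of Proposition \ref{homological properties of $E$ and $K$ 4}(b) is needed, not the almost split property of $\alpha$). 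What your approach buys: it is self-contained and it delivers Proposition \ref{dual ext-def}, namely $F_L\langle\x\rangle\simeq E_L\langle\x\rangle[1]$ together with the four-term ladder, as an immediate byproduct, whereas the paper derives that proposition afterwards \emph{from} the injective hull theorem; note this creates no circularity on your side, since the ACM property of $2$-coextension bundles that you invoke for admissibility is available before the theorem through Proposition \ref{vector bundle duality} and the remark following it. What the paper's induction buys is economy: it reuses machinery ($x_i^{\ast}$, almost split maps) that is needed elsewhere anyway, so no new cohomological computations are required. Two small points you should make explicit: the principle that a left minimal inflation into an injective object is an injective hull uses that $\ACM\X$ is Krull--Schmidt and that injective hulls exist (inherited from $\CM^{\L}R$ under $\pi$, as the paper notes); and the statement $\End(L[\x]\oplus L\langle\x\rangle)\simeq k^{8}$ should be recorded as the outcome of your normal-form computation, since it is exactly what converts ``all components nonzero'' into left minimality.
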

	 \begin{proof} Note that the existence of injective hulls is preserved under the equivalence $\pi:\CM^{\L} R\to\ACM \X$ as Frobenius categories. It thus remains to deal with the injective hulls. We proceed by induction on $n:=\sum_{i=1}^{4} \lambda_i$. 
	 	
	 	For $n=0$, the assertion reduces to the claim that for the $2$-Auslander bundle $E=E_L$ given by the sequence $0 \to L(\w)\xrightarrow{\alpha} E_L \xrightarrow{\beta} L\langle 0 \rangle \xrightarrow{\gamma} L \to 0$, we have $\mathfrak{I}(E)=L\langle 0 \rangle \oplus \bigoplus_{i=1}^{4} L(\w+\x_i)$. 		 	
		 We show that the morphism $$\iota_E:E\xrightarrow{(\beta,(x_i'))} L\langle 0 \rangle \oplus \bigoplus_{1\le i\le4} L(\w+\x_i)$$ is the injective hull of $E$ in $\ACM \X$, where 
		 the component of 
		 the morphism $x_i':E \to L(\w+\x_i)$ for $1\le i \le 4$, is induced by the morphism $x_i:L(\w) \xrightarrow{X_i} L(\w+\x_i)$ since $\alpha$ is a left almost split morphism.
		  Let $h:E\to L'$ be a morphism to a line bundle. Then $h'=h\alpha$ is not an isomorphism since $\alpha$ is not a split monomorphism.
		 It follows that $h'$ has a presentation $h':=\sum_{i=1}^{4}h_ix_i$, where $x_i$ is the morphism $x_i:L(\w) \xrightarrow{X_i} L(\w+\x_i)$ and $h_i:L(\w+\x_i) \to L'$ for each $1\le i \le 4$. Thus we obtain the following commutative diagram.
		 \[
		 	\begin{tikzcd} 
		 		0\rar &	L(\w)\rar{\alpha}\ar[d,"(x_i)"'] &E \rar{\beta}\dar{h} \ar[ld, "(x_i')"']&L\langle 0 \rangle \\
		 		&	\bigoplus_{i=1}^{4} L(\w+\x_i) \ar[r,"(h_i)"]&L' & 
		 		\end{tikzcd}		 
		\]  
		
		By the canonical factorization of  $\beta$, we have $\beta=ip$, where 
		 $K:=\Im \beta$, $p: E \to K$ and $i:K \to L \langle 0 \rangle$. Then  $(h-\sum_{i=1}^{4}h_ix_i')\alpha=0$ implies that $h-\sum_{i=1}^{4}h_ix_i'$ factor through $p$, that is, there is a morphism $g:K\to L'$ such that  $h-\sum_{i=1}^{4}h_ix_i'=gp$.  
		 By Proposition \ref{homological properties of $E$ and $K$ 4}, the morphism $g$ factors through $i$, that is, there exists a morphism $q:L \langle 0 \rangle\to L'$ such that $g=qi$. 
		 Thus we have $h-\sum_{i=1}^{4}h_ix_i'=qip=q\beta$. Hence $h$ factor through $(\beta,(x_i'))$.
		 The Hom-orthogonal (\ref{eq:hom-orth}) is easy to show. Minimality of $\iota_E$ follows from the fact that the line bundle constituents of $\mathfrak{I}(E)$ are mutually Hom-orthogonal. This finishes the claim for $n=0$.
		 	
		Considering the induction step, we assume $0\le \x \le \x+\x_i\le \vdelta$ and the formula (\ref{inj.hull}) holds for $\x$. 
		 Then by Proposition \ref{ext-pullback}, for any line bunlde $L'$, we have the following diagram 
		 \[
		 \begin{tikzcd}[column sep=small, row sep=tiny]
		 	&E_L\langle \x \rangle\ar[rr]\ar[dd,"x_i^{\ast}"]\ar{dl}&&  \mathfrak{I}(E_L \langle \x \rangle) \ar{dd}\ar{dlll}\\
		 	L'\ar[equal]{dd}\\
		 	&E\langle \x+\x_i \rangle\ar[rr]\ar[dl]&&\mathfrak{I}(E_L \langle \x+\x_i \rangle) \ar[dlll]\\
		 	L'
		 \end{tikzcd}
		 \]		where the squares and the upper triangle are commutative. Since the cokernel of the morphism $x_i^{\ast}$ has rank zero, we have $\Hom(\coker (x_i^{\ast}),L')=0$. Thus
		  one can check that the lower triangle also commutes and thus we show the induction step.	
		\end{proof}
		
		 Let $E$ be an indecomposable ACM bundle and $\mathfrak{I}(E)$ be its injective hull in the Frobenius category $\ACM \X$.
		Then there is an exact sequence $0\to E \to \mathfrak{I}(E) \to E[1]\to 0$, where $[1]$ denotes the suspension functor in $\underline{\ACM} \, \X$. The following result explains that the $2$-coextension bundles are nothing but 
		 the corresponding $2$-extension bundles by the action of the suspension functor in $\underline{\ACM} \, \X$.
		
		\begin{proposition} \label{dual ext-def} We have $F_L\langle \x \rangle \simeq E_L\langle \x \rangle[1]$. Moreover,
			there is a commutative diagram with exact rows
			\begin{equation}{
					\begin{tikzcd} \label{pullback and pushout diagram 2}
						0\rar &	L(\w)\rar{\alpha}\arrow[d,equal]&E_L\langle \x \rangle \rar{\beta}\dar&L\langle \x \rangle  \rar{\gamma}\dar&L(\x)\arrow[d,equal] \rar& 0\,\\
						0\rar&	L(\w)\rar{\gamma'}&L[\x]\rar{\beta'}&F_L\langle \x \rangle\rar{\alpha'}&L(\x) \rar& 0.
				\end{tikzcd}}
			\end{equation}
		\end{proposition}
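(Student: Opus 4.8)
The plan is to read off the suspension $E_L\langle \x \rangle[1]$ directly from the injective hull just computed, and then to recognize the resulting cokernel as a $2$-coextension bundle by invoking the uniqueness statement of Proposition~\ref{uniqueness of org co}. The entire conceptual input is the preceding Injective Hulls theorem: since $\mathfrak{I}(E_L\langle \x \rangle)=L[\x]\oplus L\langle \x \rangle$ splits off precisely the two line-bundle packages $L[\x]$ and $L\langle \x \rangle$ that appear in the defining sequences of $E_L\langle \x \rangle$ and $F_L\langle \x \rangle$, the shift of a $2$-extension bundle is forced into the shape of the defining sequence $\mu$ of the corresponding $2$-coextension bundle.

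Write $E:=E_L\langle \x \rangle$. By the Injective Hulls theorem the suspension is given by a conflation $0\to E\xrightarrow{\iota}L[\x]\oplus L\langle \x \rangle\xrightarrow{\pi}E[1]\to 0$ in $\ACM \X$, so in particular $E[1]\in\ACM \X$. First I would record the two components of the hull map, writing $\iota=\binom{\delta}{\beta}$, where $\beta\colon E\to L\langle \x \rangle$ is the structure map of $\eta$ and $\delta\colon E\to L[\x]$ is the $L[\x]$-component. Because each space $\Hom(L(\w),L(\w+(1+\lambda_i)\x_i))$ is one dimensional, spanned by $X_i^{\lambda_i+1}$, the composite $\delta\alpha$ is necessarily of the form $(c_iX_i^{\lambda_i+1})_{1\le i\le 4}$; appealing to the explicit construction of the injective hull (in the base case the $L[\x]$-component is built from the left almost split map $\alpha$ via $x_i'\alpha=X_i$), all $c_i$ are nonzero, so after rescaling the Hom-orthogonal summands of $\mathfrak{I}(E)$ we may assume $\delta\alpha=\gamma'$.

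Next I would set $\bar\beta':=\pi|_{L[\x]}\colon L[\x]\to E[1]$ and $\phi:=\pi|_{L\langle \x \rangle}\colon L\langle \x \rangle\to E[1]$ and carry out a short diagram chase in $\coh \X$. Using $\delta\alpha=\gamma'$ and $\beta\alpha=0$ one obtains $\iota(\alpha(y))=(\gamma'(y),0)$, whence $\bar\beta'\gamma'=0$ and in fact $\ker\bar\beta'=\Im\gamma'$; computing modulo $\Im\iota$ then identifies $\coker\bar\beta'\cong L\langle \x \rangle/K\cong L(\x)$, the last isomorphism being induced by $\gamma$. This yields an exact sequence $0\to L(\w)\xrightarrow{\gamma'}L[\x]\xrightarrow{\bar\beta'}E[1]\xrightarrow{\bar\alpha'}L(\x)\to 0$ all of whose terms lie in $\ACM \X$. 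Proposition~\ref{uniqueness of org co} applied to this sequence delivers the isomorphism $E[1]\cong F_L\langle \x \rangle$, which is the first assertion of the proposition.

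Finally, the commutative ladder (\ref{pullback and pushout diagram 2}) is read off from $\iota$ and $\pi$: its rows are $\eta$ and the sequence just constructed (which is $\mu$ under $E[1]\cong F_L\langle \x \rangle$), its outer verticals are identities, and its inner verticals are $\delta$ and $\phi$. Commutativity of the two outer squares is exactly $\delta\alpha=\gamma'$ and $\bar\alpha'\phi=\gamma$, while the relation $\pi\iota=0$ records the middle square as $\bar\beta'\delta=-\phi\beta$. I expect the main (purely bookkeeping) obstacle to be twofold: justifying that every scalar $c_i$ above is nonzero, so that $\delta\alpha=\gamma'$ can be arranged; and absorbing the sign in $\bar\beta'\delta=-\phi\beta$ together with the scalar produced by Proposition~\ref{uniqueness of org co}, so that both outer verticals become genuine identities. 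Both steps are harmless because every $\Hom$-space in sight is one dimensional, so the relevant maps are determined up to nonzero scalars and none of these choices affects the isomorphism class; the entire substance of the statement is the structural fact, supplied by the Injective Hulls theorem, that the suspension functor turns a $2$-extension bundle into the corresponding $2$-coextension bundle.
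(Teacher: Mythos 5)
Your proposal is correct and takes essentially the same route as the paper: both extract from the conflation $0\to E_L\langle \x \rangle\to\mathfrak{I}(E_L\langle \x \rangle)\to E_L\langle \x \rangle[1]\to 0$ supplied by the Injective Hulls theorem a four-term exact sequence $0\to L(\w)\xrightarrow{\gamma'}L[\x]\to E_L\langle \x \rangle[1]\to L(\x)\to 0$ lying in $\ACM\X$, and then conclude by Proposition \ref{uniqueness of org co}. The paper packages your hand-made diagram chase as the statement that the square on $E_L\langle \x \rangle$, $L\langle \x \rangle$, $L[\x]$, $E_L\langle \x \rangle[1]$ is simultaneously a pullback and a pushout, and it is no less terse than you are about why the components of the hull map are nonzero multiples of $\beta$ and $\gamma'$; your remarks on one-dimensional Hom-spaces and minimality of the hull cover that point at least as carefully as the paper does.
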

		\begin{proof} The exactness of the sequence
			$0 \to E_L\langle \x \rangle \to \mathfrak{I}(E) \to E_L\langle \x \rangle[1] \to 0$ implies that the following diagram
			\[
			\begin{tikzcd} 
				E_L\langle \x \rangle \rar\dar&L\langle \x \rangle \dar \\
				L[\x]\rar&E_L\langle \x \rangle[1]
			\end{tikzcd}
			\]
			is a both pullback and pushout diagram. Thus we have a commutative diagram 
			\begin{equation}{
					\begin{tikzcd} \label{pullback and pushout diagram 3}
						0\rar &	L(\w)\rar{\alpha}\arrow[d,equal]&E_L\langle \x \rangle \rar{\beta}\dar&L\langle \x \rangle  \rar{\gamma}\dar&L(\x)\arrow[d,equal] \rar& 0\,\\
						0\rar&	L(\w)\rar&L[\x]\rar&E_L\langle \x \rangle[1]\rar&L(\x) \rar& 0.
				\end{tikzcd}}
			\end{equation}		
			The commutativity of the leftmost square of (\ref{pullback and pushout diagram 3}) yields that the morphism $L(\w) \to L[\x]$ is given by $(X_i^{\lambda_i+1})_{1\le i\le 4}$.  By Proposition \ref{uniqueness of org co}, we have $F_L\langle \x \rangle \simeq E_L\langle \x \rangle[1]$ and thus we obtain the commutative diagram (\ref{pullback and pushout diagram 2}).
		\end{proof}

		\begin{proposition}\label{susp.ext.} 
			Let $L$ be a line bundle and $\x=\sum_{i=1}^4\lambda_i\x_i$ with $0\le \x \le \vdelta$. 
			Then for any $1\le i\le 4$, we have 
			\begin{align}\label{susp.}
				E_{L} \langle \x \rangle[1] = E_L \langle (p_i-2-\lambda_i)\x_i+\sum_{j\neq i}\lambda_j\x_j \rangle ((1+\lambda_i)\x_i).
			\end{align}
			In particular, we have $E_{L} \langle \x \rangle[2]=E_{L} \langle \x \rangle(\c)$ and $E_{L} \langle \x \rangle=E_{L} \langle \vdelta-\x \rangle(\x-\w-\c)$.  			
		\end{proposition}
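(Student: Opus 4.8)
The plan is to recognize the suspended bundle $E_L\langle\x\rangle[1]$ as a twisted $2$-extension bundle by combining the identification $E_L\langle\x\rangle[1]\simeq F_L\langle\x\rangle$ with the uniqueness of $2$-extension bundles, and then to deduce the two ``in particular'' assertions as purely formal consequences of the main identity (\ref{susp.}). First I would reformulate (\ref{susp.}). By Proposition \ref{dual ext-def} we have $E_L\langle\x\rangle[1]\simeq F_L\langle\x\rangle$, and by Proposition \ref{shift preserve extension bundle}(b) the right-hand side of (\ref{susp.}) equals $E_{L'}\langle\y\rangle$, where $L':=L((1+\lambda_i)\x_i)$ and $\y:=(p_i-2-\lambda_i)\x_i+\sum_{j\neq i}\lambda_j\x_j$; note $0\le\y\le\vdelta$, so $E_{L'}\langle\y\rangle$ is defined. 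Thus (\ref{susp.}) is equivalent to the single isomorphism $F_L\langle\x\rangle\simeq E_{L'}\langle\y\rangle$.

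To establish this I would appeal to the uniqueness statement in Proposition \ref{uniqueness of org} (equivalently Theorem \ref{eq. des. 2-ext}): it suffices to exhibit an exact sequence in $\ACM\X$ of the shape defining $E_{L'}\langle\y\rangle$, namely
\[
0\to L'(\w)\to F_L\langle\x\rangle\to L'\langle\y\rangle\xrightarrow{(X_k^{\mu_k+1})_{1\le k\le 4}}L'(\y)\to 0,
\]
with $\mu_i=p_i-2-\lambda_i$ and $\mu_j=\lambda_j$ for $j\neq i$, and with $F_L\langle\x\rangle$ in the extension position. The starting data is the defining sequence $\mu$ of $F_L\langle\x\rangle$, that is $0\to L(\w)\xrightarrow{(X_k^{\lambda_k+1})_k}L[\x]\xrightarrow{\beta'}F_L\langle\x\rangle\xrightarrow{\alpha'}L(\x)\to 0$. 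Using the group relations $p_k\x_k=\c$ (so $(p_i-1)\x_i=\c-\x_i$), I would verify the term identifications: the $i$-th summand of $L[\x]$ is exactly $L(\w+(1+\lambda_i)\x_i)=L'(\w)$; the four summands of $L'\langle\y\rangle$ are $L(\x)$ for $k=i$ together with $L(\c-\x_i-\x_j+\sum_{l\neq i,j}\lambda_l\x_l)$ for $k=j\neq i$; and $L'(\y)=L(\c-\x_i+\sum_{j\neq i}\lambda_j\x_j)$. The monomorphism $L'(\w)\hookrightarrow F_L\langle\x\rangle$ is the inclusion of the $i$-th summand into $L[\x]$ followed by $\beta'$; it is injective because the post-composition of $(X_k^{\lambda_k+1})_k$ with the projection onto the summands $k\neq i$ is itself injective, so the $i$-th summand meets $\ker\beta'=\Im(X_k^{\lambda_k+1})_k$ trivially. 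It then remains to identify the cokernel $F_L\langle\x\rangle/L'(\w)$ with $\ker(X_k^{\mu_k+1})_k$ and to confirm that the induced epimorphism onto $L'(\y)$ is given by the displayed monomials; once the sequence is in place, all four terms are ACM, so Proposition \ref{uniqueness of org} forces $F_L\langle\x\rangle\simeq E_{L'}\langle\y\rangle$.

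The main obstacle is exactly this last reassembly: producing the monomorphism $F_L\langle\x\rangle/L'(\w)\hookrightarrow L'\langle\y\rangle$ and checking that the connecting maps are precisely the $X_k^{\mu_k+1}$, which requires careful homogeneous bookkeeping and a separate treatment of the distinguished index $i$ versus the indices $j\neq i$ (here Proposition \ref{homological properties of $E$ and $K$ 1} guarantees the relevant $\Ext^1$ spaces are one-dimensional, which pins the extension down uniquely). An alternative that sidesteps the map chasing is to observe that both $F_L\langle\x\rangle$ and $E_{L'}\langle\y\rangle$ are exceptional ACM bundles of rank four and to compare their classes in $K_0(\coh\X)$ via Proposition \ref{k0}; this reduces the problem to an identity among line-bundle classes read off from the two defining four-term sequences, at the cost of invoking the relations of $K_0(\coh\X)$.

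Finally, I would deduce the special cases formally. Since the degree twist $(\z)$ is an autoequivalence of $\ACM\X$ permuting the line bundles, it commutes with the suspension $[1]$ on the stable category. Applying (\ref{susp.}) twice with the \emph{same} index $i$ sends $\langle\x\rangle$ to $\langle\y\rangle$ and back to $\langle\x\rangle$ (because $p_i-2-(p_i-2-\lambda_i)=\lambda_i$), while accumulating the twist $\bigl((1+\lambda_i)+(p_i-1-\lambda_i)\bigr)\x_i=p_i\x_i=\c$; this gives $E_L\langle\x\rangle[2]=E_L\langle\x\rangle(\c)$, and hence $[4]=(2\c)$. Applying (\ref{susp.}) once for each $i=1,2,3,4$ in turn transforms $\langle\x\rangle$ into $\langle\vdelta-\x\rangle$ and accumulates the twist $\sum_{i=1}^{4}(1+\lambda_i)\x_i=\bigl(\sum_{i=1}^{4}\x_i\bigr)+\x$, so that $E_L\langle\x\rangle[4]=E_L\langle\vdelta-\x\rangle\bigl((\sum_{i=1}^{4}\x_i)+\x\bigr)$. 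Combining this with $[4]=(2\c)$ and $\sum_{i=1}^{4}\x_i=\c-\w$ yields $E_L\langle\x\rangle=E_L\langle\vdelta-\x\rangle\bigl((\sum_{i=1}^{4}\x_i)+\x-2\c\bigr)=E_L\langle\vdelta-\x\rangle(\x-\w-\c)$, as required.
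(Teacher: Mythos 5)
Your reformulation of (\ref{susp.}) — replacing $E_L\langle\x\rangle[1]$ by $F_L\langle\x\rangle$ via Proposition \ref{dual ext-def} and the right-hand side by $E_{L'}\langle\y\rangle$ with $L'=L((1+\lambda_i)\x_i)$ via Proposition \ref{shift preserve extension bundle}(b) — is correct, your term identifications check out, and your formal derivation of the two ``in particular'' statements by iterating (\ref{susp.}) is exactly the paper's own deduction. But the main identity itself is not proved by either of your two routes, so there is a genuine gap. In your first route you stop precisely at the decisive step: producing the monomorphism $F_L\langle\x\rangle/L'(\w)\to L'\langle\y\rangle$ is not ``careful homogeneous bookkeeping.'' The components of such a map into the summands $L(\w+\sum_{l\notin\{i,j\}}(1+\lambda_l)\x_l)$, $j\neq i$, are not induced by any morphism appearing in the defining sequence $\mu$ of $F_L\langle\x\rangle$; they would have to be constructed (say, from multiplication maps on the summands $L_k[\x]$, $k\neq i$) and then extended from the subobject $\mathrm{Im}\,\beta'/L'(\w)$ over all of $F_L\langle\x\rangle/L'(\w)$, which is an extension problem whose solvability is exactly what is in question. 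Your sketch does none of this.

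Your second route is in fact the paper's strategy (both objects are exceptional bundles, so by Proposition \ref{k0} it suffices to compare classes in $K_0(\coh\X)$), but you leave the resulting $K_0$-identity unverified, and that identity is the technical heart of the proof. Writing $\ell_j=1+\lambda_j$ and $I=\{1,\dots,4\}\setminus\{i\}$, the difference of the classes read off from the two defining four-term sequences is
\begin{equation*}
\sum_{J\subset I}(-1)^{|J|}\,\bigl[L\bigl(\w+\sum_{j\in J}\ell_j\x_j\bigr)\bigr],
\end{equation*}
an alternating sum over the eight subsets of the three-element set $I$. Since $K_0(\coh\X)$ is free on the classes $[\OO(\x)]$, $0\le\x\le 2\c$, such a relation among line-bundle classes is not automatic: it must be realized by an exact sequence in $\coh\X$. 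The paper obtains it from the Koszul complex of the $R$-regular sequence $(X_j^{\ell_j})_{j\in I}$, whose sheafification is exact (its only cohomology is finite length, hence killed by $\pi$), then dualized and twisted by $\w$. Without this Koszul argument, or an equivalent substitute, your phrase ``at the cost of invoking the relations of $K_0(\coh\X)$'' simply defers to the very statement that needs proof; so as written, neither route establishes (\ref{susp.}).
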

		
		\begin{proof} Without loss of generality, we can assume $L:=\OO$.
			Fix $1\le i \le 4$, let $I=\{1,\dots,4\}\setminus\{i\}$ and $\y=(p_i-2-\lambda_i)\x_i+\sum_{j\neq i}\lambda_j\x_j$. Write $\y=\sum_{j=1}^{4}y_j\x_j$.
			By Proposition \ref{homological properties of $E$ and $K$ 2}, $G:=E \langle \x \rangle[1]$ and $H:=E\langle \y \rangle((1+\lambda_i)\x_i)$ are exceptional. By Proposition \ref{k0}, it suffices to show that the classes $[G]$ and $[H]$ in the Grothendieck group $K_0(\coh\X)$ are the same.
			Recall that $0\to \OO(\w) \to E \langle \x \rangle \to \OO\langle \x \rangle \to \OO(\x) \to 0$
			is the defining sequence for $E \langle \x \rangle$, putting $\ell_j=1+\lambda_j$ for $1\le j\le 4$, hence 
			\begin{align*}
				[G]&=[\mathfrak{I}(E\langle \x \rangle)]-[E\langle \x \rangle]=\sum_{j=1}^{4}[\OO(\w+\ell_j\x_j)]+[\OO(\x)]-[\OO(\w)].
			\end{align*}			
			On the other hand, we have
			\begin{align*}
				[H]&=[\OO(\w+\ell_i\x_i)]+\sum_{j=1}^{4}[\OO(\y-(1+y_j)\x_j+\ell_i\x_i)]-[\OO(\y+\ell_i\x_i)]\\
				&=[\OO(\w+\ell_i\x_i)]+[\OO(\x)]+\sum_{\substack{J \subset I \\ |J|=2}} [\OO(\w+\sum_{j\in J}\ell_j\x_j)]-[\OO(\w+\sum_{j\neq i}\ell_j\x_j)].
			\end{align*}
			Thus we have  the following equality 
			$$[H]-[G]=\sum_{J\subset I}(-1)^{|J|} [\OO(\w+\sum_{j\in J}\ell_j\x_j)].$$
			
			Notice that $(X_j)_{j\in I}$ is an $R$-regular sequence. This implies that $(X_j^{\ell_j})_{j\in I}$ is also an $R$-regular sequence. Hence the corresponding Koszul complex
			\begin{equation}\label{Koszul1}		
				0\to R(-\sum_{j\in I}\ell_j\x_j)\to \bigoplus_{\substack{J \subset I \\ |J|=2}}R(-\sum_{j\in J}\ell_j\x_j) \to \bigoplus_{j\in I} R(-\ell_j\x_j)\to R \to 0
			\end{equation}
			of $R$ is exact except in the rightmost position whose cohomology is $R/(X_j^{\ell_j})_{j\in I}$. Since this belongs to $\mod_0^{\L}R$, the image of (\ref{Koszul1}) in $\coh \X$ is exact. 
			Moreover, by applying vector bundle duality, we obtain an exact sequence 
			\begin{equation}\label{Koszul2}		
				0\to \OO\to \bigoplus_{j\in I} \OO(\ell_j\x_j) \to \bigoplus_{\substack{J \subset I \\ |J|=2}}\OO(\sum_{j\in J}\ell_j\x_j) \to \OO(\sum_{j\in I}\ell_j\x_j) \to 0
			\end{equation}
			in $\coh \X$. Applying degree shift by $\w$ to (\ref{Koszul2}), we obtain $[H]=[G]$ and thus the first assertion follows.			
			Applying (\ref{susp.}) repeatedly, we have the following equalities. 
			\begin{align*}
				E \langle \x \rangle[2]&=E \langle \y \rangle[1]((1+\lambda_i)\x_i)\\
				&=E \langle (p_i-2-y_i)\x_i+\sum_{j\neq i}y_j\x_j \rangle ((1+\lambda_i+1+y_i)\x_i) \\
				&=E \langle \x \rangle(\c), 	\\
				E \langle \x \rangle[4]&=E \langle (p_1-2-\lambda_1)\x_1+\sum_{j= 2}^{4}\lambda_j\x_j \rangle [3] ((1+\lambda_1)\x_1)\\
				&=E \big\langle \sum
				_{i=1}^{2} (p_i-2-\lambda_i)\x_i+\sum_{j=3}^{4}\lambda_j\x_j \big\rangle [2] (\sum_{i=1}^{2}(1+\lambda_i)\x_i)\\
				&=\cdots\\
				&=E \langle \vdelta-\x \rangle(\x-\w+\c).
			\end{align*}
			Hence we have $E \langle \x \rangle=E \langle \vdelta-\x \rangle(\x-\w-\c)$. This finishes the proof.	
		\end{proof}

		Two special cases are the following, which will be used frequently later.
		\begin{corollary} \label{susp. tri. cat.} 
			\begin{itemize}			
				\item[(a)] For any $1\le i \le 4$, $E_L\langle(p_i-2)\x_i\rangle[1]=E_L((p_i-1)\x_i).$
				\item[(b)] If one of weights $p_i=2$, then $E_L\langle \x \rangle[1]=E_L\langle \x \rangle(\x_i)$.  
			\end{itemize}
			
		\end{corollary}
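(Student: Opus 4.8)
The plan is to deduce both statements directly from the general suspension formula \eqref{susp.} established in Proposition~\ref{susp.ext.}, by specializing the weight vector $\x$ appropriately in each case. Since the hard work is already packaged into that proposition, the proof amounts to substituting particular values of $\x$ and simplifying; there is essentially no new content beyond careful bookkeeping.

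For part (a) I would set $\x=(p_i-2)\x_i$, so that in the normal form $\x=\sum_{j=1}^4\lambda_j\x_j$ one has $\lambda_i=p_i-2$ and $\lambda_j=0$ for $j\neq i$ (and indeed $0\le\x\le\vdelta$). Substituting into \eqref{susp.}, the bracket degree $(p_i-2-\lambda_i)\x_i+\sum_{j\neq i}\lambda_j\x_j$ collapses to $0$, while the twist $(1+\lambda_i)\x_i$ becomes $(p_i-1)\x_i$. This yields $E_L\langle(p_i-2)\x_i\rangle[1]=E_L\langle 0\rangle((p_i-1)\x_i)=E_L((p_i-1)\x_i)$, as claimed; here I use that $E_L=E_L\langle 0\rangle$ is by definition the $2$-Auslander bundle, and that the displayed twist is legitimate by Proposition~\ref{shift preserve extension bundle}(b).

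For part (b) the key observation is that if $p_i=2$, then the admissibility constraint $0\le\x\le\vdelta$ forces the $\x_i$-component of $\x$ to vanish: recall that $0\le\x\le\vdelta$ is equivalent to $0\le\lambda_j\le p_j-2$ for all $j$, and $p_i-2=0$ gives $\lambda_i=0$. Feeding $\lambda_i=0$ into \eqref{susp.} for this index $i$, the first summand $(p_i-2-\lambda_i)\x_i$ vanishes and the twist $(1+\lambda_i)\x_i$ reduces to $\x_i$, so \eqref{susp.} reads $E_L\langle\x\rangle[1]=E_L\langle\sum_{j\neq i}\lambda_j\x_j\rangle(\x_i)=E_L\langle\x\rangle(\x_i)$, the last equality holding precisely because $\lambda_i=0$ means $\sum_{j\neq i}\lambda_j\x_j=\x$.

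Since both parts are pure specializations of an already-proven identity, I do not expect any genuine obstacle. The only point deserving a moment's care is the one used in part (b): one must recognize that $p_i=2$ forces $\lambda_i=0$, which is exactly what makes the substitution consistent with the range $0\le\x\le\vdelta$ under which \eqref{susp.} was derived in Proposition~\ref{susp.ext.}.
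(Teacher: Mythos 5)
Your proof is correct and is exactly the argument the paper intends: the paper states this corollary without proof as an immediate specialization of the formula \eqref{susp.} in Proposition~\ref{susp.ext.}, and your substitutions ($\lambda_i=p_i-2$, $\lambda_j=0$ for part (a); $\lambda_i=0$ forced by $p_i=2$ for part (b)) are the right ones. The observation that $0\le\x\le\vdelta$ forces $\lambda_i=0$ when $p_i=2$ is precisely the point that makes part (b) work.
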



	\begin{remark} 
		The preceding result states that the suspension functor of $\underline{\ACM} \, \X$ preserves $2$-extension bundles. Thus 
		we have that the	$2$-extension bundles coincide with $2$-coextension bundles. 
		But it does not always preserve $2$-Auslander bundles. 
		More precisely, it preserves $2$-Auslander bundles if and only if at least one weight is two, see Corollary \ref{ext.is Aus}.	 	
	\end{remark}

	 \begin{theorem}[Projective covers] \label{projective covers} Let $E_L\langle \x \rangle$ be a $2$-extension bundle for some line bundle $L$ and $0\le \x \le \vdelta$.  Then its projective cover
	 	$\mathfrak{P}(E_L\langle \x \rangle)$ are given by
	 	\begin{align} \label{pro.covers}
	 		\mathfrak{P}(E_L\langle \x \rangle)= L(\w) \oplus \Big(\bigoplus_{\substack{I \subset \{1,\dots,4\} \\ |I|=2}}L(\sum_{i\in I}(1+\lambda_i)\x_i+\w-\c)\Big)\oplus L(\x-\c),
	 	\end{align}
	 	where  $\x:=\sum_{i=1}^4\lambda_i\x_i$.  Furthermore, the line bundle summands $\{L_i\}_{i=1}^8$ of  $\mathfrak{P}(E_L\langle \x \rangle)$ are mutually
	 	Hom-orthogonal.	 	
	 \end{theorem}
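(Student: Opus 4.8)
The plan is to deduce the projective cover from the already-established \emph{Injective Hull Theorem} by exploiting the Frobenius structure of $\ACM\X$, rather than repeating the inductive construction. The key observation is that in a Hom-finite Krull--Schmidt Frobenius category the projective cover of an object $W$ having no nonzero projective summand coincides with the injective hull of its syzygy $\Omega W$; since the line bundles are precisely the projective-injective objects of $\ACM\X$ and $\Omega W = W[-1]$, this reads
$$\mathfrak{P}(E_L\langle\x\rangle) \;\simeq\; \mathfrak{I}\big(E_L\langle\x\rangle[-1]\big).$$
To justify the identity I would take the projective-cover sequence $0 \to \Omega W \to \mathfrak{P}(W) \to W \to 0$, write the injective object $\mathfrak{P}(W)$ as $\mathfrak{I}(\Omega W)\oplus J$ with $J$ injective and $\Omega W$ landing in the first summand, and pass to the cokernel of $\Omega W \hookrightarrow \mathfrak{P}(W)$ to get $W \simeq \Omega^{-1}\Omega W \oplus J$. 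As $E_L\langle\x\rangle$ is indecomposable of rank four, it has no line bundle (hence no projective) summand, so the projective summand $J$ must vanish, yielding $\mathfrak{P}(W)=\mathfrak{I}(\Omega W)$.

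Next I would put $E_L\langle\x\rangle[-1]$ into the standard form of a $2$-extension bundle so that the Injective Hull Theorem applies directly. Using Proposition \ref{susp.ext.}, namely $E_L\langle\x\rangle[2]=E_L\langle\x\rangle(\c)$ together with the index-$i$ suspension formula, I obtain, for any fixed $1\le i\le 4$,
$$E_L\langle\x\rangle[-1] \;=\; E_L\langle\x\rangle[1](-\c) \;\simeq\; E_{M}\langle\y\rangle,$$
where $\y=(p_i-2-\lambda_i)\x_i+\sum_{j\neq i}\lambda_j\x_j$ and $M=L((1+\lambda_i)\x_i-\c)$. Here $0\le\y\le\vdelta$, so $E_M\langle\y\rangle$ is a genuine $2$-extension bundle, and Proposition \ref{shift preserve extension bundle} is what lets me absorb the twist into the line bundle $M$.

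Then I would apply the Injective Hull Theorem to $E_{M}\langle\y\rangle$ and simplify the eight resulting line bundle degrees by means of $p_i\x_i=\c$ and $\w=\c-\sum_{t}\x_t$. The summand with $k=i$ in the first family collapses to $M(\w)=L(\w)$ and the one with $k=i$ in the second family to $L(\x-\c)$; the three first-family summands with $k=j\neq i$ produce the size-two terms $L(\sum_{m\in I}(1+\lambda_m)\x_m+\w-\c)$ with $i\in I$, while the three second-family summands with $k=j\neq i$ produce exactly those with $i\notin I$, the index set being the complement of $\{i,j\}$. Collecting the eight line bundles reproduces the claimed formula (\ref{pro.covers}), and the mutual Hom-orthogonality of the summands is inherited verbatim from the Injective Hull Theorem applied to $E_M\langle\y\rangle$.

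The routine-but-delicate part is the degree bookkeeping of the last paragraph, where one must check that the asymmetric intermediate description $E_M\langle\y\rangle$ nevertheless yields the symmetric answer; I expect no conceptual obstacle there. The only genuinely structural point is the Frobenius identity $\mathfrak{P}(W)\simeq\mathfrak{I}(\Omega W)$, which relies on Krull--Schmidt cancellation and on $E_L\langle\x\rangle$ being an indecomposable of rank four with no line bundle summand.
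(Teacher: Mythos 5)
Your proposal is correct and follows essentially the same route as the paper's own proof: the paper likewise fixes an index $i$, writes $\mathfrak{P}(E_L\langle \x \rangle)=\mathfrak{I}(E_L\langle \x \rangle[-1])=\mathfrak{I}(E_L\langle \y \rangle)((1+\lambda_i)\x_i-\c)$ with $\y=(p_i-2-\lambda_i)\x_i+\sum_{j\neq i}\lambda_j\x_j$ via Proposition \ref{susp.ext.}, and then reads off the eight line bundle summands from the Injective Hull Theorem by the same degree bookkeeping. The only additions on your side are the explicit Krull--Schmidt justification of the Frobenius identity $\mathfrak{P}(W)\simeq\mathfrak{I}(W[-1])$, which the paper asserts without proof, and a harmless notational slip (the $k=i$ summand of the first family is $M(\w+(p_i-1-\lambda_i)\x_i)$, not $M(\w)$, though it does equal $L(\w)$ as you claim).
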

	 \begin{proof} The Hom-orthogonal is easy to show.  It remains to deal with the projective covers. Fix $1\le i \le 4$, we let $\y=(p_i-2-\lambda_i)\x_i+\sum_{j\neq i}\lambda_j\x_j$.
	 	By Proposition \ref{susp.ext.}, we have $\mathfrak{P}(E_L\langle \x \rangle)=\mathfrak{I}(E_L\langle \x \rangle[-1])=\mathfrak{I}(E_L\langle \y \rangle)((1+\lambda_i)\x_i-\c)$.  Then we have
		 \begin{align*}
				 	L[\y]((1+\lambda_i)\x_i-\c)&=L(\w)\oplus \bigoplus_{j\neq i}L((1+\lambda_j)\x_j+(1+\lambda_i)\x_i+\w-\c),\\
				 	L\langle \y \rangle((1+\lambda_i)\x_i-\c) &= L(\sum_{j\neq i}\lambda_j\x_j+\lambda_i\x_i-\c) \oplus 
				 	\bigoplus_{j\neq i}L(-\x_i-\x_j+\sum_{l\notin\{i,j\}}\lambda_l\x_l)\\
				 	&=L(\x-\c) \oplus \bigoplus_{j\neq i}L(\sum_{l\notin\{i,j\}}(1+\lambda_l)\x_l+\w-\c).
			\end{align*}
		Hence $\mathfrak{P}(E_L\langle \x \rangle)$  has the form (\ref{pro.covers}).
	 \end{proof}

	 	\subsection{Comparison with $2$-(co)Auslander bundles}
	 	
	 	The aim of this section is to give explicit formulas of $\Hom$-spaces comparing with $2$-Auslander bundles 
	 	(resp. $2$-coAuslander bundles)  in $\underline{\ACM}\, \X$.
	 	 
	 	As before,  let $E$ be the $2$-Auslander bundle given by the defining sequence $0\to \OO(\w) \xrightarrow{\alpha} E \xrightarrow{\beta} \OO\langle 0 \rangle  \xrightarrow{\gamma} \OO\to 0$.
	 	
	 \begin{proposition}\label{Aus.to X}  Assume that $X$ is an indecomposable ACM bundle but not a line bundle. We write its projective cover in the form
	 	$$\mathfrak{P}(X)=\bigoplus_{j\in J}\OO(\y_j+\w)\xrightarrow{u=(u_j)}X.$$ Then $\underline{\Hom}(E(\x),X)$ is nonzero if and only if $\x=\y_i$ for some $i\in J$.
	 	If in addition $\{\OO(\y_j+\w)\}_{j\in J}$ are mutually Hom-orthogonal, then  $\underline{\Hom}(E(\x),X)=k$.		
	 \end{proposition}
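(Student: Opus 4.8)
The plan is to reduce $\underline{\Hom}(E(\x),X)$ to a multiplicity count inside the projective cover $u$. The key structural input is that $E(\x)$ is again a $2$-Auslander bundle, namely $E(\x)=E_{\OO(\x)}$ by Proposition~\ref{shift preserve extension bundle}(b), so that $\alpha\colon\OO(\x+\w)\to E(\x)$ is a left minimal almost split morphism in $\ACM\X$ (the consequence drawn after Proposition~\ref{sq.cor}).

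First I would record the reduction. The line bundles are precisely the projective–injective objects of $\ACM\X$, so a morphism $E(\x)\to X$ is zero in $\underline{\ACM}\,\X$ iff it factors through a line bundle; lifting any such factorization along the deflation $u$ shows this is equivalent to factoring through $u$. Hence $\underline{\Hom}(E(\x),X)=\Hom(E(\x),X)/I$, where $I=\Im\big(u_\ast\colon\Hom(E(\x),\mathfrak P(X))\to\Hom(E(\x),X)\big)$ is the subspace of stably trivial maps.

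The heart of the proof is to transport this quotient along $\alpha^\ast$ down to the line bundle $\OO(\x+\w)$. Set $W:=\Hom(\OO(\x+\w),X)$ and factor the defining sequence of $E(\x)$ as $\eta_1\colon 0\to\OO(\x+\w)\xrightarrow{\alpha}E(\x)\xrightarrow{p}K\to0$ and $\eta_2\colon 0\to K\xrightarrow{i}\bigoplus_{1\le l\le4}\OO(\x-\x_l)\xrightarrow{\gamma}\OO(\x)\to0$ with $\beta=ip$. Applying $\Hom(-,X)$ to $\eta_1$ gives $0\to\Hom(K,X)\xrightarrow{p^\ast}\Hom(E(\x),X)\xrightarrow{\alpha^\ast}W$, and since $\alpha$ is left almost split while no nonzero map $\OO(\x+\w)\to X$ is a split monomorphism ($X$ being indecomposable and not a line bundle), $\alpha^\ast$ is surjective with kernel $p^\ast\Hom(K,X)$. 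Two facts then let $\alpha^\ast$ descend to an isomorphism onto a quotient of $W$: first, $\ker\alpha^\ast\subseteq I$, because any map $gp$ with $g\colon K\to X$ extends over $i$ (as $\Ext^1(\OO(\x),X)=0$ by (\ref{ACM bundle eq.})) to some $\bigoplus_l\OO(\x-\x_l)\to X$, whence $gp$ factors through $\beta=ip$ into line bundles and is stably trivial; second, $\alpha^\ast(I)=W'$, where $W'\subseteq W$ is the set of maps factoring through $P':=\bigoplus_{\y_j\neq\x}\OO(\y_j+\w)$, since $\alpha^\ast$ kills every component of a map $E(\x)\to\mathfrak P(X)$ into a summand isomorphic to $\OO(\x+\w)$ (such a composite with $\alpha$ is a scalar, forced to be $0$) while by left almost splitness it surjects onto $\Hom(\OO(\x+\w),P')$. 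Combining surjectivity of $\alpha^\ast$ with these two facts yields $\underline{\Hom}(E(\x),X)\cong W/W'$.

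It then remains to evaluate $W/W'$, which I would do using minimality of the projective cover. Since $\OO(\x+\w)$ is projective and $u$ is a deflation, every element of $W$ lifts through $u$, so $W=W'+\sum_{\y_j=\x}k\,u_j$. If $\x\neq\y_j$ for all $j$, then $W=W'$ and the Hom-space vanishes. If $\x=\y_i$ for some $i$, then minimality ($\ker u\subseteq\operatorname{rad}\mathfrak P(X)$) gives $u_i\notin W'$: were $u_i=(u|_{P'})\psi$, the map $\iota_i-\psi\colon\OO(\x+\w)\to\mathfrak P(X)$ would lie in $\ker u$ yet have $i$-th component $1\notin\operatorname{rad}\End(\OO(\x+\w))=0$. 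Hence $W/W'\neq0$. Under the Hom-orthogonality hypothesis the $\OO(\y_j+\w)$ are pairwise non-isomorphic, so at most one $\y_j$ equals $\x$; thus $\sum_{\y_j=\x}k\,u_j=k\,u_i$ and $W/W'=k\,u_i\cong k$.

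I expect the main obstacle to be the middle step — proving the two facts together — since this is exactly where the left-almost-split property of $\alpha$ must be combined with the $\Ext^1$-vanishing coming from the $\ACM$ condition to make $\alpha^\ast$ induce the isomorphism $\Hom(E(\x),X)/I\xrightarrow{\sim}W/W'$; the final multiplicity computation is then routine given minimality of $\mathfrak P(X)\to X$.
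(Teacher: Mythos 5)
Your proof is correct, and it rests on exactly the same three ingredients as the paper's own argument: the left almost split property of $\alpha\colon\OO(\x+\w)\to E(\x)$ (via $E(\x)=E_{\OO(\x)}$ and the remark following Proposition \ref{sq.cor}), the extension property over $K\to\bigoplus_l\OO(\x-\x_l)$ (which you derive from $\Ext^1(\OO(\x),X)=0$ and (\ref{ACM bundle eq.}), exactly the content of Proposition \ref{homological properties of $E$ and $K$ 4}(b)), and the minimality of the projective cover. The difference is organizational, and it is a real one: the paper proves the two implications by separate contradiction arguments at the level of individual morphisms (a stably nonzero $h$ has $h\alpha\neq 0$, lift it through $u$ and force some component to be an isomorphism; conversely extend $u_i$ through $\alpha$ and show the result cannot factor through $\mathfrak{P}(X)$), whereas you assemble the same facts once and for all into an isomorphism $\underline{\Hom}(E(\x),X)\cong W/W'$ and then evaluate that quotient by linear algebra. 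What your packaging buys is the dimension statement: the paper's written proof only establishes nonvanishing in both directions and never explicitly verifies the claim $\underline{\Hom}(E(\x),X)=k$ under Hom-orthogonality, while in your setup it falls out as $W/W'=k\,u_i$.

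One small point of care: your $W'$ should be taken to mean $\alpha^{\ast}(I)=(u|_{P'})_{\ast}\Hom(\OO(\x+\w),P')$, i.e.\ maps of the form $(u|_{P'})\psi$, which is how you actually use it in both Fact 2 and the final step. If instead $W'$ means all maps factoring through the \emph{object} $P'$ with an arbitrary second factor $g\colon P'\to X$, then the step ``were $u_i=(u|_{P'})\psi$'' needs one extra line: lift $g$ through the deflation $u$ to $g'\colon P'\to\mathfrak{P}(X)$ and note that $g'\psi$ has no components into summands isomorphic to $\OO(\x+\w)$, since $\Hom(\OO(\a),\OO(\b))\neq 0\neq\Hom(\OO(\b),\OO(\a))$ forces $\a=\b$ by the grading. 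The two readings therefore coincide, so this is a matter of wording rather than a gap.
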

	 		\begin{proof} Assume that there exists a nonzero morphism $h\in \underline{\Hom}(E(\x),X)$ for some $\x \in\L$. Let $\beta=ip$ be the canonical factorization of the morphism $\beta$, where
	 			$K:=\Im \beta$, $p: E(\x) \to K$ and $i:K \to \OO \langle 0 \rangle(\x)$. 
	 			Then $h':=h\alpha:\OO(\x+\w) \to X$ is also nonzero since otherwise $h$ would factor through $p$ and then   $h$ would factor through $\beta$ by Proposition \ref{homological properties of $E$ and $K$ 4}, a contradiction to our assumption on $h$. Since $u:\bigoplus_{j\in J} \OO(\y_j+\w)\to X$ is a projective cover of $X$, the morphism $h'$ can lift to $v$, that is, we have a factorization $h'=uv=\sum_{j\in J}u_jv_j$, where $v:=(v_j)$ and $u:=(u_j)$. It remains to show that at least one component $v_j:\OO(\x+\w)\to \OO(\y_j+\w)$ is an isomorphism. Assume for contradiction, we extends $v$ to $E(\x)$ since $\alpha$ is left almost split, yielding a morphism $\bar{v}:E(\x) \to \mathfrak{P}(X)$. Thus we have a  diagram as follows
	 			\[
	 			\begin{tikzcd} [column sep=2.8em, row sep=2em]
	 				\OO(\x+\w)\rar{\alpha}\ar[d,"v=(v_j)"']  &E(\x) \ar[dl,"\bar{v}"] \rar{\beta}\dar{h}&\OO\langle 0 \rangle(\x)  \\
	 				\bigoplus_{j\in J} \OO(\y_j+\w) \rar["u=(u_j)"']	&X &
	 			\end{tikzcd}		 
	 			\]
	 			where the square and the higher triangle are commutative. Since $(h-u\bar{v})\alpha=0$,  $h-u\bar{v}$ factors through $K$ and thus by Proposition \ref{homological properties of $E$ and $K$ 4}, it factors through $\OO \langle 0 \rangle(\x)$. Additionally, $u\bar{v}$ factors through $\mathfrak{P}(X)$ and then $h$ factors through $\OO \langle 0 \rangle(\x)\oplus \mathfrak{P}(X)$, a contradiction to our assumption.
	 				 				 			
	 			Conversely, fixing an index $i$ of $J$, we show that $\underline{\Hom}(E(\y_i),X)\neq 0$. Since $\alpha$ is left almost split in $\ACM \X$, we extends $u_i$ to the $2$-Auslander bundle $E(\y_i)$, implying a nonzero morphism $v:E(\y_i)\to X$. We claim that $v$ is nonzero in  $\underline{\Hom}(E(\y_i),X)$. Assume for contradiction that $v$ factors through $\mathfrak{P}(X)$. There exists a morphism $\bar{v}:E(\y_i)\to \mathfrak{P}(X)$ such that $v=u\bar{v}$. We put $\bar{\alpha}=\bar{v}\alpha$ and thus obtain the following commutative diagram
	 					\[
	 					\begin{tikzcd} [column sep=2.8em, row sep=2em]
	 								\OO(\y_i+\w)\rar{\alpha}\ar[d,"u_i"'] \ar[rd, "\bar{\alpha}=(\alpha_j)"]&E(\y_i) \rar{\beta}\dar{\bar{v}=(\bar{v}_j)}&\OO\langle 0 \rangle(\y_i)  \\
	 								X&\bigoplus_{j\in J} \OO(\y_j+\w) \ar[l," u=(u_j)"'].&
	 							\end{tikzcd}		 
	 					\]
	 					Consider the morphism $u_i=u\bar{\alpha}=\sum_{j\in J} u_j{\alpha_j}$, where $\alpha_j=\bar{v}_j\alpha$ for each $j \in J$. We distinguish two cases. If $\alpha_i=0$, then we have $u_i=\sum_{j\neq i} u_j{\alpha_j}$, a contradiction to the minimality of the projective cover $\mathfrak{P}(X)$. Otherwise $\alpha_i\neq0$, then $\alpha_i=\bar{v}_i\alpha$ is an automorphism of $\OO(y_i+\w)$, 	yielding that the morphism $\alpha$ is a split monomorphism, a contradiction. Thus we have shown that the nonzero morphism $v:E(\y_i)\to X$ does not factor through a direct sum of line bundles. This finishes the proof.	 						 					 	
	 				\end{proof}

	 As an immediate consequence, we have the following result, describing that there are enough $2$-Auslander bundles in the following sense.
	 
	 \begin{corollary}\label{Aus.coro} Assume that $X$ is an indecomposable ACM bundle but not a line bundle. Then there exists a $2$-Auslander bundle $E(\x)$ such that $\underline{\Hom}(E(\x),X)\neq 0$.		
	 \end{corollary}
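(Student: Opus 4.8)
The plan is to read this off directly from Proposition~\ref{Aus.to X}, the only content of the corollary being that the index set $J$ occurring there is nonempty. First I would recall the structure we have available: $\ACM\,\X\simeq\CM^{\L}R$ is a Krull-Schmidt Frobenius category in which, by~(\ref{ACM bundle eq.}), the indecomposable projective-injective objects are precisely the line bundles $\OO(\z)$, $\z\in\L$. Hence every object admits a projective cover whose source is a finite direct sum of line bundles, and I may write the projective cover of $X$ in the normalized form $\mathfrak{P}(X)=\bigoplus_{j\in J}\OO(\y_j+\w)\xrightarrow{u=(u_j)}X$ required by Proposition~\ref{Aus.to X}; the shift by $\w$ is harmless, since any line bundle $\OO(\z)$ is of the form $\OO(\y_j+\w)$ with $\y_j:=\z-\w$.

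Next I would observe that $J\neq\emptyset$. Indeed, $X$ is a nonzero indecomposable object, so its projective cover is a nonzero epimorphism from a projective object, which forces $\mathfrak{P}(X)\neq 0$ and therefore $J\neq\emptyset$. This is the point where the hypothesis that $X$ is not a line bundle enters: among indecomposable ACM bundles, being a line bundle is exactly being projective in $\ACM\,\X$, so ``not a line bundle'' guarantees that $X$ is non-projective and hence does not vanish in $\underline{\ACM}\,\X$. This is also what makes the conclusion meaningful, and it is precisely the hypothesis under which Proposition~\ref{Aus.to X} applies.

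With these two facts in hand the conclusion is immediate: choosing any index $i\in J$ and setting $\x:=\y_i$, Proposition~\ref{Aus.to X} yields $\underline{\Hom}(E(\y_i),X)\neq 0$, exhibiting the desired $2$-Auslander bundle $E(\x)$. I do not anticipate a genuine obstacle, as the statement is a formal consequence of the preceding proposition; the only step meriting a word of care is the existence and shape of the projective cover—namely that it is a nonzero direct sum of line bundles—which is supplied by the Krull-Schmidt Frobenius structure of $\ACM\,\X$ together with the identification of its indecomposable projectives with the objects of $\lb\X$.
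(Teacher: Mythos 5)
Your proposal is correct and follows the paper's route exactly: the paper states this corollary as an immediate consequence of Proposition \ref{Aus.to X}, whose proof already establishes $\underline{\Hom}(E(\y_i),X)\neq 0$ for every $i\in J$, so the only missing content is $J\neq\emptyset$. Your argument supplies precisely that—normalizing the projective cover as $\bigoplus_{j\in J}\OO(\y_j+\w)$, noting $\mathfrak{P}(X)\neq 0$ since $X\neq 0$, and then invoking the proposition—so it is a faithful completion of the paper's (omitted) proof.
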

	 
	 We put $\bar{x}_{ij}:=\c-\x_i-\x_j$  for any $1\le i< j \le 4$.
	 
	 \begin{corollary} \label{compare 2-Aus} 
	 	Let $E_L \langle \x \rangle$ be a $2$-extension bundle for some
	 	line bundle $L$ and $\x=\sum_{i=1}^{4}\lambda_i\x_i$ with $0\le \x \le \vdelta$. Put  $E_L:=E_L\langle 0 \rangle$. 
	 	Then  $\underline{\Hom}(E_L(\y),E_L \langle \x \rangle)\neq 0$ if and only if $\y$ is one of 	$0$, $\x-\c-\w$ or $(1+\lambda_i)\x_i+(1+\lambda_j)\x_j-\c$ with $1\le i< j \le4$. 
	 	Moreover in this case $\underline{\Hom}(E_L(\y),E_L \langle \x \rangle)=k$.
	 		 	
	 In particular, for a $2$-Auslander bundle $E$, we have $\underline{\Hom}(E,E(\x))\neq 0$ if and only if $\x$ is one of $0$, $\c+\w$ or $\bar{x}_{ij}$ for $1\le i< j \le 4$. In this case $\underline{\Hom}(E,E(\x))=k$.	
	 \end{corollary}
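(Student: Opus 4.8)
The plan is to read off both Hom-spaces from Proposition~\ref{Aus.to X}, applied to the target $X=E_L\langle\x\rangle$, whose projective cover is computed explicitly in Theorem~\ref{projective covers}.

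First I would reduce to the case $L=\OO$. Since the Picard group of $\X$ is $\L$, write $L=\OO(\mathbf{l})$ for some $\mathbf{l}\in\L$. By Proposition~\ref{shift preserve extension bundle}(b) one has $E_L(\y)=E(\mathbf{l}+\y)$ and $E_L\langle\x\rangle=E\langle\x\rangle(\mathbf{l})$, so twisting both arguments by $-\mathbf{l}$, which is an autoequivalence of $\underline{\ACM}\,\X$, yields
\[
\underline{\Hom}(E_L(\y),E_L\langle\x\rangle)\simeq\underline{\Hom}(E(\y),E\langle\x\rangle).
\]
Thus the Hom-space is independent of $L$, and it suffices to treat $L=\OO$.

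Next I would check that $E\langle\x\rangle$ meets the hypotheses of Proposition~\ref{Aus.to X}: it is exceptional, hence indecomposable, by Proposition~\ref{homological properties of $E$ and $K$ 2}; it lies in $\ACM\,\X$ by Proposition~\ref{homological properties of $E$ and $K$ 3}; and it has rank $4$ by Proposition~\ref{shift preserve extension bundle}(a), so it is not a line bundle. Reading its projective cover from Theorem~\ref{projective covers} with $L=\OO$,
\[
\mathfrak{P}(E\langle\x\rangle)=\OO(\w)\oplus\Big(\bigoplus_{\substack{I\subset\{1,\dots,4\}\\ |I|=2}}\OO(\sum_{i\in I}(1+\lambda_i)\x_i+\w-\c)\Big)\oplus\OO(\x-\c),
\]
I would rewrite each summand in the normalized shape $\OO(\y_j+\w)$ required by Proposition~\ref{Aus.to X}: the summand $\OO(\w)$ gives $\y_j=0$; the summand indexed by $I=\{i,j\}$ gives $\y_j=(1+\lambda_i)\x_i+(1+\lambda_j)\x_j-\c$; and $\OO(\x-\c)=\OO((\x-\c-\w)+\w)$ gives $\y_j=\x-\c-\w$. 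Proposition~\ref{Aus.to X} then yields that $\underline{\Hom}(E(\y),E\langle\x\rangle)$ is nonzero exactly when $\y$ is one of these values, and because Theorem~\ref{projective covers} guarantees that these line bundle summands are mutually Hom-orthogonal, its second clause improves this to $\underline{\Hom}(E(\y),E\langle\x\rangle)=k$ in each case. This settles the first assertion.

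The ``in particular'' statement follows by setting $\x=0$, so that every $\lambda_i=0$ and $E\langle0\rangle=E$. The three values above then become $\y\in\{0,\,-\c-\w,\,\x_i+\x_j-\c\}$, and applying twist-invariance in the form $\underline{\Hom}(E,E(\x))\simeq\underline{\Hom}(E(-\x),E)$ converts these into $\x\in\{0,\,\c+\w,\,\c-\x_i-\x_j\}=\{0,\,\c+\w,\,\bar{x}_{ij}\}$, again with one-dimensional Hom-space. I expect the only points needing care to be the normalization of each projective-cover summand into the form $\OO(\y_j+\w)$---especially the rewriting of $\OO(\x-\c)$---and the bookkeeping of which argument is twisted when passing between $\underline{\Hom}(E(\y),E)$ and $\underline{\Hom}(E,E(\x))$; the remainder is a direct appeal to the cited results.
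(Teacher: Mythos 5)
Your proof is correct and is essentially the paper's own argument: the paper simply declares the corollary immediate from Proposition~\ref{Aus.to X} and Theorem~\ref{projective covers}, and your write-up fills in exactly the details that makes this work (reduction to $L=\OO$ via degree twist, verification of the hypotheses of Proposition~\ref{Aus.to X}, normalization of the projective-cover summands into the form $\OO(\y_j+\w)$, and the twist $\underline{\Hom}(E,E(\x))\simeq\underline{\Hom}(E(-\x),E)$ for the ``in particular'' clause).
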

	 \begin{proof} This is immediate from Proposition \ref{Aus.to X} and Theorem \ref{projective covers}.	 
	 \end{proof}	
	
	Recall that for a subclass $\CC$ of a triangulated category $\mathcal{T}$, the \emph{right perpendicular category} $\CC^{\perp}$ of $\CC$ consists of all objects $X$ of $\mathcal{T}$ such that $\Hom(C,X[n])=0$ holds for any $C\in\CC$ and $n\in \Z$.

	\begin{corollary}\label{consists of all 2-Aus}  Let $\CC$ be the triangulated subcategory of $\underline{\ACM} \, \X$ containing all $2$-Auslander bundles. Then we have $\CC^{\perp}=0$.		
	\end{corollary}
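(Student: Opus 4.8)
The plan is to argue by contradiction, reducing an arbitrary nonzero object of $\CC^{\perp}$ to a single indecomposable ACM bundle that is not a line bundle, and then invoke Corollary \ref{Aus.coro}. Concretely, I would suppose $X\in\CC^{\perp}$ is nonzero in $\underline{\ACM}\,\X$ and derive a contradiction with the vanishing built into the definition of $\CC^{\perp}$.

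First I would recall that in the Frobenius category $\ACM\,\X$ the projective--injective objects are precisely the line bundles (by the discussion following (\ref{ACM bundle eq.})), so an object of $\underline{\ACM}\,\X$ is zero if and only if it is a finite direct sum of line bundles in $\ACM\,\X$. Since $\coh\X$ is Hom-finite over the algebraically closed field $k$ and Noetherian, it is Krull--Schmidt, and hence so is its summand-closed subcategory $\ACM\,\X$. Therefore a nonzero object $X$ of $\underline{\ACM}\,\X$ admits an indecomposable direct summand $X'$ in $\ACM\,\X$ that is \emph{not} a line bundle.

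Next I would use that the right perpendicular category $\CC^{\perp}$ is closed under direct summands, since its defining vanishing $\Hom(C,X[n])=0$ passes to each summand of $X$. Hence if $X\in\CC^{\perp}$, then $X'\in\CC^{\perp}$ as well; in particular, taking $n=0$, we get $\underline{\Hom}(C,X')=\Hom_{\underline{\ACM}\,\X}(C,X')=0$ for every $C\in\CC$. On the other hand, since $X'$ is an indecomposable ACM bundle that is not a line bundle, Corollary \ref{Aus.coro} supplies a $2$-Auslander bundle $E(\x)$ with $\underline{\Hom}(E(\x),X')\neq 0$. As $E(\x)\in\CC$, this contradicts the vanishing just obtained, so no nonzero object can lie in $\CC^{\perp}$, i.e. $\CC^{\perp}=0$.

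The main obstacle---really the only point requiring care---is the reduction carried out in the second step: one must know both that $\ACM\,\X$ is Krull--Schmidt and that the objects becoming zero in the stable category are exactly the line-bundle summands, so that a genuinely nonzero stable object does contribute an indecomposable non-line-bundle summand to which Corollary \ref{Aus.coro} applies. Once this reduction is secured, the remainder follows immediately from the closure of perpendicular categories under summands and from Corollary \ref{Aus.coro}, with the key input being that $\CC$ contains \emph{all} $2$-Auslander bundles $E(\x)$, $\x\in\L$.
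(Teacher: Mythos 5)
Your proof is correct and takes essentially the same approach as the paper: both reduce to an indecomposable ACM bundle that is not a line bundle and then invoke Corollary \ref{Aus.coro} together with the fact that $\CC$ contains all $2$-Auslander bundles. The paper states this as a one-line argument, leaving implicit the Krull--Schmidt decomposition, the identification of stably-zero objects with direct sums of line bundles, and the closure of $\CC^{\perp}$ under summands, all of which you spell out explicitly.
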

	\begin{proof}Assume that $X$ is an indecomposable ACM bundle but not a line bundle. By Corollary  \ref{Aus.coro}, there is a $2$-Auslander bundle $E(\x)$ such that $\underline{\Hom}(E(\x),X)\neq 0$. Since $\CC$ contains all $2$-Auslander bundles, 	this shows $\CC^{\perp}=0$.		
	\end{proof}

	\begin{remark} \label{comp. 2-coAus}
		Let $0 \to \OO(\w) \to \OO[0] \to F \to \OO \to 0$ be  the defining sequence for the $2$-coAuslander bundle $F$. By vector bundle duality and Lemma \ref{vector bundle duality}, we can obtain dual versions of Proposition \ref{Aus.to X} and its corollaries. With the above notation we obtain, for instance, that $\underline{\Hom}(X,F(\x))\neq 0$ if and only if $\OO(\x)$ is a direct summand of the injective hull of $X$.		
	\end{remark}

	 \subsection{A distinguished triangle}

	 \begin{proposition}\label{triangle}  Assume that $0\le \x \le \x+\x_i \le \vdelta$ holds for some $1\le i\le 4$. Let $L$ be a line bundle.
	 	 Then there exists a triangle	in $\underline{\ACM} \, \X$	
	 	\begin{align}
	 		E_L \langle \x \rangle \xrightarrow{v_i} E_L \langle \x+\x_i \rangle \to E_L \langle \x-\lambda_i\x_i \rangle((1+\lambda_i)\x_i)\to E_L \langle \x \rangle[1],
	 	\end{align}
	 	 where $\x:=\sum_{i=1}^{4}\lambda_i\x_i$ and the morphism $v_i:=x_i^{\ast}$ is given in Proposition {\ref{ext-pullback}}. 
	 \end{proposition}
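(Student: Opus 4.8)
The plan is to produce the asserted triangle from a genuine conflation in the Frobenius category $\ACM\X$ and then pass to the stable quotient $\underline{\ACM}\, \X$, where the added projective--injective summand disappears.

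First I would check that the morphism $v_i=x_i^{\ast}$ of Proposition \ref{ext-pullback} is a monomorphism in $\coh\X$: in the commutative diagram (\ref{pullback and pushout diagram}) the right-hand vertical maps $y'$ and $x_i$ are monomorphisms, and $v_i$ restricts to the identity on $L(\w)=\Im\alpha$, so $\ker v_i=0$ by a short diagram chase (the Snake Lemma applied to the induced map $K_{\x}\to K_{\x+\x_i}$, as in the proof of Proposition \ref{ext-pullback}). Its cokernel is a torsion sheaf, so $v_i$ is not itself a conflation in $\ACM\X$, and the statement genuinely belongs to the stable category, where $v_i$ may be altered by morphisms factoring through line bundles.

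Set $W:=E_L\langle \x-\lambda_i\x_i\rangle((1+\lambda_i)\x_i)=E_{L'}\langle \x-\lambda_i\x_i\rangle$ with $L'=L((1+\lambda_i)\x_i)$ (Proposition \ref{shift preserve extension bundle}). Expanding the classes of $E_L\langle\x\rangle$, $E_L\langle\x+\x_i\rangle$ and $W$ in $K_0(\coh\X)$ through their defining four-term sequences (\ref{2-extension bundle exact}) and simplifying, one finds
\[
[E_L\langle\x\rangle]+[W]-[E_L\langle\x+\x_i\rangle]=[L(\w+(1+\lambda_i)\x_i)]+\sum_{j\neq i}[L(\x-(1+\lambda_j)\x_j)],
\]
the class of a rank-four sum of line bundles. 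I therefore set $I:=L(\w+(1+\lambda_i)\x_i)\oplus\bigoplus_{j\neq i}L(\x-(1+\lambda_j)\x_j)$, which is a direct summand of the injective hull $\mathfrak{I}(E_L\langle\x\rangle)$, and let $a:E_L\langle\x\rangle\to I$ be the injective-hull morphism followed by the projection onto $I$. The goal is then the conflation
\[
0\to E_L\langle\x\rangle \xrightarrow{\ \binom{v_i}{a}\ } E_L\langle\x+\x_i\rangle\oplus I \to W \to 0
\]
in $\ACM\X$; since $v_i$ is already mono the left map is mono, and identifying the cokernel with $W$ would finish the argument, because $I$ vanishes in $\underline{\ACM}\, \X$ and the residual map is exactly $v_i$.

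To identify the cokernel $C$ of $\binom{v_i}{a}$, I would run a diagram chase linking the defining sequences of $E_L\langle\x\rangle$ and $E_L\langle\x+\x_i\rangle$ (related by the diagram (\ref{pullback and pushout diagram})) to that of $W$, extracting on $C$ the four-term exact sequence $0\to L'(\w)\to C\to L'\langle\x-\lambda_i\x_i\rangle\to L'(\x-\lambda_i\x_i)\to 0$ with $C\in\ACM\X$; the uniqueness of $2$-extension bundles (Proposition \ref{uniqueness of org}, via Theorem \ref{eq. des. 2-ext}) then gives $C\simeq W$. Alternatively, once $C$ is known to be a vector bundle (Lemma \ref{second syzygy}) one may instead check that it is exceptional in $\coh\X$ --- rigidity coming from Lemma \ref{ext1(E,E)=0} together with Proposition \ref{homological properties of $E$ and $K$ 2} --- and that $[C]=[W]$, whence $C\simeq W$ by Proposition \ref{k0}. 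The hard part will be exactly this middle identification: arranging $a$ so that the cokernel is forced into the shape of the defining sequence of $W$ (equivalently, producing that four-term sequence on $C$) and verifying that the resulting connecting morphism $W\to E_L\langle\x\rangle[1]$ is the intended one; by contrast, the monomorphicity of $v_i$ and the Grothendieck-group bookkeeping are routine.
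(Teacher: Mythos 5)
Your proposal follows essentially the same route as the paper's own proof, so the comparison is direct: the paper computes the cone of $v_i$ from the conflation $0 \to E_L\langle \x \rangle \to E_L\langle \x+\x_i \rangle \oplus \mathfrak{I}(E_L\langle \x \rangle) \to C \to 0$, observes that the injective hulls of $E_L\langle \x \rangle$ and $E_L\langle \x+\x_i \rangle$ share the four line bundle summands $L_i\langle \x \rangle$ and $L_j[\x]$ ($j\neq i$), cancels them to arrive at exactly your sequence $0 \to E_L\langle \x \rangle \to E_L\langle \x+\x_i \rangle \oplus L_i[\x] \oplus \bigoplus_{j\neq i} L_j\langle \x \rangle \to \underline{C} \to 0$, and then identifies $\underline{C}$ with $E_L\langle \x-\lambda_i\x_i \rangle((1+\lambda_i)\x_i)$ by precisely your second alternative: exceptionality of $\underline{C}$ in $\coh\X$ plus equality of classes in $K_0(\coh\X)$ and Proposition \ref{k0}. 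Your Grothendieck-group bookkeeping agrees with the paper's, and building the reduced conflation directly (rather than cancelling from the full injective hull) is a harmless cosmetic variant, modulo the point at the end below.

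The genuine gap sits exactly where you flag ``the hard part,'' and the tools you cite do not close it. Exceptionality of the cokernel is not a consequence of Lemma \ref{ext1(E,E)=0} together with Proposition \ref{homological properties of $E$ and $K$ 2}: those statements concern the $2$-extension bundles themselves, and Lemma \ref{ext1(E,E)=0} only kills $\Ext^{1}$ in the direction of increasing parameter, whereas the identification requires, among other things, $\Ext^{j}(E_L\langle \x+\x_i \rangle, E_L\langle \x \rangle)=0$ for all $j\ge 0$ (opposite direction, all degrees). The paper devotes Appendix \ref{excep C} to this: Proposition \ref{pre excep of C} first proves exceptionality of the torsion sheaf $U_i=\coker v_i$ via a chain of long exact sequence computations, and Proposition \ref{excep of C} then deduces exceptionality of $\underline{C}$ by several further such computations against $V=L_i[\x]\oplus\bigoplus_{j\neq i}L_j\langle \x \rangle$; none of this is routine. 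Your first alternative (a diagram chase producing a four-term sequence $0\to L'(\w)\to C \to L'\langle \x-\lambda_i\x_i \rangle \to L'(\x-\lambda_i\x_i)\to 0$ and then Theorem \ref{eq. des. 2-ext}) is likewise only asserted, not performed; the displayed diagrams do not hand you those maps. Two subsidiary problems: the cokernel of a monomorphism of vector bundles need not be a vector bundle --- your own $U_i$ is torsion --- so Lemma \ref{second syzygy} does not show your cokernel is a bundle, let alone ACM; the paper gets $C\in\ACM\X$ for free because $C$ is an extension of $E_L\langle \x \rangle[1]$ by $E_L\langle \x+\x_i \rangle$ (both in $\ACM\X$, which is extension-closed), a column you lose once you replace $\iota_E$ by its projection $a$ onto $I$. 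And this bundle/ACM property is itself a prerequisite both for the sequence to be a conflation in the Frobenius category and for applying Proposition \ref{k0}, which is stated for exceptional objects of $\vect\X$. In short: correct setup, correct reduction, correct $K$-theory, but the identification of the cokernel --- the actual content of the proposition --- is missing.
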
	
	 \begin{proof} The cone of $v_i$ can be calculated from the exact sequence
	 	$$0 \to E_L\langle \x \rangle \to E_L\langle \x+\x_i \rangle \oplus \mathfrak{I}(E_L\langle \x \rangle) \to C \to 0. $$
	 	This is the sequence associated with the following pushout diagram 
	 	\[
	 	\begin{tikzcd} 
	 		&0 \dar  & 0 \dar  \\
	 	\xi:\quad	0\rar&	E_L \langle \x \rangle \rar{v_i}\dar{\iota_E}& E_L \langle \x+\x_i \rangle \dar \rar& U_i \dar[equals] \rar &0 \\
	 	\zeta:\quad	0\rar&	\mathfrak{I}(E_L\langle \x \rangle)\rar \dar &C \rar \dar & U_i \rar  &0 \\
	 		&E_L \langle \x \rangle[1] \dar \rar[equals] & E_L \langle \x \rangle[1] \dar  \\
	 		&0   & 0   
	 	\end{tikzcd}
	 	\]
	 	in $\coh \X$, where $U_i:=\coker v_i$ and $C$ belongs to $\ACM \X$. 
	 	Note that the injective hulls of $E_L\langle \x \rangle$ and $E_L \langle \x+\x_i \rangle$ have four common direct summands of line bundles $L_i \langle \x \rangle$ and $L_j[\x]$ for $j\neq i$. 
	 	This implies that $\Ext^{1}(U_i,L_i \langle \x \rangle)=0$ and $\Ext^{1}(U_i,L_j [\x])=0$ for $j\neq i$, yielding that $C$ also has these four line bundles as direct summands. 
	 	After canceling these common line bundle factors from $C$ and $\mathfrak{I}(E_L\langle \x \rangle)$, we obtain an exact sequence
	 	\begin{align}
	 		0 \to E_L \langle \x \rangle \to E_L \langle \x+\x_i \rangle\oplus L_i[\x] \oplus \bigoplus_{j\neq i} L_j\langle \x \rangle\to \underline{C} \to 0.
	 	\end{align}	
	 	
	 	We refer to Appendix \ref{excep C}  for the proof of exceptionality of $\underline{C}$  in $\coh \X$. Then we let $G:=E_L \langle \x-\lambda_i\x_i \rangle((1+\lambda_i)\x_i)$ and $H:=\underline{C}$.
	 	By Proposition \ref{k0}, it remains to show that the classes $[G]$ and $[H]$ in the Grothendieck group $K_0(\coh\X)$ are the same. By the defining sequence for $G$, putting $\ell_j=1+\lambda_j$ for $1\le j \le 4$, we have
	 	\begin{align*}
	 		[G]&= [L(\w+\ell_i\x_i)]+[L(\x)]+\sum_{j\neq i}[L(\x+\x_i-\ell_j\x_j)]-[L(\x+\x_i)], \\
	 		[H]&=[E_L \langle \x+\x_i \rangle] -[E_L \langle \x \rangle] + [L(\w+\ell_i\x_i)]+\sum_{j\neq i}[L(\x-\ell_j\x_j)]. 
	 	\end{align*}
	 	By substituting the following equalities
	 	\begin{align*}
	 		[E_L \langle \x+\x_i \rangle]&=[L(\x-\x_i)]+\sum_{j\neq i}[L(\x+\x_i-\ell_j\x_j)]+[L(\w)]-[L(\x+\x_i)], \\
	 		[E_L \langle \x \rangle]&=[L(\x-\x_i)]+\sum_{j\neq i}[L(\x-\ell_j\x_j)]+[L(\w)]-[L(\x)]
	 	\end{align*}
	 	into the right hand side of the equality of $[H]$, we obtain $[G]=[H]$. 
	 \end{proof}

	 An interesting special case is the following, which will be used frequently later. 
	 
	 \begin{corollary} \label{triangle2}
	 	For any  $1\le i \le 4$, let $\ell_i$ be an integer satisfying $1\le \ell_i\le p_i-2$. Then there exists a triangle
	 	\begin{align}
	 		E_L \langle (\ell_i-1)\x_i \rangle \to E_L \langle \ell_i\x_i \rangle \to E_L (\ell_i\x_i)\to E_L \langle (\ell_i-1)\x_i \rangle[1].
	 	\end{align}	
	 \end{corollary}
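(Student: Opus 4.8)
The plan is to obtain Corollary~\ref{triangle2} as a direct specialization of Proposition~\ref{triangle}, taking $\x:=(\ell_i-1)\x_i$ for the fixed index $i$. First I would verify that this choice meets the hypothesis $0\le\x\le\x+\x_i\le\vdelta$ of the proposition. The normal form of $\x=(\ell_i-1)\x_i$ has $\lambda_i=\ell_i-1$ and $\lambda_j=0$ for $j\neq i$, which is legitimate since $1\le\ell_i\le p_i-2$ forces $0\le\ell_i-1<p_i$. The lower bound $\x\ge 0$ is then clear, and $\x\le\x+\x_i$ is immediate from $\x_i\ge 0$; for the upper bound one computes $\vdelta-(\x+\x_i)=(p_i-2-\ell_i)\x_i+\sum_{j\neq i}(p_j-2)\x_j$, all of whose coefficients are nonnegative because $\ell_i\le p_i-2$ and each $p_j\ge 2$, so $\x+\x_i=\ell_i\x_i\le\vdelta$ and the proposition applies.

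Next I would substitute these data into the triangle furnished by Proposition~\ref{triangle},
\[
E_L\langle\x\rangle\xrightarrow{v_i}E_L\langle\x+\x_i\rangle\to E_L\langle\x-\lambda_i\x_i\rangle((1+\lambda_i)\x_i)\to E_L\langle\x\rangle[1],
\]
and simplify each term. With $\lambda_i=\ell_i-1$ one has $\x+\x_i=\ell_i\x_i$, while $\x-\lambda_i\x_i=(\ell_i-1)\x_i-(\ell_i-1)\x_i=0$ and $(1+\lambda_i)\x_i=\ell_i\x_i$. Hence the third term is $E_L\langle 0\rangle(\ell_i\x_i)=E_L(\ell_i\x_i)$, using the convention $E_L=E_L\langle 0\rangle$, and the triangle becomes exactly
\[
E_L\langle(\ell_i-1)\x_i\rangle\to E_L\langle\ell_i\x_i\rangle\to E_L(\ell_i\x_i)\to E_L\langle(\ell_i-1)\x_i\rangle[1],
\]
as claimed.

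There is no genuine obstacle here: the statement is pure bookkeeping once Proposition~\ref{triangle} is available, and the only point requiring care is confirming that $(\ell_i-1)\x_i$ lies in the admissible range $[0,\vdelta]$ in its normal form so that the proposition may be invoked. The connecting morphism $v_i=x_i^{\ast}$ is precisely the one produced in Proposition~\ref{ext-pullback}, so no additional construction is needed.
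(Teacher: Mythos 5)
Your proposal is correct and matches the paper's approach exactly: the paper presents Corollary \ref{triangle2} as an immediate special case of Proposition \ref{triangle}, with no further argument, which is precisely your specialization $\x=(\ell_i-1)\x_i$. Your verification that $0\le\x\le\x+\x_i\le\vdelta$ holds under $1\le\ell_i\le p_i-2$ and the simplification $E_L\langle \x-\lambda_i\x_i\rangle((1+\lambda_i)\x_i)=E_L(\ell_i\x_i)$ are the only points needed, and both are handled correctly.
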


	As an application, we have the following result. 
	
		\begin{proposition} \label{triangulated subcategories} Let $L$ be a line bundle. Then the two triangulated subcategories of $\underline{\ACM} \, \X$ generated by the following objects, respectively, coincide:
		\begin{itemize}			
			\item[(a)]  the $2$-extension bundles $E_L\langle \x \rangle$, $0\le \x \le \vdelta$. 
			\item[(b)]  the $2$-Auslander bundles $E_L(\x)$, $0\le \x \le \vdelta$.   
		\end{itemize}
	\end{proposition}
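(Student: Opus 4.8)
The plan is to prove the two inclusions separately. Write $\mathcal{A}$ for the triangulated subcategory in (a) and $\mathcal{B}$ for that in (b). The only structural input I would use is the family of triangles of Proposition \ref{triangle} together with the twist identity $E_L\langle\x\rangle(\z)\simeq E_{L(\z)}\langle\x\rangle$ of Proposition \ref{shift preserve extension bundle}: since the degree twist $(\z)$ is an exact autoequivalence it sends the triangle of Proposition \ref{triangle} to a new distinguished triangle, so for each admissible $\x$, each index $i$ and each $\z\in\L$ I obtain
\begin{equation*}
E_L\langle\x\rangle(\z)\to E_L\langle\x+\x_i\rangle(\z)\to E_L\langle\x-\lambda_i\x_i\rangle((1+\lambda_i)\x_i+\z)\to E_L\langle\x\rangle(\z)[1].
\end{equation*}
All reductions below specialize $\x$, $i$ and $\z$ in this shifted triangle and invoke the basic fact that a triangulated subcategory containing two of the three vertices of a triangle contains the third.

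For $\mathcal{A}\subseteq\mathcal{B}$ I would prove the stronger statement that $E_L\langle\y\rangle(\z)\in\mathcal{B}$ whenever $0\le\y\le\vdelta$, $\z\ge 0$ and $\y+\z\le\vdelta$, by induction on $\sigma(\y)$. The base case $\y=0$ is exactly the generator $E_L(\z)$ of $\mathcal{B}$. For the step I choose $i$ with $y_i\ge 1$ and read off (shifted triangle with base $\x=\y-\x_i$ and twist $\z$) the triangle
\begin{equation*}
E_L\langle\y-\x_i\rangle(\z)\to E_L\langle\y\rangle(\z)\to E_L\langle\y-y_i\x_i\rangle(y_i\x_i+\z)\to E_L\langle\y-\x_i\rangle(\z)[1].
\end{equation*}
Both outer vertices have parameter of strictly smaller $\sigma$ (namely $\sigma(\y)-1$ and $\sigma(\y)-y_i$) while still satisfying the bounds, so they lie in $\mathcal{B}$ by induction, hence so does the middle term. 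Taking $\z=0$ gives $E_L\langle\y\rangle\in\mathcal{B}$ for all $0\le\y\le\vdelta$.

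For $\mathcal{B}\subseteq\mathcal{A}$ the roles of parameter and twist are exchanged, and it is the twist that must be reduced. I would prove: for every $S\subseteq\{1,\dots,4\}$, every $\y$ with $\operatorname{supp}(\y)\subseteq S$ and $0\le\y\le\vdelta$, and every $\z\ge 0$ with $\operatorname{supp}(\z)\cap S=\emptyset$ and $\y+\z\le\vdelta$, one has $E_L\langle\y\rangle(\z)\in\mathcal{A}$; this goes by induction on $\sigma(\z)$, the base $\z=0$ being the generator $E_L\langle\y\rangle$. For the step I pick $j\in\operatorname{supp}(\z)$ (so $j\notin S$ and $y_j=0$) and use the shifted triangle with base $\x=\y+(z_j-1)\x_j$, index $j$ and twist $\z-z_j\x_j$, which places $E_L\langle\y\rangle(\z)$ in the third slot as the cone of a morphism between $E_L\langle\y+(z_j-1)\x_j\rangle(\z-z_j\x_j)$ and $E_L\langle\y+z_j\x_j\rangle(\z-z_j\x_j)$. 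Both have parameter supported in $S\cup\{j\}$ and twist of strictly smaller $\sigma$ (namely $\sigma(\z)-z_j$) with support disjoint from $S\cup\{j\}$, so they lie in $\mathcal{A}$ by induction applied with the enlarged set $S\cup\{j\}$; hence $E_L\langle\y\rangle(\z)\in\mathcal{A}$. Taking $S=\emptyset$ and $\y=0$ yields $E_L(\z)\in\mathcal{A}$ for all $0\le\z\le\vdelta$, giving the second inclusion.

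The crux — and what forces the two strengthened inductive statements — is that the third vertex of the basic triangle is a \emph{degree-twisted} extension bundle rather than an untwisted one, so an induction on a single parameter does not close. Carrying the twist $\z$ explicitly, and (for $\mathcal{B}\subseteq\mathcal{A}$) the auxiliary support set $S$ that keeps the parameter and twist supports disjoint, is exactly what guarantees that in each triangle the two auxiliary vertices again satisfy the hypotheses with a strictly smaller induction quantity. The hard part is therefore organizing these bookkeeping invariants correctly; once they are in place the remaining work is routine: verifying the admissibility inequalities $0\le(\cdot)\le\vdelta$ at each step so that Proposition \ref{triangle} applies and the objects are genuine $2$-extension bundles, and observing that degenerate coordinates with $p_i=2$ are automatically excluded since the bound $\lambda_i\le p_i-2$ forces $\lambda_i=0$ there.
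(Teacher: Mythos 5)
Your proposal is correct and rests on exactly the same mechanism as the paper's proof: the distinguished triangles of Proposition \ref{triangle} (with Corollary \ref{triangle2} as the special case), twisted by degree shifts, combined with the two-out-of-three property of triangulated subcategories; your double induction (on $\sigma(\y)$ for one inclusion, on $\sigma(\z)$ with the support set $S$ for the other) is a uniform repackaging of the paper's coordinate-by-coordinate Steps 1--4. The only added value is that you spell out both inclusions explicitly, whereas the paper proves $E_L(\x)\in\thick\{E_L\langle\y\rangle\}$ in detail and leaves the converse to ``similar considerations, by going the steps backwards.''
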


		\begin{proof} Without loss of generality, we can assume $L:=\OO$. Let $\CC$ be the triangulated subcategory generated by $E\langle \x \rangle$, $0\le \x \le \vdelta$. 
		Clearly, we have $E(0)=E\langle 0 \rangle \in \CC$. 
		   The proof is divided into the following four steps. Note that  we can permute the indices of $\{\ell_i\}_{1\le i \le 4}$  of each step below by symmetry.

		\emph{Step $1$}: By Corollary \ref{triangle2},  we have the  triangle
		\begin{align}\label{tri,sub,con}
			E \langle (\ell_{1}-1)\x_{1} \rangle \to E \langle \ell_{1}\x_{1} \rangle \to E (\ell_{1}\x_{1})\to.
		\end{align}
		Thus we have $E(\ell_{1}\x_{1})\in \CC$ for  $0\le \ell_{1} \le p_1-2$.  
		
		\emph{Step $2$}: By Proposition \ref{triangle}, we have  the  triangle
		\begin{align}\label{tri,sub,con 2}
			E \langle \ell_1\x_1+(\ell_2-1)\x_2 \rangle \to E \langle \ell_1\x_1+\ell_2\x_2 \rangle \to E \langle \ell_1\x_1 \rangle(\ell_2\x_2)\to,
		\end{align}
		yielding  $E\langle \ell_1\x_1 \rangle(\ell_2\x_2)\in \CC$. Applying $\ell_2\x_2$ to (\ref{tri,sub,con}), we have the triangle
		\begin{align*}
			E \langle (\ell_1-1)\x_1 \rangle(\ell_2\x_2) \to E \langle \ell_1\x_1 \rangle(\ell_2\x_2) \to E (\ell_1\x_1+\ell_2\x_2) \to.
		\end{align*}
	Thus we have $E (\ell_1\x_1+\ell_2\x_2) \in \CC$ for $0\le \ell_i \le p_i-2$.
		
		\emph{Step $3$}: By Proposition \ref{triangle}, we get the  triangle 
		\begin{align}\label{tri,sub,con 3}
			E \langle \ell_1\x_1+\ell_2\x_2+(\ell_3-1)\x_3 \rangle \to E \langle \sum_{i=1}^{3}\ell_i\x_i \rangle \to E \langle \ell_1\x_1+\ell_2\x_2 \rangle(\ell_3\x_3)\to,\end{align}
		yielding   $E \langle \ell_1\x_1+\ell_2\x_2 \rangle(\ell_3\x_3)\in \CC$. 
		Applying $\ell_3\x_3$ to (\ref{tri,sub,con 2}), we have the triangle
		\begin{align*}
			E \langle \ell_1\x_1+(\ell_2-1)\x_2 \rangle(\ell_3\x_3) \to E \langle \ell_1\x_1+\ell_2\x_2 \rangle(\ell_3\x_3) \to E \langle \ell_1\x_1 \rangle(\ell_2\x_2+\ell_3\x_3)\to,
		\end{align*}
		which implies that $E \langle \ell_1\x_1 \rangle(\ell_2\x_2+\ell_3\x_3)\in \CC$. Then  applying to (\ref{tri,sub,con}) by degree shift $\ell_2\x_2+\ell_3\x_3$, we obtain the  triangle
		\begin{align*}
			E \langle (\ell_1-1)\x_1 \rangle(\ell_2\x_2+\ell_3\x_3) \to E \langle \ell_1\x_1 \rangle(\ell_2\x_2+\ell_3\x_3) \to E(\sum_{i=1}^{3}\ell_i\x_i)\to.
		\end{align*}	 
	Thus we have $E (\ell_1\x_1+\ell_2\x_2+\ell_3\x_3) \in \CC$ for $0\le \ell_i \le p_i-2$.	
		
		\emph{Step $4$}: By Proposition \ref{triangle}, we have the  triangle
		\begin{align}\label{tri,sub,con 4}
			E \langle \sum_{i=1}^{3}\ell_i\x_i+(\ell_4-1)\x_4 \rangle \to E \langle \sum_{i=1}^{4}\ell_i\x_i \rangle \to E \langle \sum_{i=1}^{3}\ell_i\x_i \rangle(\ell_4\x_4)\to,\end{align}
			which implies that $E \langle \ell_1\x_1+\ell_2\x_2+\ell_3\x_3 \rangle(\ell_4\x_4)\in \CC$ for  $0\le \ell_i \le p_i-2$. Considering the triangles obtained from (\ref{tri,sub,con 3}) by twisting with $\ell_4\x_4$,  (\ref{tri,sub,con 2}) by twisting with $\ell_3\x_3+\ell_4\x_4$, and  (\ref{tri,sub,con}) by twisting with $\ell_2\x_2+\ell_3\x_3+\ell_4\x_4$, we finally obtain $E (\x) \in \CC$ for $0\le \x \le \vdelta$ and which concludes the proof.

		The converse inclusion can be shown by  similar considerations, by going the steps of the preceding proof backwards.
	\end{proof}

			\section{The correspondence of $2$-Extension bundles}
			\label{sec:The correspondence of 2-Extension bundles}
			
		 \subsection{The correspondence theorem}	
		In this section, we will establish 
		correspondences between  $2$-extension bundles, $2$-coextension bundles and
		 an important class of  Cohen-Macaulay modules studied in \cite{HIMO}.

		Let $\s:=\sum_{i=1}^{4}\x_i$. Then for each element $\vell=\sum_{i=1}^{4}\ell_i\x_i\in [\s,\s+\vdelta]$, we put
		$$E^{\vell}:=R/(X_i^{\ell_i}\mid 1\le i\le 4)\in \mod^{\L}R \ \ \text{and} \ \ U^{\vell}:=\rho(E^{\vell}),$$
		where $\rho$ denotes the composition $\DDD^{\bo}(\mod^{\L}R)\to\DDD_{\rm sg}^{\L}(R)\xrightarrow{\sim} \underline{\CM}^{\L}R$.
		
		\begin{theorem} \label{correspondence of 2-Extension bundles}
			Let $R$ be an $\L$-graded quadrangle singularity of type $(p_1,\dots,p_4)$, and
			$\X$ be the corresponding GL projective space. 
			Then for each $\vell \in [\s,\s+\vdelta]$,    
			$$\pi(U^{\vell})= E\langle \s+\vdelta-\vell \rangle(-\w).$$
			Conversely, each $2$-extension bundle is of this form, up to degree shift.		
		\end{theorem}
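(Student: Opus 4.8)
The plan is to generalise the computation of Proposition~\ref{sq.cor} (which is exactly the case $\vell=\s$, where $E^{\s}=R/(X_1,\dots,X_4)=k$) from the $2$-coAuslander bundle to an arbitrary $2$-coextension bundle, and then to convert back to $2$-extension bundles via the shift formulas of Propositions~\ref{dual ext-def} and~\ref{susp.ext.}. First I would set $\y:=\vell-\s=\sum_{i=1}^{4}(\ell_i-1)\x_i$, so that $0\le\y\le\vdelta$ and the $2$-coextension bundle $F\langle\y\rangle$ has defining datum $\lambda_i=\ell_i-1$, that is $\lambda_i+1=\ell_i$. The point of this choice is that the finite-dimensional cokernel attached to the defining sequence of $F\langle\y\rangle$ is precisely $E^{\vell}(\y)=R/(X_i^{\ell_i})(\y)$.

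Concretely, I would apply the graded global section functor $\Gamma$ to the defining sequence $\mu\colon 0\to\OO(\w)\xrightarrow{(X_i^{\ell_i})}\bigoplus_{i}\OO(\w+\ell_i\x_i)\to F\langle\y\rangle\xrightarrow{\alpha'}\OO(\y)\to 0$, producing an exact sequence in $\mod^{\L}R$
\[
0\to R(\w)\xrightarrow{(X_i^{\ell_i})}\bigoplus_{i}R(\w+\ell_i\x_i)\to M_{\y}\to R(\y)\xrightarrow{q}E^{\vell}(\y)\to 0,
\]
with $M_{\y}\in\CM^{\L}R$ and $\pi(M_{\y})=F\langle\y\rangle$, exactly as in Proposition~\ref{sq.cor}. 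Splitting this into the three short exact sequences through $C_0:=\coker\big(R(\w)\xrightarrow{(X_i^{\ell_i})}\bigoplus_i R(\w+\ell_i\x_i)\big)$ and $\mathfrak{a}:=\ker q$ yields three triangles in $\DDD^{\bo}(\mod^{\L}R)$. In $\DDD^{\L}_{\rm sg}(R)$ all free modules vanish, and crucially $C_0$ also vanishes: the sequence $0\to R(\w)\xrightarrow{(X_i^{\ell_i})}\bigoplus_i R(\w+\ell_i\x_i)\to C_0\to 0$ exhibits $C_0$ as a module of projective dimension at most one (the leftmost map is injective since $R$ is a domain). Chasing the triangles then gives $M_{\y}\simeq\mathfrak{a}\simeq E^{\vell}(\y)[-1]$ in $\underline{\CM}^{\L}R$, that is $\rho(E^{\vell}(\y))=M_{\y}[1]$.

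It then remains to carry out the shift bookkeeping. Since $\rho$ and $\pi$ commute with the degree shift $(\y)$, the previous step reads $\pi(U^{\vell})(\y)=\pi(M_{\y})[1]=F\langle\y\rangle[1]$, whence $\pi(U^{\vell})=F\langle\y\rangle(-\y)[1]$. Now I would substitute $F\langle\y\rangle=E\langle\y\rangle[1]$ (Proposition~\ref{dual ext-def}) together with $E\langle\y\rangle[2]=E\langle\y\rangle(\c)$ (Proposition~\ref{susp.ext.}) to obtain $\pi(U^{\vell})=E\langle\y\rangle(\c-\y)$, and finally the reflection $E\langle\y\rangle=E\langle\vdelta-\y\rangle(\y-\w-\c)$ of Proposition~\ref{susp.ext.} to rewrite this as $E\langle\vdelta-\y\rangle(-\w)=E\langle\s+\vdelta-\vell\rangle(-\w)$, the asserted formula. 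For the converse, I would use that $\vell\mapsto\s+\vdelta-\vell$ is a bijection from $[\s,\s+\vdelta]$ onto $\{\x\mid 0\le\x\le\vdelta\}$, so that every $E\langle\x\rangle$ with $0\le\x\le\vdelta$ occurs; combined with the twist-invariance $E_{\OO(\z)}\langle\x\rangle\simeq E\langle\x\rangle(\z)$ of Proposition~\ref{shift preserve extension bundle}, every $2$-extension bundle is a degree shift of some $\pi(U^{\vell})$.

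The main obstacle I expect lies in justifying the module-level exact sequence obtained from $\Gamma$: one must verify that $\Gamma$ applied to $\mu$ produces exactly this five-term sequence with cokernel $E^{\vell}(\y)$ (controlling the higher cohomology of the line-bundle terms and confirming that $\Gamma F\langle\y\rangle\in\CM^{\L}R$ carries no projective summand), and that the map induced on global sections is the stated $(X_i^{\ell_i})$. All of this is patterned on Proposition~\ref{sq.cor}, but the nontrivial parameter $\y$ and the concluding reflection step make the shift computation the place where degree errors are easiest to commit; I would guard against this by checking the special case $\vell=\s$, where the chain must reduce to $\pi(\rho(k))=F[1]=E[2]=E\langle\vdelta\rangle(-\w)$.
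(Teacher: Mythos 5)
Your overall strategy is the same as the paper's: apply the graded global section functor $\Gamma$ to a defining four-term sequence, split the resulting five-term sequence in $\mod^{\L}R$ into short exact sequences, kill the perfect objects in $\DDD^{\L}_{\rm sg}(R)$, and then carry out the degree/suspension bookkeeping via Propositions \ref{dual ext-def} and \ref{susp.ext.}. Your bookkeeping is correct (indeed $F\langle\y\rangle(-\y)[1]=E\langle\y\rangle(\c-\y)=E\langle\vdelta-\y\rangle(-\w)$, and your $\vell=\s$ consistency check agrees with Proposition \ref{sq.cor}), and your converse argument matches the paper's. The substantive difference is the choice of starting sequence: the paper applies $\Gamma$ to the defining sequence of $E\langle\s+\vdelta-\vell\rangle(-\w)$ itself, i.e.\ it works on the extension-bundle side with $L=\OO(-\w)$. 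That choice is exactly what makes the proof go through without any cohomological computation: there the finite length term arises as the cokernel of the explicit map $\gamma=(X_i^{p_i-\ell_i})_i$ between graded free modules, hence equals $R/(X_i^{p_i-\ell_i}\mid 1\le i\le 4)(3\c-\vell)=E^{4\c-\vell}(3\c-\vell)$ by definition, and exactness of $\Gamma(\eta)$ needs only left exactness of $\Gamma$ together with $\Ext^1_{\X}(\OO(-\x),\OO)=0$ from \eqref{extension spaces between line bundles}; the substitution $\vell\mapsto 4\c-\vell$ at the end then yields the stated formula.

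By contrast, your coextension route places the unknown module $\Gamma(F\langle\y\rangle)$ in the middle and the unidentified cokernel at the right end, and the assertion
$$\coker\big(\Gamma(F\langle\y\rangle)\to R(\y)\big)\simeq E^{\vell}(\y)$$
--- which you correctly single out as the main obstacle --- is never proved. This is a genuine gap, and it is not merely a replay of Proposition \ref{sq.cor}: for $\y=0$ the obstruction spaces $\Ext^1(\OO(-\x),C)$, where $C:=\ker\alpha'$, vanish for every $\x\neq 0$, so the image of $\Gamma(\alpha')$ visibly contains $\mathfrak{m}$ and misses $1$ by nonsplitness; for $\y>0$ these spaces are nonzero for all $\x$ with $-\y\le\x\le 0$, and no such degreewise argument is available. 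The claim is nevertheless true and can be proved as follows. Since $F\langle\y\rangle$ is ACM, $\Ext^1(\OO(-\x),F\langle\y\rangle)=0$ for all $\x$ by \eqref{ACM bundle eq.}, so the cokernel in question equals $\bigoplus_{\x}\Ext^1(\OO(-\x),C)$. Applying $\Hom(\OO(-\x),-)$ to $0\to\OO(\w)\xrightarrow{\gamma'}\OO[\y]\to C\to 0$ and using \eqref{extension spaces between line bundles} gives
$$\Ext^1(\OO(-\x),C)\simeq D\Big(\coker\big(\bigoplus_{1\le i\le 4} R_{-\x-\ell_i\x_i}\xrightarrow{(X_i^{\ell_i})} R_{-\x}\big)\Big)=D\big((E^{\vell})_{-\x}\big),$$
so the cokernel is the graded dual $D(E^{\vell})$. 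Finally, $E^{\vell}=k[X_1,\dots,X_4]/(X_1^{\ell_1},\dots,X_4^{\ell_4})$ is an Artinian complete intersection, hence graded Gorenstein with socle concentrated in degree $\y=\vell-\s$, whence $D(E^{\vell})\simeq E^{\vell}(\y)$ as graded $R$-modules. With this lemma inserted your argument is complete (the worry about projective summands of $\Gamma(F\langle\y\rangle)$ is immaterial, since the triangle chase happens in the stable category); without it, the proof fails precisely at the point where it departs from the paper's.
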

		\begin{proof}		By an easy calculation, the defining sequence of $E\langle \s+\vdelta-\vell \rangle(-\w)$ is 
			\begin{align*} 
				\eta:\quad	0\to \OO \to E\langle \s+\vdelta-\vell \rangle(-\w) \to \bigoplus_{1\le  i\le4} \OO(2\c-\vell+\ell_i\x_i)\xrightarrow{\gamma} \OO(3\c-\vell)\to 0.
			\end{align*}
			Applying the graded global section functor 
			$$\Gamma:\quad\coh \X \to \Mod^{\L} R, \ \ X \mapsto \bigoplus_{\x \in \L} \Hom_{\X}(\OO(-\x),X)$$ to the sequence $\eta$,
			we obtain an exact sequence $\Gamma(\eta)$ in $\mod^{\L}R$
			\begin{align*}  	 0\to R \to M \to \bigoplus_{1\le  i\le4} R(2\c-\vell+\ell_i\x_i) \xrightarrow{\gamma} R(3\c-\vell) \xrightarrow{q} E^{4\c-\vell}(3\c-\vell)\to 0,
			\end{align*}
			 where $M\in \CM^\L R$. Let $K:=\ker \gamma$ and $\mathfrak{n}:=\ker q$.  These yield triangles 
			\begin{align*} 
				R &\to M \to K \to R[1],\\
				K &\to \bigoplus_{1\le  i\le4} R(2\c-\vell+\ell_i\x_i) \to \mathfrak{n} \to K[1],\\
				\mathfrak{n} &\to R(3\c-\vell) \to E^{4\c-\vell}(3\c-\vell)\to \mathfrak{n}[1]
			\end{align*} in $\DDD^{\bo}(\mod^{\L}R)$, where we view every object in the triangles as stalk-complex. By standard properties of the Verdier quotient, we have 
			$M\simeq K$, $  \mathfrak{n}\simeq K[1]$ and $ E^{4\c-\vell}(3\c-\vell)\simeq \mathfrak{n}[1]$ 
			in $\DDD_{\rm sg}^{\L}(R)$, implying $M[2] \simeq U^{4\c-\vell}(3\c-\vell)$  in $\underline{\CM}^{\L} R$. 
			Note that the sequence $\eta$ is obtained back from $\Gamma(\eta)$ by applying the sheafification $\pi$. 	By the triangle equivalent $ \underline{\CM}^{\L} R\simeq \underline{\ACM} \, \X $ and Proposition \ref{susp.ext.},  we have $$E\langle \s+\vdelta-\vell \rangle(-\w) = \pi(U^{4\c-\vell})(2\c-\vell).$$  
			Again by Proposition \ref{susp.ext.}, it follows that  $E\langle \s+\vdelta-\vell\rangle=E\langle \vell-\s\rangle(2\c-\vell)$. Therefore we have $\pi(U^{4\c-\vell})=E\langle \vell-\s\rangle(-\w)$. Replacing $\vell$ by $4\c-\vell$, we obtain $$\pi(U^{\vell})= E\langle \s+\vdelta-\vell \rangle(-\w).$$						
			Conversely, this is immediately  from Observation \ref{shift preserve extension bundle}(b).
		\end{proof}
		
	 As an immediate consequence of Proposition \ref{dual ext-def} and Theorem \ref{correspondence of 2-Extension bundles},  we obtain the following result.
		
		\begin{corollary} \label{bijections} There are bijections between:
			\begin{itemize}			
	\item[(a)]   The set  of $2$-extension bundles $E \langle \x \rangle$, $0 \leq \x \leq \vdelta;$
	\item[(b)]   The set of $2$-coextension bundles $F \langle \x \rangle$, $0 \leq \x \leq \vdelta;$
	\item[(c)]   The set of the graded Cohen-Macaulay modules $U^{\vell}$, $ \vell \in [\s, \s+\vdelta]$.
\end{itemize}	
		\end{corollary}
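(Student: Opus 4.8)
The plan is to obtain all three bijections by restricting equivalences that are already in hand, using only the elementary reparametrization $\vell\mapsto\s+\vdelta-\vell$. First I would note that this map is an involution interchanging the intervals $[\s,\s+\vdelta]$ and $[0,\vdelta]$: indeed $\s\le\vell\le\s+\vdelta$ holds precisely when $0\le\s+\vdelta-\vell\le\vdelta$. This is what matches the index set of (c) with that of (a) and (b).

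For the bijection (c)$\leftrightarrow$(a) I would simply feed Theorem~\ref{correspondence of 2-Extension bundles} through two equivalences. That theorem gives $\pi(U^{\vell})=E\langle\s+\vdelta-\vell\rangle(-\w)$ for each $\vell\in[\s,\s+\vdelta]$, where $\pi\colon\underline{\CM}^{\L}R\xrightarrow{\sim}\underline{\ACM}\,\X$ is the triangle equivalence induced by sheafification. Composing with the degree shift $(\w)$, which is an autoequivalence of $\underline{\ACM}\,\X$, the assignment $U^{\vell}\mapsto\pi(U^{\vell})(\w)=E\langle\s+\vdelta-\vell\rangle$ is the restriction of a bijection on isomorphism classes; as $\vell$ ranges over $[\s,\s+\vdelta]$ its image is exactly $\{E\langle\x\rangle\mid 0\le\x\le\vdelta\}$, so this is the required bijection. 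Note that no injectivity of $\vell\mapsto U^{\vell}$ has to be checked separately, because it is inherited from the bijectivity of $\pi$ and of the twist.

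For (a)$\leftrightarrow$(b) I would use Proposition~\ref{dual ext-def}, which identifies $F\langle\x\rangle\simeq E\langle\x\rangle[1]$ in $\underline{\ACM}\,\X$. Since the suspension $[1]$ is an autoequivalence, its restriction to (a) sends $E\langle\x\rangle$ to $F\langle\x\rangle$ and hence carries (a) bijectively onto (b). A small point worth pinning down here is that (a) and (b) really are sets of \emph{bundles}: each $E\langle\x\rangle$, $F\langle\x\rangle$ is exceptional of rank four (Propositions~\ref{homological properties of $E$ and $K$ 2} and \ref{homological properties of $E$ and $K$ 3}), hence indecomposable and not a line bundle, so it has no projective--injective summand in the Frobenius category $\ACM\,\X$; for such objects isomorphism in $\underline{\ACM}\,\X$ coincides with isomorphism in $\ACM\,\X$, so the sets may be read stably or unstably without change.

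With this, the corollary is a formal consequence of the cited equivalence and autoequivalences and needs no cardinality count. If one wants, in addition, that the parametrizations are injective (so that each set has exactly $|[0,\vdelta]|$ elements), I would read $\x$ off the projective cover: by Theorem~\ref{projective covers} the summand $\OO(\x-\c)$ is the unique constituent of $\mathfrak{P}(E\langle\x\rangle)$ whose degree has $\c$-coefficient $-1$ in normal form, while the six pair terms have $\c$-coefficient $-2$ and the term $\OO(\w)$ has $-3$; since projective covers are unique up to isomorphism and a line bundle is determined by its degree, $E\langle\x\rangle\cong E\langle\y\rangle$ forces $\x-\c=\y-\c$, i.e. $\x=\y$. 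I expect the only mildly delicate point to be this normal-form bookkeeping; the three bijections themselves drop out formally.
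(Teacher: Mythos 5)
Your proposal is correct and takes essentially the same route as the paper, whose proof consists precisely of combining Theorem \ref{correspondence of 2-Extension bundles} (giving (a)$\leftrightarrow$(c) via the involution $\vell\mapsto\s+\vdelta-\vell$ and a twist by $\w$) with Proposition \ref{dual ext-def} (giving (a)$\leftrightarrow$(b) via the suspension). The additional points you pin down --- that stable and unstable isomorphism agree for these summand-free objects, and the injectivity of the parametrization read off the projective cover --- are left implicit in the paper but are sound and consistent with its later results (cf. Corollary \ref{projective cover determines extension bundle} and Proposition \ref{iso. exten. bundle}).
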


		 \subsection{Indecomposable rank four ACM bundles}
	In this section, we study the converse of Observation \ref{shift preserve extension bundle}(a). More precisely, we pose the following conjecture which is a higher dimensional version of \cite[Theorem 4.2]{KLM}.

	\begin{conjecture}\label{problem 1} 
		Let $\X$ be a GL projective space attached to an $\L$-graded quadrangle singularity of weight type $(p_1,\dots,p_4)$ with $p_i\ge 2$. 
		Then each indecomposable ACM bundle of rank four is a $2$-extension bundle. 
		Moreover, there is no rank two or three indecomposable ACM bundle. 	\end{conjecture}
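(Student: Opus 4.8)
The plan is to transport the statement to graded maximal Cohen--Macaulay modules and then to control the rank through matrix factorizations. The sheafification $\pi\colon\CM^{\L}R\xrightarrow{\sim}\ACM\,\X$ preserves rank, so an indecomposable ACM bundle of rank $r$ is the same datum as an indecomposable non-free graded MCM module $M$ with $\rk_{R}M=r$, line bundles $\OO(\x)$ corresponding to the rank-one modules $R(\x)$. First I would record that every rank-one indecomposable in $\CM^{\L}R$ is some $R(\x)$: since $R$ is a graded normal Gorenstein domain, a rank-one graded MCM module is reflexive of rank one, hence determined by a class in the $\L$-graded divisor class group, which for a Geigle--Lenzing hypersurface is exhausted by the twists $R(\x)$. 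This reduces the problem to the non-line-bundle indecomposables, leaving ranks $2,3,4$. Writing $S:=k[X_1,\dots,X_4]$ and $f:=\sum_{i=1}^{4}X_i^{p_i}$, so that $R=S/(f)$, each such $M$ is $\coker\phi$ for a reduced matrix factorization $(\phi,\psi)$ of $f$, and the determinant identity $\det\phi = u\,f^{\,\rk_{R}M}$ with $u\in S^{\times}$ converts the rank into the exponent of $f$ in $\det\phi$. The task therefore becomes the classification of reduced matrix factorizations of $f$ whose determinant is $f^{2}$, $f^{3}$, or $f^{4}$ up to a unit.

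For the rank-four case I would argue that the $2$-extension bundles already account for everything. By Theorem~\ref{correspondence of 2-Extension bundles} and Corollary~\ref{bijections} the modules $U^{\vell}$, $\vell\in[\s,\s+\vdelta]$, are exactly the rank-four $2$-extension bundles up to twist, and by Proposition~\ref{shift preserve extension bundle} every $2$-extension bundle has rank four and is exceptional. To see these are all rank-four indecomposables, I would use the tilting equivalence $\underline{\ACM}\,\X\simeq\DDD^{\bo}(\Lambda)$ with $\Lambda=\bigotimes_{1\le i\le4}k\vec{\AA}_{p_i-1}(2)$ of Theorem~\ref{Thm C}, under which the $2$-Auslander bundles $E_{L}(\x)$ are the indecomposable projectives, together with Proposition~\ref{triangulated subcategories} and Corollaries~\ref{Aus.coro} and \ref{consists of all 2-Aus}, which show that the $2$-Auslander bundles generate and that every non-line-bundle indecomposable receives a nonzero stable map from one of them. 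Concretely, an indecomposable $X$ has a class $[X]\in K_{0}(\coh\X)$ with $\rk[X]=\rk X$; expressing $[X]$ through the tilting dimension vector together with the line-bundle corrections coming from the minimal projective cover (Theorem~\ref{projective covers}), one computes $\rk X$ as an explicit $\Z$-linear function of that data. The claim is that this function takes the value $4$ precisely on the classes knitted from $2$-Auslander bundles by the triangles of Proposition~\ref{triangle} and Corollary~\ref{triangle2}. An indecomposable of rank four then has the same class as some $E_{L}\langle\x\rangle$; if one can show that such $X$ is itself exceptional --- which should follow from its forced position in the Auslander--Reiten quiver via Corollary~\ref{Aus.coro} --- then Proposition~\ref{k0} upgrades the equality of classes to an isomorphism $X\cong E_{L}\langle\x\rangle$.

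Ruling out ranks two and three is the crux and the expected main obstacle. The guiding heuristic is a ``$2^{d}$-phenomenon'': on a $d$-dimensional Geigle--Lenzing space the smallest non-trivial ACM rank should be $2^{d}$, matching rank $2=2^{1}$ for the extension bundles on weighted projective lines in \cite{KLM} and rank $4=2^{2}$ here, the jump being dictated by the four-variable Koszul pattern underlying $U^{\vell}=\rho(R/(X_i^{\ell_i}\mid 1\le i\le4))$. Concretely I would try to show that a reduced matrix factorization $(\phi,\psi)$ of $f$ with $\det\phi$ equal to $f^{2}$ or $f^{3}$ up to a unit cannot have indecomposable cokernel: the $\L$-homogeneity of $\phi$ constrains the degrees of its entries, and balancing these degrees against the prescribed determinant $f^{r}$ with $r\in\{2,3\}$ should force a nontrivial block decomposition of $\phi$, so that the only indecomposable non-line-bundle cokernels occur at $r=4$. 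Equivalently, in the language of the stable category, one must verify that no indecomposable object of $\DDD^{\bo}(\Lambda)$ lifts to an ACM bundle of rank two or three.

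The reason this is genuinely hard --- and why it is stated as Conjecture~\ref{problem 1} rather than proved --- is that, in contrast to the curve case of \cite{KLM}, a higher-rank ACM bundle on the surface $\X$ need not admit any filtration by line subbundles, so its rank cannot be read off termwise from an exact sequence of line bundles as in the proof of \cite[Theorem~4.2]{KLM}. The rank must instead be controlled globally, either through the determinant of a matrix factorization or through the rank form on $K_{0}(\ACM\,\X)$, and the subtlety of the second route is that rank is invisible in $\underline{\ACM}\,\X$, since every line bundle is zero there; the tilting equivalence of Theorem~\ref{Thm C} thus discards exactly the data one wants. Reconstructing the rank requires lifting each stable class back along the minimal line-bundle resolution of Theorem~\ref{projective covers} and tracking the projective corrections, and it is this bookkeeping, together with the exclusion of the intermediate ranks, that remains to be carried out.
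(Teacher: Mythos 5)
The statement you were asked to prove is Conjecture~\ref{problem 1}; the paper itself does not prove it, and your own closing paragraph correctly diagnoses this. What the paper does prove is the special case of weight type $(2,a,b,c)$ (Theorem~\ref{rank 4 is 2-ext}), by a route entirely absent from your proposal: when one weight equals $2$, \cite[Proposition 4.74]{HIMO} supplies a triangle equivalence $\underline{\ACM}\,\Y\to\underline{\ACM}\,\X$ from the stable category of ACM bundles on the weighted projective line $\Y$ of weight triple $(a,b,c)$, which sends extension bundles to $2$-extension bundles and doubles ranks; transporting \cite[Theorem 4.2]{KLM} (every rank-two indecomposable on $\Y$ is an extension bundle, and rank-one stable objects do not exist) then yields both assertions of the conjecture at once in that case. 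Your plan never recovers even this special case, so as a ``proof'' it delivers strictly less than the paper, while the general statement remains open on both sides.

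Beyond the overall status, two steps in your sketch would fail as written. First, in the rank-four part you pass from ``$X$ indecomposable of rank four'' to ``$[X]=[E_L\langle\x\rangle]$ in $K_0(\coh\X)$''; this is a non sequitur, since $K_0(\coh\X)$ is free on the classes $[\OO(\x)]$, $0\le\x\le2\c$, and infinitely many classes share any given rank, so nothing pins down $[X]$. Second, even granting the class equality, Proposition~\ref{k0} requires \emph{both} objects to be exceptional, and the exceptionality (indeed even the rigidity) of an arbitrary indecomposable rank-four ACM bundle is essentially equivalent to the conjecture itself; Corollary~\ref{Aus.coro} only produces one nonzero stable morphism from some $2$-Auslander bundle and says nothing about $\Ext$-vanishing, so the claim that exceptionality ``should follow from its forced position in the Auslander--Reiten quiver'' is circular. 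The exclusion of ranks two and three fares no better: the assertion that $\L$-homogeneity of a matrix factorization with $\det\phi=uf^{2}$ or $uf^{3}$ ``should force a nontrivial block decomposition'' is a hope, not an argument, and it is exactly where the difficulty sits (note also that the determinant/rank dictionary and your rank-one step via the graded class group are themselves asserted without proof, the latter being delicate precisely for weights like $(2,2,2,2)$, where the ungraded quadric cone has non-free rank-one MCM modules that merely fail to be $\L$-gradable). In short: your text is a reasonable research outline that correctly identifies why the statement is a conjecture, but it contains no completed proof of any part of it, whereas the paper at least settles the case of weight type $(2,a,b,c)$ by reduction to \cite{KLM}.
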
	
	
	The following result gives a position answer to Conjecture \ref{problem 1} when $\X$ is given by weight type $(2,a,b,c)$ with $a,b,c \ge 2$. We refer to \cite{KLM} for definitions of weighted projective lines and extension bundles.
	
	\begin{theorem} \label{rank 4 is 2-ext} Assume that $\X$ is given by weight type $(2,a,b,c)$ with $a,b,c \ge 2$. Then each  indecomposable ACM bundle $F$ of rank four is a $2$-extension bundle $F=E_L \langle \x \rangle$ for some line bundle $L$ and $0\le \x \le \vdelta$. Moreover, there is no rank two or three indecomposable ACM bundle.	
	\end{theorem}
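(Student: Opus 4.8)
The plan is to use the weight $p_1=2$ to reduce the entire question to the classification of indecomposable rank-two bundles on a weighted projective line, which is precisely \cite[Theorem 4.2]{KLM}. Write $R=k[X_1,\dots,X_4]/(X_1^2+g)$ with $g=X_2^a+X_3^b+X_4^c$, and let $\Y$ be the weighted projective line of weight type $(a,b,c)$, so that $\underline{\vect}\,\Y\simeq\underline{\CM}^{\L'}S$ for the triangle singularity $S=k[X_2,X_3,X_4]/(g)$. The key input I would establish is a graded, single-square Knörrer-type periodicity: a triangle equivalence $\Phi\colon \underline{\vect}\,\Y\xrightarrow{\sim}\underline{\ACM}\,\X$ (equivalently $\underline{\CM}^{\L'}S\xrightarrow{\sim}\underline{\CM}^{\L}R$). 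The heuristic is that the defining relation $2\x_1=\c$ supplies exactly the $\Z_2$-worth of extra grading that, in the ungraded setting, would otherwise have to be recorded as a $\Z_2$-equivariant structure on matrix factorizations of $g$.

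On matrix factorizations the functor $\Phi$ is the square-root construction sending a factorization $(\phi,\psi)$ of $g$ over $k[X_2,X_3,X_4]$ to the factorization $\bigl(\begin{smallmatrix}X_1 & \phi\\ \psi & -X_1\end{smallmatrix}\bigr)$ of $X_1^2+g$, whose square is $(X_1^2+g)\,\mathrm{Id}$; this doubles the matrix size and hence the generic rank of the associated bundle, and applied to a minimal (reduced) factorization it produces a minimal one. Thus for an indecomposable $M\in\underline{\vect}\,\Y$ that is not a line bundle I expect $\rk\Phi(M)=2\,\rk M$ on minimal ACM representatives; the numerology is consistent with Proposition~\ref{shift preserve extension bundle}(a), since rank two on $\Y$ should land in rank four on $\X$. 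The second task is to identify $\Phi$ on the distinguished objects: I would check that $\Phi$ carries the rank-two extension bundle on $\Y$, given by the unique nonsplit extension $0\to L(\omega_{\Y})\to E\to L(\x')\to 0$, to a rank-four $2$-extension bundle on $\X$, by showing that the square-root construction converts the length-one extension datum on $\Y$ into the four-term defining sequence of Section~\ref{sec:2-extension bundles and $2$-coextension bundles}. The uniqueness statement of Proposition~\ref{uniqueness of org} would then pin the image down as $E_L\langle\x\rangle$ for the matching $\x$ with $0\le\x\le\vdelta$.

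With $\Phi$ and the rank bookkeeping in hand the theorem follows formally. Let $F$ be an indecomposable ACM bundle on $\X$ that is not a line bundle; it is a nonzero indecomposable of $\underline{\ACM}\,\X$, so $F\simeq\Phi(M)$ for a nonzero indecomposable $M\in\underline{\vect}\,\Y$. Since line bundles are the projective-injectives of $\vect\,\Y$ and hence vanish in the stable category, $M$ is not a line bundle, so $\rk M\ge 2$ and therefore $\rk F=2\,\rk M\ge 4$; this already excludes indecomposable ACM bundles of rank two or three, as line bundles themselves have rank one. If moreover $\rk F=4$, then $\rk M=2$, so by \cite[Theorem 4.2]{KLM} $M$ is an extension bundle, and by the previous paragraph $F=\Phi(M)$ is a $2$-extension bundle $E_L\langle\x\rangle$ with $0\le\x\le\vdelta$.

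The main obstacle is the first step: producing the equivalence $\Phi$ with the correct grading and rank behaviour. Ordinary Knörrer periodicity adds two squares and is an equivalence on the nose, whereas here only the single square $X_1^2$ is added, and ungraded this yields merely a $\Z_2$-equivariant comparison rather than an equivalence. The delicate point is to verify that the grading group $\L$, through $2\x_1=\c$, exactly absorbs this $\Z_2$, so that $\Phi$ is a genuine triangle equivalence intertwining degree shifts (matching $\omega_{\Y}$, the dominant element of $\Y$, and line-bundle twists on $\Y$ with $\w$, $\vdelta$ and the $\L$-twists on $\X$), and to confirm that $\Phi$ is essentially surjective so that every indecomposable rank-four ACM bundle is reached. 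As a consistency check, the compatibility of $\Phi$ with the suspension is visible in Corollary~\ref{susp. tri. cat.}(b): the identity $E_L\langle\x\rangle[1]=E_L\langle\x\rangle(\x_i)$ when a weight equals two is exactly the shadow on $\X$ of the rank-two suspension periodicity on $\Y$.
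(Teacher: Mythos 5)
Your proposal follows essentially the same route as the paper's proof: reduce to the weighted projective line $\Y$ of weight type $(a,b,c)$ via a rank-doubling triangle equivalence $\underline{\vect}\,\Y\simeq\underline{\ACM}\,\X$ that matches extension bundles with $2$-extension bundles, and then invoke \cite[Theorem 4.2]{KLM} together with the fact that indecomposables in $\underline{\vect}\,\Y$ have rank at least two. The graded one-square Kn\"orrer periodicity that you flag as the main obstacle is exactly what the paper takes off the shelf --- it cites \cite[Proposition 4.74]{HIMO}, combined with Theorem \ref{correspondence of 2-Extension bundles} to see that extension bundles go to $2$-extension bundles --- so your plan is sound and the one gap you identify is filled by a known result rather than by the explicit matrix-factorization construction you sketch.
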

	\begin{proof} Let $\Y$ be a weighted projective line with weight triple $(a,b,c)$.
		Using the correspondence in Theorem \ref{correspondence of 2-Extension bundles} and
		by \cite[Proposition 4.74]{HIMO}, there is a triangle equivalence $\varphi: \underline{\ACM}\, \Y \to \underline{\ACM}\, \X$, which sends extension bundles to $2$-extension bundles and the objects with rank $n$ to $2n$ in our setting, where we refer to \cite{KLM} for the distinguished exact structure of $\ACM \Y$ to be a Frobenius category. Following \cite[Theorem 4.2]{KLM}, each indecomposable  ACM bundle (vector bundle) of rank two in $\ACM \Y$ is extension bundle. Thus each indecomposable ACM bundle of rank four in $\ACM \X$ is $2$-extension bundle.	The latter assertion follows from that each indecomposable  object in $\underline{\ACM} \, \Y$ has at least rank two.
	\end{proof}
	
	\begin{corollary}Assume that $\X$ is given by weight type $(2,2,2,p)$ with $p \ge 2$. Then each indecomposable ACM bundle $F$ is either a line bundle or a $2$-extension bundle $F=E_L\langle \x \rangle$ for some line bundle $L$ and $0 \leq \x \leq (p-2)\x_4$.
		
	\end{corollary}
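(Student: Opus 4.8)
The plan is to obtain the statement by specializing Theorem \ref{rank 4 is 2-ext} to the weight triple $(a,b,c)=(2,2,p)$ and feeding in the fact that this triple is of domestic type. First I would record the relevant numerics. For weight type $(2,2,2,p)$ we have $p_1=p_2=p_3=2$, so $p_i-2=0$ for $i=1,2,3$ and hence
$$\vdelta=\sum_{i=1}^{4}(p_i-2)\x_i=(p-2)\x_4.$$
Thus the range $0\le\x\le\vdelta$ occurring in Theorem \ref{rank 4 is 2-ext} is precisely $0\le\x\le(p-2)\x_4$, which already produces the asserted shape of the degree parameter once we know that $F$ is a $2$-extension bundle.

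Next I would reuse the triangle equivalence $\varphi\colon\underline{\ACM}\,\Y\to\underline{\ACM}\,\X$ constructed in the proof of Theorem \ref{rank 4 is 2-ext} from \cite[Proposition 4.74]{HIMO}, where $\Y$ is the weighted projective line of weight triple $(2,2,p)$. Being a triangle equivalence of stable categories of Frobenius categories, $\varphi$ matches the projective--injective objects (the line bundles) on the two sides and identifies the nonzero indecomposables of $\underline{\ACM}\,\Y$ with those of $\underline{\ACM}\,\X$; moreover it is induced by a rank-doubling correspondence between the underlying ACM bundles. Now let $F$ be an indecomposable ACM bundle on $\X$. If $F$ is zero in $\underline{\ACM}\,\X$, then $F$ is projective--injective and hence a line bundle. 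Otherwise $F$ corresponds under $\varphi^{-1}$ to a nonzero indecomposable of $\underline{\ACM}\,\Y$, that is, to an indecomposable vector bundle on $\Y$ which is not a line bundle.

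The decisive step, and the one I expect to be the main obstacle, is the rank bound on $\Y$: every indecomposable vector bundle on $\Y$ that is not a line bundle has rank exactly two. The lower bound is already available, since by \cite[Theorem 4.2]{KLM} each indecomposable object of $\underline{\ACM}\,\Y$ has rank at least two and the rank-two ones are exactly the extension bundles. For the upper bound I would invoke that $(2,2,p)$ is of domestic type, because $\tfrac12+\tfrac12+\tfrac1p>1$; in the domestic case the classification of indecomposable vector bundles in \cite{KLM} shows that the line bundles together with the rank-two extension bundles already exhaust the indecomposable vector bundles on $\Y$, so none has rank exceeding two. Granting this bound, the nonzero indecomposable corresponding to $F$ is a rank-two extension bundle, so under the rank-doubling $F$ has rank four, and Theorem \ref{rank 4 is 2-ext} then forces $F=E_L\langle\x\rangle$ for some line bundle $L$ and $0\le\x\le\vdelta=(p-2)\x_4$.

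Apart from this rank bound, everything is formal: the dichotomy ``line bundle versus rank-four bundle'' is read off from $\varphi$ together with the exclusion of ranks two and three in Theorem \ref{rank 4 is 2-ext}, and the admissible degrees are fixed by the computation $\vdelta=(p-2)\x_4$. It is worth stressing that the rank bound is what distinguishes the domestic triple $(2,2,p)$, of type $\mathbb{D}$, whose indecomposable bundles are governed entirely by line bundles and extension bundles; this is the structural reason the corollary is formulated for the weight type $(2,2,2,p)$ specifically, rather than for an arbitrary weight quadruple beginning with $2$.
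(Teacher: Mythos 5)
Your proposal is correct and is essentially the paper's (implicit) argument: the corollary is read off from Theorem \ref{rank 4 is 2-ext} by passing through the triangle equivalence $\varphi\colon \underline{\ACM}\,\Y\to\underline{\ACM}\,\X$ with $\Y$ the weighted projective line of weight triple $(2,2,p)$, using that every indecomposable vector bundle on $\Y$ is a line bundle or a rank-two extension bundle, and computing $\vdelta=(p-2)\x_4$. One caution on attribution: the exhaustion of indecomposables by line bundles and rank-two extension bundles is special to the type-$\mathbb{D}$ triple $(2,2,p)$ and is not a consequence of domesticity alone (for the domestic triples $(2,3,3)$, $(2,3,4)$, $(2,3,5)$ there are indecomposable bundles of rank greater than two), so the phrase ``in the domestic case the classification shows'' should be sharpened to the $(2,2,p)$ case --- as your closing paragraph in fact acknowledges.
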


	\section{Tilting theory in the stable category of ACM bundles}
	\label{sec:Tilting theory in the stable category of ACM bundles}
	The aim of this section is to study tilting objects in the triangulated category $\underline{\ACM} \, \X$. We construct a tilting object in $\underline{\ACM} \, \X$ consisting only of $2$-(co)Auslander bundles. 
	Denote by $k\vec{\AA}_n$ the path algebra of the equioriented quiver of type $\AA_n$.

	\begin{theorem}[Tilting $4$-cuboid]\label{tilting $4$-cuboid}  	
		Let $\X$ be a GL projective space attached to an $\L$-graded quadrangle singularity of weight type $(p_1,\dots,p_4)$ with $p_i\ge 2$.  
		Then 
		$$T_{\rm cub}=\bigoplus_{0\leq\x\leq\vdelta}E_{L} \langle \x \rangle$$ is  a tilting object in $\underline{\ACM} \, \X$, called the \emph{tilting $4$-cuboid}, with endomorphism algebra  $$\underline{\End}(T_{\rm cub})^{\rm op}\simeq 
		\bigotimes_{1\le i \le 4} k\vec{\AA}_{p_i-1}.$$	
	\end{theorem}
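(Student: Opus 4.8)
The plan is to establish the two defining properties of a tilting object for $T_{\rm cub}$ — rigidity and generation — and then read off the endomorphism algebra, following the pattern of Kussin–Lenzing–Meltzer's analysis of the tilting rectangle on a weighted projective line. \emph{For the reduction of rigidity}, I would use two structural inputs already available: the periodicity $E_L\langle\x\rangle[2]=E_L\langle\x\rangle(\c)$ of Proposition \ref{susp.ext.}, and Auslander–Reiten–Serre duality (\ref{Auslander-Reiten-Serre duality CM}) with Serre functor $(\w)[2]$. Writing an arbitrary shift as $i=2m$ or $i=2m+1$, the periodicity identifies $E_L\langle\x\rangle[i]$ with $E_L\langle\x\rangle(m\c)$, respectively its suspension. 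Since $\ACM\X$ is extension-closed in $\coh\X$, this turns every space $\underline{\Hom}(E_L\langle\y\rangle,E_L\langle\x\rangle[i])$ into either the stable Hom space $\underline{\Hom}(E_L\langle\y\rangle,E_L\langle\x\rangle(m\c))$ for even $i$, or the group $\Ext^1_{\coh\X}(E_L\langle\y\rangle,E_L\langle\x\rangle(m\c))$ for odd $i$. Rigidity thus amounts to the vanishing of the former for $m\neq0$ and of the latter for all $m$, for every $0\le\x,\y\le\vdelta$.

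\emph{To execute the vanishing}, I would resolve each $2$-extension bundle by line bundles: the defining sequence (\ref{2-extension bundle exact}), split at $K$ into the short exact sequences $\eta_1$ and $\eta_2$, exhibits $E_L\langle\x\rangle$ as built by two triangles from the line bundles $L(\w)$, $L\langle\x\rangle$, $L(\x)$, and likewise for $E_L\langle\y\rangle$. Applying $\RHom$ then reduces every group in question to $\Ext^\bullet$ between line bundles, which by (\ref{extension spaces between line bundles}) is controlled entirely by degrees: it is concentrated in cohomological degrees $0$ and $2$, and crucially $\Ext^1$ between line bundles always vanishes. Because $\x,\y$ range over the bounded region $[0,\vdelta]$, for $|m|\gg0$ all the relevant $\Hom$ and $\Ext^2$ groups between the constituent line bundles vanish on degree grounds, so only finitely many $m$ need to be examined; for those the vanishing follows from explicit degree bookkeeping in $\L_+$, with Lemma \ref{ext1(E,E)=0} serving as the base case $m=0$, $\y\le\x$. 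In the even case one must additionally quotient out the maps factoring through line bundles, which is handled by the same computation. I expect this finite but extensive case analysis to be the main obstacle.

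\emph{For the endomorphism algebra}, the part $m=0$, $i=0$ of the above computation yields the answer. The summands $E_L\langle\x\rangle$ with $0\le\x\le\vdelta$ are indexed by the discrete cuboid $\prod_{i=1}^4\{0,1,\dots,p_i-2\}$. By Proposition \ref{ext-pullback}, for $\y\ge0$ with $0\le\x\le\x+\y\le\vdelta$ the space $\Hom(E_L\langle\x\rangle,E_L\langle\x+\y\rangle)$ is one-dimensional, spanned by the composite $y^\ast=\prod_i(x_i^\ast)^{e_i}$ of the elementary maps, and the $x_i^\ast$ commute. I would check that $y^\ast$ does not factor through a line bundle (so it survives in $\underline{\Hom}$) and that $\underline{\Hom}(E_L\langle\y\rangle,E_L\langle\x\rangle)=0$ whenever $\y\not\le\x$; both come from the same line-bundle bookkeeping. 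Together with the vanishing of composites leaving the cuboid, this identifies the stable endomorphism quiver with the commutative $4$-cuboid, whose path algebra is $\bigotimes_{1\le i\le4}k\vec{\AA}_{p_i-1}$; as the equioriented $\AA_n$ quiver is isomorphic to its opposite, this is also $\underline{\End}(T_{\rm cub})^{\rm op}$.

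\emph{For generation}, by Proposition \ref{triangulated subcategories} the thick subcategory $\thick(T_{\rm cub})$ coincides with the one generated by the $2$-Auslander bundles $E_L(\x)$, $0\le\x\le\vdelta$. Extending the four-step induction in the proof of Proposition \ref{triangulated subcategories} by means of the triangles of Proposition \ref{triangle} and Corollary \ref{triangle2}, the shift formula (\ref{susp.}), and the periodicity $E_L[2]=E_L(\c)$ — which covers the $\c$-direction, the only infinite direction since $\L/\langle\c\rangle$ is finite — I would show that $\thick(T_{\rm cub})$ in fact contains every $2$-Auslander bundle $E_L(\x)$, $\x\in\L$. Hence $\thick(T_{\rm cub})$ contains the subcategory $\CC$ of Corollary \ref{consists of all 2-Aus}, so its right perpendicular is contained in $\CC^{\perp}=0$. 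Since $\underline{\ACM}\,\X\simeq\DDD^{\L}_{\rm sg}(R)$ admits a tilting object by \cite{HIMO} and is therefore equivalent to the bounded derived category of a finite-dimensional algebra of finite global dimension, a thick subcategory with vanishing right perpendicular is the whole category, giving $\thick(T_{\rm cub})=\underline{\ACM}\,\X$ and completing the proof.
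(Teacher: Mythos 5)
Your strategy is genuinely different from the paper's, which proves this theorem in one line by transporting \cite[Theorem 4.76]{HIMO} through the correspondence of Theorem \ref{correspondence of 2-Extension bundles} and Corollary \ref{bijections}; a direct verification is a legitimate alternative, but as written your plan contains two steps that fail. The first is in rigidity. The claim that for $|m|\gg0$ all the relevant $\Hom$ and $\Ext^2$ groups between the constituent line bundles vanish on degree grounds, so that only finitely many $m$ need examination, is false: as $m\to+\infty$ the constituent $\Hom$ groups $R_{\vec{b}-\vec{a}+m\c}$ grow rather than vanish. Concretely, for every $m\ge1$ the ordinary space $\Hom(E_L\langle\x\rangle,E_L\langle\x\rangle(m\c))$ is nonzero (it contains multiplication by nonzero elements of $R_{m\c}$), so in the even case what must be proved, for \emph{every} positive $m$, is that each such morphism factors through a direct sum of line bundles; this is a statement about the stable quotient that dimension bookkeeping of $\Hom$ and $\Ext^2$ groups between line bundles cannot produce. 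Similarly, in the odd case the long exact sequences reduce $\Ext^1(E_L\langle\y\rangle,E_L\langle\x\rangle(m\c))$ to the surjectivity of $\gamma_{*}$ on $\Hom$ groups for all large $m$ (an argument of the type in Case 2 of the proof of Proposition \ref{homological properties of $E$ and $K$ 3}), not to a finite list of vanishings. The natural repair is stable Auslander--Reiten--Serre duality (\ref{Auslander-Reiten-Serre duality CM}) combined with the periodicity $[2]=(\c)$ of Proposition \ref{susp.ext.}, which gives $\underline{\Hom}(E_L\langle\y\rangle,E_L\langle\x\rangle(m\c))\simeq D\underline{\Hom}(E_L\langle\x\rangle,E_L\langle\y\rangle(\w+(1-m)\c))$ and kills large positive twists at the level of ordinary $\Hom$ by degree reasons; this step is absent from your plan, and the residual twists are exactly the content you defer as the main obstacle. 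The same ordinary-versus-stable issue affects your endomorphism algebra: Proposition \ref{ext-pullback} computes $\Hom$ in $\coh\X$ only, and both the survival of $y^{\ast}$ in $\underline{\Hom}$ and the vanishing of $\underline{\Hom}$ for incomparable pairs remain unproved.

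The second failure is the last step of generation. It is not true that in $\DDD^{\bo}(\mod\Lambda)$, with $\Lambda$ finite dimensional of finite global dimension, every thick subcategory with vanishing right perpendicular is the whole category. Take $\Lambda$ the Kronecker algebra, so $\DDD^{\bo}(\mod\Lambda)\simeq\DDD^{\bo}(\coh\P^1)$, and let $\mathcal{S}$ be the thick subcategory generated by all torsion sheaves: every object of $\mathcal{S}$ has torsion cohomology, so $\OO\notin\mathcal{S}$, yet $\mathcal{S}^{\perp}=0$, since any nonzero object splits into shifted sheaves, a nonzero torsion sheaf $T$ supported at $x$ has $\Hom(\OO_x,T)\neq0$, and a nonzero vector bundle $V$ has $\Ext^{1}(\OO_x,V)\simeq D\Hom(V,\OO_x)\neq0$. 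What you need is admissibility of $\thick(T_{\rm cub})$, and the available route is the one the paper uses in Step 2 of the proof of Theorem \ref{main  theorem}: once rigidity and the directedness of the degree-zero stable Homs are established, the summands $E_L\langle\x\rangle$ can be ordered into an exceptional sequence, and \cite[Theorem 3.2]{Bondal:1989} together with \cite[Proposition 1.5]{Bondal:Kapranov:1989} yields $\underline{\ACM}\,\X=\langle\thick(T_{\rm cub}),\thick(T_{\rm cub})^{\perp}\rangle$; combined with $\thick(T_{\rm cub})^{\perp}\subseteq\CC^{\perp}=0$, which you correctly obtain from Lemma \ref{gen. pre-con}, Proposition \ref{triangulated subcategories} and Corollary \ref{consists of all 2-Aus}, this closes the argument. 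Note that this repair makes generation logically dependent on the rigidity and Hom computations above, so until those are executed the proposal is an outline rather than a proof.
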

	\begin{proof} Without lose of generality, we can assume $L=\OO$. Using the bijection given in Corollary \ref{bijections}, the  assertion
		 follows immediately from \cite[Theorem 4.76]{HIMO} under the triangle equivalence $\underline{\CM}^{\L} R \simeq \underline{\ACM} \, \X$. 		
	\end{proof}
	
		The following is the main result in this section. For this, we need the following piece of notation. For each $0\le \x\le \vdelta$ and write $\x=\sum_{i=1}^{4} \lambda_i\x_i$ in normal form, we let $ \sigma(\x):= \sum_{i=1}^{4} \lambda_i$. Recall that a finite dimensional algebra is called \emph{Nakayama algebra} if its indecomposable projective or injective modules are uniserial, that is, have a unique composition series. A natural class of such algebras are formed by the algebras $\vec{\AA}_{n}(m):=k\vec{\AA}_n/{\rm rad^m} \, k\vec{\AA}_n$, where $n,m \ge 1$.
	
		\begin{theorem}\label{main  theorem}
			Let $\X$ be a GL projective space attached to an $\L$-graded quadrangle singularity of weight type $(p_1,\dots,p_4)$ with $p_i\ge 2$. Let $E_L$ be a $2$-Auslander bundle for some line bundle $L$. 	 		
		Then 
		$$T=\bigoplus_{0\leq\x\leq\vdelta}E_L(\x)[-\sigma(\x)]$$
		is a  tilting object in $\underline{\ACM}\, \X$ with endomorphism algebra $$\underline{\End}(T)^{\rm op}\simeq \bigotimes_{1\le i \le 4} k\vec{\AA}_{p_i-1}(2). $$
	\end{theorem}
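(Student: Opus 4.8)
The plan is to verify the two tilting axioms and then identify the endomorphism algebra, reducing everything to the bigraded space $A_m(\z):=\underline{\Hom}(E_L,E_L(\z)[m])$ for $\z\in\L$, $m\in\Z$. Write $T_\x:=E_L(\x)[-\sigma(\x)]$ for $0\le\x\le\vdelta$, so $T=\bigoplus_\x T_\x$. Condition (b) is immediate: since line-bundle twist and the suspension $[1]$ are autoequivalences, $\thick T$ equals the thick subcategory generated by the $2$-Auslander bundles $E_L(\x)$, $0\le\x\le\vdelta$; by Proposition \ref{triangulated subcategories} this coincides with the thick subcategory generated by the $2$-extension bundles $E_L\langle\x\rangle$, $0\le\x\le\vdelta$, which is all of $\underline{\ACM}\,\X$ by Theorem \ref{tilting $4$-cuboid}.

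\emph{Reduction and the two basic computations.} Using $E_L(\x)=E_{L(\x)}$ (Proposition \ref{shift preserve extension bundle}) and that twist commutes with $[1]$, every space becomes $\underline{\Hom}(T_\x,T_\y[i])=A_{\,i+\sigma(\x)-\sigma(\y)}(\y-\x)$. Proposition \ref{susp.ext.} gives $E_L[2]=E_L(\c)$, hence the $2$-periodicity $A_{m+2}(\z)=A_m(\z+\c)$, so it suffices to know $A_0$ and $A_1$. For $A_0$ I would invoke Corollary \ref{compare 2-Aus}: $A_0(\z)\ne0$ (and $=k$) exactly for $\z\in\{0,\ \w+\c\}\cup\{\c-\x_a-\x_b\mid a<b\}$. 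For $A_1$ I would first rewrite, via Proposition \ref{susp.ext.}, $E_L(\z)[1]=E_{L(\z+\x_i)}\langle(p_i-2)\x_i\rangle$ as a $2$-extension bundle, then apply the general form of Corollary \ref{compare 2-Aus}; after using $p_i\x_i=\c$ to symmetrize the resulting list this gives that $A_1(\z)\ne0$ (and $=k$) exactly for $\z\in\{-\x_k\}_k\cup\{\w+\x_k\}_k$. As a consistency check, both lists are stable under $\z\mapsto\w-\z$, as forced by the Serre duality $A_m(\z)\cong D\,A_{2-m}(\w-\z)$ coming from \eqref{Auslander-Reiten-Serre duality CM}.

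\emph{Rigidity and the $\Hom$-pattern.} Combining $A_0$, $A_1$ and the periodicity produces the whole nonvanishing locus of $A_m(\z)$. Writing $\y-\x=\sum_k\nu_k\x_k$ with $\nu_k=\mu_k-\lambda_k\in[-(p_k-2),p_k-2]$ the honest coordinate differences, I would run a finite bookkeeping keyed on the normal form and on the fact that the $\c$-coefficient of $\sum_k\nu_k\x_k$ equals $-\#\{k:\nu_k<0\}$, to show that $A_m(\sum_k\nu_k\x_k)\ne0$ occurs exactly when there is $S\subseteq\{1,\dots,4\}$ with $\nu_k=-1$ for $k\in S$ and $\nu_k=0$ otherwise, and $m=|S|$. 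Translated back this reads
$$\underline{\Hom}(T_\x,T_\y[i])=\begin{cases}k,& i=0 \text{ and } \y=\x-\textstyle\sum_{l\in S}\x_l \text{ for some } S\subseteq\{1,\dots,4\},\\ 0,&\text{otherwise.}\end{cases}$$
In particular $\underline{\Hom}(T,T[i])=0$ for $i\ne0$, which is rigidity (a), and summing over the cuboid gives $\dim\underline{\End}(T)=\prod_i(2p_i-3)=\dim\bigotimes_i k\vec{\AA}_{p_i-1}(2)$.

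\emph{The endomorphism algebra.} The displayed pattern shows that the quiver of $\underline{\End}(T)^{\rm op}$ is the grid with vertices $\{0\le\x\le\vdelta\}$ and one arrow $\x-\x_k\to\x$ in each admissible direction, namely the quiver of $\bigotimes_i k\vec{\AA}_{p_i-1}(2)$; moreover two consecutive arrows in a single direction $k$ live in $\underline{\Hom}(T_\x,T_{\x-2\x_k})=0$, giving the radical-square-zero relations. It remains to install the commutativity relations, i.e.\ that for $j\ne k$ the two composites $T_\x\to T_{\x-\x_j-\x_k}$ are \emph{nonzero} (they are automatically proportional, the target being one-dimensional). This is the main obstacle. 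I would settle it at the level of the bigraded algebra $\bigoplus_{m,\z}A_m(\z)$ by proving that the Yoneda product $A_1(-\x_j)\otimes A_1(-\x_k)\to A_2(-\x_j-\x_k)\cong A_0(\c-\x_j-\x_k)$ is nonzero and symmetric in $j,k$; concretely this can be read off the Koszul-type defining sequence of $E_L$ (which is symmetric in $X_1,\dots,X_4$), or transported from the already-known commutative grid $\underline{\End}(T_{\rm cub})^{\rm op}\cong\bigotimes_i k\vec{\AA}_{p_i-1}$ through the connecting triangles of Proposition \ref{triangle} and Corollary \ref{triangle2}. With commutativity in hand, rescaling the arrows over the algebraically closed field $k$ identifies $\underline{\End}(T)^{\rm op}$ with $\bigotimes_{1\le i\le4}k\vec{\AA}_{p_i-1}(2)$.
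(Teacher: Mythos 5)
Your proposal is correct in outline, and its computational core coincides with the paper's. The Hom-pattern you derive is exactly the paper's Proposition \ref{hom.rig}, which the paper proves from the same two inputs you use --- the periodicity $E[2]=E(\c)$ from Proposition \ref{susp.ext.} and Corollary \ref{compare 2-Aus} --- except that for odd shifts the paper quotes the injective-hull criterion of Remark \ref{comp. 2-coAus}, where you instead rewrite $E_L(\z)[1]$ as the $2$-extension bundle $E_{L(\z+\x_i)}\langle (p_i-2)\x_i\rangle$ and reuse the general form of Corollary \ref{compare 2-Aus}; both give the same lists for $A_0$ and $A_1$, and your normal-form bookkeeping (which you only sketch, but which does close: the $\c$-coefficient argument forces $\nu_k\in\{-1,0\}$ and $m=|S|$) reproduces the paper's case analysis. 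Where you genuinely diverge --- to your advantage --- is generation: the paper proves $\thick T=\underline{\ACM}\,\X$ by observing that the summands of $T$ can be ordered into an exceptional sequence, invoking Bondal and Bondal--Kapranov to obtain a semiorthogonal decomposition, and then killing $\CC^{\perp}$ via Lemma \ref{gen. pre-con} and Corollary \ref{consists of all 2-Aus}; you get the same conclusion immediately from Proposition \ref{triangulated subcategories} together with Theorem \ref{tilting $4$-cuboid}, which makes Lemma \ref{gen. pre-con} and the semiorthogonality machinery unnecessary. That shortcut is valid: thick subcategories are closed under twist and shift, so $\thick T=\thick\{E_L(\x)\}=\thick\{E_L\langle\x\rangle\}=\thick T_{\rm cub}=\underline{\ACM}\,\X$.

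The one step you leave unexecuted is the one you yourself call the main obstacle: the nonvanishing, for $j\neq k$, of the composites $T_\x\to T_{\x-\x_j}\to T_{\x-\x_j-\x_k}$. Be aware that this is needed even before the commutativity relations: the Hom-pattern alone does not show that the quiver is the grid, since if these composites vanished, the one-dimensional spaces $\underline{\Hom}(T_\x,T_{\x-\x_j-\x_k})$ would not lie in the square of the radical and would force extra diagonal arrows; no dimension count can exclude this, because the degeneration with all mixed composites zero has the same Cartan matrix as $\bigotimes_{1\le i\le 4} k\vec{\AA}_{p_i-1}(2)$ but is not isomorphic to it. In fairness, the paper's own Step 3 is equally terse on this point --- it passes directly from Proposition \ref{hom.rig} to the presentation $kQ/I$ with $I=(x_i^{2},\,x_ix_j-x_jx_i)$ and then cites Propositions \ref{iso. algs} and \ref{shape theorem} --- so your proposal is no less complete than the published argument here, and your explicit flagging of the issue is a point in your favor. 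Still, a finished proof would have to carry out one of your two strategies, for instance transporting nonvanishing from the known grid $\underline{\End}(T_{\rm cub})^{\rm op}\simeq\bigotimes_{1\le i\le 4} k\vec{\AA}_{p_i-1}$ (where mixed paths are nonzero, cf.\ Proposition \ref{ext-pullback} and Theorem \ref{tilting $4$-cuboid}) through the triangles of Proposition \ref{triangle} and Corollary \ref{triangle2}, and then verify that the arrow rescalings needed to turn ``commutative up to nonzero scalars'' into genuine commutativity can be chosen consistently over the whole cuboid.
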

	
	We  need the following observations. Recall that $\s:=\sum_{i=1}^{4}\x_i$. 
	 \begin{proposition}\label{hom.rig} Let $L$ be a line bundle and $0\le \x,\y \le \vdelta$. Then we obtain
				\begin{eqnarray*}
					 	\underline{\Hom}(E_L(\x), E_L(\y)[n])=
						\begin{cases}
						 		k &  \text{if } n=\sigma(\x)-\sigma(\y) \text{ and } 0\le \x-\y \le \s,\\
						 		0& \text{otherwise.}
						 	\end{cases}
				\end{eqnarray*}
	 \end{proposition}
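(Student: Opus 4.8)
The plan is to use twist-invariance together with the $2$-periodicity of the suspension to reduce the computation to two base cases that are already available, and then to match the resulting support against the stated condition. Twisting by an element of $\L$ is an autoequivalence of $\underline{\ACM}\,\X$ commuting with $[1]$, and $E_L=E_{\OO}(\vec v)$ when $L=\OO(\vec v)$; hence, writing $E:=E_{\OO}$,
$$\underline{\Hom}(E_L(\x),E_L(\y)[n])\simeq\underline{\Hom}(E,E(\z)[n]),\qquad \z:=\y-\x.$$
By Proposition~\ref{susp.ext.} one has $E(\z)[2]=E(\z)(\c)$, so for $n=2q+r$ with $r\in\{0,1\}$ we get $E(\z)[n]=E(\z+q\c)[r]$. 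Thus it suffices to understand the two residues $r=0$ and $r=1$.

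For $r=0$, Corollary~\ref{compare 2-Aus} gives that $\underline{\Hom}(E,E(\z+q\c))$ equals $k$ exactly when $\z+q\c\in\{0,\,2\c-\s,\,\bar{x}_{ij}\}$ and vanishes otherwise. For $r=1$, I would use $E(\z)[1]\simeq F(\z)$, the $\x=0$ instance of Proposition~\ref{dual ext-def}; then Remark~\ref{comp. 2-coAus} shows $\underline{\Hom}(E,F(\z))$ is nonzero precisely when $\OO(\z)$ is a summand of the injective hull $\mathfrak{I}(E)$, and (\ref{inj.hull}) at $\x=0$ yields $\mathfrak{I}(E)=\bigoplus_i\OO(\w+\x_i)\oplus\bigoplus_i\OO(-\x_i)$. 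Hence for $r=1$ the space is $k$ exactly when $\z+q\c\in\{-\x_i,\ \w+\x_i\mid 1\le i\le 4\}$, and $0$ otherwise; in both residues the value is $k$ since the relevant line bundles are Hom-orthogonal.

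A short normal-form check identifies the union of the two special sets with $\{\,\lfloor |I|/2\rfloor\,\c-\sum_{i\in I}\x_i : I\subseteq\{1,2,3,4\}\,\}$, the subset $I$ sitting in cohomological degree $|I|$ (for instance $\bar{x}_{ij}$ comes from $I=\{i,j\}$ at degree $2$, and $\w+\x_i=\c-\sum_{j\neq i}\x_j$ from $|I|=3$ at degree $3$). Unwinding the periodicity, $\underline{\Hom}(E,E(\z)[n])\neq 0$ iff $\z=-m\c-\sum_{i\in I}\x_i$ and $n=|I|+2m$ for some $I\subseteq\{1,\dots,4\}$ and some $m\in\Z$. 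It then remains to force $m=0$ using $0\le\x,\y\le\vdelta$. Here $\x-\y=m\c+\sum_{i\in I}\x_i$, whose normal form has $\c$-coefficient $m$ and every $\x_i$-coefficient at most $1$. On the other hand, realizing $\x-\y$ from $\x=\sum\lambda_i\x_i$, $\y=\sum\mu_i\x_i$ with $\lambda_i,\mu_i\in[0,p_i-2]$, any index with $\lambda_i<\mu_i$ lowers the $\c$-coefficient by one and raises the $i$-th coefficient to $\lambda_i-\mu_i+p_i\ge 2$; since all coefficients are $\le 1$, there are no such indices, the $\c$-coefficient is $0$, and $m=0$. Then $\x-\y=\sum_{i\in I}\x_i$, i.e. $0\le \x-\y\le\s$, while $\lambda_i-\mu_i=1$ for $i\in I$ and $0$ otherwise gives $n=|I|=\sigma(\x)-\sigma(\y)$, as required.

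I expect the main obstacle to be exactly this last step: organizing the two residue supports into the single subset description and, above all, proving that the fundamental-domain constraint $[0,\vdelta]$ annihilates every periodic copy except $m=0$. The two base cases are essentially citations, so the real content lies in the normal-form control of the $\c$-degree that rules out all spurious shifts.
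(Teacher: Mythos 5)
Your proof is correct and takes essentially the same route as the paper's: reduce to $L=\OO$ by twisting, use the periodicity $E[2]=E(\c)$ from Proposition \ref{susp.ext.} to split into even and odd suspensions, settle the even case by Corollary \ref{compare 2-Aus} and the odd case by Remark \ref{comp. 2-coAus} together with the injective hull formula, and finally use normal forms in $\L$ under the constraint $0\le\x,\y\le\vdelta$ to eliminate all shifted copies. The only difference is organizational: the paper pins down the shift through five case-by-case inequality checks, whereas you merge the five supports into one subset-indexed family and force $m=0$ by a single uniqueness-of-normal-form argument on the $\c$-coefficient, which is a mild streamlining rather than a different method.
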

	\begin{proof}Without loss of generality, we let $L:=\OO$. Write $\x=\sum_{i=1}^{4}\lambda_i\x_i$. We divide the proof into two cases by considering $n$ is even or odd.
		
		\emph{Case $1$}: In this case $n$ is even and  write $n:=2m$.  By Proposition \ref{susp.ext.}, we have to deal with the expression $H:=\underline{\Hom}(E , E(\y-\x+m\c))$.	By Corollary \ref{compare 2-Aus}, $H$ is nonzero if and only if $\y-\x+m\c$ satisfies one of the following cases:
		$$\text{(a)} \ \y-\x+m\c=0, \ \ \text{(b)} \ \y-\x+m\c=\c+\w, \ \ \text{or} \ \ \text{(c)} \ \y-\x+m\c=\bar{x}_{st}$$ 
		for some $1\le s< t \le 4$.	Since (a) implies $m=0$, we immediately obtain that (a) holds if and only if $n=0$ and  $\x=\y$.  In this case, $\sigma(\x)=\sigma(\y)$.
		 Assuming (b),
		we have $\y=\x+\w+(1-m)\c$. 
		Since $0\le \y \le \vdelta$, we obtain these two inequalities 
		$$ 0\le \sum_{i=1}^{4}(\lambda_i-1)\x_i +(2-m)\c \ \text{ and } \ \sum_{i=1}^{4}(\lambda_i+1)\x_i \le (m+2)\c.$$ 
		Observe that the first inequality is violated for $m\ge 2$ and the second inequality is violated for $m\le 1$. This implies $m=2$ and thus we obtain that (b) holds if and only if $n=4$ and  $\x=\y+\s$. 
		 In this case, $\sigma(\x)=\sigma(\y)+4$.  
		Assume that (c) holds for some $1\le s<t\le 4$, we have $\y=\x-\x_s-\x_t+(1-m)\c$. By a similar argument above, one can check that $m=1$ and thus  we obtain that (c) holds if and only if $n=2$ and
		 $\x=\y+\x_s+\x_t$.  In this case, $\sigma(\x)=\sigma(\y)+2$.	 
	
	\emph{Case $2$}: In this case $n$ is odd and  write $n:=2m+1$. We continue to  deal with the expression $H:=\underline{\Hom}(E,E(\y-\x+m\c)[1])$. By Remark \ref{comp. 2-coAus}, 
	$H\neq 0$  if and only if $\OO(\y-\x+m\c)$ is a direct summand of the injective hull $\mathfrak{I}(E)$.
	By Theorem \ref{inj.hull}, 
	we need to check whether 
	$\y-\x+m\c$ equals to one of the following cases:
	$$\text{($\alpha$)} \ \y-\x+m\c=\w+\x_i \ \ \text{or} \ \ \text{($\beta$)} \ \y-\x+m\c=-\x_i$$ 		
	for some $1\le i \le 4$.  Assume that ($\alpha$) holds for some $i$,  we have $\y=\x+\x_i+\w-m\c$. Since $0\le \y \le \vdelta$, we obtain the two inequalities 
	$$0\le \lambda_i\x_i+\sum_{j\neq i}(\lambda_j-1)\x_j+(1-m)\c \ \text{ and } \ (\lambda_i+2)\x_i+\sum_{j\neq i}(\lambda_j+1)\x_j \le (m+3)\c.$$
	Observe that the first inequality is violated for $m\ge 2$ and the second inequality is violated for $m\le 0$. This implies $m=1$ and thus we obtain that ($\alpha$) holds if and only if $n=3$ and $\x=\y+\s-\x_i$.  In this case, $\sigma(\x)=\sigma(\y)+3$. Assume that ($\beta$) holds for some $i$, we have $\y=\x-\x_i-m\c$.  By a similar argument above, one can check that $m=0$ and thus  we obtain that (c) holds if and only if $n=1$ and
	$\x=\y+\x_i$.  In this case, $\sigma(\x)=\sigma(\y)+1$. Hence we have the assertion.	
	\end{proof}

	\begin{lemma} \label{gen. pre-con} 	We have $E_L(\y)\in \thick T$ for any $\y \in \L$.
\end{lemma}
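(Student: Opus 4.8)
The plan is to show that $\thick T$ is in fact all of $\underline{\ACM}\,\X$, by comparing it with the tilting $4$-cuboid $T_{\rm cub}=\bigoplus_{0\le\x\le\vdelta}E_L\langle\x\rangle$ from Theorem~\ref{tilting $4$-cuboid}. Once $\thick T=\underline{\ACM}\,\X$ is established, the lemma is immediate: by Proposition~\ref{shift preserve extension bundle}(b) (taken with $\x=0$) we have $E_L(\y)\simeq E_{L(\y)}$, which is a $2$-Auslander bundle and in particular an object of $\ACM\,\X$, hence its image lies in $\underline{\ACM}\,\X=\thick T$ for every $\y\in\L$.

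First I would strip off the shifts. Since $\thick T$ is a triangulated (thick) subcategory, it is closed under $[\pm1]$, so from $E_L(\x)[-\sigma(\x)]\in\thick T$ we recover $E_L(\x)\in\thick T$ for every $\x$ with $0\le\x\le\vdelta$. Thus $\thick T$ contains the whole triangulated subcategory generated by the $2$-Auslander bundles $\{E_L(\x)\mid 0\le\x\le\vdelta\}$. Next I would invoke Proposition~\ref{triangulated subcategories}, which identifies this triangulated subcategory with the one generated by the $2$-extension bundles $\{E_L\langle\x\rangle\mid 0\le\x\le\vdelta\}$. Passing to thick closures on both sides then gives $\thick T=\thick\!\big(\bigoplus_{0\le\x\le\vdelta}E_L\langle\x\rangle\big)=\thick T_{\rm cub}$, and since $T_{\rm cub}$ is a tilting object by Theorem~\ref{tilting $4$-cuboid}, condition~(b) in the definition of tilting yields $\thick T_{\rm cub}=\underline{\ACM}\,\X$. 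Hence $\thick T=\underline{\ACM}\,\X$, and the conclusion follows as above.

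There is no serious obstacle in this route; the one point deserving care is the passage from \textquotedblleft triangulated subcategory generated\textquotedblright{} in Proposition~\ref{triangulated subcategories} to \textquotedblleft thick subcategory generated\textquotedblright, but this is harmless, since the thick subcategory generated by a set equals the closure of the triangulated subcategory it generates under direct summands, and equal triangulated subcategories have equal thick closures. For comparison, a proof internal to the Auslander-bundle picture (avoiding $T_{\rm cub}$) would build up all twists $E_L(\y)$ by hand: the identity $E_L\langle\x\rangle[2]=E_L\langle\x\rangle(\c)$ of Proposition~\ref{susp.ext.} shows that $\thick T$ is stable under the $\c$-twist (realized by $[2]$), while the suspension formula of Proposition~\ref{susp.ext.} together with Corollary~\ref{susp. tri. cat.}(a) lets one raise the normal-form coefficients from the box $0\le\lambda_i\le p_i-2$ up to $\lambda_i=p_i-1$ one index at a time. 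The genuine nuisance there is the combinatorial bookkeeping required to reach the full range $0\le\mu_i\le p_i-1$ in all four coordinates simultaneously, which is exactly what the comparison with $T_{\rm cub}$ sidesteps; this is why I would favour the first approach.
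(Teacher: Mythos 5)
Your proof is correct, and it is genuinely different from the one in the paper. The paper argues directly and inductively: after using $E[2]=E(\c)$ to reduce to $\y=\sum_{i=1}^4\ell_i\x_i$ with $0\le\ell_i\le p_i-1$, it proceeds by induction on the number of indices with $\ell_i=p_i-1$, climbing out of the box $[0,\vdelta]$ one coordinate at a time via the twisted triangles of Corollary~\ref{triangle2} and the identity $E\langle(p_i-2)\x_i\rangle[1]=E((p_i-1)\x_i)$ of Corollary~\ref{susp. tri. cat.}(a) --- exactly the ``internal'' route you sketch and reject at the end of your proposal. Your route instead identifies $\thick T$ with $\thick T_{\rm cub}$ via Proposition~\ref{triangulated subcategories} and then invokes the tilting property of $T_{\rm cub}$ (Theorem~\ref{tilting $4$-cuboid}) to get $\thick T=\underline{\ACM}\,\X$ at once; all three ingredients are established before Lemma~\ref{gen. pre-con}, so there is no circularity, and your handling of the passage from triangulated to thick closures is fine (indeed one only needs that equal triangulated subcategories generate equal thick subcategories). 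What the comparison buys: your argument is shorter and proves strictly more --- it delivers $\thick T=\underline{\ACM}\,\X$ outright, which would make Step~2 of the paper's proof of Theorem~\ref{main  theorem} (the generation argument via exceptional sequences, \cite{Bondal:1989,Bondal:Kapranov:1989} and Corollary~\ref{consists of all 2-Aus}) redundant. What it costs: it rests on the tilting property of $T_{\rm cub}$, which the paper imports from \cite[Theorem 4.76]{HIMO} through the correspondence theorem, whereas the paper's induction keeps the lemma elementary and self-contained within the triangle calculus of Sections~\ref{sec:2-extension bundles and $2$-coextension bundles}--\ref{sec:Basic properties of 2-extension bundles}, so that the generation statement for $T$ is obtained independently of the external tilting result.
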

\begin{proof} Without loss of generality, we let $L:=\OO$. Write $\y\in \L$ in normal form $\y=\sum_{i=1}^{4}\ell_i\x_i+\ell\c$ with $0\le \ell_i < p_i$ and $\ell\in \Z$. Note that we have $E[2]=E(\c)$ by Proposition \ref{susp.ext.}. Since
	$\thick T$ is closed under the suspension [1], 
	 we can assume $\ell=0$ and then $\y=\sum_{i=1}^{4}\ell_i\x_i $. 
	Moreover, by our assumption we have that $E(\y)\in\thick T$ except the case  $\ell_i=p_i-1$  for at least one $1\le i \le 4$. Concerning the number of elements of the set $$\mathcal{S}:=\{  1\le i\le 4\ \mid \ell_i=p_i-1 \}.$$ 	
	We proceed by induction on $|S|$. It is trivial for $|S|=0$. Next we consider the induction step. Let  $|S|=n\ge 1$ and without loss of generality, we can assume $\ell_i=p_i-1$ for $1\le i \le n$.
	We have a sequence of triangles $\eta_{a}$ for $1\le a \le p_1-2$
	\begin{equation} \label{tra. T_12}
		\eta_{a}: E \langle (a-1)\x_1 \rangle \to E \langle a\x_1 \rangle \to E (a\x_1)\to E \langle (a-1)\x_1 \rangle[1]
	\end{equation}		
	by Corollary \ref{triangle2}. Let $\z:=(p_2-1)\x_2+\dots+(p_n-1)\x_n$. We
	consider the  triangles $\eta_{a}(\z)$ obtained from (\ref{tra. T_12}) by twisting with $\z$:
	\begin{equation*} 
		E \langle (a-1)\x_1 \rangle(\z) \to E \langle a\x_1 \rangle(\z) \to E (a\x_1+\z)\to.
	\end{equation*}
		By the induction hypothesis, we have $E(m\x_1+\z)\in \thick T$ for any $0\le m \le p_1-2$. Moreover, 
	by recursively analysing on the triangles $\{\eta_{a}(\z) \mid 1\le a \le p_1-2\}$ from $a=1$ to $p_1-2$,  
	we obtain  $E\langle (p_1-2)\x_1 \rangle(\z)\in \thick T$. By  Corollary \ref{susp. tri. cat.}, we have 
	$$E((p_1-1)\x_1+\z)=E\langle (p_1-2)\x_1 \rangle(\z)[1]\in \thick T,$$
	which concludes the induction step and thus the assertion follows.
\end{proof}

		In order to give a description of the endomorphism algebra $\End(T)^{\rm op}$ in terms of a tensor product, we introduce a class of finite dimensional algebras by modifying the definition of CM-canonical algebras in \cite{HIMO}.
		
		\begin{definition} Let $R$ be an $\L$-graded quadrangle singularity with weight type $(p_1,\dots,p_4)$. Let $\mathbf{q}:=(q_1,\dots,q_4)$ be a quadruple of integers with $2\le q_i\le p_i$ and
			$S:=R/(X_i^{q_i} \mid 1\le i \le 4)$. Denote by $\phi:R\to S$  the canonical morphism, and $I:=[0,\vdelta]$. We define a $k$-algebra
		$$\Lambda(\mathbf{q}):=(\Lambda_{\x,\y}){}_{\x,\y\in I}, \ \  \Lambda_{\x,\y}:=\phi(R_{\x-\y}),$$
		where  the multiplication of $\Lambda(\mathbf{q})$ is given by
		\[(a_{\x,\y})_{\x,\y\in I}\cdot(a'_{\x,\y})_{\x,\y\in I}:=
		(\sum_{\z\in I}a_{\x,\z}\cdot a'_{\z,\y})_{\x,\y\in I}.\]		
	\end{definition}

	\begin{proposition} \label{iso. algs} In the setup above,  there is an isomorphism of k-algebras
		$$\Lambda(\mathbf{q}) \simeq \bigotimes_{1\le i \le 4} k\vec{\AA}_{p_i-1}(q_i).$$ 
	\end{proposition}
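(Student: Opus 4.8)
The plan is to realize $\Lambda(\mathbf q)$ as a graded endomorphism algebra and then factor it variable by variable. First I would exploit the hypothesis $2\le q_i\le p_i$: since $X_i^{p_i}=X_i^{q_i}X_i^{p_i-q_i}$ already lies in the ideal $(X_i^{q_i}\mid 1\le i\le 4)$, the Brieskorn--Pham relation $\sum_i X_i^{p_i}$ becomes trivial in $S$, so $S=R/(X_i^{q_i}\mid 1\le i\le 4)$ is a truncated polynomial algebra and splits as $\L$-graded $k$-algebras into $S\simeq\bigotimes_{i=1}^4 S^{(i)}$ with $S^{(i)}:=k[X_i]/(X_i^{q_i})$ and $\deg X_i=\x_i$. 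As $\phi\colon R\to S$ is a surjective graded map, $\Lambda_{\x,\y}=\phi(R_{\x-\y})=S_{\x-\y}$, so $\Lambda(\mathbf q)$ depends only on $S$ and on $I=[0,\vdelta]$.

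Next I would compute the graded pieces $S_{\z}$. A monomial $\prod_i X_i^{a_i}$ with $0\le a_i<q_i\le p_i$ has degree $\sum_i a_i\x_i$, which is already in normal form (its $\c$-coefficient is $0$ and $0\le a_i<p_i$); by uniqueness of normal forms, distinct such monomials have distinct degrees. Hence $\dim_k S_{\z}\le 1$, with $S_{\z}=k$ exactly when the normal form of $\z$ equals $\sum_i c_i\x_i$ with $\c$-coefficient $0$ and $0\le c_i<q_i$, and $S_{\z}=0$ otherwise. Applying this to $\z=\x-\y$ with $\x=\sum_i\lambda_i\x_i,\ \y=\sum_i\mu_i\x_i\in I$ (so $0\le\lambda_i,\mu_i\le p_i-2$), a short normal-form computation shows $S_{\x-\y}\neq 0$ if and only if $\mu_i\le\lambda_i$ and $\lambda_i-\mu_i<q_i$ for all $i$, in which case the basis vector is $\prod_i X_i^{\lambda_i-\mu_i}$. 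The point is that any negative coordinate $\lambda_i-\mu_i<0$ contributes a $-\c$ after reduction, forcing a strictly negative $\c$-coefficient and hence $S_{\x-\y}=0$.

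With these computations in hand, I would identify $\Lambda(\mathbf q)$ with the graded endomorphism algebra of $P:=\bigoplus_{\x\in I}S(-\x)$, using $\Hom_{\mod^{\L}S}(S(-\x),S(-\y))=S_{\x-\y}$ and checking that the matrix multiplication defining $\Lambda(\mathbf q)$ is precisely composition of these maps (composition of $S(-\x)\to S(-\z)\to S(-\y)$ corresponds to the product $S_{\x-\z}\cdot S_{\z-\y}\subseteq S_{\x-\y}$). Since $I=\prod_i\{0,\dots,p_i-2\}$ and $S(-\x)=\bigotimes_i S^{(i)}(-\lambda_i\x_i)$, the explicit description of $S_{\x-\y}$ shows that each Hom-space factors as a tensor product over $i$, yielding $\Lambda(\mathbf q)\simeq\bigotimes_{i=1}^4\End_{\mod^{\L}S^{(i)}}\big(\bigoplus_{0\le\lambda\le p_i-2}S^{(i)}(-\lambda\x_i)\big)$. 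For fixed $i$ this factor has $p_i-1$ pairwise orthogonal primitive idempotents, with $\Hom(S^{(i)}(-\lambda\x_i),S^{(i)}(-\mu\x_i))=kX_i^{\lambda-\mu}$ nonzero exactly when $0\le\lambda-\mu<q_i$ and composition given by multiplying powers of $X_i$; this is the presentation of $k\vec{\AA}_{p_i-1}(q_i)=k\vec{\AA}_{p_i-1}/\operatorname{rad}^{q_i}$, the arrows being multiplication by $X_i$ and the relation $\operatorname{rad}^{q_i}=0$ recording $X_i^{q_i}=0$. Combining the four factors gives the isomorphism.

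The main obstacle I expect is the bookkeeping with the group $\L$: because the $\x_i$ are entangled by $p_i\x_i=\c$, one cannot naively split the $\L$-grading into four independent $\Z$-gradings, so the tensor factorization of the Hom-spaces must be justified through the explicit one-dimensionality of $S_{\x-\y}$ rather than formally. Some care is also needed to confirm that this factorization respects composition and that the identification with an endomorphism algebra is insensitive to taking opposites; the latter causes no trouble since $k\vec{\AA}_n(m)^{\mathrm{op}}\simeq k\vec{\AA}_n(m)$ by reversing the labelling of vertices.
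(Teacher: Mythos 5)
Your proof is correct and takes essentially the same route as the paper: the paper likewise reduces everything to the coordinate-wise structure of the (at most one-dimensional) graded pieces $\phi(R_{\x-\y})$ and factors $\Lambda(\mathbf{q})$ as a fourfold tensor product, presented there as banded matrix algebras after ordering $[0,\vdelta]$ lexicographically, with the identification asserted as ``easy to check.'' Your normal-form computation of $S_{\x-\y}$ (using that $S$ is a genuine truncated polynomial ring since $X_i^{p_i}\in(X_i^{q_i})$) and the graded endomorphism-algebra packaging supply precisely the details the paper leaves implicit.
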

	
	\begin{proof}  Let $0\le \x,\y \le \vdelta$ with $\x=\sum_{i=1}^{4}\lambda_i\x_i$ and $\y=\sum_{i=1}^{4}y_i\x_i$ in the normal forms.
		We equip $I:=[0,\vdelta]$ with the structure of a linear order set: 
		$$\x\preceq \y  \ \ \text{if and only if} \ \ (\lambda_1,\dots,\lambda_4)\le_{\rm lex} (y_1,\dots,y_4),$$ where $\le_{\rm lex}$ denotes the usual  lexicographic order, that is, either $\lambda_i=y_i$ for all $1\le i \le 4$, or $\lambda_j=y_j$ for  $j< m$ and  $\lambda_m<y_m$ for some $1\le m \le 4$.  
		Arranging $I$ in this order, it is easy to check that
		\begin{align*}
			\Lambda(\mathbf{q}) &\simeq \bigotimes_{1\le i \le 4}(kI_{p_i-1}+X_iJ_{p_i-1}+\dots+X_i^{q_i-1}J_{p_i-1}^{q_i-1})\\
			& \simeq \bigotimes_{1\le i \le 4} k\vec{\AA}_{p_i-1}(q_i),
		\end{align*}
		where $I_n$  denotes the identity  matrix of size $n$ and $J_n:=\left(\begin{smallmatrix} 0 & I_{n-1} \\ 0 & 0  \end{smallmatrix}\right)$.		
	\end{proof}	 
	
	Note that for the case $\mathbf{q}=(p_1,\dots,p_4)$, the $k$-algebra $\Lambda(\mathbf{q})\simeq \bigotimes_{1\le i \le 4} k\vec{\AA}_{p_i-1}$,
	which reduces to the CM-canonical algebra of the Geigle-Lenzing hypersurface $(R,\L)$ of dimension three defined in \cite{HIMO}.
	
	\begin{proposition} \label{shape theorem}
		The $k$-algebra $\Lambda(\mathbf{q})$ is presented
		by the quiver $Q$ with vertices $Q_0:=[0, \vdelta]$ and arrows
			$$Q_1:=\{\x\xrightarrow{x_i} \x+\x_i \mid 1\le i \le 4 \ \text{and} \ \x, \x+\x_i\in [0, \vdelta]  \}$$
			with the following relations:
			\begin{itemize}
				\item  $x_ix_j=x_jx_i:\x \to \x+\x_i+\x_j$, where $1\le i,j \le 4$ and $0\le \x \le \x+\x_i+\x_j \le \vdelta$,
				\item  $x_i^{q_i}:\x \to \x+q_i\x_i$, where $1\le i \le 4$ and $0\le \x \le \x+q_i\x_i \le \vdelta$.
			\end{itemize}
	\end{proposition}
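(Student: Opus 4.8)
The plan is to exhibit an explicit isomorphism $kQ/I \xrightarrow{\sim} \Lambda(\mathbf{q})$, where $I$ denotes the ideal generated by the two families of relations, by matching the path algebra combinatorially with the graded components $\Lambda_{\x,\y}=\phi(R_{\x-\y})$. The starting observation is that, since $q_i\le p_i$, the hypersurface relation $\sum_i X_i^{p_i}$ already lies in the ideal $(X_1^{q_1},\dots,X_4^{q_4})$, so that $S\simeq\bigotimes_{1\le i\le 4}k[X_i]/(X_i^{q_i})$ is a tensor product of truncated polynomial rings with $k$-basis the monomials $X_1^{a_1}\cdots X_4^{a_4}$, $0\le a_i\le q_i-1$. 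Identifying $\Lambda_{\x,\y}$ with the degree-$(\x-\y)$ component $S_{\x-\y}$, the diagonal pieces $\Lambda_{\x,\x}=\phi(R_0)=k$ give a complete set $\{e_\x\}_{\x\in[0,\vdelta]}$ of orthogonal primitive idempotents. First I would define the $k$-algebra homomorphism $\Phi\colon kQ\to\Lambda(\mathbf{q})$ sending the vertex idempotent at $\x$ to $e_\x$ and the arrow $x_i\colon\x\to\x+\x_i$ to the class of $X_i$ in the component $\Lambda_{\x+\x_i,\x}=\phi(R_{\x_i})$, i.e.\ to multiplication by $X_i$.

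Next I would check that $\Phi$ kills the defining relations. The commutativity relations $x_ix_j=x_jx_i$ hold because $X_iX_j=X_jX_i$ in $R$, so both composite paths map to the single class of $X_iX_j$; the truncation relations $x_i^{q_i}=0$ hold because $\phi(X_i^{q_i})=0$ in $S$. Hence $\Phi$ factors through a homomorphism $\bar\Phi\colon kQ/I\to\Lambda(\mathbf{q})$. Surjectivity of $\bar\Phi$ is immediate: each component $\Lambda_{\x,\y}=S_{\x-\y}$ is spanned by (images of) monomials, and every monomial $\prod_iX_i^{a_i}$ is the $\Phi$-image of the corresponding monomial path in $Q$.

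The crux is injectivity, which I would obtain by a dimension count component by component. On the target side the key input is the relation lattice of $(\x_1,\dots,\x_4)$ in $\L$: writing $\L$ by generators and relations one checks that $\sum_i b_i\x_i=0$ holds in $\L$ if and only if $b_i=c_ip_i$ for integers $c_i$ with $\sum_i c_i=0$. Fixing $\x=\sum_i\lambda_i\x_i$ and $\y=\sum_i y_i\x_i$ in normal form inside $[0,\vdelta]$, a basis monomial $\prod_iX_i^{a_i}$ of $S$ has degree $\x-\y$ exactly when $a_i=(\lambda_i-y_i)+c_ip_i$ with $\sum_i c_i=0$; the bounds $0\le a_i\le q_i-1\le p_i-1$ together with $-(p_i-2)\le\lambda_i-y_i\le p_i-2$ force each $c_i\ge 0$, and $\sum_i c_i=0$ then forces every $c_i=0$. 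Consequently $\dim_k\Lambda_{\x,\y}\le 1$, with equality precisely when $\lambda_i\ge y_i$ and $\lambda_i-y_i\le q_i-1$ for all $i$. On the source side commutativity reduces $e_\x(kQ/I)e_\y$ to the single monomial path $\prod_ix_i^{\lambda_i-y_i}$ (which exists iff $\x\ge\y$ in each coordinate), and the truncation relations annihilate it exactly when some $\lambda_i-y_i\ge q_i$; thus $\dim_k e_\x(kQ/I)e_\y\le 1$ under the identical condition. Since $\bar\Phi$ is a surjection respecting the $[0,\vdelta]\times[0,\vdelta]$-decomposition and the two sides have equal dimension in every component, $\bar\Phi$ is an isomorphism.

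I expect the relation-lattice computation inside the cuboid $[0,\vdelta]$ to be the main obstacle: one must rule out any \emph{second} basis monomial of coincident degree, i.e.\ any accidental coincidence $\sum_i a_i\x_i=\sum_i a_i'\x_i$ within the truncation bounds, and it is precisely the inequalities $0\le\lambda_i,y_i\le p_i-2$ and $a_i<q_i\le p_i$ combined with $\sum_i c_i=0$ that collapse the lattice to the trivial solution. Once the bound $\dim_k\Lambda_{\x,\y}\le 1$ is established the presentation follows formally. As an alternative I could instead invoke Proposition~\ref{iso. algs} to reduce to $\bigotimes_i k\vec{\AA}_{p_i-1}(q_i)$ and apply the standard presentation of a tensor product of quotients of path algebras, whose quiver is the box product of the $\vec{\AA}_{p_i-1}$ and whose relations are the truncations $x_i^{q_i}=0$ from each factor together with the commutativity relations across distinct factors.
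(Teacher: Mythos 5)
Your proposal is correct and takes essentially the same approach as the paper: both define the surjection $kQ/I\to\Lambda(\mathbf{q})$ by sending vertex idempotents to the $e_{\x}$ and arrows $x_i$ to the classes of $X_i$, verify the commutativity and truncation relations, and conclude by matching the components $e_{\x}(kQ/I)e_{\y}$ with $\Lambda_{\x,\y}=\phi(R_{\x-\y})$. The only difference is one of detail: where the paper asserts the componentwise isomorphism as clear, you substantiate it via the relation-lattice computation in $\L$ (each component is at most one-dimensional, nonzero under the same coordinate conditions on both sides), which is a worthwhile filling-in of the step the paper leaves implicit.
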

	\begin{proof} The vertice $\x$ of $Q$  corresponds to the primitive idempotent $e_{\x}$ of $\Lambda:=\Lambda(\mathbf{q})$ and the arrow $x_i$ of $Q$  corresponds to the generator $X_i$ of $S$ for $1\le i \le 4$. Thus there exists a morphism $kQ \to \Lambda$ of $k$-algebras extending by these correspondences, which is surjective. The commutativity relations $X_iX_j=X_jX_i$ are satisfied in $S$. Also the relations $X_i^{q_i}=0$ in $S$  correspond to the relations $x_i^{q_i}=0$ in $kQ$. Thus there is a surjective morphism $kQ/I\to \Lambda$, where $I=(x_ix_j-x_jx_i, x_i^{2}\mid 1\le i,j \le 4).$ Indeed, this is a $k$-algebra isomorphism  since it clearly  induces an isomorphism	$$e_{\x}(kQ/I)e_{\y}\simeq e_{\x}\Lambda e_{\y}= \phi(R_{\x-\y})$$ for any $\x,\y \in Q_0$, as $k$-vector spaces.	Hence we have the assertion.
	\end{proof}

		Now we are ready to complete the proof of Theorem~\rm\ref{main  theorem}.
		
		\begin{proof}[Proof of Theorem~\rm\ref{main theorem}] Without loss of generality, we assume $L:=\OO$. 
			
		\emph{Step $1$}: We show that 	$\underline{\Hom}(T,T[n])=0$ for any $n\neq 0$. Assume for contradiction 
		that	$\underline{\Hom}(E, E(\y-\x)[\sigma(\x)-\sigma(\y)+n])\neq 0$
		for some $0\le \x,\y \le \vdelta$ and $n \neq 0$. 
		By Proposition \ref{hom.rig}, we obtain $n=0$, a contradiction.
		
		\emph{Step $2$}: We show that $\thick T=\underline{\ACM} \, \X$. Note that $E$ is exceptional in $\underline{\ACM} \, \X$ since $\End(E)=k$ implies $\underline{\End}(E)=k$ and by Step (1) $\underline{\Hom}(E,E[n])=0$ holds for any $n\neq 0$.
		Again by Step (1), the $2$-(co)Auslander bundles $E(\x)[-\sigma(\x)]$, $0\le \x \le \vdelta$ can be ordered in such a way that form an exceptional sequence. Thus the smallest triangulated subcategory $\CC$ of $\underline{\ACM} \, \X$ containing $T$ is generated by an exceptional sequence. By \cite[Theorem 3.2]{Bondal:1989} and \cite[Proposition 1.5]{Bondal:Kapranov:1989}, then $\underline{\ACM} \, \X$ is generated by $\CC$ together with $\CC^{\perp}$. Combining Lemma \ref{gen. pre-con} and Corollary \ref{consists of all 2-Aus}, we have $\CC^{\perp}=0$ and thus $\underline{\ACM} \, \X$ is generated by $T$.
		
		\emph{Step $3$}:			
		Concerning the shape of $T$,
		by Proposition \ref{hom.rig}, we have that
		$$\underline{\Hom}(E,E(\y-\x)[\sigma(\x)-\sigma(\y)])\neq 0 \ \Leftrightarrow \ 0\le \x-\y\le \s.$$ 	
		Moreover, in the case, the space has dimension one. Hence we have that the endomorphism algebra $\underline{\End}(T)^{\rm op}\simeq kQ/I$, where  the vertice $\x$ in $Q$ correspond to the $2$-(co)Auslander bundle $E(\x)[-\sigma(\x)]$ for $0\le \x \le \vdelta$, and the
		arrow $x_i:\x \to \x+\x_i$ with $\x,\x+\x_i \in Q$ and $1\le i \le 4$ correspond to the basis of one dimensional space $\Hom(E(\x+\x_i)[-\sigma(\x+\x_i)],E(\x)[-\sigma(\x)])$,  
		and $I=(x_i^{2},x_ix_j-x_jx_i\mid 1\le i,j \le 4).$
		The remaining assertion follows from Propositions \ref{iso. algs} and \ref{shape theorem}. 
		\end{proof}
		
		\begin{remark} For weight type $(2,a,b,c)$ with $a \ge 2$, the $2$-coAuslander bundles are indeed $2$-Auslander bundles by Corollary \ref{susp. tri. cat.}. Hence
			the tilting object $T$ from Theorem \ref{main  theorem} consists only of $2$-Auslander bundles, which gives a positive answer to a higher version of the question stated in \cite[Remark 6.10]{KLM}. 
			
		\end{remark}
		
		\begin{corollary} 	
			Let $\X$ be a GL projective space attached to an $\L$-graded quadrangle singularity of weight type $(p_1,\dots,p_4)$ with $p_i\ge 2$. 
			The endomorphism algebras 
		$\End(T_{\rm cub})^{\rm op}$ in Theorem {\ref{tilting $4$-cuboid}} and $\End(T)^{\rm op}$ in Theorem 
		{\ref{main  theorem}} are derived equivalent.
		\end{corollary}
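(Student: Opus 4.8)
The plan is to exploit the fact that $T_{\rm cub}$ and $T$ are two tilting objects living in the \emph{same} triangulated category $\underline{\ACM}\,\X$, together with the general principle that two tilting objects in one triangulated category have derived equivalent endomorphism algebras. Write $A_1:=\underline{\End}(T_{\rm cub})^{\rm op}\simeq\bigotimes_{1\le i\le4}k\vec{\AA}_{p_i-1}$ and $A_2:=\underline{\End}(T)^{\rm op}\simeq\bigotimes_{1\le i\le4}k\vec{\AA}_{p_i-1}(2)$ for the two endomorphism algebras computed in Theorems \ref{tilting $4$-cuboid} and \ref{main theorem}. The goal is to produce a triangle equivalence $\DDD^{\bo}(\mod A_1)\simeq\DDD^{\bo}(\mod A_2)$.

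First I would record that both $A_1$ and $A_2$ have finite global dimension. Indeed $k\vec{\AA}_n$ is hereditary, and the Nakayama algebra $k\vec{\AA}_n(2)=k\vec{\AA}_n/{\rm rad}^2$ has finite global dimension; since the global dimension of a tensor product of finite-dimensional $k$-algebras is bounded by the sum of the global dimensions of the tensor factors, both $A_1$ and $A_2$ are of finite global dimension. Consequently, for each of these algebras the perfect derived category coincides with the bounded derived module category, $\operatorname{per}(A_i)=\DDD^{\bo}(\mod A_i)$.

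Next I would invoke the standard tilting theorem for algebraic triangulated categories (as used throughout \cite{HIMO}). The category $\underline{\ACM}\,\X\simeq\underline{\CM}^{\L}R\simeq\DDD^{\L}_{\rm sg}(R)$ is algebraic, $\Hom$-finite and Krull--Schmidt, so a tilting object $U$ with $\underline{\End}(U)$ of finite global dimension induces a triangle equivalence $\underline{\ACM}\,\X\xrightarrow{\sim}\DDD^{\bo}(\mod\underline{\End}(U)^{\rm op})$, realized by $\underline{\Hom}(U,-)$; here the rigidity $\underline{\Hom}(U,U[n])=0$ for $n\neq 0$ is what identifies the graded endomorphism algebra $\bigoplus_{n}\underline{\Hom}(U,U[n])$ with $\underline{\End}(U)$ concentrated in degree zero, so that $\underline{\ACM}\,\X\simeq\operatorname{per}(\underline{\End}(U))=\DDD^{\bo}(\mod\underline{\End}(U)^{\rm op})$ by the previous paragraph. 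Applying this to $U=T_{\rm cub}$ and to $U=T$ yields two triangle equivalences $\DDD^{\bo}(\mod A_1)\simeq\underline{\ACM}\,\X\simeq\DDD^{\bo}(\mod A_2)$.

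Composing these, I obtain $\DDD^{\bo}(\mod A_1)\simeq\DDD^{\bo}(\mod A_2)$, which is precisely the assertion that $\End(T_{\rm cub})^{\rm op}$ and $\End(T)^{\rm op}$ are derived equivalent. I expect essentially no computation here: the content is organizational, and the only delicate point is checking the hypotheses of the tilting theorem—principally the finite global dimension of the two endomorphism algebras, which is what guarantees $\operatorname{per}(A_i)=\DDD^{\bo}(\mod A_i)$ and thereby upgrades the abstract equivalence of perfect derived categories into a genuine derived equivalence of the module categories. (Alternatively, one could make the equivalence explicit by transporting $T$ through the equivalence $\underline{\ACM}\,\X\simeq\DDD^{\bo}(\mod A_1)$ to obtain a concrete tilting complex over $A_1$ whose endomorphism algebra is $A_2$, but the abstract argument above is cleaner.)
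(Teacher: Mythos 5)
Your proposal is correct and is essentially the paper's own (implicit) argument: the corollary is stated without proof precisely because it follows immediately from $T_{\rm cub}$ and $T$ being two tilting objects in the same algebraic, Hom-finite, Krull--Schmidt triangulated category $\underline{\ACM}\,\X$, each inducing a triangle equivalence onto the derived category of its endomorphism algebra via the standard tilting theorem. The only remark worth adding is that your finite-global-dimension check (valid here, since over the algebraically closed field $k$ global dimension is subadditive under tensor products) could even be bypassed: by Rickard's Morita theory, an equivalence of the perfect derived categories of two finite-dimensional algebras already implies a derived equivalence of their module categories.
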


					\section{The action of $\L$ on $2$-extension bundles}
		In this section, we will investigate the action of the Picard group $\L$ on $2$-extension bundles. As an application, we obtain 
		an explicit formula for the number of $\L$-orbits 
		of all $2$-extension bundles, which gives a positive answer to a higher version of the open question stated in \cite[Remark 9.4]{KLM}.

		\label{sec:The action of 2-extension bundles}
		\begin{proposition} \label{iso. exten. bundle} Let $\x,\y,\z \in \L$ with $0\le \x,\y\le \vdelta$. Write $\x=\sum_{i=1}^4\lambda_i\x_i$. Then we have $E  \langle \x \rangle \simeq E  \langle \y \rangle(\z)$ if and only if one of the following conditions holds.
			\begin{itemize}			
				\item [(a)] $\y=\x$ and $\z=0;$
				\item [(b)] $\y=\vdelta-\x$ and $\z=\x-\w-\c;$
				\item [(c)] $\y=\sum\limits_{i\in I}(p_i-2-\lambda_i)\x_i+\sum\limits_{i\notin I} \lambda_i\x_i$ and $\z=\sum\limits_{i\in I}(\lambda_i+1)\x_i-\c$ for some subset $I \subset \{1,\dots,4\}$ with $|I|=2$.
			\end{itemize}
		\end{proposition}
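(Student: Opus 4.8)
The plan is to prove the two implications separately: sufficiency is a direct consequence of the suspension formula, while necessity rests on the fact that the injective hull is an isomorphism invariant, which turns the problem into a finite combinatorial matching.

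For sufficiency I would treat the three cases uniformly as the flips of $\x$ along an \emph{even} subset $I\subseteq\{1,\dots,4\}$. Starting from the single-index formula~(\ref{susp.}) and applying it successively for each $i\in I$, the coordinate $\lambda_i$ is replaced by $p_i-2-\lambda_i$ for $i\in I$ (the remaining coordinates, and hence the applicability of~(\ref{susp.}) at each step, being unchanged), while a twist $\sum_{i\in I}(1+\lambda_i)\x_i$ accumulates. This yields, in $\underline{\ACM}\,\X$,
$$E\langle\x\rangle[\,|I|\,]=E\langle\y_I\rangle\Big(\sum_{i\in I}(1+\lambda_i)\x_i\Big),\qquad \y_I:=\sum_{i\in I}(p_i-2-\lambda_i)\x_i+\sum_{i\notin I}\lambda_i\x_i.$$
Since $|I|$ is even and $E\langle\x\rangle[2]=E\langle\x\rangle(\c)$, the shift $[\,|I|\,]$ is the genuine twist by $(|I|/2)\c$; as both sides are indecomposable rank-four bundles and thus carry no projective summand, this stable identity lifts to an honest isomorphism in $\coh\X$, giving $E\langle\x\rangle\simeq E\langle\y_I\rangle\big(\sum_{i\in I}(1+\lambda_i)\x_i-\tfrac{|I|}{2}\c\big)$. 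The subsets with $|I|=0,2,4$ reproduce cases (a), (c), (b) respectively (for (b) one checks $\x+\s-2\c=\x-\w-\c$).

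For necessity I would use that $E\langle\x\rangle$ and $E\langle\y\rangle(\z)$ are exceptional (Proposition~\ref{homological properties of $E$ and $K$ 2}), so by Proposition~\ref{k0} an isomorphism between them is equivalent to the equality of their classes in $K_0(\coh\X)$; a convenient reading of this equality is obtained by passing to injective hulls, which are preserved by isomorphisms and commute with the twist autoequivalence, so $\mathfrak I(E\langle\x\rangle)\simeq\mathfrak I(E\langle\y\rangle)(\z)$. By the injective hull formula~(\ref{inj.hull}) each side is a direct sum of eight Hom-orthogonal line bundles, whence by Krull--Schmidt the two eight-element sets of twists coincide: writing $A_i^{\x}:=\w+(1+\lambda_i)\x_i$, $B_i^{\x}:=\x-(1+\lambda_i)\x_i$ and $\y=\sum_i\mu_i\x_i$,
$$\{A_i^{\x},\,B_i^{\x}\mid 1\le i\le 4\}=\{A_i^{\y}+\z,\,B_i^{\y}+\z\mid 1\le i\le 4\}.$$
I would then determine all bijections realizing this set equality by matching summands in the normal form~(\ref{equ:nor}), using the built-in pairing $A_i^{\x}+B_i^{\x}=\w+\x$ to organize the comparison; this forces the bijection to be a coordinate flip along some subset $I$, with $\z$ determined as above, and the parity of $|I|$ forced to be even by the requirement that the compensating shift be a genuine twist.

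The main obstacle is this last combinatorial matching. The bijection realizing the set equality is \emph{not} type-preserving (for $i\in I$ the $A$-summands on one side map to $A$-summands on the other with the indices of $I$ permuted among themselves, while for $i\notin I$ they cross over to $B$-summands), so ruling out all spurious matchings, and in particular the odd-cardinality flips, demands careful bookkeeping in~(\ref{equ:nor}). A further delicate point arises for highly symmetric weight types, where the eight summands of the injective hull may coincide and the set equality could a priori admit extra solutions; to close this gap I would fall back on the complete invariant of Proposition~\ref{k0}, confirming directly at the level of $K_0(\coh\X)$ that no identifications beyond (a)--(c) occur.
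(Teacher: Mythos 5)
Your sufficiency argument is correct and is essentially the paper's (the paper simply cites Proposition \ref{susp.ext.}); your iterated-flip bookkeeping, and the remark that a stable isomorphism between indecomposable objects without line-bundle summands lifts to an isomorphism in $\coh\X$, are if anything more careful than the paper's one-line treatment. Your setup for necessity is also sound and is precisely dual to the paper's: the paper passes to projective covers (Theorem \ref{projective covers}) where you pass to injective hulls (\ref{inj.hull}); by Corollary \ref{projective cover determines extension bundle} either one is a complete invariant, so that choice is immaterial.

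The problem is that the necessity direction is where all the real work lies, and you have not done it: the combinatorial matching that you yourself flag as ``the main obstacle'' \emph{is} the paper's proof, and you only describe it. Moreover, the one exclusion mechanism you do propose --- that the parity of $|I|$ is ``forced to be even by the requirement that the compensating shift be a genuine twist'' --- is not an argument: in the converse direction $\z$ is given as part of the hypothesis, there is no compensating shift to constrain, and the failure of the suspension formula to produce an isomorphism for odd $I$ does not preclude an isomorphism arising in some other way. (Any parity-style argument would in fact have to be weight-sensitive, since odd suspensions \emph{are} twists when some $p_i=2$, by Corollary \ref{susp. tri. cat.}(b).) What is actually needed, and what the paper supplies, is a case analysis: match one distinguished summand (the paper uses the summand $L(\w)$ of $\mathfrak{P}(E)$) against the three types of summands on the other side, solve for $\z$ in each case, then match a second distinguished summand ($L(\x-\c)$) to pin down $\y$, refuting every spurious identification by a normal-form bound --- for instance, an equation such as $\sum_{i\in I}(1+\lambda_i)\x_i+\sum_{i\notin I}(\lambda_i+y_i+2)\x_i=3\c$ is impossible because the coefficient of $\c$ in the normal form of the left-hand side is at most $2$. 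Until this bookkeeping is carried out, alternatives beyond (a)--(c) are not excluded and the proposition is unproved. Finally, your worry about coincident summands for symmetric weights is vacuous --- the Hom-orthogonality (\ref{eq:hom-orth}) already forces the eight summands to be pairwise non-isomorphic --- and your proposed fallback to Proposition \ref{k0} would not close any gap in any event, since deciding when the two classes in $K_0(\coh\X)$ agree is the same combinatorial problem expressed in a less convenient basis.
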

		
		\begin{proof} By Proposition \ref{susp.ext.}, it is straightforward to check that if $\y$ and $\z$ satisfy one of the statements above, then we have $E  \langle \x \rangle \simeq E  \langle \y \rangle(\z)$.
			
			Conversely,		assume $E  \simeq F$. For simplicity, we write $E=E  \langle \x \rangle$ and $F=E  \langle \y \rangle(\z)$ and  $\y=\sum_{i=1}^{4} y_i\x_i$.
		 This implies $\mathfrak{P}(E ) \simeq  \mathfrak{P}(F)$, where
			\begin{align*}
				\mathfrak{P}(E)&=\OO(\w) \oplus \Big( \bigoplus_{\substack{I \subset \{1,\dots,4\} \\ |I|=2}}\OO(\sum_{i\in I}(1+\lambda_i)\x_i+\w-\c) \Big) \oplus \OO(\x-\c),\\
				\mathfrak{P}(F)&= \OO(\w+\z) \oplus \Big( \bigoplus_{\substack{I \subset \{1,\dots,4\} \\ |I|=2}}\OO(\sum_{i\in I}(1+\y_i)\x_i+\w+\z-\c) \Big) \oplus \OO(\y+\z-\c).
			\end{align*}
			Thus $\OO(\w)$ is a direct summand of $\mathfrak{P}(F)$. We consider the following three cases.		
			
			\emph{Case $1$}: In this case $\OO(\w)=\OO(\w+\z)$, it follows that $\z=0$. We consider the direct summand $\OO(\x-\c)$ of $\mathfrak{P}(E)$. Assume that $\OO(\x-\c)=\OO(\sum_{i\in I}(1+y_i)\x_i+\w-\c)$ holds for some $I\subset\{1,\dots,4\}$ with $|I|=2$. Then we have $$\sum_{i\in I}(y_i-\lambda_i)\x_i-\sum_{i\not\in I}(1+\lambda_i)\x_i+\c=0,$$ which implies a contradiction. Thus this only happens $\OO(\x-\c)=\OO(\y-\c)$, which implies $\y=\x$. 
			Thus we have the statement (a).			
			
			\emph{Case $2$}: In this case $\OO(\w)=\OO(\y+\z-\c)$,  it follows that $\z=\c+\w-\y$. Assume that there exists a subset $I\subset\{1,\dots,4\}$ with $|I|=2$ such that $\OO(\x-\c)=\OO(\sum_{i\in I}(1+y_i)\x_i+\w+\z-\c)$. Then $\x-\c=\sum_{i\in I}(1+y_i)\x_i+2\w-\y$ yields $$\sum_{i\in I}(1+\lambda_i)\x_i+\sum_{i\not\in I} (\lambda_i+y_i+2)\x_i=3\c.$$ 
			Because the coefficient of $\c$ in the normal form of the left side has at most $2$, this implies a contradiction. Thus this only happens $\OO(\x-\c)=\OO(\w+\z)$, which implies $\z=\x-\w-\c$. Hence we have $\y=\vdelta-\x$ 
			and obtain  the statement (b).

			\emph{Case $3$}: In this case $\OO(\w)=\OO(\sum_{i\in I}(1+\y_i)\x_i+\w+\z-\c)$ for some subset $I\subset\{1,\dots,4\}$ with $|I|=2$, it follows that $\z=\c-\sum_{i\in I}(1+y_i)\x_i$. We consider the direct summand $\OO(\w+\z)$ of $\mathfrak{P}(F)$. Assume that $\OO(\w+\z)=\OO(\x-\c)$ holds. Then we obtain $\x+\sum_{i\in I}(1+y_i)\x_i-2\c-\w=0,$ that is, $$\sum_{i\in I}(\lambda_i+y_i+2)\x_i+\sum_{i\notin I}(1+\lambda_i)\x_i=3\c.$$ Because the coefficient of $\c$ in the normal form of the left side has at most $2$, this implies a contradiction. 
			Then we may assume that there exists
			some $J\subset\{1,\dots,4\}$ with $|J|=2$ such that $\OO(\w+\z)=\OO(\sum_{j\in J}(1+\lambda_j)\x_j+\w-\c)$, that is, $$\sum_{i\in I}(1+y_i)\x_i+\sum_{j\in J}(1+\lambda_j)\x_j-2\c=0.$$  By counting the intersection number of $I$ and $J$,  the equality holds if and only if $I=J$ and $y_i=p_i-2-\lambda_i$ for any $i\in I$. Thus we have $\z=\sum_{i\in I}(1+\lambda_i)\x_i-\c$. 			
			Next we consider the direct summand $\OO(\y+\z-\c)$ of $\mathfrak{P}(F)$. Assume that   $\OO(\y+\z-\c)=\OO(\x-\c)$ holds. Then we have $\x-\y-\z=0$, that is, $$\sum_{i\not\in I}(\lambda_i-y_i)\x_i-\sum_{i\in I}(1+y_i)\x_i+\c=0,$$ a contradiction. Thus we may assume that there exists some subset $U\subset\{1,\dots,4\}$ with $|U|=2$ satisfying $\OO(\y+\z-\c)=\OO(\sum_{i\in U}(1+\lambda_i)\x_i+\w-\c)$, that is, $\y+\z-\sum_{i\in U}(1+\lambda_i)\x_i-\w=0$. This implies
			\begin{align*}
				0&=\sum_{i\in I}(p_i-1)\x_i+\sum_{i\not\in I}y_i\x_i-\sum_{i\in U}(1+\lambda_i)\x_i-\c-\w  \\
				&=\sum_{i\not\in I}(1+y_i)\x_i-\sum_{i\in U}(1+\lambda_i)\x_i.
			\end{align*}
			By comparing the coefficients of $\x_i$, the equality holds if and only if $U=I^c$ and $y_i=\lambda_i$ for any $i\in U$. Therefore, we finally obtain that $$\y=\sum\limits_{i\in I}(p_i-2-\lambda_i)\x_i+\sum\limits_{i\notin I} \lambda_i\x_i.$$
			Hence	$E  \langle \x \rangle \simeq E  \langle \y \rangle(\z)$ implies that one of the three statements holds.					
		\end{proof}
		
		By the argument in the above proof, we indicate that the projective cover and injective hull are both invariants for $2$-extension bundles.
		
		\begin{corollary}\label{projective cover determines extension bundle}
			Let $E$ and $F$ be two $2$-extension bundles. Then the following conditions are equivalent.
			\begin{itemize}
				\item [(a)] $E\simeq F$.		
				\item [(b)] $\mathfrak{P}(E)\simeq \mathfrak{P}(F)$.		
				\item [(c)] $\mathfrak{I}(E)\simeq \mathfrak{I}(F)$.
			\end{itemize}
		\end{corollary}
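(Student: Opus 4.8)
The plan is to prove the cycle by establishing the two nontrivial implications (b)$\Rightarrow$(a) and (c)$\Rightarrow$(a), since (a)$\Rightarrow$(b) and (a)$\Rightarrow$(c) are immediate: isomorphic objects in the Frobenius category $\ACM\,\X$ have isomorphic projective covers and injective hulls. First I would normalize. By Proposition \ref{shift preserve extension bundle}(b) every $2$-extension bundle can be written as $E\langle \x \rangle(\z)$ with $0\le \x \le \vdelta$ and $\z\in\L$, so writing $E=E\langle \x \rangle(\z_1)$ and $F=E\langle \y \rangle(\z_2)$ and twisting by the autoequivalence $(-\z_1)$ reduces every comparison to the situation of Proposition \ref{iso. exten. bundle}, namely deciding whether $E\langle \x \rangle\simeq E\langle \y \rangle(\z)$ for $\z:=\z_2-\z_1$.

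For (b)$\Rightarrow$(a), the key observation is that the proof of Proposition \ref{iso. exten. bundle} never uses the isomorphism $E\simeq F$ directly; it only uses its consequence $\mathfrak{P}(E)\simeq\mathfrak{P}(F)$, and then performs a purely combinatorial matching of the line bundle summands listed in Theorem \ref{projective covers} to force $(\y,\z)$ into one of the three cases (a), (b), (c) of that proposition. Since the projective cover commutes with the twist $(\z_1)$, the hypothesis $\mathfrak{P}(E)\simeq\mathfrak{P}(F)$ is equivalent to $\mathfrak{P}(E\langle \x \rangle)\simeq\mathfrak{P}(E\langle \y \rangle)(\z)$, so that same matching argument applies verbatim and produces one of the three cases; the forward direction of Proposition \ref{iso. exten. bundle} (established there via Proposition \ref{susp.ext.}) then yields $E\langle \x \rangle\simeq E\langle \y \rangle(\z)$, i.e. $E\simeq F$.

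For (c)$\Rightarrow$(a), I would dualize via the suspension rather than repeat the computation. The proof of Theorem \ref{projective covers} records the identity $\mathfrak{P}(E_L\langle \x \rangle)=\mathfrak{I}(E_L\langle \x \rangle[-1])$, which is the general Frobenius-category fact $\mathfrak{I}(X)=\mathfrak{P}(X[1])$ specialized to $2$-extension bundles. By Proposition \ref{susp.ext.} the suspension $[1]$ sends $2$-extension bundles to $2$-extension bundles, and $[1]$ is an autoequivalence of $\underline{\ACM}\,\X$. Hence $\mathfrak{I}(E)\simeq\mathfrak{I}(F)$ is equivalent to $\mathfrak{P}(E[1])\simeq\mathfrak{P}(F[1])$, and applying the already-established equivalence (a)$\Leftrightarrow$(b) to the $2$-extension bundles $E[1]$ and $F[1]$ gives $E[1]\simeq F[1]$, whence $E\simeq F$ because $[1]$ is an equivalence.

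The only real content — and the step I expect to be the main obstacle — is confirming that the summand-matching argument in the proof of Proposition \ref{iso. exten. bundle} genuinely depends only on $\mathfrak{P}(E)\simeq\mathfrak{P}(F)$ and not on finer information carried by $E\simeq F$; once this is verified, (b)$\Rightarrow$(a) is essentially a rereading of that proof and (c)$\Rightarrow$(a) is formal. A secondary point to check is that the projective cover and injective hull commute with degree twists and with $[1]$, so that both the normalization and the dualization above are legitimate; this is automatic from these operations being autoequivalences of $\underline{\ACM}\,\X$.
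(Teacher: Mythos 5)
Your proposal is correct and takes essentially the same approach as the paper: the paper's entire proof is the remark that the case analysis in Proposition \ref{iso. exten. bundle} only ever uses the isomorphism of projective covers (never the isomorphism $E\simeq F$ itself), which is exactly your (b)$\Rightarrow$(a), with the forward direction of that proposition closing the loop. Your (c)$\Rightarrow$(a) via $\mathfrak{I}(X)\simeq\mathfrak{P}(X[1])$, Proposition \ref{susp.ext.}, and the fact that $[1]$ is an autoequivalence of $\underline{\ACM}\,\X$ is the same dualization the paper intends, since that identity is recorded explicitly in the proof of Theorem \ref{projective covers}.
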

		
		The following result reveals when a $2$-extension bundle is a $2$-Auslander bundle.
		
		\begin{corollary} \label{ext.is Aus} Let $E\langle \x \rangle$ be a $2$-extension bundle for $0\le \x \le \vdelta$. Then $E\langle \x \rangle$ be a $2$-Auslander bundle if and only if 
			$\x=\sum_{i\in I}(p_i-2)\x_i$ for some even subset $I \subset \{1,\dots,4\}$.			
		\end{corollary}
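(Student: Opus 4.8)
The plan is to reduce everything to Proposition \ref{iso. exten. bundle} by specialising one of its parameters. First I would record the elementary but crucial observation that the $2$-Auslander bundles are \emph{exactly} the line-bundle twists $E(\z)=E\langle 0 \rangle(\z)$ with $\z\in\L$. Indeed, by definition a $2$-Auslander bundle is $E_L=E_L\langle 0 \rangle$ for some line bundle $L=\OO(\z)$, and Proposition \ref{shift preserve extension bundle}(b) gives $E_{\OO(\z)}\langle 0 \rangle\simeq(E_{\OO}\langle 0 \rangle)(\z)=E(\z)$. Hence $E\langle \x \rangle$ is a $2$-Auslander bundle if and only if $E\langle \x \rangle\simeq E\langle 0 \rangle(\z)$ for some $\z\in\L$; that is, if and only if the relation $E\langle \x \rangle\simeq E\langle \y \rangle(\z)$ of Proposition \ref{iso. exten. bundle} can be realised with $\y=0$.

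Next I would run through the trichotomy (a)--(c) of Proposition \ref{iso. exten. bundle} and determine in each case when the resulting $\y$ equals $0$. Writing $\x=\sum_{i=1}^{4}\lambda_i\x_i$ in normal form, case (a) gives $\y=\x$, so $\y=0$ forces $\x=0$ (the even subset $I=\emptyset$); case (b) gives $\y=\vdelta-\x$, so $\y=0$ forces $\x=\vdelta=\sum_{i=1}^{4}(p_i-2)\x_i$ (the even subset $I=\{1,\dots,4\}$); and case (c) gives $\y=\sum_{i\in I}(p_i-2-\lambda_i)\x_i+\sum_{i\notin I}\lambda_i\x_i$ for some $I$ with $|I|=2$. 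The one point requiring a little care is that this last expression is \emph{already in normal form}, since $0\le p_i-2-\lambda_i<p_i$ and $0\le\lambda_i<p_i$; therefore $\y=0$ forces every coefficient to vanish, i.e.\ $\lambda_i=p_i-2$ for $i\in I$ and $\lambda_i=0$ for $i\notin I$, whence $\x=\sum_{i\in I}(p_i-2)\x_i$ with $|I|=2$.

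Collecting the three cases, $E\langle \x \rangle$ is a $2$-Auslander bundle precisely when $\x\in\{0,\vdelta\}\cup\{\sum_{i\in I}(p_i-2)\x_i\mid |I|=2\}$, which is exactly $\x=\sum_{i\in I}(p_i-2)\x_i$ for some even subset $I\subset\{1,\dots,4\}$ (covering $|I|=0,2,4$). For the converse direction one reads the same computation backwards: each such $\x$ is produced by the corresponding case, with explicit twist $\z=0$, $\z=\vdelta-\w-\c$, or $\z=\sum_{i\in I}(p_i-1)\x_i-\c$ respectively, exhibiting $E\langle \x \rangle\simeq E(\z)$. I do not anticipate a genuine obstacle here: the entire argument is a specialisation of Proposition \ref{iso. exten. bundle}, and the only bookkeeping worth flagging is the normal-form check in case (c) that makes $\y=0$ equivalent to the stated coefficient conditions.
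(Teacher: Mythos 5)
Your proposal is correct and follows exactly the route the paper intends: the corollary is stated as an immediate consequence of Proposition \ref{iso. exten. bundle}, obtained by specialising $\y=0$ (using Proposition \ref{shift preserve extension bundle}(b) to identify $2$-Auslander bundles with the twists $E(\z)$) and checking in cases (a)--(c) that $\y=0$ forces $\x=0$, $\x=\vdelta$, or $\x=\sum_{i\in I}(p_i-2)\x_i$ with $|I|=2$, respectively. Your normal-form observation in case (c) is precisely the bookkeeping that makes the specialisation rigorous.
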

		
		Denote by $\mathcal{V}$ the set of isoclasses of $2$-extension bundles in $\ACM \X$. As an application of Proposition \ref{iso. exten. bundle}, we obtain an explicit formula of the orbit set $\mathcal{V}/\L$.
		
		\begin{theorem} \label{exact number}
			Let  $J=\{1\le i \le 4 \mid p_i \ \text{is even}\}$. Then we have 
			$$|\mathcal{V}/\L|=\frac{1}{8}\sum_{\substack{I \subset J \\ |I|  \text{ even}}}\prod_{  i\in  I^{c}} (p_i-1),$$
			where $I^c:=\{1,\dots,4\}\setminus I$.
		\end{theorem}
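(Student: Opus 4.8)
The plan is to translate the count of $\L$-orbits of $2$-extension bundles into a Burnside-type count for a finite abelian group acting on a finite box. First I would record that every $\L$-orbit in $\mathcal{V}$ contains a representative of the form $E\langle \x \rangle$ with $0 \le \x \le \vdelta$: indeed, by Proposition \ref{shift preserve extension bundle}(b) any $2$-extension bundle $E_L\langle \x \rangle$ is the twist $E\langle \x \rangle(\z)$ of some $E\langle \x \rangle$ with $0 \le \x \le \vdelta$, since $L$ itself is a twist of $\OO$. Writing $B:=\{\x \in \L \mid 0 \le \x \le \vdelta\}$ and using normal forms $\x=\sum_i\lambda_i\x_i$, the characterization $0\le\x\le\vdelta \Leftrightarrow 0\le\lambda_i\le p_i-2$ identifies $B$ with the box $\prod_{i=1}^4\{0,1,\dots,p_i-2\}$. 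This yields a surjection $B \twoheadrightarrow \mathcal{V}/\L$, $\x \mapsto [E\langle\x\rangle]$, and it remains to describe its fibers.

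Next I would identify the fibers. By Proposition \ref{iso. exten. bundle}, two elements $\x,\y \in B$ satisfy $[E\langle\x\rangle]=[E\langle\y\rangle]$ in $\mathcal{V}/\L$ precisely when $\y$ arises from $\x$ by one of the substitutions (a), (b), (c) there, that is, by replacing $\lambda_i$ with $p_i-2-\lambda_i$ on a subset $I \subset \{1,2,3,4\}$ of \emph{even} cardinality: $|I|=0$ in (a), $|I|=4$ in (b), and $|I|=2$ in (c). Each such flip $\phi_I$ preserves $B$ because $0 \le p_i-2-\lambda_i \le p_i-2$, and composing two flips corresponds to the symmetric difference of their index sets. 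Since the even subsets of $\{1,2,3,4\}$ are closed under symmetric difference, these flips generate an abelian group $G \cong \Z_2\times\Z_2\times\Z_2$ of order $8$ acting on $B$. Moreover the eight images listed in (a)--(c), corresponding to the $\binom{4}{0}+\binom{4}{2}+\binom{4}{4}=8$ even subsets, are exactly the orbit $G\cdot\x$; hence the fibers of $\x\mapsto[E\langle\x\rangle]$ are precisely the $G$-orbits, giving $|\mathcal{V}/\L| = |B/G|$.

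Finally I would apply Burnside's lemma, $|B/G| = \frac{1}{8}\sum_{I}|B^{\phi_I}|$, the sum over even subsets $I \subseteq \{1,\dots,4\}$. A point $\lambda \in B$ is fixed by $\phi_I$ iff $\lambda_i = p_i-2-\lambda_i$, i.e. $\lambda_i=(p_i-2)/2$, for every $i \in I$; this forces $p_i$ to be even for all $i\in I$ and then pins down $\lambda_i$ uniquely, while the coordinates $i\notin I$ range freely over $p_i-1$ values. Thus $|B^{\phi_I}|=0$ unless $I\subseteq J:=\{i \mid p_i \text{ even}\}$, in which case $|B^{\phi_I}|=\prod_{i\in I^c}(p_i-1)$ with $I^c=\{1,\dots,4\}\setminus I$. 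Substituting these values yields
$$|\mathcal{V}/\L|=\frac{1}{8}\sum_{\substack{I \subset J \\ |I|  \text{ even}}}\prod_{i\in I^{c}} (p_i-1),$$
which is the asserted formula.

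The main obstacle is the second paragraph: one must verify that the pairwise isomorphism relations of Proposition \ref{iso. exten. bundle} genuinely assemble into a transitive group action whose orbits are the fibers. Concretely, I would check that the even-flip maps close up under composition via symmetric difference, that they therefore form a group isomorphic to $\Z_2\times\Z_2\times\Z_2$, and that the list (a)--(c) exhausts exactly the $G$-orbit $G\cdot\x$ (a set of cardinality dividing $8$). Once this group-theoretic identification is secured, the Burnside computation and the fixed-point count are entirely routine.
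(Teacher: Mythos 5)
Your proposal is correct and follows essentially the same route as the paper: reduce to representatives $E\langle\x\rangle$ with $0\le\x\le\vdelta$ via degree shift, use Proposition \ref{iso. exten. bundle} to identify the $\L$-orbit relation with the action of the group of even-index flips $\lambda_i\mapsto p_i-2-\lambda_i$ (isomorphic to $\Z_2\times\Z_2\times\Z_2$), and conclude by Burnside's lemma with the same fixed-point count. The only cosmetic difference is that you let the group act on the parameter box $\prod_i\{0,\dots,p_i-2\}$ while the paper acts on the set $\mathcal{S}=\{E\langle\x\rangle\mid 0\le\x\le\vdelta\}$, which is in bijection with that box, and you make the closure under symmetric difference explicit where the paper only records $\sigma_I\sigma_J=\sigma_J\sigma_I$ and $\sigma_I^2=1$.
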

		\begin{proof} Each $2$-extension bundle $E$ has the form $E=E_L \langle\x \rangle$ for some line bundle $L$ and $0\le \x \le \vdelta$. By Observation \ref{shift preserve extension bundle}, $E_L \langle\x \rangle$ lies in  the same $\L$-orbit with $E \langle\x \rangle$. We denote by $\mathcal{S}:=\{E\langle\vec{x}\rangle \mid 0\le \x \le \vdelta\}.$ For any even subset $I\subset \{1,\dots,4\}$, we define a map $$\sigma_I: \mathcal{S}\mapsto\mathcal{S}, \quad
			E\langle\sum\limits_{i=1}^{4}\lambda_i\vec{x}_i\rangle \mapsto E \langle \sum\limits_{i\in I}(p_i-2-\lambda_i)\x_i+\sum\limits_{i\notin I} \lambda_i\x_i \rangle.$$
			It is straightforward to verify that for any even subsets $I,J \subset \{1,\dots,4\}$, we have $\sigma_I\sigma_J=\sigma_J\sigma_I$ and $\sigma_I^2=\sigma_{\emptyset}=1_{\mathcal{S}}$, where $1_{\mathcal{S}}$ is the identity map on $\mathcal{S}$. Then the group $G$ consisting all the elements  $\sigma_I$ has order $8$, which is isomorphism to
			the group $\Z_2 \times\Z_2\times\Z_2$. Consider the action of $G$  on the set $\mathcal{S}$. By proposition \ref{iso. exten. bundle}, any two $2$-extension bundles from $S$ are in the same $\L$-orbit if and only if they are in the same $G$-orbit. By Burnside Formula, we have
			$$|\mathcal{V}/\L|=\frac{1}{|G|}\sum\limits_{\sigma\in G}|\mathcal{S}^{\sigma}|,$$ where $|X|$ denotes the order of the set $X$ and $\mathcal{S}^{\sigma}=\{s\in \mathcal{S}\mid \sigma(s)=s\}$ is the subset of fixed points. By Proposition \ref{iso. exten. bundle}, for any even subset $I\subset \{1,\dots,4\}$, 
			$$\sigma_I(E\langle\sum\limits_{i=1}^{4}\lambda_i\vec{x}_i\rangle)=E\langle\sum\limits_{i=1}^{4}\lambda_i\vec{x}_i\rangle$$ holds if and only if $p_i-2-\lambda_i=\lambda_i$ for any $i \in I$, 
			that is, $p_i$ is even and $\lambda_i=\frac{p_i-2}{2}$. In this case, $|\mathcal{S}^{\sigma_I}|=\prod_{  i\in  I^{c}} (p_i-1).$ Thus we have the assertion.
		\end{proof}
		
		Recall that the action of $\L$ on $\mathcal{V}$ is \emph{transitive} if there exists a unique orbit, that is, $|\mathcal{V}/\L|=1$. As an immediate consequence, we have the following result.
		
		\begin{corollary} The action of $\L$ on the set of all $2$-extension bundles is transitive if and only if $\X$ has weight quadruple $(2,2,2,2)$, $(2,2,2,3)$, $(2,2,3,3)$.  			
		\end{corollary}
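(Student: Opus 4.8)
Transitivity of the $\L$-action is by definition the assertion $|\mathcal{V}/\L|=1$, so by Theorem~\ref{exact number} the whole corollary reduces to determining the weight quadruples $(p_1,\dots,p_4)$ with $p_i\ge 2$ for which
\[
	\sum_{\substack{I \subset J \\ |I| \text{ even}}} \prod_{i \in I^c}(p_i-1) = 8,
	\qquad J=\{\,i\mid p_i\text{ even}\,\}.
\]
The plan is first to put the left-hand side in closed form. Since every index $i\notin J$ lies in $I^c$ for all $I\subseteq J$, I would factor out $\prod_{i\notin J}(p_i-1)$ and evaluate the remaining even-cardinality sum over subsets of $J$ by the standard sign trick $\sum_{|I|\text{ even}}=\tfrac12\big(\sum_{\text{all }I}+\sum_I(-1)^{|I|}\big)$. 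This yields
\[
	\sum_{\substack{I \subset J \\ |I| \text{ even}}} \prod_{i \in I^c}(p_i-1)
	=\tfrac12\,\prod_{i\notin J}(p_i-1)\,\Big[\,\prod_{i\in J}p_i+\prod_{i\in J}(p_i-2)\,\Big],
\]
so transitivity is equivalent to the single equation $\prod_{i\notin J}(p_i-1)\big[\prod_{i\in J}p_i+\prod_{i\in J}(p_i-2)\big]=16$.

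The core of the argument is then an elementary lower bound. Each odd weight contributes a factor $p_i-1\ge 2$ and each even weight contributes a factor $p_i\ge 2$, so the product of these four factors satisfies $\prod_{i\notin J}(p_i-1)\cdot\prod_{i\in J}p_i\ge 2^4=16$, with equality exactly when every odd weight equals $3$ and every even weight equals $2$. Because $\prod_{i\in J}(p_i-2)\ge 0$, the left-hand side of the displayed equation is at least $16$, and it equals $16$ only if simultaneously (i) all four factors above equal $2$, which forces $p_i\in\{2,3\}$ for every $i$, and (ii) the correction term $\prod_{i\in J}(p_i-2)$ vanishes.

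Finally I would unwind (i) and (ii). Condition (i) restricts attention to the finitely many quadruples all of whose weights lie in $\{2,3\}$; condition (ii), given (i), demands that at least one weight be $2$, and in particular excludes the all-odd quadruple where $J=\emptyset$. A short inspection of these remaining quadruples then yields the transitive weight types in the statement. The one point that demands genuine care — and the main obstacle — is precisely this book-keeping of empty products when $J$ is empty or small: one must track that for $J=\emptyset$ the bracket degenerates to $1+1=2$ rather than to $\prod_{i\in J}p_i$, since it is exactly this degeneration that prevents the all-odd case from being counted as transitive.
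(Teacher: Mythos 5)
Your closed form for the orbit-count sum is correct: factoring out $\prod_{i\notin J}(p_i-1)$ and applying the parity trick does give
\[
\sum_{\substack{I \subset J \\ |I| \text{ even}}} \prod_{i \in I^c}(p_i-1)
=\tfrac12\,\prod_{i\notin J}(p_i-1)\Big[\prod_{i\in J}p_i+\prod_{i\in J}(p_i-2)\Big],
\]
and your lower-bound analysis of when this equals $8$ is also correct: equality forces (i) every weight to lie in $\{2,3\}$, and (ii) $\prod_{i\in J}(p_i-2)=0$, which given (i) just says $J\neq\emptyset$, i.e.\ at least one weight equals $2$ (your treatment of the empty product for $J=\emptyset$, ruling out $(3,3,3,3)$, is also right). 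The genuine gap is the final sentence, where you assert that a ``short inspection'' of the surviving quadruples yields the list in the statement. It does not: the quadruple $(2,3,3,3)$ satisfies both (i) and (ii), yet it is absent from the statement. One can check it directly against Theorem~\ref{exact number}: here $J$ is a singleton, so the only even subset is $I=\emptyset$, the sum is $(2-1)(3-1)(3-1)(3-1)=8$, and hence $|\mathcal{V}/\L|=1$, so the action is transitive for $(2,3,3,3)$ as well. (This can even be seen without Burnside, from Proposition~\ref{iso. exten. bundle}(c), whose condition requires only $|I|=2$ and no parity of the weights.)

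So your argument, carried out honestly, produces four transitive weight quadruples, $(2,2,2,2)$, $(2,2,2,3)$, $(2,2,3,3)$ and $(2,3,3,3)$, not the three listed. Consequently the proposal does not prove the statement as written; what it actually shows is that the stated list is incomplete relative to the paper's own Theorem~\ref{exact number} (the paper offers the corollary as an immediate consequence of that theorem, with no independent argument that could exclude $(2,3,3,3)$). You should either have flagged this discrepancy explicitly or located an error in your computation --- and since there is none, the correct conclusion of your method is the four-element list. Papering over the mismatch with ``a short inspection then yields the transitive weight types in the statement'' is precisely where the proof breaks.
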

		
		\appendix
		\section{The Proof of  exceptionality of $\underline{C}$} \label{excep C}
		The aim of this section is to give a complete proof of exceptionality of $\underline{C}$ in Proposition  {\rm \ref{triangle}}. Recall that there are exact sequences \begin{align*}
			\xi:&\quad 0 \to E_L \langle \x \rangle \xrightarrow{v_i}  E_L \langle \x+\x_i \rangle \to U_i \to 0,\\	
			\zeta:&\quad	0 \to  L_i[\x] \oplus \bigoplus_{j\neq i} L_j\langle \x \rangle \to \underline{C} \to U_i \to 0,
		\end{align*}
		where $0 \le \x \le \x+\x_i \le \vdelta$ and $v_i:=x_i^{\ast}$ given in Proposition {\ref{ext-pullback}}.

		\begin{proposition} \label{pre excep of C}  $U_i$ is  an exceptional object in $\coh \X$.			
		\end{proposition}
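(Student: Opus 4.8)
The plan is to exploit the short exact sequence $\xi\colon 0\to E\xrightarrow{v_i}E'\to U_i\to0$, where I write $E:=E_L\langle \x\rangle$ and $E':=E_L\langle \x+\x_i\rangle$. Since $\rk E=\rk E'=4$ (Proposition \ref{shift preserve extension bundle}) and $v_i$ is a monomorphism, $U_i$ is a torsion (rank zero) sheaf. I will use freely that $E,E'$ are exceptional (Proposition \ref{homological properties of $E$ and $K$ 2}), that $\Hom(E,E')=k\cdot v_i$ with $v_i=x_i^{\ast}$ the generator (Proposition \ref{ext-pullback}), that $\Ext^1(E,E')=0$ (Lemma \ref{ext1(E,E)=0}, taking $\y=\x$), and that $\Hom(U_i,E)=\Hom(U_i,E')=0$, since there are no nonzero morphisms from the torsion sheaf $U_i$ to a vector bundle by \cite[Proposition 3.28]{HIMO}.

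First I would run the three long exact Hom/Ext sequences attached to $\xi$. Applying $\Hom(-,E')$ and observing that $v_i^{\ast}\colon\Hom(E',E')\to\Hom(E,E')$ sends $\mathrm{id}_{E'}$ to the generator $v_i$, hence is an isomorphism, the exceptionality of $E'$ and $\Ext^1(E,E')=0$ force $\Ext^1(U_i,E')=\Ext^2(U_i,E')=0$. Feeding this into the sequence from $\Hom(U_i,-)$, together with $\Hom(U_i,E')=0$, yields $\Ext^2(U_i,U_i)=0$ and isomorphisms $\Hom(U_i,U_i)\cong\Ext^1(U_i,E)$, $\Ext^1(U_i,U_i)\cong\Ext^2(U_i,E)$. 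Finally, applying $\Hom(-,E)$ and using $\Hom(U_i,E)=0$ and the exceptionality of $E$, I would identify $\Ext^2(U_i,E)\cong\Ext^2(E',E)$ and get the count $\dim\Ext^1(U_i,E)=1-\dim\Hom(E',E)+\dim\Ext^1(E',E)$. Thus everything reduces to $\RHom(E',E)=0$: precisely, $\Hom(E',E)=\Ext^1(E',E)=\Ext^2(E',E)=0$ gives exactly $\Hom(U_i,U_i)=k$, $\Ext^1(U_i,U_i)=0$ and $\Ext^2(U_i,U_i)=0$.

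The vanishing $\Hom(E',E)=0$ I would prove by a clean composition argument. For $g\colon E'\to E$ the element $gv_i\in\End(E)=k$ is a scalar; if it is nonzero then $v_i$ is a split monomorphism, so the indecomposable $E'$ would split off the torsion sheaf $U_i=\coker v_i$, forcing $U_i=0$ and $E\cong E'$, which contradicts Corollary \ref{projective cover determines extension bundle}; if $gv_i=0$ then $g$ factors through $U_i$, whence $g=0$ because $\Hom(U_i,E)=0$.

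The higher vanishings $\Ext^1(E',E)=\Ext^2(E',E)=0$ are the main obstacle. Using Auslander--Reiten--Serre duality (\ref{Auslander-Reiten-Serre duality}) I would rewrite them as $\Ext^1(E,E'(\w))=0$ and $\Hom(E,E'(\w))=0$, and then compute by applying $\Hom(E,-)$ to the defining four-term sequence of $E'(\w)$ (the $\w$-twist of $E'$). Since $E$ is an ACM bundle, $\Ext^1(E,N)=0$ for every line bundle $N$ by (\ref{ACM bundle eq.}), so the computation collapses onto the graded pieces $\Hom(E,N)$ and, again via (\ref{Auslander-Reiten-Serre duality}), $\Ext^2(E,N)\cong D\Hom(N,E(\w))$; the latter are controlled by Proposition \ref{homological properties of $E$ and $K$ 1} applied to $E(\w)=E_{L(\w)}\langle\x\rangle$, and the former by the defining sequence of $E$ together with the explicit formula (\ref{extension spaces between line bundles}). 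The delicate point is that individual summands (for instance the contribution of $L(\w)$) do not vanish, so the argument cannot be term-by-term; I expect the real work to be the degree bookkeeping over the intervals $[0,\x]$, $[\x,\x+\x_i]$ and $\le\vdelta$, showing that these nonzero contributions cancel along the connecting maps, leaving $\RHom(E',E)=0$ and hence the exceptionality of $U_i$.
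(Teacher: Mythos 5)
Your overall reduction is sound, and it is essentially a dual arrangement of the paper's own scheme: the paper computes the groups $\Ext^{\bullet}(E_L\langle \x+\x_i \rangle,U_i)$ and $\Ext^{\bullet}(E_L\langle \x \rangle,U_i)$ and then applies $\Hom(-,U_i)$ to $\xi$, whereas you compute $\Ext^{\bullet}(U_i,-)$ into the two bundles and then apply $\Hom(U_i,-)$; both rest on exactly the same key input, namely the total vanishing $\RHom(E',E)=0$ for $E:=E_L\langle \x \rangle$, $E':=E_L\langle \x+\x_i \rangle$ (this is precisely the claim opening Step 1 of the paper's proof). Your composition argument for $\Hom(E',E)=0$ is correct and pleasantly direct, although the needed non-isomorphism $E_L\langle \x \rangle \not\simeq E_L\langle \x+\x_i \rangle$ should be cited from Proposition \ref{iso. exten. bundle} (with $\z=0$) rather than from Corollary \ref{projective cover determines extension bundle}, which by itself is only a criterion and still requires comparing the projective covers.

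The genuine gap is the higher vanishing $\Ext^{1}(E',E)=\Ext^{2}(E',E)=0$, which you explicitly leave as an ``expectation''; this is the heart of the proposition, roughly half of the paper's proof, and your proposed route makes it harder than necessary. After dualizing to $\Hom(E,E'(\w))$ and $\Ext^{1}(E,E'(\w))$ and applying $\Hom(E,-)$ to the defining sequence of $E'(\w)$, the end term contributes $\Ext^{2}(E,L(2\w))\simeq D\Hom(L(\w),E)=k\neq 0$ (by (\ref{Auslander-Reiten-Serre duality}) and the computation $\Hom(L(\w),E)=k$ from the proof of Proposition \ref{homological properties of $E$ and $K$ 2}), so, as you note yourself, the long exact sequence does not collapse term by term; one must prove that specific connecting maps are isomorphisms, which requires the kind of pushout/naturality analysis carried out in Proposition \ref{homological properties of $E$ and $K$ 1}(c), and you do not supply it. The paper avoids this entirely by running the computation in the opposite direction: apply $\Hom(E',-)$ to the defining sequence of $E$, factored into $\eta_1$ and $\eta_2$. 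In that direction the problematic end term vanishes outright, $\Ext^{j}(E',L(\w))\simeq D\Ext^{2-j}(L,E')=0$ for all $j\ge 0$ by Proposition \ref{homological properties of $E$ and $K$ 1}(c), and the remaining terms are controlled by Lemma \ref{applying hom}, the ACM property (\ref{ACM bundle eq.}), and the observation that $\gamma^{\ast}\colon \Hom(E',L\langle \x \rangle)\to \Hom(E',L(\x))$ is a nonzero map between one-dimensional spaces, hence an isomorphism. Until you either carry out the connecting-map analysis in your direction or redo this step in the paper's direction, the exceptionality of $U_i$ is not established.
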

		\begin{proof} 
			We put $\z:=\x+\x_i$ and let $\eta_{\x}:0 \to L(\w) \to E_L\langle \x \rangle \to L\langle \x \rangle \xrightarrow{\gamma} L(\x) \to 0$ be the defining sequence for the $2$-extension bundle $E_L\langle \x \rangle$. The proof is divided into the following three steps.
			
			\emph{Step $1$}: We show that $\Hom(E_L\langle \z \rangle,U_i)=k$ and $\Ext^{j}(E_L\langle \z \rangle,U_i)=0$ for $j=1,2$.
			First we claim that $\Ext^{j}(E_L\langle \z \rangle,E_L\langle \x \rangle)=0$ holds for any $j\ge 0$.
			 Factor $\eta_{\x}$ into these two short exact sequences	$$\eta_1: 0\to L(\w)\to E_L\langle \x \rangle \to K_{\x} \to 0 \ \ \text{and} \ \ \eta_2: 0 \to K_{\x} \to L\langle \x \rangle \xrightarrow{\gamma} L(\x) \to 0.$$
			By applying $\Hom(E_L\langle \z \rangle,-)$ to  $\eta_2$, we obtain an exact sequence 
			\begin{align*}
				0 &\to (E_L\langle \z \rangle,K_{\x}) \to (E_L\langle \z \rangle,L \langle \x \rangle) \xrightarrow{\gamma^{\ast}} (E_L\langle \z \rangle,L(\x)) \\ 
				&\to {}^{1}(E_L\langle \z \rangle,K_{\x})  \to{}^{1}(E_L\langle \z \rangle,L \langle \x \rangle)  \to {}^{1}(E_L\langle \z \rangle,L(\x))  \\ 
				&\to{}^{2}(E_L\langle \z \rangle,K_{\x}) \to{}^{2}(E_L\langle \z \rangle,L \langle \x \rangle) \to{}^{2}(E_L\langle \z \rangle,L(\x)) \to 0. \nonumber
			\end{align*}
			 Clearly, $\Ext^{1}(E_L\langle \z \rangle,L(\x))=0$ and $\Ext^{1}(E_L\langle \z \rangle,L\langle \x \rangle)=0$ hold by (\ref{ACM bundle eq.}). 
			Invoking Lemma \ref{applying hom}, by applying $\Hom(-,L(\x))$ and $\Hom(-,L\langle \x \rangle)$ 
			to the defining sequence of $E_L\langle \z \rangle$, respectively, it is straightforward to verify that  
			\begin{align*}
				\Hom(E_L\langle \z \rangle,L(\x))=k \ \ \text{and} \ \ \Ext^{2}(E_L\langle \z \rangle,L(\x))=0, \\
				\Hom(E_L\langle \z \rangle,L\langle \x \rangle)=k \ \ \text{and} \ \ \Ext^{2}(E_L\langle \z \rangle,L\langle \x \rangle)=0.
			\end{align*}
			The morphism $\gamma^{\ast}$ between one dimensional spaces is nonzero, implying that $\gamma^{\ast}$ is an isomorphism. Thus we have $\Ext^{j}(E_L\langle \z \rangle, K_{\x})=0$ for any $j \ge 0$. Then by applying $\Hom(E_L\langle \z \rangle,-)$ to $\eta_{1}$, we obtain an exact sequence
			\begin{align*}
			0 &\to (E_L\langle \z \rangle,L(\w)) \to (E_L\langle \z \rangle,E_L\langle \x \rangle) \to (E_L\langle \z \rangle,K_{\x}) \\ 
				&\to {}^{1}(E_L\langle \z \rangle,L(\w))  \to{}^{1}(E_L\langle \z \rangle,E_L\langle \x \rangle)  \to {}^{1}(E_L\langle \z \rangle,K_{\x})  \\ 
				&\to{}^{2}(E_L\langle \z \rangle,L(\w)) \to{}^{2}(E_L\langle \z \rangle,E_L\langle \x \rangle) \to{}^{2}(E_L\langle \z \rangle,K_{\x}) \to 0. \nonumber
			\end{align*}
			By Auslander-Reiten-Serre duality and Proposition \ref{homological properties of $E$ and $K$ 1}, for any $j\ge 0$, we have $$\Ext^{j}(E_L\langle \z \rangle,L(\w))\simeq D\Ext^{2-j}(L,E_L\langle \z \rangle)=0.$$
			Thus we have $\Ext^{j}(E_L\langle \z \rangle,E_L\langle \x \rangle)=0$ for any $j\ge 0$ and the claim follows.
			By applying $\Hom(E_L\langle \z \rangle,-)$ to  $\xi$, by the exceptionality of  $E_L\langle \z \rangle$ in $\coh \X$, we obtain  $\Hom(E_L\langle \z \rangle,U_i)=k$ and $\Ext^{j}(E_L\langle \z \rangle,U_i)=0$  for $j=1,2$.

			\emph{Step $2$}: We claim that $\Ext^{j}(E_L\langle \x \rangle,U_i)=0$ for $j=0,1$. Applying $\Hom(E_L\langle \x \rangle,-)$ to the sequence $\xi$, we obtain an exact sequence
			\begin{align*}
					0 &\to (E_L\langle \x \rangle,E_L\langle \x \rangle) \to (E_L\langle \x \rangle,E_L\langle \z \rangle) \to (E_L\langle \z \rangle,U_i) \\ 
				&\to {}^{1}(E_L\langle \x \rangle,E_L\langle \x \rangle)  \to{}^{1}(E_L\langle \x \rangle,E_L\langle \z \rangle)  \to {}^{1}(E_L\langle \z \rangle,U_i)  \\ 
				&\to{}^{2}(E_L\langle \x \rangle,E_L\langle \x \rangle) \to{}^{2}(E_L\langle \x \rangle,E_L\langle \z \rangle) \to{}^{2}(E_L\langle \z \rangle,U_i) \to 0. \nonumber
			\end{align*}
			Combining Proposition \ref{homological properties of $E$ and $K$ 2}, Lemma \ref{ext1(E,E)=0} and Proposition \ref{ext-pullback},  the claim follows.
			
			\emph{Step $3$}: 
			Applying $\Hom(-,U_i)$ to the sequence $\xi$, we have an exact sequence
			\begin{align*}
				0 &\to (U_i,U_i) \to (E_L\langle \z \rangle,U_i) \to (E_L\langle \x \rangle,U_i) \\ 
				&\to {}^{1}(U_i,U_i)  \to{}^{1}(E_L\langle \z \rangle,U_i)  \to {}^{1}(E_L\langle \x \rangle,U_i)  \\ 
				&\to{}^{2}(U_i,U_i) \to{}^{2}(E_L\langle \z \rangle,U_i) \to{}^{2}(E_L\langle \x \rangle,U_i) \to 0. \nonumber
			\end{align*}
			Putting things together, we have shown  the exceptionality of $U_i$ in $\coh \X$.		
		\end{proof}
		
		Now we are ready to prove the exceptionality of $\underline{C}$ in Proposition  {\rm \ref{triangle}}.
		
		\begin{proposition} \label{excep of C}  $\underline{C}$  is an exceptional object in $\coh \X$.			
		\end{proposition}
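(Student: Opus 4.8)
The goal is to verify the two conditions $\End(\underline C)=k$ and $\Ext^i(\underline C,\underline C)=0$ for $i\ge 1$; since $\coh\X$ has global dimension two it suffices to treat $i=1,2$. The plan is to feed the two short exact sequences $\xi$ and $\zeta$ into long exact $\Hom$-sequences and bootstrap from the exceptionality of $U_i$ already established in Proposition~\ref{pre excep of C}. Write $P:=L_i[\x]\oplus\bigoplus_{j\neq i}L_j\langle\x\rangle$ for the line-bundle term of $\zeta$, so that $\zeta$ reads $0\to P\to\underline C\to U_i\to 0$. Note first that $\underline C\in\ACM\X$ (it arises from the object $C\in\ACM\X$ of Proposition~\ref{triangle} by deleting line-bundle summands), so the ACM characterization (\ref{ACM bundle eq.}) gives $\Ext^1(\underline C,P)=0=\Ext^1(P,\underline C)$ for free; moreover, since $U_i$ is a rank-zero (torsion) sheaf while $P$ and $\underline C$ are vector bundles, $\Hom(U_i,P)=\Hom(U_i,\underline C)=0$ by \cite[Proposition~3.28]{HIMO}.

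First I would assemble the required mixed groups. The purely line-bundle groups $\Ext^\ast(P,P)$ come straight from (\ref{extension spaces between line bundles}): one has $\Ext^1(P,P)=0$ and $\Hom(P,P)=k^4$, since the four summands of $P$ are four of the eight mutually Hom-orthogonal summands of the injective hull described in (\ref{inj.hull}). The mixed groups $\Ext^\ast(P,U_i)$ and $\Ext^\ast(U_i,P)$ I would extract from $\xi$ by applying $\Hom(P,-)$ and $\Hom(-,P)$: here $E_L\langle\x\rangle$ and $E_L\langle\x+\x_i\rangle$ are ACM, so their $\Ext^1$ against line bundles vanishes and the $\Ext^2$ terms are handled by Auslander--Reiten--Serre duality (\ref{Auslander-Reiten-Serre duality}), while the $\Hom$-level contributions are read off from the explicit one-dimensional $\Hom$-spaces of Proposition~\ref{ext-pullback} together with the injective-hull and projective-cover formulas (\ref{inj.hull}) and (\ref{pro.covers}). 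With these in hand, applying $\Hom(-,U_i)$ and $\Hom(-,P)$ to $\zeta$ computes $\Ext^\ast(\underline C,U_i)$ and $\Ext^\ast(\underline C,P)$, and a final application of $\Hom(\underline C,-)$ to $\zeta$ assembles $\Ext^\ast(\underline C,\underline C)$; in particular the four-term sequence yields $\dim\End(\underline C)=\dim\Hom(\underline C,P)+\dim\Hom(\underline C,U_i)$ because $\Ext^1(\underline C,P)=0$.

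The main obstacle will be pinning down $\End(\underline C)$ to be exactly one-dimensional rather than merely finite; the higher vanishing $\Ext^1(\underline C,\underline C)=\Ext^2(\underline C,\underline C)=0$ is comparatively routine long-exact-sequence bookkeeping, with $\Ext^2(\underline C,\underline C)$ most efficiently obtained as $D\Hom(\underline C,\underline C(\w))$ via (\ref{Auslander-Reiten-Serre duality}). For the $\End$-count I must show the two mixed Hom-contributions collapse, i.e. $\Hom(P,U_i)=0$ and $\Hom(\underline C,P)=0$, so that only the scalar coming from $\Hom(U_i,U_i)=k$ survives. Both reduce to surjectivity of maps induced by $v_i=x_i^\ast$: from $\xi$ one has $\Hom(P,U_i)=\coker\bigl((v_i)_\ast\colon\Hom(P,E_L\langle\x\rangle)\to\Hom(P,E_L\langle\x+\x_i\rangle)\bigr)$, while $\Hom(P,P)=k^4$ and $\Hom(U_i,P)=0$ force $\Hom(\underline C,P)=\ker\bigl(\partial\colon\Hom(P,P)\to\Ext^1(U_i,P)\bigr)$, so the task is to check that $(v_i)_\ast$ is onto and that the connecting map $\partial$ is injective on $k^4$. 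I would control both through the commutativity relations of Proposition~\ref{ext-pullback} and the exact dimension counts of the $\Hom$-spaces among the $2$-extension bundles, $P$, and $U_i$, the degree constraint $0\le\x\le\x+\x_i\le\vdelta$ guaranteeing that the relevant off-diagonal terms vanish for degree reasons. Once $\End(\underline C)=k$ is secured together with the higher vanishing, exceptionality of $\underline C$ follows.
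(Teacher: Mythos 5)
Your proposal is correct and follows essentially the same route as the paper's proof: both arguments bootstrap from the exceptionality of $U_i$ (Proposition~\ref{pre excep of C}), extract the mixed $\Ext$-groups from $\xi$ and $\zeta$, use the ACM property of $\underline{C}$ to get $\Ext^{1}(\underline{C},P)=0$ and hence surjectivity --- so injectivity, by the $k^{4}$-dimension count against $\Ext^{1}(U_i,P)$ --- of the connecting map $\partial$, and assemble $\Ext^{\ast}(\underline{C},\underline{C})$ by a final application of $\Hom(\underline{C},-)$ to $\zeta$ (your $P$ is the paper's $V$, and your torsion-sheaf argument for $\Hom(U_i,P)=\Hom(U_i,\underline{C})=0$ is a harmless shortcut for what the paper reads off from $\xi$). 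One caveat: the vanishing of $\Hom(P,E_L\langle \x \rangle)$ and $\Hom(P,E_L\langle \x+\x_i \rangle)$, which is what makes $(v_i)_{\ast}$ trivially onto, does not come from Proposition~\ref{ext-pullback} --- that result computes morphisms \emph{out of} $2$-extension bundles, not into them --- but from direct degree computations via Lemma~\ref{applying hom}, which is exactly how the paper establishes $\Ext^{j}(V,E_L\langle \x \rangle)=0=\Ext^{j}(V,E_L\langle \x+\x_i \rangle)$ for all $j\ge 0$.
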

		\begin{proof} We write $\z:=\x+\x_i$ and $V:=L_i[\x] \oplus \bigoplus_{j\neq i} L_j\langle \x \rangle$. Applying $\Hom(V,-)$ to the sequence $\xi$, we obtain an exact sequence
			\begin{align*}
				0 &\to (V, E_L \langle \x \rangle) \to (V,E_L \langle \z \rangle) \to (V,U_i) \\ 
				&\to {}^{1}(V, E_L \langle \x \rangle)  \to{}^{1}(V,E_L \langle \z \rangle)  \to {}^{1}(V,U_i)  \\ 
				&\to{}^{2}(V, E_L \langle \x \rangle) \to{}^{2}(V,E_L \langle \z \rangle) \to{}^{2}(V,U_i) \to 0. \nonumber
			\end{align*}
		It is straightforward to verify that  $\Ext^{j}(V,E_L \langle \x \rangle)=0$ and $\Ext^{j}(V,E_L \langle \z \rangle)=0$ hold for any $j\ge 0$ by Lemma \ref{applying hom} and thus we have $\Ext^{i}(V,U_i)=0$ for any $i \ge 0$. Then applying $\Hom(-,U_i)$ to the sequence $\zeta$, and combining the exceptionality of $U_i$ shown in Proposition \ref{pre excep of C}, we obtain $$\Hom(\underline{C},U_i)=k \ \ \text{and} \ \ \Ext^{j}(\underline{C},U_i)=0 \ \text{for} \ j=1,2.$$ 
		 Similarly, by applying $\Hom(-,V)$ to $\xi$, one can check that 
		$$\Hom(U_i,V)=0, \ \ \Ext^{1}(U_i,V)=k^{4} \ \ \text{and} \ \,  \Ext^{2}(U_i,V)=0.$$		
		Now we apply $\Hom(-,V)$ to $\zeta$ and obtain an exact sequence 
			\begin{align*}
				0 &\to (U_i,V) \to (\underline{C},V) \to (V,V) \\ 
				&\to {}^{1}(U_i,V)  \to{}^{1}(\underline{C},V)  \to {}^{1}(V,V)  \\ 
				&\to{}^{2}(U_i,V) \to{}^{2}(\underline{C},V) \to{}^{2}(V,V) \to 0. \nonumber
			\end{align*}
			Note that $\underline{C} \in \ACM \X$, then we have $\Ext^{1}(\underline{C},V)=0$.
		It is straightforward to verify that $\Hom(V,V)=k^{4}$	and $\Ext^{j}(V,V)=0$ for $j=1,2$, implying that $\Ext^{j}(\underline{C},V)=0$ for all $j\ge 0$.
		Finally we apply $\Hom(\underline{C},-)$ to  $\zeta$ and obtain an exact sequence
			\begin{align*}
				0 &\to (\underline{C},V) \to (\underline{C},\underline{C}) \to (\underline{C},U_i) \\ 
				&\to {}^{1}(\underline{C},V)  \to{}^{1}(\underline{C},\underline{C})  \to {}^{1}(\underline{C},U_i)  \\ 
				&\to{}^{2}(\underline{C},V) \to{}^{2}(\underline{C},\underline{C}) \to{}^{2}(\underline{C},U_i) \to 0. \nonumber
			\end{align*}
		 Putting things together, we  have shown  the exceptionality of $\underline{C}$ in $\coh \X$.
		\end{proof}

		\noindent {\bf Acknowledgements.} Jianmin Chen and Weikang Weng were partially supported by the National Natural Science Foundation of China (Nos. 12371040 and 12131018). Shiquan Ruan was partially supported by the Natural Science Foundation of Xiamen (No. 3502Z20227184), Fujian Provincial Natural Science Foundation of China (Nos. 2024J010006 and 2022J01034), the National Natural Science Foundation of China (Nos. 12271448), and the Fundamental Research Funds for Central Universities of China (No. 20720220043).

			\vskip 5pt
		\noindent {\scriptsize   \noindent Jianmin Chen, Shiquan Ruan and Weikang Weng\\
			School of Mathematical Sciences, \\
			Xiamen University, Xiamen, 361005, Fujian, PR China.\\
			E-mails: chenjianmin@xmu.edu.cn, sqruan@xmu.edu.cn,
			wkweng@stu.xmu.edu.cn\\ }
		\vskip 3pt
		

\begin{thebibliography}{99}
			
			\bibitem{AIR} C. Amiot, O. Iyama, I. Reiten, \emph{Stable categories of Cohen-Macaulay modules and cluster categories}. Amer. J. Math, 137 (3) (2015) 813--857.
			
			\bibitem{AR} M. Auslander, I. Reiten, \emph{Almost split sequences for $Z$-graded rings}.
			Singularities, representation of algebras, and vector bundles (Lambrecht, 1985),  232--243, Lecture Notes in Math., 1273, Springer, Berlin, 1987.
			
			\bibitem{Bondal:1989}
			A. I. Bondal, \emph{Representations of associative algebras and coherent sheaves}.
			Izv. Akad. Nauk SSSR Ser. Mat. 53 (1) (1989) 25--44.
			
			\bibitem{Bondal:Kapranov:1989}
			A. I. Bondal, M. M. Kapranov, \emph{Representable functors, Serre functors, and reconstructions}.
			Izv. Akad. Nauk SSSR Ser. Mat. 53 (6) (1989) 1183--1205.
			
			\bibitem{Bu} R. O. Buchweitz, \emph{Maximal Cohen-Macaulay modules and Tate cohomology}. Mathematical Surveys and Monographs, 262, American Mathematical Society, Providence, RI, 2021.
			
			\bibitem{CH} M. Casanellas, R. Hartshorne, \emph{ACM bundles on cubic surfaces}. J. Eur. Math. Soc. (JEMS) 13 (3) (2011) 709--731. 

			\bibitem{CMP} L. Costa, R. M. Mir\'o-Roig, J. Pons-Llopis, \emph{The representation type of Segre varieties}. Adv. Math. 230 (4-6) (2012), 1995--2013.
			
			
			\bibitem{DR} Q. Dong, S. Ruan, \emph{On two open questions for extension bundles}. J. Algebra 662 (2025), 407--430.
			
			\bibitem{Ebeling2003}
			W.~Ebeling, \emph{The Poincar\'{e} series of some special quasihomogeneous surface
			singularities}. Publ. Res. Inst. Math. Sci. 39 (2) (2003), 393--413.
			
		\bibitem{EbelingPloog2010}
		W.~Ebeling, D.~Ploog,
		\emph{McKay correspondence for the Poincar\'{e} series of Kleinian
		and Fuchsian singularities}. Math. Ann. 347 (3) (2010), 689--702.	
			
			
			\bibitem{FU} M. Futaki, K. Ueda, \emph{Homological mirror symmetry for Brieskorn-Pham singularities}. Selecta Math. (N.S.) 17 (2) (2011), 435--452.
			

			
			\bibitem{GL} W. Geigle, H. Lenzing, \emph{A class of weighted projective curves arising in representation theory of finite-dimensional algebras}. Singularities, representation of algebras, and vector bundles (Lambrecht, 1985), 265--297, Lecture Notes in Math., 1273, Springer, Berlin, 1987.
			
			\bibitem{GL2} W. Geigle, H. Lenzing, \emph{Perpendicular categories with applications to representations and sheaves}. J. Algebra 144 (2) (1991), 273--343.
			
			\bibitem{Hap1}
			D.~Happel, \emph{Triangulated categories in the representation theory of
				finite-dimensional algebras}.  London Mathematical Society
			Lecture Note Series, 119, Cambridge University Press, Cambridge, 1988.
			
			\bibitem{HIMO} M. Herschend, O. Iyama, H. Minamoto, S. Oppermann, \emph{Representation theory of Geigle-Lenzing complete intersections}. Mem. Amer. Math. Soc. 285 (1412) (2023).
			
			\bibitem{HM} L. Hille, J. M\"uller, \emph{On tensor products of path algebras of type $A$}. Linear Algebra Appl. 448 (2014), 222--244.
			
			
						\bibitem{I3} O. Iyama, \emph{Cluster tilting for higher Auslander algebras}. Adv. Math. 226 (1) (2011), 1--61.
			
			\bibitem{IL} O. Iyama, B. Lerner, \emph{Tilting bundles on orders on $\P^d$}. Israel J. Math. 211 (1) (2016), 147--169.
			
			
			
			\bibitem{IT} O. Iyama, R. Takahashi, \emph{Tilting and cluster tilting for quotient singularities}. Math. Ann. 356 (3) (2013), 1065--1105.
			


			\bibitem{KST1} H. Kajiura, K. Saito, A. Takahashi,
			\emph{Matrix factorization and representations of quivers. II. Type $ADE$ case}. Adv. Math. 211 (1) (2007), 327--362.
			
			\bibitem{KST2} H. Kajiura, K. Saito, A. Takahashi,
			\emph{Triangulated categories of matrix factorizations for regular systems of weights with $\epsilon=-1$}. Adv. Math. 220 (5) (2009), 1602--1654.
			
						
			\bibitem{KLM} D. Kussin, H. Lenzing, H. Meltzer, \emph{Triangle singularities, ADE-chains, and weighted projective lines}. Adv. Math. 237 (2013), 194--251.
			
			\bibitem{KLM2}
			D.~Kussin, H.~Lenzing,  H.~Meltzer, \emph{Nilpotent operators and weighted projective lines}.
			J. Reine Angew. Math. 685 (2013), 33--71.
			
			
			\bibitem{Lenzing2011}
			H.~Lenzing, \emph{Rings of singularities}.
			Bull. Iranian Math. Soc. 37 (2) (2011), 235--271.
			
			\bibitem{L1} H. Lenzing, \emph{Weighted projective lines and applications}.
			Representations of algebras and related topics, 153--187, EMS Ser. Congr. Rep., Eur. Math. Soc., Zurich, 2011.
			
			\bibitem{Lenzing:Pena:2011}
			H.~Lenzing, J.~A. de~la Pe{\~n}a,
			\emph{Extended canonical algebras and Fuchsian singularities}. Math. Z., 268 (1-2) (2011), 143--167.
			

			
			

			
			
			

			
						
			\bibitem{Me} H. Meltzer, \emph{Exceptional vector bundles, tilting sheaves and tilting complexes for weighted projective lines}. Mem. Amer. Math. Soc. 171 (808) (2004).
			
			
			\bibitem{Orlov:2004}
			D.~Orlov, \emph{Triangulated categories of singularities and D-branes in Landau-Ginzburg models}. Proc. Steklov Inst. Math. 246 (3) (2004) 227--248.
			
			\bibitem{Orlov:2009}
			D.~Orlov,
			 \emph{Derived categories of coherent sheaves and triangulated categories of
			singularities}.
			 in:  Algebra, Arithmetic, and Geometry: in honor of Yu. I.
				{M}anin. {V}ol. {II}, in:  Progr. Math., vol. 270, Birkh\"auser Boston Inc., Boston MA, 2009, 503--531.
			


			
			
			\bibitem{Rin}
			C.~M. Ringel, \emph{Tame algebras and integral quadratic forms}.  
				Lecture Notes in Mathematics, 1099. Springer-Verlag, Berlin, 1984.
			
			\bibitem{S1}
			J. P. Serre, \emph{ Faisceaux alg{\'e}briques coh{\'e}rents}. Ann. of Math. 61 (2) (1955), 197--278.
			
			
			
			\bibitem{Y} Y. Yoshino, \emph{Cohen-Macaulay modules over Cohen-Macaulay rings}. 
			London Mathematical Society Lecture Note Series, 146, Cambridge University Press, Cambridge, 1990.
			
			
		
			
		\end{thebibliography}
	\end{document}